\documentclass{amsart} 

\usepackage[centertags]{amsmath} %
\usepackage{amsfonts,amssymb,amsthm,amscd,mathabx,latexsym,cmll,eufrak}
\usepackage[mathscr]{euscript}
\usepackage[all]{xy}
\usepackage{tikz}
\usetikzlibrary{arrows}

\usepackage{hyperref} 

\numberwithin{equation}{subsection}%
\newtheorem{theorem}[equation]{Theorem}
\newtheorem{corollary}[equation]{Corollary}
\newtheorem{proposition}[equation]{Proposition}
\newtheorem{lemma}[equation]{Lemma}
\theoremstyle{definition}
\newtheorem{definition}[equation]{Definition}
\newtheorem{example}[equation]{Example}
\newtheorem{example-special}[equation]{Definition/Example}
\newtheorem{remark}[equation]{Remark}

\newtheorem{assumption}[equation]{Assumption}
\newtheorem{construction}[equation]{Construction}
\newtheorem{notation}[equation]{Notation}

\newcommand{\co}{\colon\thinspace}%
\newcommand{\dual}{\ensuremath{{\sf v}}}%
\newcommand{\mc}[1]{\ensuremath{\mathcal{#1}}}%
\newcommand{\toh}[1]{\ensuremath{\stackrel{#1}{\rightarrow}}}
\newcommand{\colim}{\operatorname*{colim}}%
\newcommand{\coker}{\operatorname{coker\,}}%
\renewcommand{\ker}{\operatorname{ker\,}}
\newcommand{\sk}{\operatorname{sk}}%
\newcommand{\cosk}{\operatorname{cosk}}%
\newcommand{\im}{\operatorname{im}}%
\renewcommand{\Pr}{\mathrm{Prim}}
\newcommand{\holim}{\operatorname*{holim\,}}%
\newcommand{\hocolim}{\operatorname*{hocolim\,}}%
\newcommand{\id}{\operatorname{id}}%
\newcommand{\Tot}{\operatorname{Tot}}%
\newcommand{\Cotor}{\operatorname{Cotor}}%
\newcommand{\surj}{\ensuremath{\twoheadrightarrow}}%
\newcommand{\ul}[1]{\underline{#1}}%
\newcommand{\ol}[1]{\overline{#1}}%
\newcommand{\wh}[1]{\widehat{#1}}%
\newcommand{\oltimes}{\operatorname*{\overline{\otimes}}}%
\newcommand{\ho}[1]{\ensuremath{{\rm Ho}({#1})}}%
\newcommand{\CA}{\ensuremath{{\mathcal{CA}}}}%
\newcommand{\UA}{\ensuremath{{\mathcal{UA}}}}%
\newcommand{\V}{\ensuremath{{\mathcal{V}}}}%
\newcommand{\F}{\ensuremath{\mathbb{F}}}%
\newcommand{\Comod}{{\rm Comod}}
\renewcommand{\Vec}{\ensuremath{{\rm Vec}}}%
\newcommand{\AQ}{\mathrm{AQ}}%
\newcommand{\TAQ}{\mathscr{AQ}}
\newcommand{\Coalg}{{\rm Coalg_{\F}}}%
\newcommand{\grCoalg}{{\rm Coalg_{Vec}}}%
\newcommand{\Fp}{\ensuremath{\mathbb{F}_p}}%
\newcommand{\kfp}[1]{\ensuremath{K(\mathbb{F},{#1})}}%
\newcommand{\bu}{\bullet}%
\newcommand{\naturalpi}[3]{\ensuremath{\pi_{#1}^{\natural}(#2,#3)}}%
\newcommand{\op}{\ensuremath{\mathrm{op}}}%
\newcommand{\Hom}{\ensuremath{{\rm Hom}}}%
\newcommand{\map}{\ensuremath{{\rm map}}}
\newcommand{\olmap}{\ensuremath{\overline{\map}}}%
\newcommand{\Hun}{\ensuremath{\mathcal{H}_{\rm un}}}
\renewcommand{\hom}{\ensuremath{{\rm hom}}}
\newcommand{\olhom}{\operatorname*{\overline{\hom}}}%
\newcommand{\Aut}{\ensuremath{{\rm Aut}}}
\newcommand{\Path}{{\rm Path}}%
\newcommand{\ext}{{\rm ext}}%
\newcommand{\inter}{{\rm int}}%
\newcommand{\cone}{{\rm Cone}}%
\newcommand{\eps}{{\epsilon}}%
\newcommand{\Alg}{\ensuremath{{\rm Alg}}}%
\newcommand{\fib}{\ensuremath{{\rm fib}}}%
\newcommand{\terminal}{\ensuremath{\underline{\mathbb{F}}}}
\newcommand{\set}{\mathrm{Set}}%
\newcommand{\ucat}{\ensuremath{\,\shpos\,}}
\newcommand{\cotower}{\ensuremath{\mathbf{N}}}
\newcommand{\diagram}[2]{ \begin{align} \begin{split} \xymatrix{#1} \end{split} \label{#2} \end{align}}%
\newcommand{\diagr}[1]{ \begin{equation*} \xymatrix{#1} \end{equation*}}%

\begin{document}
\title{The realization space of an unstable coalgebra}

\author{Georg Biedermann}
\address{LAGA, Institut Galil\'{e}e, Universit\'{e} Paris 13, 99 Avenue JB Cl\'{e}ment, 93430 Villetaneuse, France}
\email{biedermann@math.univ-paris13.fr}

\author{Georgios Raptis}
\address{Fakult\"{a}t f\"{u}r Mathematik, Universit\"{a}t Regensburg, 93040 Regensburg, Germany}
\email{georgios.raptis@mathematik.uni-regensburg.de}

\author{Manfred Stelzer}
\address{Institut f\"ur Mathematik, Universit\"at Osnabr\"uck, Albrechtstrasse 28a, D-49076 Osnabr\"uck, Germany}
\email{mstelzer@uni-osnabrueck.de}

\date{\today}

\begin{abstract}
Unstable coalgebras over the Steenrod algebra form a natural target category for singular homology with prime field coefficients. The realization problem asks whether an 
unstable coalgebra is isomorphic to the homology of a topological space. We study the moduli space of such realizations and give a description of this in terms of cohomological 
invariants of the unstable coalgebra. This is accomplished by a thorough comparative study of the homotopy theories of cosimplicial unstable coalgebras and of cosimplicial 
spaces. 
\end{abstract}

\subjclass{55S10, 55S35, 55N10, 55P62}
\keywords{moduli space, singular homology, Steenrod algebra, unstable coalgebra, Andr\'{e}-Quillen cohomology, obstruction theory, spiral exact sequence.}

\maketitle
\setcounter{tocdepth}{1}
\tableofcontents

\section{Introduction}
Let $p$ be a prime and $C$  an unstable (co)algebra over the Steenrod algebra $\mathcal{A}_p$. The realization problem asks whether $C$ is isomorphic to the singular 
$\mathbb{F}_p$-(co)homology of a topological space $X$. In case the answer turns out to be affirmative, one may further ask how many such spaces $X$ there are up to $\mathbb{F}_p$-homology equivalence. The purpose of this work is to study a moduli space of topological realizations 
associated with a given unstable coalgebra $C$. Our results give a description of this moduli space in terms of a tower of spaces which are determined by cohomological invariants of $C$. As 
a consequence, we obtain obstruction theories for the existence and uniqueness of topological realizations, where the obstructions are defined by Andr\'{e}-Quillen cohomology classes. 
These obstruction theories recover and sharpen results of Blanc~\cite{Blanc:coalg}. Moreover, our results apply also to the case of rational coefficients. We thus provide a unified 
picture of the theory in positive and zero characteristics. 
\vspace*{3mm}

Let us start by briefly putting the problem into historical perspective. 
The realization problem was explicitly posed by Steenrod in~\cite{Steenrod:cohomology-algebra}.
In the rational context, at least if one restricts to simply-connected objects, such realizations always exist by celebrated theorems 
of Quillen~\cite{Quillen:rational} and Sullivan~\cite{sullivan:infinitesimal}. In contrast, there are many deep non-realization theorems known in positive characteristic: 
some of the most notable ones are by Adams~\cite{Adams:Hopf-invariant}, Liulevicius~\cite{Liulevicius:cyclic-powers}, Ravenel~\cite{Ravenel:Arf}, 
Hill-Hopkins-Ravenel~\cite{HHR:Kervaire}, Schwartz~\cite{Schwartz:apropos}, and Gaudens-Schwartz~\cite{Gaudens-Schwartz}.

Moduli spaces parametrizing homotopy types with a given cohomology algebra or homotopy Lie algebra were first constructed in 
rational homotopy theory. The case of cohomology was treated by F\'{e}lix~\cite{Felix:deform}, Lemaire-Sigrist~\cite{Lemaire-Sigrist}, and Schlessinger-Stasheff~\cite{SchleSta}. 
All these works relied on the obstruction theory developed by Halperin-Stasheff~\cite{Halperin-Stasheff:obstructions}. The (moduli) set of equivalence classes of realizations 
was identified with the quotient of a rational variety by the action of a unipotent algebraic group. 
Moreover, Schlessinger and Stasheff \cite{SchleSta} associated to a graded algebra $A$  a differential graded coalgebra  $C_A$, whose set of components, defined in terms of an algebraic notion of homotopy,
parametrizes the different realizations of $A$. This coalgebra represents a moduli space for $A$ which encodes higher order 
 information.
 
An obstruction theory for unstable coalgebras was developed by Blanc in \cite{Blanc:coalg}. He defined obstruction classes for realizing an unstable coalgebra $C$ and proved that the 
vanishing of these classes is necessary and sufficient for the existence of a realization. Furthermore, he defined difference classes which distinguish two given realizations. Both 
kinds of classes  live in certain Andr\'{e}-Quillen cohomology groups associated to $C$. Earlier, Bousfield \cite{Bou:obstructions} developed an obstruction theory for realizing maps 
with obstruction classes in the unstable Adams spectral sequence. For very nice unstable algebras, obstruction theories using the Massey-Peterson machinery \cite{Massey-Peterson:mod2} 
were also introduced by Harper~\cite{Harper:torsion} and McCleary~\cite{MCl:H-spaces} already in the late seventies. 

In a landmark paper, Blanc, Dwyer, and Goerss \cite{BlDG:pi-algebra} studied the moduli space of topological realizations of a given $\Pi$-algebra over the integers.
Again, the components of this moduli space correspond to  different realizations. By means of simplicial resolutions, the authors showed a 
decomposition of this moduli space into a tower of fibrations, thus obtaining obstruction theories for the realization and uniqueness problems for $\Pi$-algebras. 
Their work relied on earlier work of Dwyer, Kan and Stover \cite{DKSt:E2} on resolution (or $E^2$) model categories which was later generalized by Bousfield \cite{Bou:cos}. 
Using analogous methods, Goerss and Hopkins in~\cite{GoHop:moduli}, and \cite{GoHop:moduli2}, studied the moduli space of $E_{\infty}$-algebras in spectra 
which have a prescribed homology with respect to a homology theory. Their results gave rise to some profound applications in stable homotopy theory \cite{Rezk:Hopkins-Miller}. 
\vspace*{3mm}

In this paper, we consider an unstable coalgebra $C$ over the Steenrod algebra and we study the (possibly empty) moduli space of realizations $\mc{M}_{\rm Top}(C)$. We work with unstable 
coalgebras and homology, instead of unstable algebras and cohomology, because one avoids in this way all kinds of issues that are related to (non-)finiteness. In the presence of suitable finiteness assumptions, the 
two viewpoints can be translated into each other. We exhibit a decomposition of $\mc{M}_{\rm Top}(C)$ into a tower of fibrations in which the layers have homotopy groups related to the
Andr\'{e}-Quillen cohomology groups of $C$. Our results apply also to the rational case. 
The decomposition of $\mc{M}_{\rm Top}(C)$ is achieved by means of cosimplicial resolutions and their Postnikov 
decompositions in the cosimplicial direction, following and dualizing the approach of~\cite{BlDG:pi-algebra}. At several steps, the application of this general approach to moduli space 
problems from \cite{BlDG:pi-algebra} requires non-trivial input and this accounts in part for the length of the article. Another reason is that the article is essentially self-contained. 
\vspace*{3mm}

We will now give the precise statements of our main results. Following the work of Dwyer and Kan on moduli spaces in homotopy theory, a moduli space of objects in a certain homotopy theory is defined as the classifying space 
of a category of weak equivalences. Under favorable homotopical assumptions, it turns out that this space encodes the homotopical information of the spaces of homotopy automorphisms of these objects. We refer 
to Appendix \ref{DK-theory} for the necessary background and a detailed list of references. 

Fix a prime field $\mathbb{F}$ and an unstable coalgebra $C$ over the corresponding Steenrod algebra. We consider the category $\mc{W}_{\rm Top}(C)$ whose objects are all spaces with $\mathbb{F}$-homology isomorphic to $C$, as unstable coalgebras, and whose morphisms are the $\mathbb{F}$-homology equivalences. 
The realization space $\mc{M}_{\rm Top}(C)$ is the classifying space of the category $\mc{W}_{\rm Top}(C)$.
For the decomposition of this space into a tower of more accessible spaces, we replace actual topological realizations with their cosimplicial resolutions regarded as objects in an 
appropriate model category of cosimplicial spaces. These resolutions are 
given by products of Eilenberg-MacLane spaces of type $\mathbb{F}$ in each cosimplicial degree. More precisely, they are fibrant replacements in the resolution model 
structure $c \mc{S}^{\mc{G}}$ on the category of cosimplicial spaces $c\mc{S}$ with respect to the class \mc{G} of Eilenberg-MacLane spaces. 

There is a notion of a Postnikov decomposition for such resolutions defined in the cosimplicial direction. It is given by the skeletal filtration. Its terms are characterized by 
connectivity and vanishing conditions similar to the classical case of spaces. A potential $n$-stage for $C$ is then defined to be a cosimplicial space which satisfies such conditions 
so that it may potentially be the $(n+1)$-skeleton of an actual realization. The definition makes sense also for $n=\infty$, giving rise to the notion of an $\infty$-stage. 
The category whose objects are potential $n$-stages for $C$ defines a moduli space $\mc{M}_{n}(C)$ in $c \mc{S}^{\mc{G}}$ and the derived skeleton functor induces a map 
\[\sk_{n}^c:\mc{M}_{n}(C)\to \mc{M}_{n-1}(C).\]
The following theorem addresses the relation between the moduli spaces of $\infty$-stages and genuine realizations of $C$.
\vspace{2mm}

\noindent
{\bf Theorem~\ref{infty-stages-are-reals}.} 
{\it Let $C$ be an unstable coalgebra. Suppose that the homology spectral sequence of each $\infty$-stage $ X^\bu$ for $C$ converges strongly to $H_*(\Tot X^\bu)$.
Then the totalization functor induces a weak 
equivalence $$\mc{M}_{\infty}(C) \simeq \mc{M}_{\rm Top}(C).$$ 
This is the case if the unstable coalgebra $C$ is simply-connected, i.e.\! $C_1 = 0$ and $C_0 = \mathbb{F}$. }
\vspace{2mm}

As a further step towards a decomposition of $\mc{M}_{\infty}(C)$, we show:
\vspace{2mm}

\noindent      
{\bf Theorem~\ref{infty-stages-holim}.}
{\it Let $C$ be an unstable coalgebra. There is a weak equivalence 
  $$\mc{M}_{\infty}(C) \simeq \holim_n \mc{M}_n(C).$$ }

The difference between $\mc{M}_n(C) $ and $\mc{M}_{n-1}(C)$ is explained in the next theorem but we first need to introduce some notation. Given a coabelian unstable coalgebra 
$M \oplus \mathbb{F}$ where $M$ is also a $C$-comodule, there is a cosimplicial unstable coalgebra $K_C(M,n)$ of ``Eilenberg-MacLane type", as explained in Section~\ref{sec:cosimplicial-unst-coalg}. This object co-represents Andr\'{e}-Quillen cohomology. Then we define Andr\'{e}-Quillen spaces 
to be the (derived) mapping spaces in the model category of cosimplicial unstable coalgebras under $C$:
  $$\TAQ^n_C(C;M) : = \map^{\rm der}_{c(C/ \CA)}(K_C(M,n), cC),$$
where  $cC$ denotes the constant cosimplicial object defined by $C$.
The homotopy groups of this mapping space yield the Andr\'{e}-Quillen cohomology of $C$ with coefficients in $M$.
Let us write $C[n]$ for the $C$-comodule and unstable module obtained from $C$ by shifting $n$ degrees up with respect to its internal grading. The automorphism group 
$\Aut_C(C[n])$ acts on $\TAQ^k_C(C; C[n])$, for any $k$, and has a homotopy fixed point at the basepoint which represents the zero Andr\'e-Quillen cohomology 
class. We denote the homotopy quotient by $\widetilde{\TAQ}^{k}_C(C;C[n])$. 
\vspace{2mm}

\noindent      
{\bf Theorem~\ref{main-pullback-square-alternative}.} 
{\it Let $C$ be an unstable coalgebra. For every $n \geq 1$, there is a homotopy pullback square}
\begin{displaymath}
\xymatrix{
\mc{M}_n(C) \ar[rr] \ar[d]^{\sk^c_n} && B\Aut_C(C[n]) \ar[d] \\
\mc{M}_{n-1}(C) \ar[rr] && \widetilde{\TAQ}^{n+2}_C(C;C[n]).
} 
\end{displaymath}
\vspace{2mm}

As a consequence, we obtain:  
\vspace{2mm}

\noindent
{\bf Corollary~\ref{main-pullback-square2}.} 
{\it Let $X^\bu$ be a potential $n$-stage for an unstable coalgebra $C$. Then there is a homotopy pullback square }
\begin{displaymath}
\xymatrix{
\TAQ^{n+1}_C(C; C[n]) \ar[rr] \ar[d] && \mc{M}_n(C) \ar[d]^{\sk^c_n} \\
\ast \ar[rr]^{\sk^c_n X^\bu} && \mc{M}_{n-1}(C). 
}
\end{displaymath}

Finally, the bottom of the tower of moduli spaces can be identified as follows.
\vspace{2mm}

\noindent
{\bf Theorem~\ref{moduli-0-stages}.}
{\it Let $C$ be an unstable coalgebra. There is a weak equivalence 
\begin{center}
$\mc{M}_0(C) \simeq B \Aut(C).$ 
\end{center}
  }
\vspace{2mm}

Before proving these results, we need to develop two main technical tools. The first tool is a cosimplicial version of the spiral exact sequence,
which first appeared in the study of the $E^2$-model structure on pointed simplicial spaces by Dwyer-Kan-Stover~\cite{DKSt:bigraded}. In particular, its structure as a sequence of modules over its zeroth term will be important. The spiral exact sequence establishes a fundamental link between the homotopy theories of cosimplicial spaces and cosimplicial unstable coalgebras.

We also give a detailed account of the homotopy theory of cosimplicial unstable coalgebras. This culminates in a homotopy excision theorem for cosimplicial unstable coalgebras. It leads to the following homotopy excision theorem in $c \mc{S}^{\mc{G}}$, the resolution model category of cosimplicial spaces with respect to Eilenberg-MacLane spaces. 
This is our second main tool.
\vspace*{2mm}

\noindent
{\bf Theorem~\ref{homotopy-excision 2}} (Homotopy excision for cosimplicial spaces).
{\it Let 
\[
 \xymatrix{
 E^\bu \ar[d] \ar[r] & X^\bu \ar[d]^f \\
 Y^\bu \ar[r]^g & Z^\bu
 }
\]
be a homotopy pullback square in $c \mc{S}^{\mathcal{G}}$ where $f$ is $m$-connected and $g$ is $n$-connected. Then the square is homotopy $(m+n)$-cocartesian. }
\vspace*{2mm}

A detailed account of the resolution model category $c \mc{S}^{\mc{G}}$ will be given in Section \ref{sec:cosimplicial-spaces}. Let us simply mention here that a map of cosimplicial spaces is a weak equivalence in 
$c \mc{S}^{\mc{G}}$ if the induced map on homology is a weak equivalence of cosimplicial unstable coalgebras. The relevant connectivity notion in the theorem can be defined in terms of the cohomotopy groups of cosimplicial 
unstable coalgebras. 
\vspace*{3mm}

The paper is structured as follows.

Resolution model categories of cosimplicial objects with respect to a class of injective models \mc{G} are studied in Sections~\ref{resolution-model-cat} and \ref{sec:natural-homotopy-groups}. In Section \ref{resolution-model-cat},
we give a brief review of Bousfield's work \cite{Bou:cos} and then we define and study some useful properties of the class of (quasi-)\mc{G}-cofree maps. In Section \ref{sec:natural-homotopy-groups}, we introduce the natural homotopy 
groups with respect to \mc{G} and compare them with the $E_2$-homotopy groups of a cosimplicial object. The main result is the all-important spiral exact sequence (Theorem~\ref{ses-statement}). 
Finally, we discuss the notion of cosimplicial connectivity with respect to \mc{G}.

Section~\ref{sec:cosimplicial-unst-coalg} is concerned with the homotopy theory of cosimplicial unstable coalgebras. First we review some basic facts about the categories of unstable 
right modules and unstable coalgebras. The model structure on cosimplicial unstable coalgebras is 
an example of a resolution model category. We study the homotopy theory of cosimplicial comodules over a cosimplicial coalgebra and discuss the K\"unneth spectral sequences in this 
setting (Theorem \ref{Kunneth spectral sequence 2}) - which are dual
to the ones proved in \cite{Quillen:HA}. Using these spectral sequences, we obtain a homotopy excision theorem for cosimplicial unstable coalgebras (Theorem \ref{homotopy-excision}). 

In Section \ref{sec:AQ}, we recall the definition of Andr\'{e}-Quillen cohomology of unstable coalgebras following \cite{Quillen:HA}. We construct certain twisted ``Eilenberg-MacLane" objects $K_C(M,n)$ and observe that they co-represent Andr\'{e}-Quillen cohomology. Furthermore, we determine the moduli spaces of these objects which will later be needed for the 
proof of Theorem \ref{main-pullback-square-alternative}. We study the skeletal filtration of a cosimplicial unstable coalgebra and prove that it defines 
a Postnikov decomposition with respect to the cohomotopy groups. Moreover, using the homotopy excision theorem, we prove that each map in this skeletal filtration is a ``principal cofibration'', i.e., it is a pushout of an attachment defined by a map from a twisted Eilenberg-MacLane object (see Proposition \ref{diff. constr. in CA}). 
This yields dual versions of $k$-invariants which enter into the obstruction theory. The application of the homotopy excision theorem is immediate in this case because the maps of the filtration are 
at least $1$-connected; a final subsection analyzes in detail some subtle points of the case of $0$-connected maps which may also be of independent interest. 

Section~\ref{sec:cosimplicial-spaces} is devoted to the resolution model category of cosimplicial spaces with respect to the class of $\F$-GEMs, the generalized Eilenberg-MacLane spaces of type $\mathbb{F}$. 
The K\"unneth theorem for singular homology with field coefficients provides an important link between the topological and the algebraic homotopy theories and its consequences 
are explained. As an example, the homotopy excision theorem for cosimplicial spaces is deduced from the one for cosimplicial unstable coalgebras (Theorem \ref{homotopy-excision 2}). We 
prove the existence of twisted ``Eilenberg-MacLane" objects $L_C(M,n)$ in cosimplicial spaces and study their relation with the objects of type $K_C(M,n)$ 
(Proposition \ref{Representability of L(C,n)}). These new objects appear as layers of the Postnikov decomposition of a cosimplicial space that is obtained 
from its skeletal filtration (see Proposition \ref{diff. constr. in S}). 
Although the homology of an $L$-object is not a $K$-object, these two types of objects are nevertheless similar with respect to their functions in each homotopy theory. Ultimately it is 
this fact that allows a reduction from spaces to unstable coalgebras. 

Section~\ref{moduli-spaces} starts with a discussion of potential $n$-stages defined as cosimplicial spaces that approximate an honest realization. A key result which 
determines when a potential $n$-stage admits an extension to a potential $(n+1)$-stage is easily deduced from results of the previous sections (Theorem \ref{obstruction-theory-step}). 
Then a synthesis in the language of moduli spaces of the results obtained so far, especially on Postnikov decompositions and the relation between $K$- and $L$-objects, 
produces our main results about the 
moduli space of realizations of an unstable coalgebra as listed above. 

The paper ends with three appendices that contain results which are used in the rest of paper but may also be of independent interest.

The spiral exact sequence is constructed in Appendix~\ref{appsec:spiral}. 
This is done in a general cosimplicial unpointed resolution model category. The spiral exact sequence relates the two types of homotopy groups which one associates to a cosimplicial object 
in a resolution model category. An additional algebraic structure on the spiral exact sequence, an action of its zeroth term, is explored in detail. Finally, the associated spiral spectral sequence 
is discussed.

In Appendix~\ref{appsec:H-alg}, we give a description of unstable algebras in terms of algebraic theories. This extends also to rational coefficients.
The result is used to translate the abstract study of the spiral exact sequence in Appendix~\ref{appsec:spiral} to our concrete setting. 
In a final subsection, we identify the spiral spectral sequence as the vector space dual of the homology spectral sequence of a cosimplicial space.

In Appendix~\ref{DK-theory}, we provide a brief survey of some necessary background material on moduli spaces in homotopy theory. The material is mainly drawn 
from several papers of Dwyer and Kan, with some slight modifications and generalizations on a few occasions in order to fit the purposes of this paper.
\vspace*{3mm}

\textbf{Acknowledgments.} This work was mostly done while the authors were at the Institut f\"ur Mathematik, Universit\"at Osnabr\"{u}ck. We would like to warmly acknowledge the support. 
The first author was partially supported by the Centre Henri Lebesgue (programme "Investissements d'avenir" -- ANR-11-LABX-0020-01). This project has also received funding from the European Union’s Horizon 2020 research and innovation programme under the Marie Sklodowska-Curie grant agreement No 661067. The second author was partially supported by 
SFB 1085 - \emph{Higher Invariants}, Universit\"at Regensburg, funded by the DFG. The third author was supported by DFG grant RO 3867/1-1. We would also like to thank Geoffrey Powell for his helpful comments. 
     
\section{Resolution model categories} \label{resolution-model-cat}

We review the theory of resolution model structures due to Dwyer-Kan-Stover~\cite{DKSt:E2} and Bousfield~\cite{Bou:cos}. We are especially interested in the cosimplicial 
and unpointed version from~\cite[Section 12]{Bou:cos}. In Subsection~\ref{cosimplicial-objects}, we recall some basic facts about the categories of cosimplicial 
objects. Subsection~\ref{subsec:res-model-cat} provides a short summary of resolution model structures. 
The last Subsection~\ref{subsec:quasi-G-cofree} is concerned with the class of quasi-\mc{G}-cofree maps. These are analogues of relative cell complexes in the setting of 
a cosimplicial resolution model category and are useful in producing explicit factorizations. 

\subsection{Cosimplicial objects}\label{cosimplicial-objects}
This section fixes some notation and recalls several basic constructions related to cosimplicial objects.

We denote by $\Delta$ the category of finite ordinals. Its objects are given by $\ul{n}=\{0<1<\hdots<n\}$ for all $n\ge 0$. The morphisms are the non-decreasing maps.
Let \mc{M} be a complete and cocomplete category. We denote by $c\mc{M}$ the category of {\it cosimplicial objects} in \mc{M}, i.e. the category of functors from $\Delta$ to \mc{M}. 
For an object $X$ in \mc{M}, let $c(X)$ or $cX$ denote the {\it constant cosimplicial object} at $X$. 

\begin{definition}
For a cosimplicial object $X^\bullet$ in $\mc{M}$ we define its {\it $n$-th matching object} by
  $$ M^nX^\bullet:=\lim_{[n]\twoheadrightarrow [k]\atop k< n}X^k $$
and its {\it $n$-th latching object} by
  $$ L^nX^\bullet:=\colim_{[k]\hookrightarrow [n] \atop k< n} X^k .$$
We recall the definition of the {\it Reedy model structure} on the category $c\mc{M}$ (see, e.g., \cite{GoJar:simp2}). A map $X^\bu\to Y^\bu$ is
\begin{enumerate}
   \item
a {\it Reedy equivalence} if for all $n\ge 0$, the map $X^n\to Y^n$ is a weak equivalence in \mc{M}.
   \item
a {\it Reedy cofibration} if for all $n\ge 0$, the induced map 
  $$X^n\cup_{L^nX^\bu}L^nY^\bu\to Y^n$$ 
is a cofibration in \mc{M}.
   \item
a {\it Reedy fibration} if for all $n\ge 0$, the induced map
  $$X^n\to Y^n\times_{M^nY^\bu}M^nX^\bu $$ 
is a fibration in \mc{M}.
\end{enumerate}
\end{definition}

Let \mc{S} denote the category of simplicial sets. The term ``space'' will refer to a simplicial set unless explicitly stated otherwise. 
\begin{definition}
Assume that \mc{M} is simplicially enriched, tensored and cotensored over \mc{S}. Then the {\it internal simplicial structure} on $c\mc{M}$ is defined by
  $$ (X^\bu\otimes^{\rm int} K)^n= X^n\otimes K \ , \ \hom^{\rm int}(K,X^\bu)^n=\hom(K,X^n) $$
and
  $$ \map^{\rm int}(X^\bu,Y^\bu)_n=\Hom_{\mc{M}}(X^\bu\otimes^{\rm int}\Delta^n,Y^\bu),$$
where $X^\bu$ and $Y^\bu$ are objects in $c\mc{M}$ and $K$ is in \mc{S}. 
\end{definition}

\begin{proposition}\label{lem:c-Reedy-right-Quillen} 
The Reedy model structure on $c\mc{M}$ is compatible with the internal simplicial structure. The adjoint functors 
  $$(-)^0\co c\mc{M}^{\rm Reedy}\rightleftarrows\mc{M}\!: c$$ 
define a simplicial Quillen adjunction.
\end{proposition}
\begin{proof}
See, e.g., ~\cite[2.6]{Bou:cos}.
\end{proof}

For any $n\ge 0$, we denote by $\Delta_{\le n}$ the full subcategory of $\Delta$ given by the objects $\ul{\ell}$ with $0\le\ell\le n$. We denote the category of functors from $\Delta_{\le n}$ to \mc{M} by $c_n\mc{M}$. The inclusion functor $i_n\co\Delta_{\le n}\to\Delta$ induces a restriction functor
  $$ i_n^*\co c\mc{M}\to c_n\mc{M} $$
which has a right adjoint $\rho_n$ and a left adjoint $\lambda_n$. We define the {\it $n$-th skeleton} of a cosimplicial object $X^\bu$ by
  $$ \sk_nX^\bu:=\lambda_ni_n^*X^\bu $$
and the {\it $n$-th coskeleton} of a cosimplicial object $X^\bu$ by
  $$ \cosk_nX^\bu:=\rho_ni_n^*X^\bu. $$
Since $i_n$ is full, we have $(\sk_nX^\bu)^{s}=X^s=(\cosk_nX^\bu)^s$ for $0\le s\le n$. 

\begin{lemma}\label{app:skeletal-inclusion-Reedy-cof}
Let $X^\bu$ be Reedy cofibrant. Then the map $\sk_nX^\bu\to \sk_{n+1}X^\bu$ is a Reedy cofibration in $c\mc{M}$ for all $n\ge 0$. A dual statement holds for coskeleta.
\end{lemma}
\begin{proof}
See, e.g., \cite[Proposition 6.5]{Berger-Moerdijk:extension-Reedy}.
\end{proof}

\begin{proposition}\label{sk-left-Quillen}
The adjoint pair $\sk_n\co c\mc{M} \rightleftarrows c\mc{M}\!:\cosk_n$ is a Quillen adjunction for the Reedy model structure. 
\end{proposition}

\begin{proof}
See, e.g., \cite[Lemma 6.4]{Berger-Moerdijk:extension-Reedy}. 
\end{proof}

\medskip

We review the construction of the external simplicial structure on $c\mc{M}$. This does not require any simplicial structure on \mc{M}. 
Since \mc{M} is complete and cocomplete, it is tensored and cotensored over sets in the following way:
for any set $S$ and any object $X$ of $\mc{M}$, we denote by 
  $$\bigsqcup_{S}X \,\text{ and }\,\prod_{S}X $$
the coproduct and product in \mc{M} of $|S|$ copies of $X$. These constructions are functorial in both variables $X$ and $S$. Thus, for objects $X^\bullet$ of $c\mc{M}$ and $L$ of \mc{S}, there is a functor $\Delta^{\rm op}\times\Delta\to\mc{M}$, defined on objects by
$$([\ell],[m])\mapsto\bigsqcup_{L_\ell}X^m.$$
Now one can consider the coend
  $$ X^\bullet\otimes_{\Delta}L:=\int^{\Delta}\bigsqcup_{L_\ell}X^m \in\mc{M} $$
which is, by definition, the following coequalizer
  $$ \bigsqcup_{[m]\to [\ell]}\bigsqcup_{L_\ell} X^m \rightrightarrows \bigsqcup_{\ell\ge 0}\bigsqcup_{L_\ell} X^\ell \dashrightarrow  X^\bullet\otimes_{\Delta}L,   $$
where the parallel arrows are defined in the obvious way using the maps induced by $[m] \to [\ell]$. Moreover, for a fixed simplicial set $K$ and $X^\bu$ in $c\mc{M}$, there is a bicosimplicial object in \mc{M},
  $$ \Delta\times\Delta\to \mc{M}, (m,n)\mapsto \prod_{K_m}X^n ,$$
that is functorial in $K$ and $X^\bu$.

\begin{definition} \label{Externe simpliziale Struktur}
Let $K$ be a simplicial set and let $X^\bullet$ and $Y^\bullet$ be objects of $c\mc{M}$. Then we define tensor, cotensor and mapping space objects as follows:
\begin{align*}
          ( X^\bullet\otimes^{\rm ext} K)^n  &=\, X^\bullet\otimes_{\Delta} ( K\times\Delta^n) \\ 
          \hom^{\rm ext}( K,X^\bullet)^n &= {\rm diag}\left((m,n)\mapsto \prod_{K_m}X^n\right) =\ \ \prod_{K_n}X^n \\
          \map^{\rm ext}( X^\bullet,Y^\bullet)_n &=\ \Hom_{c\mc{M}}(X^\bullet\otimes^{\rm ext}\Delta^n,Y^\bullet)
\end{align*} 
This structure is called the {\it external simplicial structure} on $c\mc{M}$. We will usually drop the superscripts and often abbreviate
  $$ \hom^{\rm ext}( K,X^\bullet)\ \ \text{ to }\ \ (X^\bu)^K. $$
\end{definition}

\begin{example}\label{exam:ext-mapping-space-constant-target}
The external mapping space takes a simple form if the target is constant. For $X^\bu$ in $c\mc{M}$, $Y$ in \mc{M}, and any $n\ge 0$, we have a canonical isomorphism
  $$ (X^\bu\otimes^{\ext}\Delta^n)^0=X^\bu\otimes_{\Delta}\Delta^n\cong X^n $$
and consequently,
  $$ \map^{\ext}(X^\bu,cY)_n\cong\Hom_{c\mc{M}}(X^\bu\otimes^{\ext}\Delta^n,cY)\cong\Hom_{\mc{M}}(X^n,Y) .$$
\end{example}

\medskip

We will also make use of the pointed variations of these objects. Let $\mc{M}_*$ denote the category $* \ucat \mc{M}$ where $*$ is the terminal object of $\mc{M}$. 

\begin{definition} \label{punktierte externe simpliziale Struktur}
Let $K$ be a pointed simplicial set, $X^\bu$ an object in $c\mc{M}$, and $Y^\bu$ in $c\mc{M}_*$. We define
  $$ X^\bullet\oltimes K  =\, (X^\bullet\otimes^{\rm ext} K)/(X^\bullet\otimes^{\rm ext} \Delta^0)\in c\mc{M}_* $$
which has a canonical basepoint as cofiber. We define 
  $$ \olhom( K,Y^\bullet) =\,\fib\bigl[\hom(K,Y^\bu)\to\hom(\Delta^0,Y^\bu)\cong Y^\bu\bigr]\in c\mc{M}_* $$
as the fiber taken at the basepoint of $Y^\bu$ of the map induced by the basepoint of $K$. The map $K\to\Delta^0$ together with the basepoint of $Y^\bu$ induce a basepoint 
$$\ast\to Y^\bu\to\hom(K,Y^\bu),$$ 
and hence $\olhom(K,Y^\bu)$ is also pointed. We will often abbreviate
  $$ \olhom( K,Y^\bullet)\ \ \text{ to }\ \ K(Y^\bu). $$
\end{definition}

\medskip 

\noindent By forgetting the basepoint of $K(Y^\bu)$, one obtains an adjunction isomorphism
  $$ \Hom_{c\mc{M}_{\ast}}(X^\bu\oltimes K,Y^\bu)\cong\Hom_{c\mc{M}}(X^\bu,K(Y^\bu)).$$

\begin{definition}\label{external-loops}
The collapsed boundary gives the $n$-sphere $S^n:=\Delta^n/\partial\Delta^n$ a canonical basepoint. Let $X^\bullet$ be an object of $c\mc{M}$. 
We define the {\it $s$-th external sus\-pen\-sion} by
  $$ \Sigma^n_{\rm ext}X^\bullet= X^\bullet\oltimes (\Delta^n/\partial\Delta^n).$$
If $X^\bu$ is pointed, we define the {\it $s$-th external loop object} by
   $$ \Omega^n_{\rm ext}X^\bullet=\olhom(\Delta^n/\partial\Delta^n, X^\bullet) .$$
We will often omit the subscript ``\texttt{ext}'' when the reference to the external structure is clear from the context.
Note that $\Sigma^0X^\bu=X^\bu\sqcup\ast$.
\end{definition}

\begin{definition}\label{def:olmap}
Let $X^\bu$ be an object in $c\mc{M}$ and $Y^\bu$ an object in $c\mc{M}_*$.
We denote the external mapping space $\map(X^\bu, Y^\bu)$ in $c\mc{M}$ by 
  $$\olmap^{\ext}(X^\bu, Y^\bu),$$ 
when we view it as a pointed simplicial set whose basepoint is given by the constant map $X^\bu\to\ast\to Y^\bu$. 
We will use this bar notation for special emphasis in the definitions, but will often neglect this distinction in the text 
(as well as the superscript \texttt{ext}).
\end{definition}

\begin{remark}
For an unpointed cosimplicial object $X^\bu$ and a pointed cosimplicial object $Y^\bu$, there is a canonical isomorphism of pointed sets 
  $$ \olmap^{\ext}(X^\bu,Y^\bu)_n\cong\Hom_{c\mc{M}_{\ast}}(X^\bu\oltimes\Delta^n_+,Y^\bu)$$
which is natural in $n$, $X^\bu$, and $Y^\bu$.
\end{remark}

\begin{proposition}\label{prop:external-adjunctions-(un)pointed}
Let $K$ be a simplicial set and let $X^\bullet$ and $Y^\bullet$ be objects of $c\mc{M}$.
\begin{enumerate}
   \item 
There are natural isomorphisms of simplicial sets
\begin{align*} 
   \map_{\mc{S}}\bigl(K,\map(X^\bullet,Y^\bullet)\bigr)&\cong\map(X^\bullet\otimes K,Y^\bullet) \cong\map\bigl(X^\bullet,(Y^\bullet)^K\bigr).
\end{align*}
   \item If $K$ and $Y^\bullet$ are pointed, but not $X^\bullet$, there are natural isomorphisms of pointed simplicial sets
   $$\map_{\mc{S}_*}\bigl(K, \olmap(X^\bu, Y^\bu)\bigr) \cong \map_{c\mc{M}_*}(X^\bu\oltimes K,Y^\bu) \cong \olmap\bigl(X^\bu,K(Y^\bu)\bigr).$$
\end{enumerate}
\end{proposition}

\begin{proof}
Part (1) can be found in the simplicial case in \cite[Theorem II.2.5]{GoJar:simp2} and 
\cite[Example II.2.8(4)]{GoJar:simp2} illustrates the cosimplicial version. Part (2) is an easy consequence by direct inspection.
\end{proof}

When they both exist, the external and internal simplicial structures commute in the following sense.

\begin{proposition}\label{int-ext-commute}
Given simplicial sets $K$ and $L$ and a cosimplicial object $X^\bu$, there are canonical natural isomorphisms
  $$ \hom^{\rm int}\bigl(K,\hom^{\rm ext}(L,X^\bu)\bigr)\cong\hom^{\rm ext}\bigl(L,\hom^{\rm int}(K,X^\bu)\bigr) ,$$
where on the left the internal cotensor is applied degreewise. 
If $L$  and $X^\bu$ are pointed, there are canonical natural isomorphisms
  $$ \hom^{\rm int}\bigl(K,\olhom(L,X^\bu)\bigr)\cong\olhom\bigl(L,\hom^{\rm int}(K,X^\bu)\bigr) .$$
\end{proposition}
\begin{proof}
This follows easily from the definitions.
\end{proof}

Although the external simplicial structure and the Reedy model structure are not compatible, the next lemma states that there is some partial compatibility.

\begin{proposition}\label{Reedy-ext-comp}
Let $f\co X^\bu \to Y^\bu$ be a Reedy cofibration in $c\mc{M}$ and $j\co K\to L$ a cofibration in \mc{S}. Then the map
  $$ (X^\bu \otimes^{\rm ext} L)\cup_{(X^\bu \otimes^{\rm ext} K)} (Y^\bu \otimes^{\rm ext} K)\to Y^\bu \otimes^{\rm ext} L$$
is a Reedy cofibration which is a Reedy equivalence if $f$ is a Reedy equivalence. The pointed analogue referring to $\oltimes$ also holds.
\end{proposition}
\begin{proof}
This is proved for simplicial objects in \cite[VII.2.15]{GoJar:simp2}.
\end{proof}

\subsection{Cosimplicial resolutions}\label{subsec:res-model-cat}

This subsection recalls briefly the relevant notions from Bousfield~\cite{Bou:cos}. Especially relevant is \cite[Section 12]{Bou:cos} 
where the unpointed theory is outlined. 

Let \mc{M} be a left proper model category with terminal object $\ast$. In \cite{Bou:cos}, the factorizations in $\mc{M}$ are not assumed to be functorial, however, 
{\it in this article the definition of a model structure is assumed to include functorial factorizations.} 

Let $\mc{M}_{\ast}$ denote the associated pointed model category whose weak equivalences, cofibrations, and fibrations are defined by forgetting the basepoints.
Let $\ho{\mc{M}}$ and $\ho{\mc{M}_\ast}$ be the respective homotopy categories, and $\ho{\mc{M}}_\ast$ the category $[\ast]\ucat\ho{\mc{M}}$.
In \cite[Lemma 12.1]{Bou:cos}, it is explained that for left proper model categories, the ordinary (derived) loop functor $\Omega\co\ho{\mc{M}_\ast}\to \ho{\mc{M}_\ast}$ also yields objects $\Omega^nY, n\ge 1,$ in $\ho{\mc{M}}_{\ast}$ for each object $Y$ in $\ho{\mc{M}}_{\ast}$. The object $\Omega^n Y$ is defined up to isomorphism and
depends only on the isomorphism class of $Y$. Moreover, the object $\Omega^nY$ admits a group object structure in \ho{\mc{M}} for $n\ge 1$, which is abelian for $n\ge 2$, and this group structure depends only on the isomorphism class of $Y$. For $X$ in \ho{\mc{M}}, we write
   $$ [X, Y]_n = [X, \Omega^nY]=\Hom_{\ho{\mc{M}}}(X, \Omega^nY). $$

\medskip   
   
Let \mc{G} be a class of group objects in \ho{\mc{M}}. Each $G\in\mc{G}$, with its unit map, represents an object of $\ho{\mc{M}}_{\ast}$ and therefore it has an $n$-fold loop 
object $\Omega^n G$ in \ho{\mc{M}}. For $n \geq 0$, we have an associated homotopy functor 
$$[-,G]_n \colon \ho{\mc{M}}^{\op} \to \text{Groups}, \ X \mapsto [X, G]_n.$$ 
A map $i\co A\to B$ in \ho{\mc{M}} is {\it \mc{G}-monic} if
    $$i^*\co[B,G]_n\to [A,G]_n$$ is surjective for each $G\in\mc{G}$ and $n\ge 0$, and an object $Y$ in \ho{\mc{M}} is {\it \mc{G}-injective} if
    $$i^*\co [B,Y]\to [A,Y]$$ 
is surjective for each \mc{G}-monic map $i\co A\to B$ in \ho{\mc{M}}. We denote the class of \mc{G}-injective objects by \mc{G}-inj. 
The homotopy category $\ho{\mc{M}}$ has {\it enough \mc{G}-injectives} when every object is the domain of a \mc{G}-monic map to a \mc{G}-injective target. 

A map in \mc{M} is \mc{G}-monic if its image in $\ho{\mc{M}}$ is $\mc{G}$-monic, and an object in \mc{M} is \mc{G}-injective if its image in $\ho{\mc{M}}$ is \mc{G}-injective. 
A fibration in $\mc{M}$ is {\it \mc{G}-injective} if it has the right lifting property with respect to the \mc{G}-monic cofibrations in \mc{M}. 

\begin{definition}\label{def:injective-models}
  A class \mc{G} of group objects in $\ho{\mc{M}}$ is called a {\it class of injective models} if $\ho{\mc{M}}$ has enough \mc{G}-injectives and for every object in \mc{M}, 
  a \mc{G}-monic morphism into a \mc{G}-injective target can be chosen functorially, see \cite[4.2]{Bou:cos}. 
\end{definition}

Note that $[X^\bullet,G]$ is a simplicial group for any $X^\bu$ in $c \mc{M}$ and $G \in \mc{G}$. A map $X^\bu\to Y^\bu$ in $c\mc{M}$ is called:
\begin{itemize}
   \item[(i)] a {\it \mc{G}-equivalence} if for each $G\in\mc{G}$ and $n \geq 0$, the induced map
   $$ [Y^\bu,G]_n \to[X^\bu,G]_n $$
is a weak equivalence of simplicial groups. 
   \item[(ii)] a {\it \mc{G}-cofibration} if it is a Reedy cofibration and for each $G\in\mc{G}$ and $n \geq 0$, the induced map
   $$ [Y^\bu,G]_n \to [X^\bu,G]_n $$
is a fibration of simplicial groups.
   \item[(iii)] a {\it \mc{G}-fibration} if for each $s\ge 0$, the induced map
   $$ X^s\to Y^s\times_{M^sY^\bu}M^sX^\bu $$
is a \mc{G}-injective fibration.
\end{itemize}
A useful observation is that an object is \mc{G}-cofibrant if and only if it is Reedy cofibrant. In fact, a Reedy cofibration that is degreewise \mc{G}-monic is a \mc{G}-cofibration.

\begin{theorem}[Dwyer-Kan-Stover \cite{DKSt:E2}, Bousfield \cite{Bou:cos}] \label{bousfield}
Let \mc{M} be a left proper model category.
If \mc{G} is a class of injective models for \ho{\mc{M}}, then the category $c\mc{M}$ admits a left proper simplicial model structure given by 
the \mc{G}-equivalences, the \mc{G}-fibrations, and the \mc{G}-cofibrations. The simplicial structure is the external one.
\end{theorem}

This is called the {\it \mc{G}-resolution model category} and will be denoted by $c \mc{M}^{\mc{G}}$.

If \mc{G} is a set, then by~\cite[4.5]{Bou:cos}, there is a functorial \mc{G}-monic map into a \mc{G}-injective target for each object in $\mc{M}$, 
i.e., \mc{G} is a class of injective models and the resolution model category exists. 

Theorem \ref{bousfield} may be regarded as a vast generalization of a theorem of Quillen \cite[II.4]{Quillen:HA} which treats the dual statement in the case of a discrete model category $\mc{M}$ with a set $\mc{G}$ of small projective generators. 

\medskip

Given $X^\bu \in c \mc{M}$ and $G \in \mc{G}$, we call the homotopy groups 
$$\pi_{\ast}[X^\bu, G]$$
the $E_2$\emph{-homotopy groups} of $X^\bu$ with respect to $G \in \mc{G}$. There are long exact sequences of  $E_2$-homotopy groups $\pi_{*}[ -, G]$ associated with a $\mc{G}$-cofibration $X^\bu \to Y^\bu$. The 
relative groups of $(Y^\bu, X^\bu)$ are defined to be the homotopy groups of the simplicial group $[(Y^\bu, X^\bu), (G, \ast)]$, whose $n$-simplices 
are the homotopy classes of morphisms of pairs $(Y^n, X^n) \to (G, \ast)$ in the model category of morphisms of $\mc{M}$. 
Let $C^\bu$ be the pointed cofiber of the \mc{G}-cofibration $X^\bu \hookrightarrow Y^\bu$. Since $\mc{M}$ is left proper by assumption 
(otherwise, simply assume that $X^\bu$ is degreewise cofibrant), we have an isomorphism 
$$[(Y^\bu, X^\bu), (G, \ast)] \cong [(C^\bu, \ast), (G, \ast)] \cong [C^\bu, G]_{\mc{M}_*} .$$

\begin{proposition} \label{G-cofiber-to-LES-2} 
Let $X^\bu \hookrightarrow Y^\bu$ be a $\mc{G}$-cofibration. Suppose that 
  $$[Y^\bu, G]_n \to [X^\bu, G]_n$$ 
is an epimorphism for every $G \in \mc{G}$ and $n \geq 0$. Then for every $G \in \mc{G}$, there is a long exact sequence
$$
\cdots \to \pi_s[(Y^\bu, X^\bu), (G, \ast)] \to \pi_s[Y^\bu, G] \to \pi_s[X^\bu, G] \to \pi_{s-1}[(Y^\bu, X^\bu), (G, \ast)] 
\to \cdots $$
$$\cdots \to \pi_0[(Y^\bu, X^\bu), (G, \ast)] \to \pi_0[Y^\bu, G] \to \pi_0[X^\bu, G] \to 0. $$
\end{proposition}
\begin{proof}
Since $X^\bu \to Y^\bu$ is a $\mc{G}$-cofibration, the induced 
map of simplicial groups
$$[Y^\bu, G] \to [X^\bu, G]$$
is a fibration for every $G \in \mc{G}$. Hence it suffices to identify the fiber of this simplicial map with the simplicial 
group $[(Y^\bu, X^\bu), (G, \ast)]$. There is a long exact sequence of simplicial groups 
$$ \cdots [X^\bu, \Omega G] \to [(Y^\bu, X^\bu), (G, \ast)] \to [Y^\bu, G] \to [X^\bu, G] $$
which, by the assumption, simplifies to the required short exact sequences of simplicial groups 
$$0 \to [(Y^\bu, X^\bu), (G, \ast)] \to [Y^\bu, G] \to [X^\bu, G] \to 0.$$
\end{proof}

The constant functor $c$ obviously maps weak equivalences to 
\mc{G}-equivalences, but $(-)^0\co c\mc{M}^{\mc{G}}\rightleftarrows\mc{M}\!: c$ is {\it not} a Quillen adjunction; 
$c$ does not preserve fibrations.

\begin{lemma}\label{lem:c-preserves-G-inj-fib}
If $X\to Y$ is a \mc{G}-injective fibration in \mc{M}, then $cX\to cY$ is a \mc{G}-fibration. In particular, if $G$ is a fibrant \mc{G}-injective object, then $cG$ is \mc{G}-fibrant.
\end{lemma}

\begin{proposition} \label{Delta-Tot-adjunction}
$\Delta^\bu\times -\co \mc{M}\rightleftarrows c\mc{M}\!:\Tot$ is a Quillen adjunction for both the Reedy and the 
\mc{G}-resolution model structures.
\end{proposition}
\begin{proof}
This is shown in \cite[Proposition 8.1]{Bou:cos}.
\end{proof}

\subsection{\mc{G}-cofree and quasi-\mc{G}-cofree maps}\label{subsec:quasi-G-cofree}

Let $\mc{M}$ be a model category and $\mc{G}$ a class of group objects in $\ho{\mc{M}}$. The $\mc{G}$-cofree maps can be viewed as analogues of relative cell complexes. More precisely,  
the objects of $\mc{G}$ and their external iterated loop objects are used to define co-attachments for the relative coskeletal projections. 

Let $\Delta_{\rm id}\subset\Delta_{\rm surj}\subset\Delta$ be the subcategories of identity and surjective maps respectively. 
The restriction functor 
    $$ \rm{Fun}(\Delta_{\rm surj}, \mc{M}) \to \rm{Fun}(\Delta_{\rm id}, \mc{M})$$
has a right adjoint $R$ which is defined objectwise by $ (R \{X^i\}_{i\ge 0})_j = \prod_{j \twoheadrightarrow i} X^i$ for each sequence $\{X^i\}_{i\ge 0}$ of objects in \mc{M}.

\begin{definition}\label{def:codeg-cofree}
A cosimplicial object $X^\bullet$ in $c\mc{M}$ is called {\it codegeneracy cofree} if its restriction to $\Delta_{\rm surj}$ is of the form $R(\{M^i\}_{i\ge 0})$ for some $\{M^i\}_{i\ge 0} \co \Delta_{\rm id} \to \mc{M}$. 
A map $f \co X^\bullet \to Y^\bullet$ in $c \mc{M}$ is called {\it codegeneracy cofree} if there is a codegeneracy cofree object $F^\bu$ such that when we regard $f$ as a map in $\rm{Fun}(\Delta_{\rm surj}, \mc{M})$,  it can be factored into an isomorphism $X^\bu \cong Y^\bu \times F^\bu$ followed by the projection $Y^\bu \times F^\bu \to Y^\bu$. 
\end{definition}

\begin{definition} \label{def:G-cofree}
An object in $c\mc{M}$ is called {\it \mc{G}-cofree} if it is codegeneracy cofree on a sequence 
$(G_i)_{i\ge 0}$ of fibrant \mc{G}-injective objects.  A map $f: X^\bu \to Y^\bu$ is called {\it\mc{G}-cofree} 
if it is a codegeneracy cofree map, as defined in Definition \ref{def:codeg-cofree}, where $F^\bu$ is a \mc{G}-cofree object.
\end{definition}

The class of \mc{G}-cofree maps is stable under pullbacks and \mc{G}-cofree objects are stable under products. Note that an object $X^\bu$ is \mc{G}-cofree 
if and only if the map $X^\bu\to\ast$ is \mc{G}-cofree. The following proposition explains the analogy to relative cell complexes.

\begin{proposition}\label{co-cell decomposition of cofree maps}
A map $f\co X^\bullet\to Y^\bullet$ in $c\mc{M}$ is \mc{G}-cofree on $(G_s)_{s\ge 0}$ if and only if, for all $s\ge 0$, there are pullback diagrams in $c\mc{M}$
\diagr{ \cosk_s(f) \ar[r]\ar[d]_{\gamma_{s-1}(f)} & \hom(\Delta^s,cG_s) \ar[d]^{i_s^*} \\
        \cosk_{s-1}(f) \ar[r]   & \hom(\partial\Delta^s,cG_s),}
where $i_s\co\partial\Delta^s\to\Delta^s$ is the canonical inclusion.
\end{proposition}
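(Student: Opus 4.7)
The plan is to verify the asserted pullback square levelwise. First, for any simplicial set $K$ and $G\in\mc{M}$, the external cotensor satisfies $\hom(K,cG)^j\cong \prod_{K_j}G$, with cosimplicial structure dual to the simplicial structure of $K$. Applied to $K=\Delta^s$ and $K=\partial\Delta^s$, and using that $(\Delta^s)_j\setminus(\partial\Delta^s)_j$ is precisely the set $\Delta_{\rm surj}([j],[s])$ of surjections $[j]\twoheadrightarrow[s]$, the map $i_s^*$ at cosimplicial level $j$ is the projection whose fiber is $\prod_{[j]\twoheadrightarrow [s]}G_s$.

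Next, I interpret $\cosk_s(f)$ as the relative coskeleton $Y^\bu\times_{\cosk_sY^\bu}\cosk_sX^\bu$, giving a factorization $X^\bu\to\cosk_s(f)\to Y^\bu$ with $\cosk_{-1}(f)=Y^\bu$ and a natural tower of maps $\gamma_{s-1}(f)$ satisfying $X^\bu=\lim_s\cosk_s(f)$. Assuming $f$ is $\mc{G}$-cofree on $(G_t)_{t\ge 0}$ with codegeneracy-cofree decomposition $X^j\cong Y^j\times\prod_{t\ge 0,\, j\twoheadrightarrow t}G_t$, a direct inspection of the limit defining $(\cosk_sX^\bu)^j$ shows that only the factors $G_t$ with $t\le s$ appear, yielding $\cosk_s(f)^j\cong Y^j\times\prod_{t\le s,\,j\twoheadrightarrow t}G_t$. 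Consequently the fiber of $\gamma_{s-1}(f)$ at level $j$ is $\prod_{[j]\twoheadrightarrow [s]}G_s$, matching the fiber of $i_s^*$.

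The horizontal map $\cosk_s(f)\to\hom(\Delta^s,cG_s)$ is built via the limit structure of $\cosk_s X^\bu$: for each $\alpha\in(\Delta^s)_j=\Delta([j],[s])$, its $\alpha$-coordinate at level $j$ is the composite $\cosk_s(f)^j\to X^s\to G_s$, where the first map is the projection from the limit indexed by $\alpha:[j]\to[s]$ and the second is the projection onto the $G_s$-factor of $X^s$ indexed by the identity $[s]\twoheadrightarrow[s]$. For non-surjective $\alpha$, the Eilenberg-Zilber factorization $\alpha=\iota\pi$ with $\iota:[i]\hookrightarrow[s]$ and $i<s$ exhibits this projection through $\cosk_{s-1}(f)^j\to X^i\to X^s$, so these coordinates assemble into the bottom horizontal map $\cosk_{s-1}(f)\to\hom(\partial\Delta^s,cG_s)$. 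On fibers, for surjective $\alpha:[j]\twoheadrightarrow[s]$, the codegeneracy action of $\alpha$ on the cofree structure identifies the target $G_s$-factor with the $G_s$-factor of $X^j$ indexed by $\alpha$, so the horizontal map restricts to the identity on fibers and the square is a pullback levelwise. The converse is immediate: iterating the pullbacks inductively yields $\cosk_s(f)^j\cong Y^j\times\prod_{t\le s,\,j\twoheadrightarrow t}G_t$, and passing to the limit via $X^\bu=\lim_s\cosk_s(f)$ recovers the codegeneracy-cofree decomposition. The main obstacle is the bookkeeping around the cofaces of $X^\bu$: these are not tautologically fixed by the cofree codegeneracy data but are constrained by the cosimplicial identities to interact with the $G_t$-factors in exactly the way needed for the horizontal maps to assemble coherently.
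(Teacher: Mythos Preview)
Your argument is essentially correct and is a direct levelwise verification, whereas the paper's proof consists of a single sentence citing the dual of \cite[VII.1.14]{GoJar:simp}. So you are supplying the details the paper omits. One simplification worth noting: the construction of the horizontal maps can be handled more cleanly by adjunction. A cosimplicial map $Z^\bu\to\hom(\Delta^s,cG_s)$ corresponds, via the external tensor--cotensor adjunction and the adjunction $(\,(-)^0,\,c\,)$, to a map $T_{\Delta^s}Z^\bu\cong Z^s\to G_s$ in $\mc{M}$ (see Example~\ref{T-Delta-n}); applied to $Z^\bu=\cosk_s(f)$, for which $\cosk_s(f)^s=X^s$, the top horizontal map is then simply the adjoint of the projection $X^s\to G_s$ onto the identity factor, and cosimplicial naturality is automatic. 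This dissolves the bookkeeping concern you flag about cofaces: once the map is specified by adjunction, compatibility with all structure maps is built in. Your final remark about cofaces being ``constrained by the cosimplicial identities'' is thus true but unnecessary to verify by hand. For the converse, your inductive reading of the pullback tower is the right idea; note that the codegeneracy-cofree condition in Definition~\ref{def:G-cofree} only concerns the restriction to $\Delta_{\rm surj}$, and this is exactly what your levelwise decomposition $\cosk_s(f)^j\cong Y^j\times\prod_{t\le s,\,j\twoheadrightarrow t}G_t$ recovers in the limit.
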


\begin{proof}
This is the dual of \cite[VII.1.14]{GoJar:simp2}.
\end{proof}

\begin{proposition}
Every \mc{G}-cofree map is a \mc{G}-fibration. 
\end{proposition}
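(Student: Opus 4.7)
The plan is to verify the defining condition of a $\mc{G}$-fibration directly: for every $s \geq 0$, the matching map
\[
\phi_s \co X^s \to Y^s \times_{M^s Y^\bu} M^s X^\bu
\]
must be a $\mc{G}$-injective fibration.

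First I invoke the defining isomorphism $X^\bu \cong Y^\bu \times F^\bu$ in $\mathrm{Fun}(\Delta_{\mathrm{surj}}, \mc{M})$, with $F^\bu$ codegeneracy cofree on a sequence $(G_s)_{s \geq 0}$ of fibrant $\mc{G}$-injective objects. Because the cosimplicial matching object $M^s(-)$ is defined as a limit over proper codegeneracies, it is determined by the restriction to $\Delta_{\mathrm{surj}}$ and converts this product into a product: $M^s X^\bu \cong M^s Y^\bu \times M^s F^\bu$. Substituting this into the pullback target yields $Y^s \times_{M^s Y^\bu} M^s X^\bu \cong Y^s \times M^s F^\bu$, under which $\phi_s$ becomes $\id_{Y^s} \times (F^s \to M^s F^\bu)$.

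Next I compute the matching map of $F^\bu$ itself. Writing $F^n = \prod_{[n] \twoheadrightarrow [i]} G_i$, an inspection of the limit yields $M^s F^\bu \cong \prod_{[s] \twoheadrightarrow [i],\, i < s} G_i$, and $F^s \to M^s F^\bu$ is the projection onto all factors other than the one indexed by $\id_{[s]}$, namely $G_s$. Hence this matching map is isomorphic, over $M^s F^\bu$, to the product projection $G_s \times M^s F^\bu \to M^s F^\bu$, which is the pullback of $G_s \to \ast$ along $M^s F^\bu \to \ast$. Consequently $\phi_s$ is itself a pullback of $G_s \to \ast$.

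Finally, by Assumption~\ref{strict-unit} each $G_s$ is fibrant, and being $\mc{G}$-injective it has the right lifting property against every $\mc{G}$-monic cofibration in $\mc{M}$; thus $G_s \to \ast$ is a $\mc{G}$-injective fibration. Since both fibrations and right lifting properties are stable under pullback, $\phi_s$ inherits $\mc{G}$-injectivity, which is the required property. The main obstacle I anticipate is the bookkeeping identification of $M^s F^\bu$ as the partial product over codegeneracies with $i < s$ and the corresponding verification that the matching map is a bona fide projection; but this is essentially combinatorial, and is exactly what the codegeneracy-cofree structure is engineered to make tautological.
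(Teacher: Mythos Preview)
Your proof is correct and follows essentially the same approach as the paper. The paper's own argument is very terse---it observes that each $X^n \to Y^n$ is a projection off a fibrant $\mc{G}$-injective factor and then simply asserts ``it follows easily'' that the matching maps $X^s \to Y^s \times_{M^s Y^\bu} M^s X^\bu$ are $\mc{G}$-injective fibrations; you have carefully unpacked this step by computing $M^s F^\bu$ explicitly and exhibiting $\phi_s$ as a pullback of $G_s \to \ast$.
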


\begin{proof}
For a \mc{G}-cofree map $X^\bu\to Y^\bu$ and $s \ge 0$, using the pullback diagram of Proposition \ref{co-cell decomposition of cofree maps} in degree $s$, we see that there exists 
a fibrant \mc{G}-injective object $G_s$ such that the map $X^s \to Y^s \times_{M^s Y^\bu} M^s X^\bu$ is 
isomorphic to the projection $$(Y^s \times_{M^s Y^\bu} M^s X^\bu) \times G_s \to Y^s \times_{M^s Y^\bu} M^s X^\bu.$$ 
These are obviously \mc{G}-injective fibrations. 
\end{proof}

\begin{remark} \label{G-cofree-fact}
In certain cases of $\mc{M}$ and $\mc{G}$, there is a monad $\Gamma\co\mc{M}\to\mc{M}$ with natural transformations $e\co {\rm Id}\to\Gamma$ and $\mu\co\Gamma^2\to\Gamma$ such that (cf. 
\cite[Section 7]{Bou:cos}): 
\begin{enumerate}
\item  $\Gamma$ preserves weak equivalences,
\item  $\Gamma$ takes values in fibrant \mc{G}-injective objects,
\item  the map $X \xrightarrow{e} \Gamma(X)$ is \mc{G}-monic for each $X \in \mc{M}$. 
\end{enumerate} 
Using this monad and the associated cosimplicial resolutions, it is sometimes possible to produce functorial factorizations of maps in $c \mc{M}$ into a \mc{G}-equivalence 
(or trivial \mc{G}-cofibration) and a \mc{G}-cofree map, using a construction similar to \cite[p. 57]{Miller:sullivan} or the methods of \cite[Section 7]{Bou:cos}. 
\end{remark}

The notion of a \mc{G}-cofree map is unnecessarily strict for certain purposes when the model category \mc{M} is not discrete (i.e., when the weak equivalences are not just 
the isomorphisms). This leads us to consider the weaker notion of quasi-\mc{G}-cofree maps which are \mc{G}-cofree only up to Reedy equivalence. This notion also provides a useful description 
of $\mc{G}$-fibrations as retracts of quasi-$\mc{G}$-cofree maps. We also prove a factorization statement (Proposition~\ref{prop:quasi-cofree-replacements}) assuming right 
properness of the underlying model category $\mc{M}$. Another advantage of working with quasi-\mc{G}-cofree maps is that they are better suited for the construction of minimal replacements 
(see Proposition~\ref{n-conn-fact}).

\begin{definition}\label{def:quasi-G-cofree}
A map $f\co X^\bullet\to Y^\bullet$ in $c\mc{M}$ is called {\it quasi-\mc{G}-cofree} if there is a sequence of fibrant \mc{G}-injective objects $(G_s)_{s\ge 0}$ such that for all $s\ge 0$, there are homotopy pullback diagrams 
\diagr{ \cosk_s(f) \ar[r]\ar[d]_{\gamma_{s-1}(f)} & \hom(\Delta^s,cG_s) \ar[d]^{i_s^*} \\
        \cosk_{s-1}(f) \ar[r]   & \hom(\partial\Delta^s,cG_s)}
in $c\mc{M}$ equipped with the Reedy model structure. 
\end{definition}

Quasi-\mc{G}-cofree maps are useful because one usually only needs the fact that these squares are homotopy pullbacks rather 
than pullbacks. The notion seems more appropriate than the notion of a \mc{G}-cofree map when \mc{M} has a more complicated 
model structure than the discrete one. If the model structure is discrete, then both notions coincide. 

\begin{lemma}\label{prop:recognize-quasi-G-cofree}
Let $\mc{M}$ be a right proper model category and $f: X^\bu \to Y^\bu$ a Reedy fibration in $c \mc{M}$. Suppose that for each $s\ge 0$, there is a fibrant $\mc{G}$-injective object $G_s$ together with a factorization 
\[ 
\xymatrix{ && (Y^s \times_{M^sY^\bu} M^sX^\bu) \times G_s \ar[d]^{\rm projection} \\
  X^s \ar@{->>}[urr]^-{\sim} \ar@{->>}[rr] && Y^s \times_{M^sY^\bu} M^sX^\bu.
}
\]
where the first map is a trivial fibration. Then $f$ is a quasi-\mc{G}-cofree map.
\end{lemma}
\begin{proof}
For a map $f: X^\bu \to Y^\bu$,  we denote
  $$ M^sf= Y^s \times_{M^sY^\bu} M^sX^\bu. $$
We also abbreviate $\hom^{\rm ext}(K,-)$ to $(-)^K$. By the definition of the coskeletal tower, there are pullback 
squares as follows 
\diagram{ \cosk_s(f) \ar[r]\ar[d]_{\gamma_{s-1}(f)} &  (c X^s)^{\Delta^s} \ar[d] \\
        \cosk_{s-1}(f) \ar[r]   & (cM^sf)^{\Delta^s} \times_{(cM^sf)^{\partial\Delta^s}} (cX^s)^{\partial \Delta^s}}{diagr:cosimpl-pullback}
If $f$ is a Reedy fibration, then so is the map $c X^s \to c M^s(f)$, by Lemma~\ref{lem:c-Reedy-right-Quillen}. By the partial compatibility of the Reedy model structure with the 
external simplicial enrichment in Proposition~\ref{Reedy-ext-comp}, it follows that the vertical map on the right is a Reedy fibration. 
Since \mc{M} is right proper, this pullback square is a homotopy pullback for the Reedy model structure. Thus, after replacing $(cX^s)^{\Delta^s}$ with the Reedy equivalent 
$\bigl(c (M^s f \times G_s) \bigr)^{\Delta^s}$, we retain a 
homotopy pullback square. Using the assumptions, we achieve the following simplifications on the 
right hand side of Diagram~(\ref{diagr:cosimpl-pullback}): 
\begin{align*} 
   (cM^sf)^{\Delta^s} \times_{(cM^sf)^{\partial\Delta^s}} (cX^s)^{\partial\Delta^s} &\simeq (cM^sf)^{\Delta^s} \times_{(cM^sf)^{\partial\Delta^s}} \bigr(c(M^sf\times G_s)\bigr)^{\partial\Delta^s} \\
   &\cong (cM^sf)^{\Delta^s} \times (cG_s)^{\partial\Delta^s}
\end{align*} 
Projecting away from the term $(cM^sf)^{\Delta^s}$, one obtains a homotopy pullback square:
\diagr{ (c X^s)^{\Delta^s} \ar[d]\ar[r] & (cG_s)^{\Delta^s} \ar[d] \\
        (cM^sf)^{\Delta^s} \times_{(cM^sf)^{\partial\Delta^s}} (cX^s)^{\partial\Delta^s} \ar[r] & (cG_s)^{\partial\Delta^s} 
}
Concatenation with Diagram~(\ref{diagr:cosimpl-pullback}) yields the desired homotopy pullback.
\end{proof}

\begin{remark}
Conversely, if $f: X^\bu \to Y^\bu$ is quasi-\mc{G}-cofree, then there is a weak equivalence $X^s \simeq  M^s(f) \times G_s$ for $s \geq 0$. This can be seen by restricting the homotopy pullback square of Definition 
\ref{def:quasi-G-cofree} to  cosimplicial degree $s$.
\end{remark}

\begin{corollary}
Let \mc{M} be a proper model category and $\mc{G}$ a class of injective models. Then:
\begin{enumerate}
   \item
A composition of a trivial Reedy fibration with a $\mc{G}$-cofree map is quasi-\mc{G}-cofree.
   \item
Every \mc{G}-cofree map is quasi-\mc{G}-cofree.
   \item
Every quasi-\mc{G}-cofree Reedy fibration is a \mc{G}-fibration.
\end{enumerate}
\end{corollary}

\begin{proof}
Assertion (1) follows easily from Lemma~\ref{prop:recognize-quasi-G-cofree}. Assertion (2) is a special case of (1). For assertion (3), note that the map 
  $$ (cG_s)^{\Delta^s}\to (cG_s)^{\partial\Delta^s}  $$
is a \mc{G}-fibration by Lemma~\ref{lem:c-preserves-G-inj-fib} and the compatibility of the external simplicial structure with the \mc{G}-resolution model structure. 
Let $f$ be a quasi-\mc{G}-cofree Reedy fibration. This implies that in the diagram of Definition~\ref{def:quasi-G-cofree}, the map
  $$ \gamma_{s-1}(f)\co\cosk_{s}(f)\to\cosk_{s-1}(f) $$
factors into a Reedy equivalence $c \co\cosk_s(f)\to P^\bu$ followed by a \mc{G}-fibration $q \co P^\bu\to\cosk_{s-1}(f)$ where $P^\bu$ denotes the pullback. 
We factor $c$ further into a trivial Reedy cofibration $c_1$ followed by a trivial Reedy fibration $c_2$. Since $f$ is assumed to be a Reedy fibration, one easily 
finds that the Reedy fibration $\gamma_{s-1}(f)$ is a retract of $q \circ c_2$ using the dotted lift in the following diagram:
\diagr{ \cosk_s(f) \ar@{=}[r]\ar[d]_{c_1} & \cosk_s(f) \ar[d]^{\gamma_{s-1}(f)} \\
        Z^\bu \ar[r]^-{q \circ c_2}\ar@{.>}[ur] & \cosk_{s-1}(f) }
Since $q \circ c_2$ is a \mc{G}-fibration, so is $\gamma_{s-1}(f)$ and then also $f$, too, by induction.
\end{proof}

We can now give the second construction for the factorization of maps. 

\begin{proposition}\label{prop:quasi-cofree-replacements}             
Let \mc{M} be a proper model category and \mc{G} a class of injective models.
Every map in $c\mc{M}$ can be factored functorially into a trivial \mc{G}-cofibration followed by a quasi-\mc{G}-cofree Reedy fibration.
\end{proposition}
\begin{proof}
Let $f: X^\bu \to Y^\bu$ be a map. We construct a cosimplicial object $Z^\bu$ and a factorization 
  $$X^\bu \xrightarrow{j} Z^\bu \xrightarrow{q} Y^\bu$$
by induction on the cosimplicial degree. Let $\gamma\co X^0 \to G_0$ be a functorial $\mc{G}$-monic map into a fibrant $\mc{G}$-injective object. Factorize the map $(f^0,\gamma)$ in $\mc{M}$
  $$X^0 \rightarrowtail Z^0 \stackrel{\sim}{\twoheadrightarrow} Y^0 \times G_0$$
into a cofibration followed by a trivial fibration. Since $X^0 \to G_0$ is \mc{G}-monic, it follows that $X^0\to Y^0\times G_0$ and, as a consequence, $X^0\to Z^0$ are also 
\mc{G}-monic. 

Suppose we have constructed $Z^\bu$ up to cosimplicial degree $< s$. For the inductive step, we need to find a suitable factorization 
  $$L^s Z^\bu \cup_{L^s X^\bu} X^s \to Z^s \to Y^s \times_{M^s Y^\bu} M^s Z^\bu.$$
Let $L^s Z^\bu \cup_{L^s X^\bu} X^s \to G_s$ be a functorial $\mc{G}$-monic map into a fibrant $\mc{G}$-injective object.
Choose a factorization in $\mc{M}$ as before,
  $$L^s Z^\bu \cup_{L^s X^\bu} X^s \rightarrowtail Z^s \stackrel{\sim}{\twoheadrightarrow} (Y^s \times_{M^s Y^\bu} M^s Z^\bu) \times G_s$$
into a $\mc{G}$-monic cofibration followed by a trivial fibration. This completes the inductive definition of $Z^\bu$ and the construction of the factorization. By 
construction, the resulting map $j: X^\bu \to Z^\bu$ has the property 
that for all $s\ge 0$, the map 
  $$ L^s Z^\bu \cup_{L^s X^\bu} X^s \to Z^s$$
is a $\mc{G}$-monic cofibration in $\mc{M}$. This implies that $j$ is a trivial $\mc{G}$-cofibration \cite[Proposition 3.13]{Bou:cos}. 
The map $q: Z^\bu \to Y^\bu$ is a Reedy fibration and satisfies the assumptions of Lemma~\ref{prop:recognize-quasi-G-cofree}. So it is also quasi-\mc{G}-cofree, as required. 
\end{proof}

\begin{corollary} \label{all-about-cofree-maps2}
Let $\mc{M}$ and $\mc{G}$ be as in \emph{Proposition~\ref{prop:quasi-cofree-replacements}}. Then every \mc{G}-fibration is a retract of 
a quasi-\mc{G}-cofree Reedy fibration. If the model structure on \mc{M} is discrete then every \mc{G}-fibration is a retract of a \mc{G}-cofree map. 
\end{corollary}
\begin{proof}
This follows easily from Proposition \ref{prop:quasi-cofree-replacements}.
\end{proof}

\section{Natural homotopy groups}\label{sec:natural-homotopy-groups}

In the theory of resolution model structures, there are two notions of homotopy groups: the $E_2$-homotopy groups, as already defined, and the natural homotopy 
groups. A central tool for relating these two types of homotopy groups is the spiral exact sequence. These homotopy groups and the spiral exact sequence carry 
further algebraic structure that we formalize in the notions of \mc{H}-algebra and $\pi_0$-module. These ideas were introduced in \cite{DKSt:E2, DKSt:bigraded} 
in the ``dual'' setting of simplicial spaces. 

In Subsection~\ref{subsec:natur+spirale}, we define the natural homotopy groups and discuss some of their basic properties. The \mc{H}-algebra structure 
and $\pi_0$-module structure on the $E_2$- and the natural homotopy groups are described in Subsection~\ref{subsec:algebraic-structure-on-G-homotopy-groups}. We state the theorem about the spiral exact 
sequence in Subsection~\ref{subsec:spiral}. Its construction and the proof of the $\pi_0$-module structure are deferred to Appendix~\ref{appsec:spiral}. 

The last Subsection~\ref{subsec:cosimp-conn} introduces cosimplicial connectivity associated to \mc{G} and some useful properties of cosimplicially $n$-connected maps.

\subsection{Basic properties of the natural homotopy groups}\label{subsec:natur+spirale}
Here we will define the natural homotopy groups and the $E_2$-homotopy groups associated with a class of injective models \mc{G}. The comparison between these types of homotopy groups
is given by the spiral exact sequence in Theorem~\ref{ses-statement}. To obtain all the necessary properties we introduce the following conditions.

\begin{assumption}\label{strict-unit}
Let \mc{M} be a left proper model category (with functorial factorizations) and $\mc{G}$ a class of injective models in the sense of Definition~\ref{def:injective-models}. Furthermore,
we assume:
\begin{enumerate}
   \item
Each object $G \in \mc{G}$ is an abelian group object in \ho{\mc{M}}. 
   \item
The unit map of this group structure in \ho{\mc{M}} is given by a map $\ast\to G$ in $\mc{M}$. We regard the objects in $\mc{G}$ as objects of $\mc{M}_*$. 
   \item
Each object $G \in \mc{G}$ is fibrant in $\mc{M}$ (equivalently, in $\mc{M}_*$).
   \item
The class \mc{G} is closed under the loop functor $\Omega$ defined on $\mc{M}_*$.
\end{enumerate}
\end{assumption}

\begin{remark}
Some remarks on these assumptions are in order. 
\begin{enumerate}
   \item
The assumption that all $G\in\mc{G}$ are fibrant simplifies the notation. We do not want to invoke derived mapping spaces. 
   \item
The \mc{G}-resolution model structure is determined by the \mc{G}-injective objects (see \cite[4.1]{Bou:cos}). If $G$ is in \mc{G}, then $\Omega G$ is \mc{G}-injective. 
Hence, the assumption that \mc{G} is closed under internal loops is not necessary. It is convenient though because this way we avoid passing to \mc{G}-injective 
objects.
   \item
The assumption that each homotopy group object $G\in\mc{G}$ is strictly pointed and homotopy abelian is technically convenient in the construction of the spiral exact sequence 
and its $\pi_0$-module structure in Appendix~\ref{appsec:spiral}. It is likely that this assumption can be relaxed.
\end{enumerate}
\end{remark}

Under our assumptions, for any cosimplicial object $X^\bu$ in $c \mc{M}$ and $G$ in $\mc{G}$, the simplicial group $[X^{\bu}, G]$ is abelian. For every $s\ge 0$, we have the functor of $E_2$\emph{-homotopy groups}
\begin{equation}
(X^\bu,G)  \mapsto \pi_s[X^\bullet,G] .
\end{equation}
These groups detect \mc{G}-equivalences by definition. The name is justified by the fact that they define the $E_2$-term of the spiral spectral sequence in~\ref{subsec:sss}.

\medskip 

Following \cite{DKSt:E2},  we introduce another set of invariants.
For every $s\ge 0$, we have a functor
\begin{equation}\label{natural homotopy groups}
       (X^\bu, G)  \mapsto [X^\bullet,\Omega^s_{\rm ext}cG]_{c\mc{M}^{\mc{G}}}=:\naturalpi{s}{X^\bullet}{G} 
\end{equation}
which we call the {\it s-th natural homotopy group} of $X^\bullet$. The 
functoriality properties in $G$ of these two types of homotopy groups will be studied in detail in the following Subsection \ref{subsec:algebraic-structure-on-G-homotopy-groups}.

Thanks to Assumption~\ref{strict-unit}, we can supply the simplicial set $\Hom_{\mc{M}}(X^\bullet,G)$ with a canonical basepoint $X^0\to\ast\to G$. With the notation defined in 
\ref{def:olmap}, there is a natural isomorphism of pointed simplicial sets
\begin{equation} \label{Hom=olmap}
    \Hom_{\mc{M}}(X^\bullet,G)\cong \olmap^{\rm ext}(X^\bullet, cG),
\end{equation}
as explained in Example~\ref{exam:ext-mapping-space-constant-target}.
If $X^\bullet$ is Reedy cofibrant, and since $G$ is fibrant, this simplicial set is fibrant and there is another natural isomorphism using 
Proposition~\ref{prop:external-adjunctions-(un)pointed}:
\begin{equation*}
    \naturalpi{s}{X^\bullet}{G}=[X^\bullet,\Omega^s_{\rm ext}cG]_{c\mc{M}^{\mc{G}}}\cong \pi_s\olmap^{\rm ext}(X^\bullet, cG) 
\end{equation*}

One reason for introducing the natural homotopy groups is the fact that cosimplicial
objects have natural Postnikov decompositions with respect to them. This decomposition is central 
to the approach we take and will be developed in later sections.
Given a Reedy cofibrant object $X^\bu \in c \mc{M}$ and $G \in \mc{G}$, there is a natural isomorphism 
  $$\map(\sk_n X^\bu, cG) \cong \cosk_n \map(X^\bu, cG),$$ 
which is obtained using the isomorphism in (\ref{Hom=olmap}). This shows that 
$$ \emph{\naturalpi{s}{\sk_nX^\bullet}{G}} \cong \left\{
                   \begin{array}{cl}
                     \emph{\naturalpi{s}{X^\bu}{G}} & s < n \\
                             0                      & \hbox{otherwise. }
                   \end{array} 
                                        \right. $$ 
Here, we use that for a Kan complex $K$, $\cosk_{n+1}K$ is a model for the $n$-th Postnikov section. Thus, the skeletal filtration
  $$ \sk_1 X^\bu \to \cdots \to \sk_n X^\bu \to \cdots \to X^\bu$$
defines a Postnikov decomposition of $X^\bu$ with respect to its natural homotopy groups. 

\begin{lemma}
The functor $\sk_n$ preserves trivial \mc{G}-cofibrations and dually, $\cosk_n$ preserves \mc{G}-fibrations.
\end{lemma}

\begin{proof}
This proof is based on the characterization of (trivial) \mc{G}-cofibrations in \cite[Proposition 3.13]{Bou:cos}. 
Let $f: X^\bu \to Y^\bu$ be a trivial $\mc{G}$-cofibration. Then the cofibration 
$$\sk_n(X^\bu)^m \cup_{L^m(\sk_n(X^\bu))} L^m(\sk_n(Y^\bu)) \longrightarrow \sk_n(Y^\bu)^m$$
is $\mc{G}$-monic if $m \leq n$ - since $f$ is a trivial \mc{G}-cofibration - and if $m > n$, it is an isomorphism because 
$$L^m(\sk_n(X^\bu)) \xrightarrow{\cong} \sk_n(X^\bu)^m$$
$$L^m(\sk_n(Y^\bu)) \xrightarrow{\cong} \sk_n(Y^\bu)^m$$
(see \cite[Lemmas 6.2 and 6.3]{Berger-Moerdijk:extension-Reedy}). It follows that $\sk_n(f)$ is a trivial \mc{G}-cofibration. 
\end{proof}

\begin{remark}\label{rem:natural-Postnikov}
We note that that  $\sk_nX^\bu\to X^\bu$ is usually {\it not} a \mc{G}-cofibration. 
This can be seen by comparing the cofiber with the \mc{G}-homotopy cofiber of the map $i_n\co\sk_nX^\bu\to X^\bu$. Since, for Reedy cofibrant $X^\bu$, $\sk_nX^\bu$ is a model for the cosimplicial $(n-1)$-Postnikov section, the \mc{G}-homotopy cofiber of $i_n$ has trivial natural homotopy groups in degrees $< n$, but not necessarily in degree $n$ itself. However, the cofiber $C^\bu$ of $i_n$ has $C^k=\ast$ for all $0\le k\le n$. 
\end{remark}

There are long exact sequences of natural homotopy groups associated with a $\mc{G}$-cofibration, too. 
Given a $\mc{G}$-cofibration $i : X^\bu \hookrightarrow Y^\bu$, we define relative natural homotopy groups as follows:
\begin{align*}
 \naturalpi{s}{(Y^\bu, X^\bullet)}{G}&= \pi_s \Hom_{\mc{M}}((Y^\bu, X^\bullet),(G, \ast)) \cong \pi_s \Hom_{\mc{M}}((C^\bu, \ast), (G, \ast)) \\
           & \cong\pi_s\map_{\mc{M}_*}(C^\bu, cG)
\end{align*}
where $C^\bu$ denotes the canonically pointed cofiber of $i$. We emphasize here that the $n$-simplices of $\Hom_{\mc{M}}\bigl((Y^\bu, X^\bullet),(G, \ast)\bigr)$ are the morphisms $Y^n \to G$ which send $X^n$ to the basepoint of $G$. Clearly, $\pi_s\map_{\mc{M}_*}(C^\bu, cG)$ is 
a natural homotopy group of $C^\bu$ as a cosimplicial object in the pointed category $c\mc{M}_*$.

\begin{proposition} \label{G-cofiber-to-LES} 
Let $X^\bu \hookrightarrow Y^\bu$ be a $\mc{G}$-cofibration. Then for every $G \in \mc{G}$, there is a long exact sequence of natural homotopy groups
$$
\cdots \to \emph{\naturalpi{s}{(Y^\bu, X^\bu)}{G}} \to \emph{\naturalpi{s}{Y^\bu}{G}} \to \emph{\naturalpi{s}{X^\bu}{G}} \to 
\emph{\naturalpi{s-1}{(Y^\bu, X^\bu)}{G}} \to \cdots $$
$$\cdots \to \emph{\naturalpi{0}{(Y^\bu, X^\bu)}{G}} \to \emph{\naturalpi{0}{Y^\bu}{G}} \to \emph{\naturalpi{0}{X^\bu}{G}}. $$
\end{proposition}

\begin{proof}
The induced map $\olmap(Y^\bu, cG) \to \olmap(X^\bu, cG)$ is a Kan fibration with fiber $\Hom((Y^\bu, X^\bu), (G, \ast))$ 
at the basepoint. So there is a long exact sequence of homotopy groups as required. 
\end{proof}

\subsection{Algebraic structures on homotopy groups}
\label{subsec:algebraic-structure-on-G-homotopy-groups}

The algebraic structure of the spiral exact sequence is formulated using the notions of \Hun-algebra, \mc{H}-algebra, and of $\pi_0$-module.  
In Examples~\ref{exam:action-on-natural} and~\ref{exam:naturalpi-with-Omega} we explain how the natural homotopy groups are equipped with these structures. Example~\ref{exam:action-on-bigraded} exhibits these structures on the $E_2$-homotopy groups.

\begin{definition} \label{def:K-algebra-general-unpointed}
Let $\Hun$ be the full subcategory of $\ho{\mc{M}}$ given by finite products of objects in \mc{G}. An {\it \Hun-algebra} is a product-preserving functor 
from $\Hun$ to the category of sets. The morphisms between \Hun-algebras are given by natural transformations. We denote the category of \Hun-algebras by $\Hun\text{-\Alg}$.
\end{definition}

We will also need to consider the pointed version of the notion of an \Hun-algebra. 

\begin{definition}\label{def:K-algebra-general}
Let \mc{H} be the full subcategory of $\ho{\mc{M}_*}$ given by finite products of objects in \mc{G} using Assumption~\ref{strict-unit}(2). An {\it \mc{H}-algebra} is a product-preserving functor from \mc{H} to the category of sets. The morphisms between \mc{H}-algebras are given by natural transformations. We denote the category of \mc{H}-algebras by $\mc{H}\text{-\Alg}$.
\end{definition}

Note that every \Hun-algebra becomes an \mc{H}-algebra via precomposition with the forgetful functor $\mc{H} \to \Hun$. Given an \Hun-algebra $A \co \Hun \to \set$,
we denote the associated \mc{H}-algebra by $\Xi(A): \mc{H} \to \set$.

\begin{remark}\label{rem:pointed-values} 
The terminal object $\ast$ is in \mc{H}, since it is the empty product, therefore \mc{H} is pointed. Every \mc{H}-algebra preserves the terminal 
object and lifts uniquely to a product-preserving functor with values in pointed sets. Clearly, the forgetful functor from product-preserving 
functors with values in pointed sets to \mc{H}-algebras is an equivalence of categories. Moreover, since each $G \in \mc{G}$ is an abelian group 
object, so are the values of each \mc{H}-algebra as well, and the morphisms of \mc{H}-algebras preserve this structure. 
\end{remark}

\begin{example}\label{example:K-algebras}
Our motivating examples are the following:
\begin{enumerate}
   \item
Every object $X$ of \mc{M} yields an \Hun-algebra 
  $$F_X\co\Hun \to \set\, ,\, F_X(-)=[X,-]_{\mc{M}}.$$
If $\mc{M}=\mc{S}$ is the category of spaces and $\mc{G}=\{K_n=K(\F,n)\,|\,n\ge 0\}$, this turns out to be equivalent 
to $H^*(X)$ as an unstable algebra. This claim is proved in Appendix~\ref{appsec:H-alg}.
   \item
For any $s\ge 0$ and $X^\bu$ in $c\mc{M}$, there are simplicial \mc{H}-algebras
  $$ G\mapsto \Omega^s[X^\bu,G]. $$
Applying $\pi_0$ yields an \mc{H}-algebra structure on the $E_2$-homotopy groups
  $$ \pi_s[X^\bu,G]. $$
Note that this defines an \Hun-algebra for $s = 0$. 
  \item
For any $s\ge 0$, the natural homotopy groups
  $$ G\mapsto \naturalpi{s}{X^\bu}{G}=[X^\bu,\Omega^scG]_{c\mc{M}^{\mc{G}}} $$
define \mc{H}-algebras. Moreover, this defines an \Hun-algebra for $s=0$.
\end{enumerate}
\end{example}

The notion of a module over an \Hun-algebra can be defined using the general abstract notion of a Beck module. We first 
recall the abstract definition and then discuss its specialization to \Hun-\Alg.

\begin{definition}\label{def:pi-0-modules}
Let \mc{C} be a category with pullbacks and let $A \in \mc{C}$ be an object. An abelian object in the slice category $\mc{C}/A$ is called an {\it $A$-module}. More explicitly, an $A$-module is 
an object in the slice category 
 $$p\co E\stackrel{\curvearrowleft}{\longrightarrow} A,$$ 
which is equipped with a section $s\co A\to E$ and structure maps in $\mc{C}/A$
$$m \co E \times_A E \to E, \ i \co E \to E$$
that make $p$ into an abelian group object in $\mc{C}/A$. The term {\it Beck module} is also used in the literature.  A {\it morphism of $A$-modules} is a morphism of abelian objects in $\mc{C}/A$.
\end{definition}

\begin{example-special} \label{Hun-algebra-mod}
Let $A$ be an \Hun-algebra. Suppose that $p \co E \to A$ corresponds to an abelian group object in $(\Hun\text{-\Alg})/A$ (or $A$-module). If we restrict to the corresponding \mc{H}-algebras, 
then $\Xi(A)$ is pointed and we have split short exact sequences of abelian groups
$$0 \to K(G) \to \Xi(E)(G) \stackrel{\curvearrowleft}{\rightarrow} \Xi(A)(G) \to 0$$
where the \mc{H}-algebra $K$ is the kernel of $\Xi(p)$ at the basepoint of $\Xi(A)$. Thus, we obtain for each $G \in \mc{G}$ an isomorphism as follows
$$E(G) \cong A(G) \times K(G).$$
These isomorphisms together with the \Hun-algebra structure of $E$ prescribe an action of $A$ on $K$, that is, for each $g \co G_1 \to G_2$ in \Hun, we obtain an action map
$$A(G_1) \times K(G_1) \cong E(G_1) \xrightarrow{E(g)} E(G_2) \cong A(G_2) \times K(G_2) \to K(G_2)$$
and these maps satisfy certain compatibility conditions (cf. \cite[Section 4]{BlDG:pi-algebra}). Moreover, this action of $A$ on $K$ determines $p$ as an abelian group object. We will often identify the datum of an $A$-module 
$p: E \to A$ with the associated action of $A$ on the kernel of $p$. 
\end{example-special} 

Fix an object $X^\bu$ in $c\mc{M}$. Our interest lies in the following modules over the respective \Hun-algebras. 

\begin{example}\label{exam:action-on-natural}
For every $G\in\mc{G}$ and $p > 0$, we have a \mc{G}-fibration sequence 
\begin{equation*}
    \Omega^pcG \xrightarrow{\gamma_\ext}  (cG)^{\Delta^p/\partial\Delta^p} \xrightarrow{\eps_*} cG ,
\end{equation*}
with section $s_\ext\co cG\to (cG)^{\Delta^p/\partial\Delta^p}$. In fact, since $cG$ is an abelian group object in the \mc{G}-resolution homotopy category, 
this fibration is equivalent to a trivial projection via the following diagram: 
  $$\xymatrix@C=45pt{\Omega^pcG\times cG \ar[r]_-{\simeq}^-{\gamma_\ext + s_\ext}\ar[d]_{{\rm pr}} & (cG)^{\Delta^p/\partial\Delta^p} \ar[d]_{\eps_*} \\
        cG \ar@{=}[r]\ar@/_10pt/[u]_-{(0,\id)} & cG \ar@/_10pt/[u]_-{s_\ext} }$$
where both squares with maps to and from $cG$ commute. The induced map 
$$[X^\bu,(cG)^{\Delta^p/\partial\Delta^p}]_{c\mc{M}^{\mc{G}}} \xrightarrow{p} \naturalpi{0}{X^\bu}{G} = \co \pi_0$$ 
defines an \Hun-algebra over the \Hun-algebra $\pi_0$ and is equipped with an abelian group structure that comes from the cogroup structure of  $\Delta^p / \partial \Delta^p$ 
in the homotopy category of pointed spaces. If we restrict to the underlying \mc{H}-algebras, we obtain a split short exact sequence of \mc{H}-algebras:
$$ 0\to \naturalpi{p}{X^\bu}{G}\to [X^\bu,(cG)^{\Delta^p/\partial\Delta^p}]_{c\mc{M}^{\mc{G}}} \stackrel{\curvearrowleft}{\longrightarrow} \naturalpi{0}{X^\bu}{G}\to 0.$$
The \Hun-algebra structure on $G \mapsto [X^\bu,(cG)^{\Delta^p/\partial\Delta^p}]_{c\mc{M}^{\mc{G}}}$ and the isomorphisms
$$[X^\bu,(cG)^{\Delta^p/\partial\Delta^p}]_{c\mc{M}^{\mc{G}}} \cong \naturalpi{p}{X^\bu}{G}\times\naturalpi{0}{X^\bu}{G}$$
prescribe an action of $\pi_0$ on the natural homotopy groups $\{G \mapsto \naturalpi{p}{X^\bu}{G}\}$, as indicated in Example \ref{Hun-algebra-mod}.
\end{example}

\begin{definition}
Given a fibrant simplicial set $K$, we define
  $$ \Omega_+^pK=\map_{\mc{S}}(\Delta^p/\partial\Delta^p,K). $$ 
The functor $\Omega_+=\Omega_+^1$ is the free loop space functor often denoted by $L$ or $\Lambda$. 
Note the isomorphism of pointed simplicial sets
  $$ \map^{\ext}(X^\bu,(cG)^{\Delta^p/\partial\Delta^p})\cong \Omega_+^p\map(X^\bu,cG). $$
\end{definition}

\begin{example}\label{exam:action-on-bigraded} 
The split cofiber sequence of pointed simplicial sets
  $$ S^0\stackrel{\curvearrowleft}{\longrightarrow}(\Delta^p/\partial\Delta^p)_+\to \Delta^p/\partial\Delta^p, $$
where the splitting collapses $\Delta^p/\partial\Delta^p$ to the non-basepoint in $S^0$, induces the split fibration sequence of simplicial abelian groups:
\begin{equation}\label{eqn:action-on-bigraded} 
   \Omega^p[X^\bu,G] \to \Omega_+^p[X^\bu,G] \stackrel{\curvearrowleft}{\longrightarrow} [X^\bu,G].
\end{equation}
Note that this is natural in $G \in \mc{H}$. Then, after passing to the path components, we obtain a split short exact sequence of \mc{H}-algebras:
$$0 \to \pi_p[X^\bu,G] \to \pi_0(\Omega_+^p [X^\bu,G])  \stackrel{\curvearrowleft}{\longrightarrow} \pi_0[X^\bu,G] \to 0.$$
This defines an abelian object over $\pi_0$ and consequently, $\pi_p[X^\bu,G]$ is endowed with an action by the \Hun-algebra $\pi_{0}[X^\bu, G]$. 
In this way, the $E_2$-homotopy groups of $X^\bu$ become $\pi_0$-modules.
\end{example}

\begin{example}\label{exam:naturalpi-with-Omega}
The \mc{H}-algebra $\naturalpi{p}{X^\bu}{\Omega G}$ is not just a module over the $\mc{H}$-algebra $\naturalpi{0}{X^\bu}{\Omega G}$, but also a module over the \Hun-algebra $\pi_0=\naturalpi{0}{X^\bu}{G}$. The reason is that $\naturalpi{0}{X^\bu}{\Omega G}$ is already a 
$\pi_0$-module via the split $\mc{G}$-fibration sequence (see Definition~\ref{def:Path-P-Omegabar})
$$\Omega G \to LG \stackrel{\curvearrowleft}{\longrightarrow} G.$$

\noindent To exhibit this structure more precisely, we first need some preparation. We deal with this in detail because it can be confusing and it will be needed in Appendix \ref{appsec:spiral} (see 
Corollary~\ref{cor:shift-is-pi-0}).

The representing object $\Omega^p c(\Omega G)$ of $\naturalpi{p}{X^\bu}{\Omega G}$ is the total fiber of the following commutative square of \mc{G}-fibrations:
\begin{equation}
\begin{split}\label{totalfiber} 
   \xymatrix{ c(LG)^{S^p} \ar[r]\ar[d] & c(LG) \ar[d] \\
                (cG)^{S^p} \ar[r] & cG}
\end{split}
\end{equation}
In other words, there is a fiber sequence
  $$ \Omega^p c(\Omega G) \to c(LG)^{S^p}\to c(LG)\times_{cG} (cG)^{S^p} .$$
In Diagram~\eqref{totalfiber} the squares that are defined by the respective sections over $c G$, either to $c(LG)^{S^p}$ or the 
pullback, commute. Hence there is a pullback square
\begin{equation*}
   \xymatrix{  P \ar[r]\ar[d]^{p} &  c(LG)^{S^p} \ar[d] \\
              cG \ar[r] & c(LG)\times_{cG}(cG)^{S^p} }
\end{equation*}
where $p$ is a \mc{G}-fibration with fiber $\Omega^p c(\Omega G)$ and a section which is derived from pulling back the section $cG \to c(LG)^{S^p}$.
Then the required split short exact sequence is as follows, 
  $$ 0\to \naturalpi{p}{X^\bu}{\Omega G}\to [X^\bu, P]_{c\mc{M}^{\mc{G}}}\stackrel{\curvearrowleft}{\longrightarrow} \naturalpi{0}{X^\bu}{G}\to 0.$$
In the case $p=0$, this is explicitly given by
  $$ 0\to \naturalpi{0}{X^\bu}{\Omega G}\to [X^\bu, c(LG)]_{c\mc{M}^{\mc{G}}}\stackrel{\curvearrowleft}{\longrightarrow} \naturalpi{0}{X^\bu}{G}\to 0.$$
\end{example}

\subsection{The spiral exact sequence}\label{subsec:spiral}
The main result of Section~\ref{sec:natural-homotopy-groups} is the following long exact sequence which contains the comparison between the natural homotopy groups and the $E_2$-homotopy groups. 

\begin{theorem}[The spiral exact sequence, \ref{spiral-ex-sequ}~and~\ref{thm:how--spiral-exact-seq-looks}] \label{ses-statement}
There is an isomorphism of \Hun-algebras 
    $$ \naturalpi{0}{X^\bullet}{G}\cong\pi_0[X^\bullet,G]=:\pi_0$$ 
and a long exact sequence of \mc{H}-algebras and $\pi_0$-modules
\begin{align*}
    ...& \to \naturalpi{s-1}{X^\bullet}{\Omega G}\to\naturalpi{s}{X^\bullet}{G}\to\pi_s[X^\bullet,G]\to \naturalpi{s-2}{X^\bullet}{\Omega G} \to ... \\
    ...& \to \pi_{2}[X^\bullet,G]\to \naturalpi{0}{X^\bullet}{\Omega G}\to \naturalpi{1}{X^\bullet}{G}\to \pi_1[X^\bullet,G] \to 0,
\end{align*}
where $\Omega$ is the internal loop functor.
\end{theorem}
 
The construction of this {\it spiral exact sequence} will be deferred to Appendix~\ref{appsec:spiral} and Theorem~\ref{thm:how--spiral-exact-seq-looks}. The $\pi_0$-module structure 
is proved in Theorem~\ref{thm:pi-0-module-structure}. 

In the case where \mc{M} is the category of spaces and \mc{G} the set of finite products of $\F$-Eilenberg-MacLane spaces, an 
\Hun-algebra is the same as an unstable algebra by Theorem~\ref{thm:UA-equiv-H-alg}. Moreover, by Corollary~\ref{cor:inverse-to-h}, the module structure over 
$\pi_0$ on the terms $\pi_s[X^\bu,G]$ corresponds to the usual action of the unstable algebra $\pi_0H^*(X^\bu)$ on (the unstable module) $\pi_sH^*(X^\bu)$.

\begin{corollary} \label{naturale G-Aequivalenzen} 
A map $X^\bullet\to Y^\bullet$ is a \mc{G}-equivalence if and only if it induces iso\-mor\-phisms
$ \emph{\naturalpi{s}{Y^\bullet}{G}}\to\emph{\naturalpi{s}{X^\bullet}{G}}$
for all $s\ge 0$ and $G\in\mc{G}$.
\end{corollary}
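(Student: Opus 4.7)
The plan is to exploit the naturality of the spiral exact sequence from Theorem~\ref{ses-statement} together with Assumption~\ref{strict-unit}(4), namely that $\mc{G}$ is closed under the internal loop functor $\Omega$. Given a map $f^\bu\co X^\bu\to Y^\bu$, the spiral exact sequences of $X^\bu$ and $Y^\bu$ with coefficients in $G$ (or $\Omega G$) assemble, by naturality, into a ladder of long exact sequences to which I intend to apply the five-lemma.

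For the ``only if'' direction, suppose $f^\bu$ is a $\mc{G}$-equivalence, so that $\pi_s[Y^\bu,G]\to\pi_s[X^\bu,G]$ is an isomorphism for all $s\ge 0$ and all $G\in\mc{G}$; in particular this also holds for $\Omega G\in\mc{G}$. I would induct on $s$ to show that $\naturalpi{s}{Y^\bu}{G}\to\naturalpi{s}{X^\bu}{G}$ is an isomorphism for every $G\in\mc{G}$. The base case $s=0$ is the isomorphism $\naturalpi{0}{-}{G}\cong\pi_0[-,G]$ provided by Theorem~\ref{ses-statement}. For $s\ge 1$, apply the five-lemma to the segment
\[
\pi_{s+1}[-,G]\to\naturalpi{s-1}{-}{\Omega G}\to\naturalpi{s}{-}{G}\to\pi_s[-,G]\to\naturalpi{s-2}{-}{\Omega G}
\]
of the ladder: the two $\pi_*$-terms are isomorphisms by hypothesis, and the two flanking natural-homotopy-group terms are isomorphisms by the inductive hypothesis applied to $\Omega G\in\mc{G}$.

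For the ``if'' direction, suppose that $f^\bu$ induces isomorphisms on $\naturalpi{s}{-}{G}$ for all $s\ge 0$ and all $G\in\mc{G}$. Then \emph{all} natural-homotopy-group terms appearing in the spiral ladder are isomorphisms, including those indexed by $\Omega G$ thanks to the closure of $\mc{G}$ under $\Omega$. The five-lemma applied to the segment
\[
\naturalpi{s-1}{-}{\Omega G}\to\naturalpi{s}{-}{G}\to\pi_s[-,G]\to\naturalpi{s-2}{-}{\Omega G}\to\naturalpi{s-1}{-}{G}
\]
then shows that $\pi_s[Y^\bu,G]\to\pi_s[X^\bu,G]$ is an isomorphism for every $s$ and $G$, hence that $f^\bu$ is a $\mc{G}$-equivalence.

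The argument is essentially formal once the spiral exact sequence is in hand; the only point that needs care is the bookkeeping of indices in the five-lemma, together with the crucial use of the hypothesis $\Omega G\in\mc{G}$ to feed the inductive step in the first direction and to recognize all flanking groups as isomorphisms in the second.
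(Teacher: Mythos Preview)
Your proof is correct and is essentially the same as the paper's, which merely says that the result ``follows immediately from the spiral exact sequence by induction in the degree of connectivity over the whole class $\mc{G}$ and using the five lemma.'' You have simply spelled out that one-line argument in detail, including the explicit segments of the spiral exact sequence to which the five-lemma is applied and the role of Assumption~\ref{strict-unit}(4) in ensuring that $\Omega G\in\mc{G}$.
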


\begin{proof}
This follows immediately from the spiral exact sequence by induction on the degree of connectivity over the whole class \mc{G} and using the Five Lemma. 
\end{proof}

\subsection{Cosimplicial connectivity}\label{subsec:cosimp-conn}
The following definition generalizes the notion of connectivity of maps to general resolution model categories. The main result of this subsection 
establishes yet another factorization property which takes into account the cosimplicial connectivity of a map and gives a more controlled 
way of replacing it by a cofree map.

Let $\mc{M}$ be a left proper model category and \mc{G} a class of injective models. In this subsection, we also require that Assumption~\ref{strict-unit} is satisfied. 
We have the following notion of connectivity for maps in the resolution model category
$c \mc{M}^{\mc{G}}$. 

\begin{definition}\label{def. n-cocon.}
Let $n \geq 0$. 
\begin{itemize}
\item[(a)] A morphism $f: X^\bu \to Y^\bu$ is called {\it cosimplicially $n$-connected} if the induced map
  $$ \naturalpi{s}{Y^\bu}{G}\to\naturalpi{s}{X^\bu}{G} $$
is an isomorphism for $0\le s< n$ and an epimorphism for $s=n$ for all $G\in \mc{G}$.
\item[(b)] A {\it pointed object $(C^\bu, \ast)$  of $c\mc{M}^{\mc{G}}$ is cosimplicially $n$-connected} if the map $\ast \to C^\bu$ is cosimplicially $n$-connected.
\item[(c)] An arbitrary object $X^\bu$ of $c\mc{M}^{\mc{G}}$ is called {\it cosimplicially $n$-connected} if the canonical map $X^\bu \to \ast$ is cosimplicially $n$-connected.
\end{itemize}
\end{definition}

A pointed object $(C^\bu, \ast)$  of $c\mc{M}^{\mc{G}}$ is cosimplicially $n$-connected if and only if $C^\bu$ is cosimplicially $(n-1)$-connected.
Every pointed object is cosimplicially $0$-connected.

Cosimplicial connectivity has to be clearly distinguished and is completely different from connectivity notions internal to the category \mc{M}.
However, when it is clear from the context that a connectivity statement refers to the cosimplicial direction, rather than an internal direction, we will 
omit the word ``cosimplicially''. Obviously, the notion also depends on the class \mc{G} which will also be suppressed from the notation. 
We emphasize that cosimplicial connectivity is in\-va\-riant under $\mc{G}$-equivalences. 

Cosimplicial connectivity can be formulated in several equivalent ways. We state some of them in the following lemma. 
The restriction to $\mc{G}$-cofibrations is, of course, simply for technical convenience. 

\begin{lemma}\label{lem:equivalent-formulations-n-connected}
Let $f: X^\bu \to Y^{\bu}$ be a $\mc{G}$-cofibration in $c\mc{M}^{\mc{G}}$ and $(C^\bu, \ast)$ its pointed cofiber.
Then the following assertions are equivalent:
\begin{enumerate}
\item[(i)] The map $f$ is $n$-connected.
\item[(ii)] The map $f$ is $0$-connected and $(C^\bu, \ast)$ is $n$-connected. 
\item[(iii)] The map $f$ is $0$-connected and the induced map
  $$\pi_s[C^\bu, G]_{\mc{M}} \to \pi_s[\ast, G]_{\mc{M}} $$
is an isomorphism for $0\le s < n$ and an epimorphism for $s = n$ for all $G \in \mc{G}$.
\item[(iv)] The induced map
  $$ \pi_{s}[Y^\bu,G]_{\mc{M}}\to\pi_{s}[X^\bu,G]_{\mc{M}} $$
is an isomorphism for $0\le s< n$ and an epimorphism for $s=n$ for all $G
\in \mc{G}$.
\end{enumerate}
\end{lemma}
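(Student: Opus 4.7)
The plan is to chain the equivalences (i) $\Leftrightarrow$ (ii) $\Leftrightarrow$ (iii) $\Leftrightarrow$ (iv), each step invoking one of the long exact sequences developed in this subsection.

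For (i) $\Leftrightarrow$ (ii), I would apply Proposition~\ref{G-cofiber-to-LES} to the $\mc{G}$-cofibration $f$ to obtain a long exact sequence of natural homotopy groups. By equation~(\ref{natural homotopy groups 2}) the relative term $\naturalpi{s}{(Y^\bu,X^\bu)}{G}$ equals $\pi_s\map_{\mc{M}_*}(C^\bu,cG)$, which coincides with the natural homotopy groups of the pointed cofiber $(C^\bu,\ast)$. A routine diagram chase shows that $f$ being $n$-connected is equivalent to vanishing of these relative groups in the range $s<n$, which is exactly $n$-connectivity of $(C^\bu,\ast)$. The additional $0$-connectivity condition in (ii) is absorbed for $n\ge1$ and is needed only to cover the degenerate case $n=0$.

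For (ii) $\Leftrightarrow$ (iii), I would apply the spiral exact sequence of Theorem~\ref{ses-statement} to $C^\bu$. The goal is to show that $\naturalpi{s}{C^\bu}{G}=0$ for all $G\in\mc{G}$ and $s<n$ iff $\pi_s[C^\bu,G]=0$ for all $G\in\mc{G}$ and $s<n$. This is proved by induction on $s$, crucially using Assumption~\ref{strict-unit}(4) so that $\Omega G\in\mc{G}$ whenever $G\in\mc{G}$, allowing the inductive step to feed back into the terms $\naturalpi{s-1}{C^\bu}{\Omega G}$ and $\naturalpi{s-2}{C^\bu}{\Omega G}$ that sandwich $\pi_s[C^\bu,G]$ in the spiral sequence. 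The base case is the isomorphism $\naturalpi{0}{C^\bu}{G}\cong\pi_0[C^\bu,G]$ from the theorem.

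For (iii) $\Leftrightarrow$ (iv), I would apply Proposition~\ref{G-cofiber-to-LES-2} to $f$. Its epimorphism hypothesis is automatic: after (if needed) a Reedy cofibrant replacement of $X^\bu$, the map $f$ is a degreewise cofibration into $Y^s$ and each $G$ is fibrant, so each $[Y^s,G]\twoheadrightarrow[X^s,G]$ is surjective. The resulting long exact sequence of $\pi_*[-,G]$-groups directly converts vanishing of $\pi_*[C^\bu,G]$ in the range $s<n$ into the iso/epi condition on $\pi_*[Y^\bu,G]\to\pi_*[X^\bu,G]$ and vice versa; the $0$-connectivity hypothesis of (iii) is what anchors this LES at the bottom, and it is implied by (iv) through the isomorphism $\naturalpi{0}{-}{G}\cong\pi_0[-,G]$.

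The main obstacle I expect is the inductive bookkeeping in step (ii) $\Leftrightarrow$ (iii): the spiral sequence mixes the two kinds of homotopy groups with a shift from $G$ to $\Omega G$, so one has to carefully track which vanishing ranges are assumed and which are derived, and the forward and backward implications are proved somewhat asymmetrically. The other two equivalences are essentially direct consequences of the long exact sequences together with the cofiber identification in~(\ref{natural homotopy groups 2}).
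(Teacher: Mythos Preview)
Your chain of equivalences matches the paper's proof exactly: (i)~$\Leftrightarrow$~(ii) via Proposition~\ref{G-cofiber-to-LES}, (ii)~$\Leftrightarrow$~(iii) via the spiral exact sequence (the paper phrases this as comparing the spiral sequences of $C^\bu$ and $\ast$ and invoking the five-lemma, which is the same induction you describe), and (iii)~$\Leftrightarrow$~(iv) via Proposition~\ref{G-cofiber-to-LES-2}.

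There is, however, a genuine error in your justification for the last step. You claim that the epimorphism hypothesis of Proposition~\ref{G-cofiber-to-LES-2} is automatic because $f$ is a degreewise cofibration and $G$ is fibrant, hence $[Y^s,G]\to[X^s,G]$ is surjective. This is false: a cofibration $A\hookrightarrow B$ with fibrant target $G$ does \emph{not} give surjectivity of $[B,G]\to[A,G]$. The lifting axiom only provides a lift of a map $A\to G$ along $A\hookrightarrow B$ when $G\to\ast$ is a \emph{trivial} fibration, not merely a fibration. A concrete counterexample in spaces: $S^1\hookrightarrow D^2$ and $G=K(\mathbb{Z},1)$ give $[D^2,G]=0\to[S^1,G]=\mathbb{Z}$. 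You may be conflating surjectivity on $\Hom$-sets (which also fails here) with surjectivity on homotopy classes.

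The paper does not verify the epimorphism hypothesis either; instead it asserts directly that $[(C^\bu,\ast),(G,\ast)]\to[Y^\bu,G]\to[X^\bu,G]$ is a homotopy fiber sequence, citing Proposition~\ref{G-cofiber-to-LES-2} only as a ``cf.'' The underlying reason this works is the degreewise Puppe sequence $\cdots\to[X^\bu,\Omega G]\to[(C^\bu,\ast),(G,\ast)]\to[Y^\bu,G]\to[X^\bu,G]$ of simplicial abelian groups together with the closure of $\mc{G}$ under $\Omega$, which lets one splice the resulting long exact sequences on $\pi_*$ across varying $\Omega^kG$. Your writeup should drop the incorrect surjectivity claim and argue along these lines instead.
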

\begin{proof}
Consider the cofiber sequence
  $$ X^\bu\toh{f} Y^\bu\to C^\bu $$
This gives rise to a homotopy fiber sequence (cf. Proposition
\ref{G-cofiber-to-LES})
  $$ \Hom_{\mc{M}}((C^\bu, \ast), (G, \ast)) \to \map(Y^\bu,cG) \to
\map(X^\bu,cG) $$
By Proposition \ref{G-cofiber-to-LES}, (i) is equivalent to
\begin{itemize}
\item[(ii)']  The map $f$ is $0$-connected and $\pi_s \Hom_{\mc{M}}((C^\bu,\ast),
(G, \ast)) = 0$ for all $0 \le s < n$ and $G \in \mc{G}$.
\end{itemize}
which is clearly equivalent to (ii), by another application of Proposition
\ref{G-cofiber-to-LES} to the map $\ast \to C^\bu$. Thus (i) and (ii) are
equivalent.
The equivalence between
(ii) and (iii) follows inductively from comparing the spiral exact sequences of
$C^\bu$ and $\ast$ and applying the Five lemma.
There is a homotopy fiber sequence (cf. Proposition \ref{G-cofiber-to-LES-2})
  $$[(C^\bu, \ast),(G, \ast)] \to [Y^\bu,G] \to [X^\bu,G]. $$
By Proposition \ref{G-cofiber-to-LES-2}, (iv) is equivalent to
\begin{itemize}
\item[(iii)']  The map $f$ is $0$-connected and $\pi_s[(C^\bu,\ast), (G, \ast)] =
0$ for all $0 \le s < n$ and $G \in \mc{G}$,
\end{itemize}
which is clearly equivalent to (iii), by another application of
Proposition \ref{G-cofiber-to-LES-2} to the map $\ast \to C^\bu$.
\end{proof}

\begin{proposition} \label{n-conn-fact}
Let $\mc{M}$ be a proper model category and \mc{G} a class of injective models satisfying \emph{Assumption~\ref{strict-unit}}. Then each cosimplicially $n$-connected map between Reedy cofibrant objects can be written functorially as a 
composition $qj$, where $j$ is a $\mc{G}$-equivalence and $q$ is a quasi-$\mc{G}$-cofree map on objects $(G_s)_{s \geq 0}$ such that $G_s = \ast$ for $s \leq n$. 
\end{proposition}

The proof is a modification of the proof of Proposition~\ref{prop:quasi-cofree-replacements}. 

\begin{proof}
Let $f: X^\bu \to Y^\bu$ be an $n$-connected map between Reedy cofibrant objects.
We will produce a factorization of $f$
  $$ X^\bu\toh{j} Z^\bu\toh{q} Y^\bu $$
such that
\begin{enumerate}
   \item $j$ is a \mc{G}-equivalence, and
   \item $q$ is quasi-\mc{G}-cofree on $(G_s)_{s \geq 0}$ such that $G_s = \ast$ for $s \leq n$. 
\end{enumerate}
For $0\le s\le n$, we set $Z^s=Y^s$ and $j^s :=f^s$. Suppose we have constructed $Z^\bu$ up to cosimplicial degree $s -1 \ge n$. Then we choose functorially a \mc{G}-monic 
cofibration  
  $$ \alpha_s\co X^s\cup_{L^sX^\bu}L^sZ^\bu\to G_s $$
into a fibrant \mc{G}-injective object, and factor functorially the canonical map
  $$ \beta_s\co X^s\cup_{L^sX^\bu}L^sZ^\bu\to (Y^s\times_{M^sY^\bu}M^sZ^\bu)\times G_s $$
into a cofibration  
  $$ l^s(j) \co X^s\cup_{L^sX^\bu}L^sZ^\bu\to Z^s $$
followed by a trivial fibration
  $$ \gamma_s\co Z^s\to (Y^s\times_{M^sY^\bu}M^sZ^\bu)\times G_s. $$
Since $\alpha_s$ is \mc{G}-monic, so are $\beta_s$ and $l^s(j)$. Let
  $$m^s(q) \co Z^s\to Y^s\times_{M^sY^\bu}M^sZ^\bu $$
be the composition ${\rm pr}_{1}\circ\gamma_s$, where ${\rm pr}_{1}$ is the projection onto the first factor. 
The maps $l^s(j)$ and $m^s(q)$ extend the factorization to the cosimplicial degree $s$ and this process yields 
inductively a factorization of $f$.

The resulting map $q\co Z^\bu\to Y^\bu$ is quasi-\mc{G}-cofree by Lemma~\ref{prop:recognize-quasi-G-cofree}. Here we use the right properness of \mc{M}. 
By construction, it is quasi-\mc{G}-cofree on a sequence $\{G_s\}_{s\ge 0}$ with $G_s=\ast$ for $0\le s\le n$. 

It remains to show that $j$ is a \mc{G}-equivalence. By the connectivity assumption and Lemma~\ref{lem:equivalent-formulations-n-connected}, there are isomorphisms 
  $$ \naturalpi{s}{Y^\bu}{G}=\naturalpi{s}{Z^\bu}{G}\cong \naturalpi{s}{X^\bu}{G}, $$ 
for $0\le s< n$, and an epimorphism for $s=n$, because these groups only depend on $Z^m=Y^m$ for $0\le m\le n$.
For every $s>n$, the map $l^s(j)$ is a \mc{G}-monic cofibration. Thus, for all fibrant $G\in\mc{G}$, the induced map
  $$ \Hom(Z^s,G)\to \Hom(X^s\cup_{L^sX^\bu}L^sZ^\bu,G) $$
is surjective. This means that every diagram of the form
\diagr{ \partial\Delta^s \ar[r]\ar[d] & \Hom(Z^\bu,G)=\map(Z^\bu,cG) \ar[d] \\
        \Delta^s \ar[r]\ar@{.>}[ur] & \Hom(X^\bu,G)=\map(X^\bu,cG)}
admits a lift. Since both $X^\bu$ and $Y^\bu$ are Reedy cofibrant, $Z^\bu$ is also Reedy cofibrant and the mapping spaces on the right 
in the diagram above are Kan complexes. Therefore, the map
  $$ \pi_s^{\natural}(j)\co\naturalpi{s}{Z^\bu}{G}\to\naturalpi{s}{X^\bu}{G} $$
is injective for all $s\ge n$. 
For $s>n$, it is also surjective, because of the diagram
\diagr{ \partial\Delta^s \ar[r]\ar[d] & \ast \ar[d]\ar[r] & \map(Z^\bu,cG) \ar[d] \\
\Delta^s \ar[r]\ar@{.>}@/^5pt/[urr]^-<<<<{h_1} & \Delta^s/\partial\Delta^s \ar[r]\ar[ur]_{h_2}& \map(X^\bu,cG) }
and the lift $h_1$ yields a lift $h_2$. This proves the claim that $\pi_s^{\natural}(j)$ is an isomorphism for all $s\ge 0$.
\end{proof}

\section{Cosimplicial unstable coalgebras}\label{sec:cosimplicial-unst-coalg}

In this section, we study the homotopy theory of cosimplicial unstable coalgebras. First we recall some basic definitions and facts about unstable modules and coalgebras over 
the Steenrod algebra. For more details, see Schwartz~\cite{Schwartz:book} and Lannes~\cite{Lannes:fonctiennels}. We also discuss the resolution model structure on $c\CA$ by 
applying the methods of Section \ref{resolution-model-cat}. 

Then we digress and discuss the proof of a related model structure on cosimplicial comodules over a general cosimplicial coalgebra. This model category 
provides the context in which we can construct K\"unneth spectral sequences for the derived cotensor product of comodules. They are dual to the ones constructed by Quillen in~\cite[II.6, Theorem 6]{Quillen:HA}. 

These spectral sequences will be used to prove a homotopy excision theorem for cosimplicial unstable coalgebras (Theorem~\ref{homotopy-excision}). It is the main result needed for Pro\-po\-si\-tion~\ref{diff. constr. in CA}, which is the analogue of the ``difference construction'' in the sense of Blanc-Dwyer-Goerss \cite{BlDG:pi-algebra}. 

\subsection{Preliminaries} \label{unstable-coalgebras}

Let $p$ be a prime or $0$. Let $\mathbb{F} = \mathbb{F}_p$ denote the prime field of characteristic $p$ with the convention that 
$\mathbb{F}_0 =\mathbb{Q}$. Let $\Vec$ denote the category of $\mathbb{Z}$-graded $\mathbb{F}$-vector spaces which are trivial in 
negative degrees. 
The degree of a homogeneous element $x$ of a graded vector space $V$ will be indicated by $|x|$. 
For $q\ge 0$, we write
  $$V\mapsto V[q]$$
for the functor which increases the grading by $q$, i.e.\! $(V[q])_r = V_{r-q}$. 

The graded tensor product of graded vector spaces $V$ and $W$, defined by 
  $$(V \otimes W)_q = \bigoplus_{k + l = q} V_k \otimes W_l,$$
is a symmetric monoidal pairing on $\Vec$. Its symmetry isomorphism involves the Koszul sign rule
  $$ v\otimes w \mapsto (-1)^{|v|\cdot |w|}w\otimes v. $$
Moreover, this pairing is closed in the sense that there exist
internal Hom-objects defined as follows
  $$\underline{\Hom}_{\Vec}(V, W)_q = \Hom_{\Vec}( V[q], W).$$

Let $\mathcal{A}= \mathcal{A}_p$ be  the Steenrod algebra at the prime $p$, with the convention that $\mathcal{A}_0 =\mathbb{Q}$, concentrated in degree 0. Since we are going to work with homology instead of cohomology, we use homological grading on $\mathcal{A}$ and let $\mathcal{A}^i$, which consists of cohomology operations raising the cohomological degree by $i$, sit in degree $-i$.

We recall the definition of an unstable $\mathcal{A}$-module. 
\begin{definition} \label{unstable right module}
An {\it unstable right $\mathcal{A}$-module} consists of
\begin{enumerate}
   \item[(a)] a graded $\mathbb{F}$-vector space $M \in \Vec$,
   \item[(b)] a  homomorphism \[M \otimes \mathcal{A} \to M\] which in addition to the usual module properties, also satisfies the following instability conditions:
\begin{enumerate}
\item[(i)] \hspace{0,9mm}$xSq^n =0$ \ for $p=2$ and $ |x| < 2n$\\
\item[(ii)] \hspace{2,2mm}$xP^n =0$ \ for $p$ odd and $ |x| < 2pn$\\
\item[(iii)] $x\beta P^n =0$ \ for $p$ odd and $ |x| = 2pn + 1$.
\end{enumerate}
\end{enumerate}
We write $\mathcal{U}$ for the category of unstable right $\mathcal{A}$-modules. It is easy to see that $\mathcal{U}$ is an abelian category. 
In the rational case, \mc{U} is just the category $\Vec$ of graded rational vector spaces.
\end{definition}

For $M,N\in \mathcal{U}$, the tensor product $M\otimes N$ of the underlying graded vector spaces can be given an $\mathcal{A}$-module structure via the Cartan formula. 
Moreover, the resulting $\mathcal{A}$-module is an object of $\mathcal{U}$, i.e., it satisfies the instability conditions.

\begin{remark}\label{Vec-U}
The categories $\Vec$ and $\mathcal{U}$ are connected via an adjunction where the left adjoint is the forgetful functor. The right adjoint sends a graded vector space $V$ 
to the maximal unstable submodule of the $\mathcal{A}$-module $\underline{\Hom}_{\Vec}(\mathcal{A}, V)$, with $\mathcal{A}$-action given by $(f \cdot a)(a') = f(a \cdot a')$. 
The objects in the image of this right adjoint are injective. Thus, the abelian category $\mathcal{U}$ has enough injectives. 
\end{remark}

Let $\Coalg$ denote the category of cocommutative, coassociative, counital coalgebras over $\mathbb{F}$. There is a functor 
  $$\F \co {\rm Sets}\to \Coalg,$$ 
which sends a set $S$ to the free $\mathbb{F}$-module $\mathbb{F}(S)$ generated by $S$, with comultiplication determined by 
$\Delta(s)=s \otimes s$ and counit by $\epsilon (s)=1$, for $s \in S$. The functor $\F$ admits a right adjoint $\pi_0$ which sends a coalgebra  
$(C, \Delta_C, \epsilon)$ to the set of \emph{set-like elements}, i.e.\! the elements $c\in C$ which satisfy $\Delta_C (c)=c\otimes c$. An object 
$C\in \Coalg$ is called \emph{set-like} if it is isomorphic to $\mathbb{F}(S)$ for some set $S$. The names
\emph{discrete} \cite{Bou:obstructions} and \emph{group-like} \cite{sweedler:hopf} are also used in the literature for  this concept. Note that there 
is a canonical isomorphism $\pi_0 \mathbb{F}(S) \cong S$, whence it follows that the category of set-like coalgebras is equivalent to the category of sets. 

Let $\grCoalg$ denote the category of cocommutative, coassociative, counital coalgebras in $\Vec$ with the 
respect to the  graded tensor product. We recall the definition of an unstable coalgebra. 

\begin{definition} \label{unstable coalgebra}
An {\it unstable coalgebra} over the mod $p$ Steenrod algebra $\mathcal{A}$ consists of an unstable right $\mathcal{A}$-module 
$C$ whose underlying graded vector space is also a cocommutative coalgebra $(C, \Delta_C, \epsilon)$ in $\grCoalg$ such that the two structures satisfy the following compatibility conditions:
\begin{enumerate}
\item[(a)] The comultiplication $\Delta_C :C\to C\otimes C$ is a morphism in $\mathcal{U}$, 
\item[(b)] The $p$-th root or Verschiebung map  $\xi :C_{pn}\to C_n $ (dual to the Frobenius) satisfies
\begin{align*}
   \xi (x) &= xSq^n\ \  \text{ for } p=2 \text{ and } |x|=2n\\
   \xi (x) &= xP^{n/2} \,\,  \text{ for } p >2, n \text{ even and } |x| = pn.
\end{align*}
\end{enumerate}
\end{definition}  

\begin{remark} \label{set-like-deg-0}
The $0$-th degree of an unstable coalgebra is set-like as an $\mathbb{F}$-coalgebra. This follows from the fact that the $p$-th root map is the identity in degree $0$ by \ref{unstable coalgebra}(b) for $n=0$. 
A proof in the dual context of $p$-Boolean algebras can be found in~\cite[Appendix]{kuhn:generic}. Indeed, by duality, this implies the result for finite dimensional $\F$-coalgebras, which then extends to all $\F$-coalgebras since each $\F$-coalgebra is the filtered colimit of its finite dimensional subcoalgebras. 
\end{remark}

For $\F=\mathbb{Q}$, condition (b) above does not make sense. We have the following definition in this case.

\begin{definition} \label{set-like-deg-0-2} 
An \emph{unstable coalgebra over $\mathbb{Q}$} is a cocommutative and counital graded $\mathbb{Q}$-coalgebra 
$C_* = \{C_n\}_{n \geq 0} \in \grCoalg$ such that $C_0 \in \mathrm{Coalg}_{\mathbb{Q}}$ is set-like.
\end{definition}

Let $\CA$ be the category of unstable coalgebras over the Steenrod algebra $\mathcal{A}$. The category $\CA$ is complete and cocomplete and has a generating set given by the finite (= finite dimensional in finitely many degrees) coalgebras. As these objects are also finitely presentable, the category $\CA$ is finitely presentable. 

The categorical product of a pair of coalgebras is given by the tensor product. It is easy to verify that in $\CA$, too, a finite product is given by the corresponding tensor 
product of unstable modules endowed with the canonical coalgebra structure. 

Note that the terminal object is given by the unit $\terminal$ of the monoidal pairing, i.e.\! the unstable module which is concentrated in 
degree $0$ where it is the field $\mathbb{F}$ with the canonical coalgebra structure. 

A basepoint of an unstable coalgebra $C$ is given by a section $\sigma: \terminal \to C$ of the counit map $\epsilon : C \to \terminal$.
Evidently, a basepoint is specified by a set-like element of $C$ in 
degree $0$.
Let $\CA_* = \terminal / \CA$ denote the category of pointed unstable coalgebras. More generally, for a given unstable coalgebra
$C$, we denote as usual the associated under-category by $C / \CA$. There is an adjunction 
\begin{equation} \label{slice-adjunction}
\CA \rightleftarrows C / \CA
\end{equation}
where the left adjoint is defined by taking the coproduct with $C$, i.e.\! $D \mapsto D \oplus C$ for $D \in \CA$, and the right 
adjoint is the forgetful functor. 

The categories $\mathcal{U}$ and $\CA$ are connected by an adjunction
\begin{equation} \label{U-CA}
\CA \rightleftarrows \mathcal{U}
\end{equation}
where the right adjoint defines the \emph{cofree unstable coalgebra of an unstable $\mathcal{A}$-module} and the left adjoint is 
the forgetful functor. The adjunction in Remark~\ref{Vec-U} can be composed with \eqref{U-CA} and yields an adjunction
\begin{equation} \label{cofree-adjunction} 
J\co \CA \rightleftarrows \Vec\! : G
\end{equation}
where $J$ is the forgetful functor and the right adjoint $G$ defines the \emph{cofree unstable coalgebra} associated with 
a graded vector space. 
In general, an unstable coalgebra is called \emph{injective} if it is a retract of an object of the form 
$G(V)$. More generally, for $C \in \CA$ there is an induced adjunction between the under-categories
\begin{equation}\label{eqn:J:G}
J\co C / \CA \rightleftarrows  J(C) / \Vec\! :G. 
\end{equation}

Next we recall a description of the functor $G$ in terms of Eilenberg-MacLane spaces (Theorem~\ref{explicit-description-G}). 
This provides the fundamental link for the comparison between $\CA$ and the homotopy category of spaces.

\subsection{Homology and $\F$-GEMs} 
Let \kfp{m} denote the Eilenberg-MacLane space representing the singular cohomology functor $X \mapsto H^m(X,\mathbb{F})$. 
We denote the singular homology functor with coefficients in $\mathbb{F}$ by
  $$ H_*\co\mc{S} \to \Vec\, ,\  X\mapsto\bigoplus_{m\ge 0}H_m(X;\mathbb{F})=H_*(X).$$
The graded homology $H_*(X)$ of a simplicial set $X$ is naturally an unstable coalgebra where the comultiplication is induced by the diagonal map $X \to X \times X$ combined with the K\"{u}nneth isomorphism.
Thus we view singular homology as a functor
\begin{equation*} 
H_* \co\mc{S} \to\CA.
\end{equation*}
We note that there are also pointed versions of the above where we consider the unreduced homology of a pointed space
as a pointed unstable coalgebra. 

The classical K\"unneth theorem can be stated in the following way.

\begin{theorem} \label{enhanced Kuenneth}
The homology functor $H_*\co\mc{S}\to\CA$ preserves finite products.
\end{theorem}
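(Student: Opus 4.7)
The plan is to reduce the statement to classical K\"unneth-type results about the homology cross product. Recall from the discussion after Definition \ref{unstable coalgebra} that the categorical product in $\CA$ is computed as the tensor product of the underlying graded coalgebras equipped with the Cartan-formula $\mc{A}$-action, and that the terminal object is $\terminal$. So the theorem reduces to two claims: (i) $H_*(\ast) \cong \terminal$, which is immediate since $\ast$ has only one $0$-simplex; and (ii) the canonical map $H_*(X \times Y) \to H_*(X) \otimes H_*(Y)$ induced by the two projections is an isomorphism in $\CA$ for all $X, Y \in \mc{S}$.

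For (ii) I would exhibit an inverse, the homology cross product
\[
\times \co H_*(X) \otimes H_*(Y) \to H_*(X \times Y),
\]
built from the Eilenberg-Zilber shuffle map. Over a field, the classical K\"unneth theorem guarantees that $\times$ is an isomorphism of graded $\F$-vector spaces, so the map is bijective on underlying objects. It then remains to check four compatibilities. Compatibility with the comultiplications follows from the naturality of $\times$ applied to the diagonals $\Delta_X, \Delta_Y$, together with the identity $\Delta_{X \times Y} = (\id \times \tau \times \id) \circ (\Delta_X \times \Delta_Y)$, where $\tau$ is the symmetry of $\otimes$; compatibility with counits is naturality with respect to the collapse maps $X \to \ast$ and $Y \to \ast$. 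Compatibility with the $\mc{A}$-action is precisely the Cartan formula, $Sq^n(a \times b) = \sum_{i+j=n} Sq^i(a) \times Sq^j(b)$, and its analogue at odd primes. Finally, compatibility with the $p$-th root map $\xi$ is a consequence of the $\mc{A}$-compatibility, since by Definition \ref{unstable coalgebra}(b) the map $\xi$ agrees with a specific Steenrod operation in the degrees where it is non-trivial.

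I do not expect a genuine obstacle in any of these steps; each ingredient is classical, and the theorem is really a repackaging of the K\"unneth isomorphism together with the Cartan formula into the language of unstable coalgebras. The step requiring the most care is the coalgebra-compatibility, where the naturality argument must be reconciled with the sign conventions in the Eilenberg-Zilber map and in the symmetry isomorphism of the graded tensor product (particularly in the odd-primary case); all other checks are purely formal consequences of naturality.
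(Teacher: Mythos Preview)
Your proposal is correct and is exactly the standard argument; the paper itself offers no proof of this statement, treating it simply as a restatement of the classical K\"unneth theorem in the language of unstable coalgebras. Your outline therefore supplies more detail than the paper does, and every step you list (K\"unneth over a field for bijectivity, naturality of the cross product for the coalgebra structure, the Cartan formula for the $\mc{A}$-action, and the reduction of $\xi$-compatibility to the Steenrod-operation compatibility) is the right ingredient.
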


An arbitrary product of spaces of type \kfp{m}, for possibly different $m$ but fixed \F, is called a generalized Eilenberg-MacLane 
space or \emph{$\F$-GEM} for brevity.  Given a graded vector space $V$, we denote the associated $\F$-GEM by 
  $$K(V) := \prod_{n\ge 0} K(V_n, n).$$
In this case the homology functor preserves even infinite products. The following fact is stated for $\mathbb{F}_p$  in \cite{Bou:obstructions}.

\begin{theorem}\label{explicit-description-G}
The unstable coalgebra $H_*(K(V))$ is 
the cofree unstable coalgebra associated with $V$, i.e.
\begin{equation*}
   G(V) \cong  H_*(K(V)).
\end{equation*}  
\end{theorem}

\begin{proof} The homology of $K(V_0, 0)$ is cofree because every unstable coalgebra is set-like in degree $0$ and we have an isomorphism $\pi_0 \mathbb{F}(S) \cong S$. Hence, by Theorem \ref{enhanced Kuenneth} it is enough to consider a connected $\F$-GEM. 

By the classical computations of Cartan~\cite{Cartan} 
and Serre~\cite{Serre} the unstable coalgebra $H_*(K(\mathbb{F},n))$ is the cofree unstable coalgebra associated to the graded vector space which is $\mathbb{F}$ in degree $n$ and trivial otherwise. Using Theorem \ref{enhanced Kuenneth} and the fact that $G$ preserves products, this also yields the claim in the case of finite products of $\mathbb{F}$-Eilenberg-MacLane spaces. 

The general case in positive characteristic follows using the properties of the K\"unneth spectral sequence for an infinite product as explained in \cite[4.4]{Bou: homology SS}.

We give a separate argument for the case $\mathbb{F}=\mathbb{Q}$ which relies on the structure theory of connected graded Hopf algebras over the rationals. The homology $C : = H_*(K(V))$ is a connected graded commutative and cocommutative Hopf algebra. By the theorem of Cartier-Milnor-Moore~\cite[Appendix B.4.5]{Quillen:rational}, 
it is the universal enveloping algebra $U\Pr(C)$ on its primitives. By the theorem of Poincare-Birkhoff-Witt~\cite[Appendix B.2.3]{Quillen:rational}, the underlying graded 
coalgebra is isomorphic to the symmetric algebra $S\Pr(C)$ on $\Pr(C)$. This is the graded cofree coalgebra on $\Pr(C)$ by \cite[Appendix B.4.1]{Quillen:rational}. Now it is  
a well-known fact in rational homotopy theory that the primitives in loop space homology can be identified with the homotopy groups. As a consequence, $V = \Pr(C)$ which completes the proof of the assertion.    
\end{proof}

As a consequence we have for any $C \in \CA$ a natural isomorphism 
\begin{equation} \label{catan-serre-calculations}
\Hom_{\CA}\bigl(C,H_*(K(V))\bigr) \cong \Hom_{\Vec} (J(C), V).
\end{equation}
For any $\F$-GEM $G$ and space $X$, we obtain a natural isomorphism 
\begin{equation} \label{representing property of GEMs}
[X,  G] \cong \Hom_{\CA}\bigl(H_*(X),H_*(G)\bigr).
\end{equation}
Combining the last two isomorphisms, we note that the functor $K: \Vec \to \ho{\mc{S}}$ 
is a right adjoint to the homology functor $J \circ H_*\co \ho{\mc{S}} \to \Vec$.
Another direct consequence is the following proposition.

\begin{proposition}\label{cogenerating property of GEMs}
The set of objects $\{H_*(\kfp{m})\, |\, m \geq 0 \}$ is a cogenerating set for the category $\CA$. In particular, for a map $f\co C \to D$ in $\CA$ the following statements are equivalent:
\begin{enumerate}
\item The map $f$ is an isomorphism \emph{(}respectively, monomorphism\emph{)}.
\item The induced map 
   $$f^*\co \Hom_{\CA}\bigl(D,H_*\kfp{m}\bigr) \to \Hom_{\CA}\bigl(C,H_*\kfp{m}\bigr)$$ 
is an isomorphism \emph{(}respectively, epimorphism\emph{)} for every $m \geq 0$.
\end{enumerate}
\end{proposition}

\subsection{Cosimplicial resolutions of unstable coalgebras}
\label{subsec:cos-res-unst-coalg}

We may regard $\CA$ as a proper model category endowed with the discrete model structure. Consider 
\begin{equation}\label{eqn:def-of-mcE}  
   \mc{E}=\{H_*\bigl( \kfp{m} \bigr) \, |\,  m \geq 0 \} 
\end{equation}
as a set of group objects in $\CA$. The group structure is induced by 
the group structure of the Eilenberg-MacLane spaces. By Proposition~\ref{cogenerating property of GEMs}, a map in $\CA$ is \mc{E}-monic if and only if it is injective. Moreover, every unstable coalgebra can be embedded into a product of objects from $\mc{E}$. 
An unstable coalgebra is \mc{E}-injective if and only if it is a retract of a product of objects from $\mc{E}$ (cf. \cite[Lemma 3.10]{Bou:cos}). 

\begin{theorem}\label{thm:E-resolution-model-structure}
There is a proper simplicial model structure $c\CA^{\mc{E}}$ on the category of cosimplicial unstable coalgebras, whose weak equivalences are the $\mc{E}$-equivalences, the cofibrations are the $\mc{E}$-cofibrations, and the fibrations are the $\mc{E}$-fibrations.
\end{theorem}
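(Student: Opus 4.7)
The strategy is to apply Bousfield's general existence result (Theorem~\ref{bousfield}) with $\mc{M}=\CA$ endowed with the discrete model structure and with class of injective models $\mc{G}=\mc{E}$ as in \eqref{eqn:def-of-mcE}. With this setup, left properness and the simplicial enrichment come for free, and the work consists of checking the three hypotheses of Theorem~\ref{bousfield} and then addressing right properness by hand.

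First I would note that the discrete model structure on $\CA$ is trivially left proper: every map is a cofibration and every object is cofibrant, so the pushout of any weak equivalence (= isomorphism) along any map is again an isomorphism. Second, each $H_*\kfp{m}\in\mc{E}$ is naturally an abelian group object in $\ho{\CA}=\CA$: the infinite loop space structure on $\kfp{m}$ yields a map $\kfp{m}\times\kfp{m}\to\kfp{m}$ which, together with Theorem~\ref{enhanced Kuenneth}, induces a multiplication $H_*\kfp{m}\otimes H_*\kfp{m}\to H_*\kfp{m}$ in $\CA$ satisfying the abelian group axioms in $\CA$.

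The main verification is that $\mc{E}$ is a class of injective models in the sense of Definition~\ref{def:injective-models}. By Proposition~\ref{cogenerating property of GEMs}, $\mc{E}$ is a cogenerating set; combining this with \eqref{explicit-description-G}, the cofree adjunction $J\colon\CA\rightleftarrows\Vec\!:G$ in \eqref{cofree-adjunction} yields for every $C\in\CA$ a natural monic map $C\hookrightarrow GJ(C)$ whose target is a product of objects in $\mc{E}$ and hence $\mc{E}$-injective. This shows simultaneously that there are enough $\mc{E}$-injectives and that $\mc{E}$-injective envelopes can be chosen functorially; alternatively, one can simply invoke the fact, noted in the paragraph following Theorem~\ref{bousfield}, that any set of group objects automatically defines a class of injective models by \cite[4.5]{Bou:cos}. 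Theorem~\ref{bousfield} then produces the desired left proper simplicial model structure on $c\CA$ whose weak equivalences, cofibrations and fibrations are exactly the $\mc{E}$-weak equivalences, $\mc{E}$-cofibrations and $\mc{E}$-fibrations.

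The remaining and most delicate point is right properness, which does not follow from Bousfield's theorem. The plan is to observe that pullbacks in $c\CA$ are computed levelwise, that the underlying model structure on $\CA$ is discrete and thus right proper, and that by the definition of $\mc{E}$-fibration, pulling back along an $\mc{E}$-fibration is compatible with the mapping-space description of $\mc{E}$-weak equivalences: for each $G\in\mc{E}$ the simplicial group $[-,G]$ takes an $\mc{E}$-fibration to a Kan fibration of simplicial groups, so that the pullback of $[-,G]$-equivalences along such fibrations is again a $[-,G]$-equivalence by right properness of simplicial sets. Collecting this information across $G\in\mc{E}$ with the characterization of $\mc{E}$-equivalences via the simplicial groups $[X^\bu,G]$ gives right properness. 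The main obstacle is precisely this last step, since one must compare $\mc{E}$-fibrations with levelwise matching-condition fibrations; the key input is the description of $\mc{E}$-fibrations from Bousfield's definition, which translates pullbacks in $c\CA$ into homotopy pullbacks of the relevant simplicial mapping spaces.
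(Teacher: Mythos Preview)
Your reduction to Theorem~\ref{bousfield} with the discrete model structure on $\CA$ and $\mc{G}=\mc{E}$ is exactly what the paper does (in one sentence), and your verification that $\mc{E}$ is a class of injective models is fine. The issue is entirely with your sketch of right properness.

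The argument you outline does not work. Given a pullback
\[
\xymatrix{
E^\bu \ar[r]^{\tilde g} \ar[d] & A^\bu \ar[d]^{f} \\
B^\bu \ar[r]^{g} & C^\bu
}
\]
with $f$ an $\mc{E}$-fibration and $g$ an $\mc{E}$-equivalence, applying the contravariant functor $[-,G]=\Hom_{\CA}(-,G)$ levelwise produces a commutative square of simplicial groups, but there is no reason for that square to be a pushout or a (homotopy) pullback: representable functors send colimits to limits, not limits to colimits, and pullbacks in $\CA$ are \emph{not} computed on underlying vector spaces (products in $\CA$ are tensor products). So even granting your claim that $f^*$ is a Kan fibration (which, incidentally, is what the definition of $\mc{E}$-\emph{cofibration} gives, not $\mc{E}$-fibration), right properness of simplicial sets tells you nothing about $\tilde g^*$ unless you already know $[E^\bu,G]$ sits in the correct position in a homotopy-meaningful square. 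Establishing that is precisely the content of right properness, so the argument is circular.

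The paper handles this quite differently and defers it to Corollary~\ref{right-proper}. The key observation is that for an $\mc{E}$-fibration $f$ (reduced via Corollary~\ref{all-about-cofree-maps2} to a cofree map), each $A^n$ is a \emph{cofree} $C^n$-comodule, the pullback $E^\bu$ is the cotensor product $B^\bu\,\Box_{C^\bu}A^\bu$, and the K\"unneth spectral sequence of Theorem~\ref{Kunneth spectral sequence 2}(a) then identifies this with the derived cotensor product. Invariance of the derived cotensor product under weak equivalences in $B^\bu$ gives the result. This genuinely requires the comodule machinery of Subsection~4.2; there is no shortcut via mapping into the injectives $G\in\mc{E}$.
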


\begin{proof}
This is an application of Theorem \ref{bousfield} or the dual of \cite[II.4, Theorem 4]{Quillen:HA}. Right properness will be
shown in Corollary \ref{right-proper}.
\end{proof}

\begin{remark} \label{F-equivalences-explicit}
The underlying model structure on $\CA$ is discrete. This has the following consequences:
\begin{enumerate}
   \item
Every object in $c\CA$ is Reedy cofibrant, and therefore \mc{E}-cofibrant.
   \item
For any $s\ge 0$ and $H\in\mc{E}$, the canonical map
  $$ \naturalpi{s}{C^\bu}{H}\to\pi_s[C^\bu,H] $$
is a natural isomorphism. Furthermore, there are natural isomorphisms
  $$ \pi_{s}[C^\bu,H_* \kfp{m}] \cong \bigl(\pi^s (C^\bu)_m\bigr)^{\dual} $$ 
where $m$ on the left denotes the internal grading and $(-)^\dual$ is the dual vector space. This shows that in $c\CA$ we deal essentially with only one invariant, namely, $\pi^*C^\bu$.
\end{enumerate}
\end{remark}

The cohomotopy groups of a cosimplicial (unstable) coalgebra $C^\bu$ carry additional structure which is induced from the comultiplication. More specifically, the graded vector space $\pi^0(C^\bu)$ of a cosimplicial (unstable) coalgebra $C^\bu$ is again an (unstable) coalgebra. Moreover, the higher cohomotopy groups $\pi^s(C^\bu)$, for $s \geq  1$, are (unstable) $\pi^0(C^\bu)$-comodules where the comodule structure is essentially induced from the unique degeneracy map $C^s \to C^0$ and the comultiplication. Furthermore, these are also coabelian as unstable modules. (The definition of coabelian objects will be recall in Subsection~\ref{coabelian-objects}.) See also the Appendix of~\cite{Blanc-Stover} for the dual statement. 

Using the Dold-Kan correspondence, the normalized cochains functor $N : c\Vec \to {\rm Ch}^+(\Vec)$ is an equivalence from $c \Vec$ to the category ${\rm Ch}^+(\Vec)$ of cochain complexes in $\Vec$. The category ${\rm Ch}^+(\Vec)$ admits 
a  model structure where the weak equivalences are the quasi-isomorphisms, the cofibrations are the monomorphisms in positive degrees, and the fibrations are the epimorphisms. The Dold-Kan equivalence can be used to transport a model structure on $c\Vec$ from the model category ${\rm Ch}^+(\Vec)$.  The resulting model structure on $c \Vec$ is an example of a resolution model category as explained in \cite[4.4]{Bou:cos}. Moreover, with respect to this model structure, the adjunction obtained in (\ref{cofree-adjunction}): 
  $$J : c\CA^{\mc{E}} \rightleftarrows c\Vec : G $$
is a Quillen adjunction. One easily derives the following characterizations 
(cf. \cite[4.4]{Bou:cos}).

\begin{proposition} \label{F-equivalences-explicit2}
Let $f: C^\bu \to D^\bu$ be a map in $c \CA$. 
\begin{enumerate}
\item 
$f$ is an $\mc{E}$-equivalence if and only if it induces isomorphisms 
  $$\pi^n(f): \pi^n(C^\bu) \cong \pi^n(D^\bu)$$ 
for all $n \geq 0$, or equivalently, if the induced map between the associated normalized cochain complexes of graded vector spaces 
is a quasi-isomorphism.
 \item 
$f$ is an $\mc{E}$-cofibration if and only if the induced map 
  $$Nf : N C^\bu \to N D^\bu$$ 
between the associated cochain complexes is injective in positive degrees.
\end{enumerate}
\end{proposition}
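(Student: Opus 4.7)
The plan is to reduce both parts to standard Moore/Dold-Kan theory using the identification
\[ [C^\bu, H_*\kfp{m}]_n \;=\; \Hom_{\CA}(C^n, H_*\kfp{m}) \;\cong\; \bigl((JC^n)_m\bigr)^{\dual} \]
coming from the cofree adjunction \eqref{catan-serre-calculations}. Thus, for $H = H_*\kfp{m} \in \mc{E}$, the simplicial abelian group $[C^\bu, H]$ is the degreewise vector-space dual of the cosimplicial $\F$-vector space $(JC^\bu)_m$, with simplicial structure coming from the cosimplicial structure of $C^\bu$.

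For part (1), by definition $f$ is an $\mc{E}$-equivalence iff $[D^\bu, H] \to [C^\bu, H]$ is a weak equivalence of simplicial groups for every $H \in \mc{E}$, i.e.\ iff it induces an isomorphism on $\pi_s$ for all $s \ge 0$ and all $H \in \mc{E}$. By Remark~\ref{F-equivalences-explicit}(1), for $H = H_*\kfp{m}$ the induced map on $\pi_s$ is naturally the dual of $\pi^s(f)_m : \pi^s(C^\bu)_m \to \pi^s(D^\bu)_m$. Since $\F$-linear duality reflects isomorphisms ($\phi$ injective iff $\phi^{\dual}$ surjective, and $\phi$ surjective iff $\phi^{\dual}$ injective), this is equivalent to $\pi^s(f)$ being an isomorphism of graded vector spaces for all $s \ge 0$. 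The equivalence with the quasi-isomorphism statement is the classical Dold--Kan identification $\pi^s(C^\bu) \cong H^s(NC^\bu)$ applied to the underlying cosimplicial graded vector space $JC^\bu$.

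For part (2), the discrete model structure on $\CA$ makes every map in $c\CA$ a Reedy cofibration (cf.\ Remark~\ref{F-equivalences-explicit}(2)), so $f$ is an $\mc{E}$-cofibration iff $[D^\bu, H] \to [C^\bu, H]$ is a fibration of simplicial groups for every $H \in \mc{E}$. Using the identification above, for $H = H_*\kfp{m}$ this map is the vector-space dual of $(JC^\bu)_m \to (JD^\bu)_m$. Now invoke the standard fact that a map of simplicial abelian groups is a Kan fibration iff its Moore normalization is surjective in positive degrees, and dualize (over a field, the normalized chain complex of a degreewise dual cosimplicial vector space is the dual of its normalized cochain complex); the condition becomes injectivity of $N((JC^\bu)_m) \to N((JD^\bu)_m)$ in positive degrees. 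Letting $m$ range over all internal gradings, this is exactly the condition that $Nf$ is injective in positive cosimplicial degrees. The main subtlety is tracking the variance introduced by vector-space duality, which interchanges injectivity and surjectivity on the normalized complexes; otherwise both parts are routine applications of the Dold--Kan correspondence once the mapping simplicial groups have been correctly identified.
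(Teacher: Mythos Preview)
Your proof is correct and follows essentially the same approach that the paper gestures at: the paper simply notes that the adjunction $J : c\CA^{\mc{E}} \rightleftarrows c\Vec : G$ is Quillen for the Dold--Kan-lifted model structure on $c\Vec$ and says ``one derives the following characterizations,'' whereas you carry out the derivation explicitly by identifying $[C^\bu, H_*\kfp{m}]$ with the dual of $(JC^\bu)_m$ and invoking the Moore/Dold--Kan criterion. One small point: your citation of Remark~\ref{F-equivalences-explicit}(2) for ``every map is a Reedy cofibration'' is slightly off---that remark only asserts every \emph{object} is Reedy cofibrant---but the claim you need follows immediately from the fact that in the discrete model structure on $\CA$ every map is a cofibration, so every latching map condition is vacuous.
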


\begin{remark}
The normalized cochain complex of a cocommutative coalgebra $C^\bu$ in $c\Vec$ carries the structure of a cocommutative coalgebra in ${\rm Ch}^+(\Vec)$ which is induced by the comultiplication of $C^\bu$ and the shuffle map (cf. \cite[II, 6.6-6.7]{Quillen:HA}). 
\end{remark}

\subsection{Cosimplicial comodules over a cosimplicial coalgebra} 
In this subsection, we discuss the homotopy theory of cosimplicial comodules over a cosimplicial $\mathbb{F}$-coalgebra $C^\bu$. The results here are mostly dual to results about simplicial modules over a simplicial ring due to Quillen \cite{Quillen:HA}. 

Let $C^{\bu}$ be a cosimplicial graded $\mathbb{F}$-coalgebra, i.e., a cocommutative, counital coalgebra object in $c\Vec$. Note that $C^\bu$ carries a cosimplicial and an internal grading.

\begin{definition}
A {\it (cosimplicial) $C^\bu$-comodule} $M^\bu$ is a cosimplicial graded $\F$-vector space $M^\bu$ together with maps $M^n \to C^n \otimes_{\F} M^n$, for each 
$n \geq 0$, which are compatible with the cosimplicial structure maps and endow $M^n$ with a (left) $C^n$-comodule structure. We denote the category of $C^\bu$-comodules by $\Comod_{C^\bu}$. 
\end{definition}

The category $\Comod_{C^\bu}$ is an abelian category whose objects are equipped with a cosimplicial and an internal grading. The forgetful functor $U$ admits a right adjoint 
  $$U\!: \Comod_{C^\bu} \rightleftarrows c\Vec\!: C^\bu \otimes -$$ 
which is comonadic: $\Comod_{C^\bu}$ is a category of coalgebras with respect to a comonad on $c \Vec$.

We wish to promote this adjunction to a Quillen adjunction by transferring the model structure from $c \Vec$ to $\Comod_{C^\bu}$. The standard methods for transferring a (cofibrantly generated) model structure along an adjunction do not apply to this case because here the transfer is in the other direction, from right to left along the adjunction. 
Moreover, $\Comod_{C^\bu}$ is not a category of cosimplicial objects in a model category, so the resulting model category is not, strictly speaking, an example of a resolution model category. We give a direct proof for the existence of the induced model structure on $\Comod_{C^{\bu}}$, essentially by dualizing arguments from \cite{Quillen:HA}, see also \cite{Blanc-model str}.

A map $f: X^\bu \to Y^\bu$ in $\Comod_{C^\bu}$ is:
\begin{itemize}
\item a \emph{weak equivalence} (resp. \emph{cofibration}) if $U(f)$ is a weak equivalence (resp. cofibration) in $c\Vec$.
\item a \emph{cofree map} if it is the transfinite (pre-)composition of an inverse diagram $F: \lambda^{\op} \to \Comod_{C^\bu}$ for some ordinal $\lambda$,
$$X^\bu \to \cdots \to F(\alpha + 1) \xrightarrow{p_{\alpha}} F(\alpha) \to \cdots \to F(1) \xrightarrow{p_0} F(0) = Y^\bu$$  
such that for each $\alpha < \lambda$ there is a fibration $q_{\alpha}: V^\bu_{\alpha} \to W^\bu_{\alpha}$ in $c\Vec$ and a pullback square
\[
\xymatrix{
F(\alpha+1) \ar[r] \ar[d]^{p_{\alpha}} & C^\bu \otimes V^\bu_{\alpha} \ar[d]^{C^\bu \otimes q_{\alpha}} \\
F(\alpha) \ar[r] & C^\bu \otimes W^\bu_{\alpha}
}
\]
and for each limit ordinal $\alpha \leq \lambda$, $F(\alpha) \to \varprojlim_{\kappa < \alpha} F(\kappa)$ is an isomorphism. (This is equivalent to saying that 
$f$ is cellular in $(\Comod_{C^\bu})^{\op}$ with respect to the class of maps of the form $C^\bu \otimes q$ where $q$ is a fibration in $c \Vec$.) 
\item a \emph{fibration} if it is a retract of a cofree map. 
\end{itemize}

\begin{theorem} \label{Comod-model-structure}
These classes of weak equivalences, cofibrations and fibrations define a proper simplicial model structure 
on $\Comod_{C^\bu}$.
\end{theorem}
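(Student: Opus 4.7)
The plan is to verify the model category axioms directly, dualizing Quillen's arguments for simplicial modules in \cite[II.6]{Quillen:HA} and transferring structure from $c\Vec$ along the forgetful-cofree adjunction $U \dashv (-\otimes C^\bu)$. First, $\Comod_{C^\bu}$ is complete and cocomplete: $U$ creates colimits as a left adjoint, and limits exist because $-\otimes C^\bu$ is exact over the field $\F$, so the graded vector space limit $\lim M^\bu_i$ of any diagram in $\Comod_{C^\bu}$ inherits a comodule structure from the isomorphism $(\lim M^\bu_i)\otimes C^\bu \cong \lim(M^\bu_i \otimes C^\bu)$, which shows at the same time that $U$ preserves limits. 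Closure under retracts and the two-out-of-three property for weak equivalences are immediate from the analogous statements in $c\Vec$, since cofibrations and weak equivalences are reflected and created by $U$, while fibrations are retracts of cofree maps by definition.

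For the lifting axioms, the adjunction $U \dashv (-\otimes C^\bu)$ reduces any lifting problem in $\Comod_{C^\bu}$ against a map $q\otimes C^\bu$ to a lifting problem in $c\Vec$ against $q$ itself. Hence, if $j$ is a trivial cofibration in $\Comod_{C^\bu}$, then $U(j)$ is a trivial cofibration in $c\Vec$ and lifts against any fibration $q$, so $j$ lifts against each pullback step in the tower defining a cofree map. Since the lifting property is preserved under pullback, inverse tower composition and retracts, trivial cofibrations lift against all fibrations. The lifting of cofibrations against trivial fibrations is handled analogously, after observing that trivial fibrations are retracts of cofree towers built from trivial fibrations in $c\Vec$.

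The factorization axioms constitute the main obstacle and require a cosmall-object-type argument dual to the standard small object argument. Any map $f \co X^\bu \to Y^\bu$ is to be factored functorially as $X^\bu \to Z^\bu \to Y^\bu$ with the first map a trivial cofibration and the second a fibration, built transfinitely as follows: at successor stages, factor $U(X^\bu)\to U(F(\alpha))$ in $c\Vec$ into a trivial cofibration followed by a fibration $q_\alpha \co V^\bu_\alpha \to W^\bu_\alpha$, and form $F(\alpha+1)$ as the pullback of $F(\alpha) \to W^\bu_\alpha \otimes C^\bu$ along $q_\alpha\otimes C^\bu$; at limit stages take the inverse limit. Termination at a sufficiently large regular cardinal is controlled by the fact that every object of $\Comod_{C^\bu}$ is a filtered colimit of its finite-dimensional sub-comodules, which bounds the cardinality of obstructions. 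The factorization as cofibration followed by trivial fibration is constructed analogously, using trivial fibrations $q$ in $c\Vec$ at each stage.

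Finally, properness and the simplicial structure are inherited from $c\Vec$. Right properness follows from the facts that $U$ creates weak equivalences and preserves pullbacks, and that $U$ sends a fibration in $\Comod_{C^\bu}$ to a fibration in $c\Vec$, because each tower step $V^\bu\otimes C^\bu \to W^\bu \otimes C^\bu$ is a fibration in $c\Vec$ by flatness of $C^\bu$ over $\F$. Left properness follows from left properness of $c\Vec$ together with the fact that $U$ preserves pushouts. The simplicial enrichment is defined via the external simplicial structure on $c\Vec$, with tensors and cotensors computed levelwise; the simplicial model category axiom reduces once more, by adjunction, to the corresponding axiom for $c\Vec$.
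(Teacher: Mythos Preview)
Your overall outline matches the paper's, but there are genuine gaps in two places.

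First, your completeness argument is incorrect: the tensor product $-\otimes C^\bu$ over $\F$ does \emph{not} commute with infinite products, so the claimed isomorphism $(\lim M^\bu_i)\otimes C^\bu \cong \lim(M^\bu_i \otimes C^\bu)$ fails for infinite diagrams and $U$ does not create arbitrary limits. The paper handles this by invoking local presentability of $\Comod_{C^\bu}$ (via \cite{Porst}) for general completeness, and only claims that \emph{finite} limits are lifted from $c\Vec$.

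Second, and more seriously, your factorization argument is not an argument. A ``cosmall-object-type argument'' is not a standard technique that can be invoked off the shelf; left transfer of model structures along a right adjoint is delicate precisely because the small object argument does not dualize. You give no reason why $X^\bu \to \lim_\alpha F(\alpha)$ is a trivial cofibration, and the remark about finite-dimensional sub-comodules ``bounding obstructions'' connects to nothing concrete. The paper avoids this entirely by exploiting the abelian structure. For the (cofibration, trivial fibration) factorization it uses a \emph{one-step} construction: factor $U(X^\bu) \to 0$ in $c\Vec$ through a weakly trivial $Z^\bu$, and take $X^\bu \hookrightarrow Y^\bu \oplus (Z^\bu \otimes C^\bu) \twoheadrightarrow Y^\bu$. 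For the (trivial cofibration, fibration) factorization it first reduces to a cofibration injective on cohomotopy, then runs a \emph{countable} tower in which each stage kills one cohomotopy degree via a pullback along a path-object fibration, with convergence controlled by the Mittag-Leffler condition. Your lifting argument for cofibrations against trivial fibrations also rests on the unproved claim that trivial fibrations are retracts of towers built from trivial fibrations in $c\Vec$; the paper instead deduces this lifting from the explicit one-step form of its (cofibration, trivial fibration) factorization via a retract trick.
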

\begin{proof}
It is clear that $\Comod_{C^\bu}$ has colimits - they are lifted from $c \Vec$. Following \cite{Porst}, it can be shown that $\Comod_{C^\bu}$ is locally presentable and therefore also complete. 
It can be checked directly that finite limits are lifted from finite limits in $c \Vec$ due to the exactness of the graded tensor product. The ``2-out-of-3'' and retract axioms are obvious. The factorization axioms are proved in Lemmas~\ref{lem:comod-cof-triv-fib} and~\ref{lem:comod-triv-cof-fib} below. In both factorizations the second map is actually a cofree map. For the lifting axiom in the case of a trivial cofibration
and a fibration, it suffices to consider a lifting problem as follows
\[
\xymatrix{
A^\bu \ar[r] \ar[d]^j & C^\bu \otimes V^\bu \ar[d]^{C^\bu \otimes q} \\
B^\bu \ar[r] & C^\bu \otimes W^\bu 
}
\]
where $j$ is a trivial cofibration, and $q$ a fibration in $c\Vec$. This square admits a lift because the adjoint diagram in the model category
$c \Vec$ admits a lift
\[
\xymatrix{
U(A^\bu) \ar[r] \ar[d]^{U(j)} & V^\bu \ar[d]^{q} \\
U(B^\bu) \ar[r] \ar[ru] & W^\bu 
}
\]
For the other half of the lifting axiom, consider a commutative square
\[
\xymatrix{
X^\bu \ar[r]^f \ar[d]^i & E^\bu \ar[d]^{p} \\
D^\bu \ar[r]^g & Y^\bu 
}
\]
where $i$ is a cofibration and $p$ a trivial fibration. The proof of Lemma \ref{lem:comod-cof-triv-fib} below gives a factorization 
$p = p' j$ into a cofibration followed by a trivial fibration which has the following form
$$E^\bu \stackrel{\simeq}{\hookrightarrow} Y^\bu \oplus (C^\bu \otimes Z^\bu) \stackrel{\simeq}{\twoheadrightarrow} Y^\bu$$
where $Z^\bu$ is weakly trivial in $c \Vec$ and the last map is the projection. By the ``2-out-of-3'' property, $j$ is a weak equivalence.  The commutative square 
\[
\xymatrix{
X^\bu \ar[r]^(.3){j f} \ar[d]^i & Y^\bu \oplus (C^\bu \otimes Z^\bu) \ar[d]^{p'} \\
D^\bu \ar[r]^g & Y^\bu
}
\]
admits a lift $h: D^\bu \to Y^\bu \oplus (C^\bu \otimes Z^\bu)$ which is induced by an extension of $U(X^\bu) \to Z^\bu$ 
to $U(D^\bu)$ in $c \Vec$. Moreover, given the half of the lifting axiom that is already proved, the commutative square 
\[
\xymatrix{
E^\bu \ar@{=}[r] \ar[d]^j &  E^\bu \ar[d]^p \\
Y^\bu \oplus (C^\bu \otimes Z^\bu) \ar[r]^(.7){p'} & Y^\bu
}
\]
admits a lift $h': Y^\bu \oplus (C^\bu \otimes Z^\bu) \to E^\bu$ - which shows that $p$ a retract of $p'$. The composition 
$h'': = h' h: D^\bu \to E^\bu$ is a required lift for the original square. 

Properness is inherited from $c\Vec$. The simplicial structure is the external one - this is defined dually to the one in \cite{Quillen:HA}, see also Subsection~\ref{cosimplicial-objects}. The compatibility
of the simplicial structure with the model structure can easily be deduced from the corresponding compatibility in $c \Vec$. 
\end{proof}

We complete the proof of Theorem \ref{Comod-model-structure} with proofs of the following two lemmas. 

\begin{lemma}\label{lem:comod-cof-triv-fib}
Every map in $\Comod_{C^\bu}$ can be factored functorially into a cofibration followed by a cofree map which is also a weak equivalence.
\end{lemma}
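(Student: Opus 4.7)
The plan is to perform the factorization in a single cofree step, by lifting the functorial factorization available in $c \Vec$ along the coaction of $Y^{\bu}$.

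Given $f \co X^{\bu} \to Y^{\bu}$, I would first factor the underlying map $U(f)$ in $c \Vec$ functorially as $U(X^{\bu}) \stackrel{i}{\to} V^{\bu} \stackrel{q}{\to} U(Y^{\bu})$ with $i$ a cofibration and $q$ a trivial fibration. Then I define
\begin{equation*}
  Z^{\bu} := Y^{\bu} \times_{U(Y^{\bu}) \otimes C^{\bu}} (V^{\bu} \otimes C^{\bu})
\end{equation*}
as a pullback in $\Comod_{C^{\bu}}$ along the coaction $\rho_{Y^{\bu}}$ and the map $q \otimes C^{\bu}$. The original map $f$ factors through $Z^{\bu}$ via the adjoint $X^{\bu} \to V^{\bu} \otimes C^{\bu}$ of $i$, and the whole construction is manifestly functorial.

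The projection $Z^{\bu} \to Y^{\bu}$ is a cofree map with $\lambda = 1$ in the defining tower. To see that it is a weak equivalence, I would use that the coaction $\rho_{Y^{\bu}}$ is split by $\id \otimes \epsilon$, so the pullback can be computed degreewise as $U(Z^{\bu}) \cong U(Y^{\bu}) \oplus (\ker(q) \otimes U(C^{\bu}))$. Since $\ker(q)$ is acyclic as the kernel of a trivial fibration in $c \Vec$, and we are working over a field, the K\"unneth theorem forces $\ker(q) \otimes U(C^{\bu})$ to be acyclic as well, so the projection $U(Z^{\bu}) \to U(Y^{\bu})$ is a weak equivalence.

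The main obstacle is to show that the induced map $X^{\bu} \to Z^{\bu}$ is a cofibration, i.e., that its image in $c \Vec$ is injective in positive normalized degrees. For this I plan to observe that the composite
\begin{equation*}
  U(X^{\bu}) \to U(Z^{\bu}) \to V^{\bu} \otimes U(C^{\bu}) \xrightarrow{\id \otimes \epsilon} V^{\bu}
\end{equation*}
equals $i$: the middle arrow is the adjoint $(i \otimes \id) \circ \rho_{X^{\bu}}$, and $(\id \otimes \epsilon) \circ \rho_{X^{\bu}} = \id$ by the counit axiom. Since $i$ is a cofibration in $c \Vec$, its normalization is injective in positive degrees, and this injectivity must be inherited by the first arrow $U(X^{\bu}) \to U(Z^{\bu})$ of the composite. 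Hence $X^{\bu} \to Z^{\bu}$ is a cofibration and the factorization is complete.
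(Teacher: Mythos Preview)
Your proof is correct and follows a closely related but genuinely different route from the paper. The paper factors the \emph{zero map} $U(X^{\bu}) \to 0$ in $c\Vec$ as a cofibration $j\co U(X^{\bu}) \to Z^{\bu}$ followed by a trivial fibration $Z^{\bu} \to 0$, and then takes the factorization
\[
X^{\bu} \xrightarrow{(f,\,\tilde{\jmath})} Y^{\bu} \oplus (Z^{\bu} \otimes C^{\bu}) \xrightarrow{\mathrm{pr}} Y^{\bu},
\]
where $\tilde{\jmath}$ is the adjoint of $j$. Since $Z^{\bu}$ is acyclic, so is $Z^{\bu} \otimes C^{\bu}$, and the projection is a cofree weak equivalence; the cofibration check is exactly your counit trick, composing with $\id \otimes \epsilon$ to recover $j$. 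You instead factor $U(f)$ itself and replace the direct sum by the pullback along the coaction $\rho_{Y^{\bu}}$. Your analysis of the resulting short exact sequence $0 \to \ker(q) \otimes C^{\bu} \to U(Z^{\bu}) \to U(Y^{\bu}) \to 0$ is correct (the degreewise splitting is incidental; what matters is the long exact sequence in cohomotopy together with acyclicity of $\ker(q)\otimes C^{\bu}$ via K\"unneth). The paper's choice avoids the pullback entirely and makes the weak-equivalence claim immediate, but your construction has the mild advantage that the intermediate object $V^{\bu}$ already sits over $U(Y^{\bu})$, which can be convenient if one later wants to compare with factorizations in $c\Vec$.
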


\begin{proof}
Let $f\co X^\bu\to Y^\bu$ be a map in $\Comod_{C^\bu}$.
Choose a functorial factorization in $c\Vec$ of the zero map  $U(X^\bu) \xrightarrow{j} Z^\bu\xrightarrow{q} 0$, into a cofibration 
$j$ followed by a trivial fibration $q$.
Consider $i: X^\bu \to C^\bu \otimes Z^\bu$, the adjoint of $j$. Then $f$ factors as 
\[
X^\bu \xrightarrow{(f,i)} Y^\bu\oplus (C^\bu \otimes Z^\bu) \xrightarrow{p} Y^\bu \] 
where $p$ is the projection. The map $p$ is obviously cofree. It is also a weak equivalence since $C^\bu \otimes Z^\bu$ is weakly trivial. 
\end{proof}

\begin{lemma}\label{lem:comod-triv-cof-fib}
Every map in $\Comod_{C^\bu}$ can be factored functorially into a trivial cofibration followed by a cofree map.
\end{lemma}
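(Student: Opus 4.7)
The plan is to mimic dually the Quillen-style small-object argument for the corresponding factorization on simplicial modules over a simplicial ring \cite[II.4]{Quillen:HA}. Given $f\co X^\bu \to Y^\bu$, I build a transfinite inverse tower $Z_0 = Y^\bu \leftarrow Z_1 \leftarrow \cdots \leftarrow Z_\alpha \leftarrow Z_{\alpha+1} \leftarrow \cdots$ with compatible maps $X^\bu \to Z_\alpha$, each transition $Z_{\alpha+1} \to Z_\alpha$ cofree, and set $Z^\bu := \varprojlim_\alpha Z_\alpha$. Since cofreeness is defined as a transfinite composition of pullbacks of maps of the form $q \otimes C^\bu$, the composite $Z^\bu \to Y^\bu$ will automatically be cofree.

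At a successor stage, I apply the functorial factorization in $c\Vec$ to the underlying map $U(f_\alpha)\co U(X^\bu) \to U(Z_\alpha)$, obtaining $U(X^\bu) \stackrel{j_\alpha}{\hookrightarrow} E_\alpha \stackrel{q_\alpha}{\twoheadrightarrow} U(Z_\alpha)$ with $j_\alpha$ a trivial cofibration and $q_\alpha$ a fibration. I then define
\[
Z_{\alpha+1} := Z_\alpha \times_{U(Z_\alpha) \otimes C^\bu} (E_\alpha \otimes C^\bu),
\]
where the left-hand map is the coaction $\Delta_{Z_\alpha}$ and the right-hand map is $q_\alpha \otimes \id$. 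This displays $Z_{\alpha+1} \to Z_\alpha$ as a pullback of $q_\alpha \otimes \id$, hence as a cofree map. The lift $X^\bu \to Z_{\alpha+1}$ is supplied by the universal property of the pullback, combining the existing $X^\bu \to Z_\alpha$ with the adjoint $X^\bu \to E_\alpha \otimes C^\bu$ of $j_\alpha$; the requisite compatibility in $U(Z_\alpha) \otimes C^\bu$ reduces to the observation that $X^\bu \to Z_\alpha$ is a comodule map and so intertwines the two coactions. At limit ordinals I pass to inverse limits and use the universal property to induce $X^\bu \to Z_\lambda$, and functoriality throughout is inherited from the functorial choices in $c\Vec$.

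The main obstacle will be to establish convergence: for some sufficiently large ordinal $\lambda$ the map $X^\bu \to Z_\lambda$ must be a weak equivalence in $\Comod_{C^\bu}$. That it is a cofibration at every stage is comparatively straightforward, since at each step the added data arises from tensoring a cofibration of $c\Vec$ with $C^\bu$, and tensoring with $C^\bu$ over the field $\F$ is exact. For the weak-equivalence part, one analyses the cohomology of the iterated pullbacks via the K\"unneth formula, taking advantage of field coefficients so that the defect in $H^*(U(X^\bu)) \to H^*(U(Z_\alpha))$ being an isomorphism is systematically reduced at each successor. A possibly cleaner alternative is to phrase the whole construction as a genuine dual small-object argument with the set of generators $\{j \otimes C^\bu \mid j \in J\}$, where $J$ is a set of generating trivial cofibrations in $c\Vec$; each such $j \otimes C^\bu$ is a trivial cofibration in $\Comod_{C^\bu}$ because $-\otimes C^\bu$ preserves monomorphisms in positive degrees and weak equivalences. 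The remaining technical step in that approach is to verify that the right-hand factor produced by the small-object argument is actually cofree rather than merely a retract of one, which comes back to the same structural observation used above.
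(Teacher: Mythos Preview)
Your step-by-step construction is correct as far as it goes: the pullback $Z_{\alpha+1} = Z_\alpha \times_{U(Z_\alpha)\otimes C^\bu}(E_\alpha\otimes C^\bu)$ is a cofree map over $Z_\alpha$, and the lift $X^\bu\to Z_{\alpha+1}$ exists for the reason you give. The gap is exactly the one you flag as ``the main obstacle'': convergence. Neither of your suggested mechanisms supplies it. The dual small-object argument would require the codomains of your generators $j\otimes C^\bu$ to be cosmall in $\Comod_{C^\bu}$, which fails in general. And the K\"unneth analysis cuts the wrong way: at each successor the short exact sequence $0\to K_\alpha\otimes C^\bu\to Z_{\alpha+1}\to Z_\alpha\to 0$ has $\pi^*(K_\alpha\otimes C^\bu)\cong\bigoplus_{p+q=*}\pi^pK_\alpha\otimes\pi^qC^\bu$, so tensoring with $C^\bu$ spreads the cohomotopy of the kernel across many degrees rather than concentrating it. There is no reason the defect of $U(f_\alpha)$ decreases monotonically, and it is not clear the process stabilizes at any ordinal.

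The paper's argument avoids this by making the tower converge in $\omega$ steps by design. First it performs one cheap cofree step so that the map becomes a cofibration which is \emph{injective} on all cohomotopy. From that point on, rather than factoring $U(f_n)$ generically, it takes the cofiber $W^\bu$ of $i_n\co X^\bu\to Z_n^\bu$, forms a path object $0\stackrel{\sim}{\to}P(W^\bu)\twoheadrightarrow U(W^\bu)$ in $c\Vec$, and pulls back $P(W^\bu)\otimes C^\bu\to W^\bu\otimes C^\bu$ along $Z_n^\bu\to W^\bu\to W^\bu\otimes C^\bu$. Because $W^\bu$ is already $(n-1)$-connected at this stage, the resulting exact sequence forces $\pi^n(X^\bu)\cong\pi^n(Z_{n+1}^\bu)$, so the connectivity improves by exactly one. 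Mittag-Leffler then identifies $\pi^*$ of the limit. Your generic pullback does not see the cofiber and so cannot exploit this connectivity bookkeeping; that is the missing idea.
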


\begin{proof}
Let $f\co X^\bu\to Y^\bu $ be a map in $\Comod_{C^\bu}$. 
First, we show that we can factor $f$ functorially into a cofibration which is injective on cohomotopy groups 
followed by a cofree map.
There is a functorial factorization of $U(f)$ in $c\Vec$
$$U(X^\bu) \xrightarrow{(f,i')} U(Y^\bu) \oplus W^\bu \xrightarrow{q} U(Y^\bu)$$
where $i'$ is a cofibration and induces monomorphisms on cohomotopy groups. The map $q$ is the projection.
Then the map $i$ in $\Comod_{C^\bu}$ adjoint to $i'$ gives the desired factorization:
\[
X^\bu \xrightarrow{(f,i)} Z^\bu : = Y^\bu\oplus (C^\bu \otimes W^\bu) \xrightarrow{p} Y^\bu 
\] 
where $p$ denotes the projection away from $C^\bu \otimes W^\bu$. Since $i'$ factors through $U(i)$, it follows 
that $i$ also induces a monomorphism on cohomotopy groups.

Thus it suffices to show that a cofibration $f: X^\bu \to Y^\bu$ which is injective on cohomotopy groups can be factored functorially into a trivial cofibration 
followed by a cofree map. We proceed with an inductive construction. Set $i_{0} = f$. For the inductive step, suppose that we are given a cofibration 
$i_n : X^\bu \to Z^\bu_n$ which is injective on cohomotopy in all degrees and surjective in degrees less than $n$. We claim that there is a functorial factorization 
of $i_n$ into a cofibration $i_{n+1}$ whose connectivity is improved by one, followed by a cofree map. 

Let $W^\bu$ be the cofiber of $i_n$ in $\Comod_{C^\bu}$ (or $c\Vec$) and $q\co Z^\bu_n \to W^\bu$ the canonical map. By assumption, we have that $\pi^*(W^\bu)= 0$
for $* < n$ and $\pi^s(Z^\bu_n) \to \pi^s(W^\bu)$ is surjective with kernel $\pi^s(X^\bu)$ for all $s \geq 0$. Let $0 \xrightarrow{\simeq} P(W^\bu) \twoheadrightarrow U(W^\bu)$ be a functorial 
(trivial cofibration, fibration)-factorization in $c\Vec$. Consider the pullback in $\Comod_{C^\bu}$,
\[
\xymatrix{
& Z^\bu_{n+1} \ar[d]^{p_n} \ar[rr] && C^\bu \otimes P(W^\bu) \ar[d] \\
X^\bu \ar[r]^{i_n} \ar@{-->}[ru]^{i_{n+1}} & Z^\bu_n \ar[r] & W^\bu \ar[r] & C^\bu \otimes W^\bu 
}
\]
There is a canonical factorization of $i_n$ through $Z^\bu_{n+1}$ since the bottom composition is the zero map. We claim that this factorization $i_n = p_n i_{n+1}$ 
has the desired properties. Obviously $p_n$ is cofree, $i_{n+1}$ is injective on cohomotopy in all degrees, and it also is surjective in degrees less 
than $n$. Since $C^\bu \otimes P(W^\bu)$ is weakly trivial and $W^\bu$ 
is $(n-1)$-connected, there is an exact sequence 
$$ 0 \to \pi^n(Z^\bu_{n+1}) \to \pi^n(Z^\bu_n) \to \pi^n(W^\bu)$$
from which it follows that the monomorphism $\pi^n(X^\bu) \to \pi^n(Z^\bu_{n+1})$ is actually an isomorphism. 

By this procedure, we obtain inductively a sequence of maps $(i_{n+1} ,p_n)$, for each $n \geq 0$, such that $i_{n} = p_n i_{n+1}$. The desired 
factorization of $f$ is obtained by first passing to the limit  
\[
X^\bu \xrightarrow{i} \varprojlim_n Z_n^\bu \xrightarrow{p} Y^\bu 
\]
and then applying Lemma \ref{lem:comod-cof-triv-fib} to factor $i = p' j$ into a cofibration $j$ followed by a cofree map $p'$ which is a weak equivalence. 
By construction, the map $p$ is cofree and therefore so is $pp'$. The inverse system of cohomotopy groups induced by the tower of objects 
$\{Z^\bu_n\}$ in $\Comod_{C^\bu}$ satisfies the Mittag-Leffler condition since the maps become isomorphisms eventually. Hence 
$$\pi^* (\varprojlim_n Z_n^\bu ) \xrightarrow{\cong} \varprojlim_n \pi^*(Z^\bu_n),$$
so $i$ is a weak equivalence, and therefore so is $j$. Then the factorization $f = (pp') j$ has the required properties. 
\end{proof}

\subsection{Spectral sequences}
Let $C$ be a cocommutative coalgebra over $\mathbb{F}$, and let $M$ and $N$ be two $C$-comodules. The cotensor product is the equalizer of the diagram
\[ 
M\ \Box_C\ N \to M \otimes N \rightrightarrows M \otimes C\otimes N 
\] 
where the maps on the right are defined using the two comodule structure maps. A $C$-comodule $M$ is called \emph{cofree} if it is of the form  $C \otimes V$ for some 
$\F$-vector space $V$. $M$ is called  {\it injective} if it is a retract of a cofree $C$-comodule. We have a canonical isomorphism 
$(C \otimes V)\ \Box_C\ N \cong V \otimes N$. There are analogous definitions and properties in the context of graded vector spaces 
and graded coalgebras. For background material and the general properties of the cotensor product,  we refer to Milnor-Moore~\cite{Milnor-Moore}, Eilenberg-Moore~\cite{Eilenberg-Moore}, Neisendorfer~\cite{Neis:alg-methods}, and Doi~\cite{Doi}. 

The definition of the cotensor product extends pointwise to the context of $C^\bu$-comodules $B^\bu$ and $A^\bu$ over a cosimplicial cocommutative coalgebra 
$C^\bu$ in $\Vec$. The cotensor product with a $C^\bu$-comodule $B^\bu$,
$$ B^\bu \ \Box_{C^\bu} \ - \co \Comod_{C^\bu} \to c\Vec,$$
is left exact and therefore admits right derived functors denoted $\Cotor^p_{C^\bu}(B^\bu, -)$. Note that the functor  $\Cotor$ comes with a trigrading, i.e., 
$\Cotor^p_{C^\bu}(B^\bu, -)$ is bigraded for all $p \geq 0$. As usual, we will usually suppress the internal grading from the notation.

Here we will be interested in the derived cotensor product and its relation with $\Cotor^*_{C^\bu}(B^{\bu}, -)$ via a coalgebraic 
version of the K\"unneth spectral sequence. We define the derived cotensor product 
$$B^\bu \stackrel{R}{\Box}_{C^\bu} A^\bu : = I^\bu\ \Box_{C^\bu}\ J^\bu$$
where $B^\bu \xrightarrow{\simeq} I^\bu$ and $A^\bu \xrightarrow{\simeq} J^\bu$ are functorial fibrant replacements in $\Comod_{C^\bu}$. It can be shown by 
standard homotopical algebra arguments that the derived cotensor product is invariant under weak equivalences in both variables. Indeed, any two fibrant 
replacements are (cochain) homotopy equivalent and such equivalences are preserved by the cotensor product. 

The following theorem is the coalgebraic version of \cite[II.6, Theorem 6]{Quillen:HA}. We only give a sketch of the proof since the arguments are dual to those 
of \cite{Quillen:HA}.

\begin{theorem}[K\"unneth spectral sequences] \label{Kunneth spectral sequence 2}
Let $C^\bu$ be a cocommutative coalgebra in $c\Vec$ and $A^\bu$, $B^\bu$ two $C^\bu$-comodules.
Then there are first quadrant spectral sequences
\begin{itemize}
\item[(a)] $E_2^{p, q} = \pi^p({\rm \Cotor}_{C^\bu}^q(B^\bu, A^\bu))\ \Longrightarrow\ \pi^{p+q}  (B^\bu \stackrel{R}{\Box}_{C^\bu} A^{\bu})$ \\

\item[(b)] $E_2^{p, q} = {\rm \Cotor}_{\pi^{*}(C^\bu)}^p (\pi^*(B^\bu), \pi^*(A^\bu))^q\ \Longrightarrow\ \pi^{p+q}(B^\bu \stackrel{R}{\Box}_{C^\bu} A^{\bu})$ \\

\item[(c)] $E_2^{p, q} = \pi^p( \pi^q(B^\bu) \stackrel{R}{\Box}_{C^\bu} A^\bu)\ \Longrightarrow\ \pi^{p+q}(B^\bu \stackrel{R}{\Box}_{C^\bu} A^{\bu}) $ \\

\item[(d)] $E_2^{p, q} = \pi^p( B^\bu \stackrel{R}{\Box}_{C^\bu} \pi^q(A^\bu))\ \Longrightarrow\ \pi^{p+q}(B^\bu \stackrel{R}{\Box}_{C^\bu} A^{\bu})$ 
\end{itemize}
which are natural in $A^\bu$, $B^\bu$ and $C^\bu$.

The edge homomorphism of \emph{(a)} is 
  $$\pi^* (B^\bu\ \Box_{ C^\bu}\ A^\bu ) \to   \pi^* (B^\bu\ \stackrel{R}{\Box}_{ C^\bu}\ A^\bu )$$ 
and is induced by the canonical map 
  $$B^\bu\ \Box_{C^\bu}\ A^\bu \to B^\bu\ \stackrel{R}{\Box}_{ C^\bu}\ A^\bu.$$ 
This canonical map is a weak equivalence if $\Cotor^p_{C_n}(B_n ,A_n )=0$ for $p>0$ and $n \geq 0$. 
\end{theorem}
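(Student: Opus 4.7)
The proof dualizes Quillen's construction in \cite[II.6, Theorem 6]{Quillen:HA}, producing all four spectral sequences from a single bicosimplicial $\mathbb{F}$-vector space. The key device is the cobar construction arising from the adjunction $U\colon \Comod_{C^\bu} \rightleftarrows c\Vec : -\otimes C^\bu$: iterating the associated comonad $T = (-\otimes C^\bu)\circ U$ produces a cosimplicial object $A^{\bu,*}$ in $\Comod_{C^\bu}$ with $A^{\bu,q} \cong A^\bu \otimes (C^\bu)^{\otimes(q+1)}$, together with an augmentation $A^\bu \to A^{\bu,*}$. Each $A^{\bu,q}$ is cofree, hence both injective for $\Cotor$ and fibrant in $\Comod_{C^\bu}$, and the partial totalizations $\Tot_n A^{\bu,*}$ provide a fibrant replacement of $A^\bu$ in the model structure built in the previous subsection. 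Applying $B^\bu \Box_{C^\bu} -$ degreewise yields a bicosimplicial $\mathbb{F}$-vector space $X^{\bu,\bu}$ whose total cohomology represents $\pi^*(B^\bu \stackrel{R}{\Box}_{C^\bu} A^\bu)$, via the identity $B^\bu \Box_{C^\bu}(V \otimes C^\bu) \cong B^\bu \otimes V$ on cofree comodules.

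Spectral sequence (a) is then obtained by filtering $X^{\bu,\bu}$ along the resolution direction: each column computes $\Cotor^q_{C^\bu}(B^\bu,A^\bu)$ degreewise in the cosimplicial direction (since the resolution is cofree in every cosimplicial degree $n$), so applying $\pi^p$ horizontally gives the claimed $E_2$-term. Spectral sequences (c) and (d) come from the opposite filtration, where one applies $\pi^q$ to $B^\bu$ or $A^\bu$ first and then derives the cotensor product; the identification of the $E_2$-terms uses that $\pi^*(B^\bu)$ and $\pi^*(A^\bu)$ are $\pi^0(C^\bu)$-comodules via the canonical map $C^\bu \to c\pi^0(C^\bu)$, as recorded in the discussion after Remark~\ref{F-equivalences-explicit}.

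Spectral sequence (b) is a Grothendieck-style composition-of-functors spectral sequence for ``cohomotopy composed with derived cotensor.'' It is constructed by taking, on top of $A^{\bu,*}$, a further cofree resolution of the $\pi^*(C^\bu)$-comodule $\pi^*(A^\bu)$ and reorganizing the resulting tri-cosimplicial object. The $E_2$-page is identified as $\Cotor^p_{\pi^*(C^\bu)}(\pi^*(B^\bu), \pi^*(A^\bu))$ by applying a universal-coefficient argument to each cofree layer. Naturality in all three variables is automatic from the functoriality of the cobar construction.

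Finally, the edge homomorphism of (a) is read off the $p=0$ column $\pi^*(B^\bu \Box_{C^\bu} A^\bu)$ and is induced by the augmentation $A^\bu \to A^{\bu,0}$. Under the hypothesis $\Cotor^p_{C^n}(B^n,A^n)=0$ for all $p>0$ and $n\ge 0$, each column of the $E_1$-page of (a) is concentrated in degree $q=0$, so the spectral sequence collapses and forces the canonical map to be a weak equivalence. The main technical obstacle will be the construction of (b): one must interleave two distinct cofree resolutions, control the auxiliary tri-cosimplicial bookkeeping, and verify conditional convergence of the composed-functor spectral sequence in the cosimplicial setting.
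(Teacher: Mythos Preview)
Your overall shape for (a) is reasonable, but the identification of the abutment is where it diverges from the paper and where the gap lies. The cobar terms $A^{\bu,q}\cong A^\bu\otimes (C^\bu)^{\otimes(q+1)}$ are cofree and hence fibrant in $\Comod_{C^\bu}$, but they are \emph{not} weakly trivial for $q>0$, and the claim that ``partial totalizations $\Tot_n A^{\bu,*}$ provide a fibrant replacement of $A^\bu$'' is not a proof. The paper avoids this issue by building the resolution differently: it resolves $B^\bu$ inductively via $B^\bu_p\stackrel{\sim}{\to} I^\bu_p$ (fibrant replacement) and $B^\bu_{p+1}=\coker(B^\bu_p\to I^\bu_p)$. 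The point of this construction is that $I^\bu_p$ is weakly trivial for $p>0$, so the \emph{other} filtration of the double complex $N^\bu_v(I^\bu_*\Box_{C^\bu}A^\bu)$ collapses at $E_2$ and exhibits the abutment as $\pi^*(I^\bu_0\Box_{C^\bu}A^\bu)$; a symmetry argument (resolving $A^\bu$ fibrantly and degenerating the spectral sequence with $I^\bu_0$ in place of $B^\bu$) then identifies this with the derived cotensor product. Your cobar resolution does not enjoy the weak-triviality property, so you would need a separate argument connecting $\mathrm{diag}(B^\bu\Box_{C^\bu}A^{\bu,*})$ to the model-categorical derived cotensor.

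For (b) and (c) your descriptions are both vaguer and structurally different from what the paper does. For (b), the paper does not build a tri-cosimplicial object; it passes to the resolution model category $c\Comod_{C^\bu}$ with respect to cofree $C^\bu$-comodules, chooses a cofree $\mc{G}$-fibrant replacement $B^\bu\to I^*_\bu$ there, and identifies the $E_2$-page using the fact (dual to \cite[II.6, Lemma~1]{Quillen:HA}) that the normalization of $\pi^\bu(I^\bu_*)$ is an injective $\pi^\bu(C^\bu)$-resolution of $\pi^\bu(B^\bu)$; the abutment comes from part~(a). For (c), the paper does not use ``the opposite filtration'' of a bicomplex: it constructs an exact couple directly from the truncation functor $T(D^\bu)=D^\bu/c\pi^0(D^\bu)$ together with cone and suspension functors, producing natural exact sequences $c\pi^k(B^\bu)\to\Sigma^k B^\bu\to CT\Sigma^k B^\bu\to\Sigma^{k+1}B^\bu$ which, after applying $-\Box_{C^\bu}I^\bu$ for a fibrant replacement $A^\bu\to I^\bu$, splice into the required exact couple with $E_2^{p,q}=\pi^p(\pi^q(B^\bu)\Box_{C^\bu}I^\bu)$. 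Your edge-homomorphism and collapse argument at the end is correct and matches the paper.
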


The notation in (b) and (c) requires some explanation. In (b), $\pi^*(B^\bu)$, a graded coalgebra in $\Vec$, becomes a $\pi^*(C^\bu)$-comodule using the $C^\bu$-comodule 
structure of $B^\bu$ and the shuffle map. In (c), $\pi^*(B^\bu)$ denotes the constant object $c \pi^*(B^\bu) \in c\Vec$. The $C^\bu$-comodule structure
and the shuffle map makes this a comodule over the constant cosimplicial coalgebra $\pi^0(C^\bu)$. We then regard it as $C^\bu$-comodule via the 
canonical coalgebra map $c\pi^0(C^\bu) \to C^\bu$.

\begin{proof}(Sketch)
(a) One constructs inductively  an exact sequence in $\Comod_{C^\bu}$:
\begin{equation}\label{C-Comod-resolution}
    B^\bu \to I^\bu_0 \to I^\bu_1\to \ldots
\end{equation}
where $B_0^\bu =B^\bu$, $B^\bu_p \stackrel{\simeq}{\hookrightarrow} I^\bu_p$ is a fibrant replacement of $B^\bu_p$ in $\Comod_{C^\bu}$, and  
$$B_{p+1}^\bu = \coker(B^\bu_{p}\to I^\bu_p).$$ 
Then $I^\bu_p$ is weakly trivial for all $p > 0$, i.e.\! $\pi^s(I^\bu_p) = 0$ for all $s \geq 0$ and $p > 0$. Since $I^\bu_p$ is fibrant, it 
follows that $I^\bu_p\ \Box_{C^\bu}\ A^\bu$ is also weakly trivial, i.e.\! $\pi^* (I^\bu_p\ \Box_{C^\bu}\ A^\bu )=0$ for all $p > 0$. Apply the 
normalized cochains functor with respect to the cosimplicial direction of the complex~(\ref{C-Comod-resolution}) to obtain a double complex 
  $$N_v^\bu (I^\bu_{\ast}\ \Box_{C^{\bu}}\ A^\bu)$$
in which the vertical direction corresponds to the cosimplicial direction. Since $I^n_p$ is an injective $C^n$-comodule for all $n$, by the description 
of the fibrant objects in $\Comod_{C^\bu}$, $I^n_{\ast} $ is an injective resolution of the $C^n$-comodule $B^n$. Hence, 
  $$H^q N_v^p (I^\bu_{\ast}\ \Box_{C^{\bu}}\ A^\bu ) = N^p \Cotor^q_{C^\bu}(B^\bu ,A^\bu).$$ 
So there is a spectral sequence associated to this double complex with $E_2$-term
\begin{equation}\label{double complex}
 E^{p,q}_2 = H^p_v H^q_h N_v^\bu (I^\bu_{\ast}\ \Box_{C^{\bu}}\ A^\bu) = \pi^p( \Cotor^q_{C^\bu}(B^\bu, A^\bu))
\end{equation}  
where the subscripts $v,h$ denote the vertical and horizontal directions respectively.
To see that the spectral sequence converges to $\pi^{p+q}(I_0^\bu \Box_{C^\bu}A^\bu )$, it suffices to note that
$$H^p_h H^q_v N_v^\bu (I^\bu_{\ast}\ \Box_{C^{\bu}}\ A^\bu) = 
                  \left\{ \begin{array}{cl}
                                   0                & q > 0 \\
             \pi^p (I^\bu_{0}\ \Box_{C^{\bu}}\ A^\bu )  & q=0.
             \end{array}\right. $$
Resolving $A^\bu \to J^\bu_{\ast}$  instead of $B^\bu$, we obtain a similar spectral sequence. This new spectral sequence degenerates if we replace $B^\bu$ with $J_0^\bu$. 
This means that the map 
  $$A^\bu\ \Box_{C^\bu}\ I^\bu_0 \xrightarrow{\simeq} J^\bu_0\ \Box_{C^\bu}\ I^\bu_0$$
is a weak equivalence and therefore 
  $$B^\bu \stackrel{R}{\Box}_{C^\bu} A^{\bu} \simeq I_0^\bu\ \Box_{C^\bu}\ A^\bu .$$ 
This completes the construction of the spectral sequence and proves the last claim (cf. \cite[II.6, 6.10]{Quillen:HA}).

(b) Consider the category $c\Comod_{C^\bu}$ of cosimplicial objects over $C^\bu$-comodules endowed with the resolution model structure 
with respect to the class $\mc{G}$ of cofree $C^\bu$-comodules. We choose a $\mc{G}$-fibrant replacement $B^\bu \to I^\ast_\bu$,
which we may in addition assume to be cofree in $c\Comod_{C^\bu}$.
By (a), we have
\begin{equation}\label{Tot}
   \Tot(I^\ast_\bu\ \Box_{C^\bu}\ A^{\bu})={\rm diag}(I^\ast_\bu\ \Box_{C^\bu}\ A^{\bu}) \simeq B^\bu \stackrel{R}{\Box}_{C^\bu} A^{\bu}.
\end{equation}
Consider the spectral sequence with $E_2^{p,q}=\pi^p_h\pi^q_v$ of the associated double complex where the original cosimplicial 
direction (the $\bu$-direction) is viewed as the vertical one. Since the fibrant replacement is cofree, the normalized cochain complex 
of $\pi^\bu(I^\bu_\ast)$, along the horizontal $\ast$-direction, defines an injective $\pi^\bu(C^\bu )$-resolution of $\pi^\bu (B^\bu )$ 
(see the proof of \cite[II.6, Lemma 1]{Quillen:HA}). Then 
  $$E_2^{p,q}= \pi^p(\pi^\bu(I^\ast_\bu)\ \Box_{\pi^\bu(C^\bu)}\ \pi^\bu (A^{\bu}))^q $$ 
by arguments  dual to \cite[II.6, Lemma 1]{Quillen:HA}. As a consequence, the spectral sequence has the required 
$E_2$-term and it converges to the derived cotensor product by (\ref{Tot}).

(c) For a $C^\bu$-comodule $D^\bu$, we consider a truncation functor defined by 
$$T(D^\bu) =D^\bu /c \pi^0 (D^\bu ).$$
We define cone and suspension functors, $\mathrm{Cone}$ and $\Sigma$, using the simplicial structure as follows,
\begin{align*} 
   \mathrm{Cone}(D^\bu) &= D^\bu \otimes \Delta^1 / D^\bu \otimes \Delta^0 \\
   \Sigma(D^\bu) & = D^\bu \otimes \Delta^1 / D^\bu \otimes \partial \Delta^1.
\end{align*}
Note that $\mathrm{Cone}(D^\bu)$ is weakly trivial for each $D^\bu$. There is a natural exact sequence of $C^\bu$-comodules
\[c\pi^0 (D^\bu )\to D^\bu \to \mathrm{Cone}(TD^\bu) \to \Sigma(TD^\bu)\]
For $k\geq 1$, we have natural isomorphisms 
  $$\pi^k (TD^\bu )\cong \pi^k (D^\bu )\ \ \text{ and }\ \ \pi^{k-1} (\Sigma D^\bu )\cong \pi^k (D^\bu ).$$
Consequently, we obtain exact sequences for each $k \geq 0$:
\[c\pi^k (B^\bu )\to \Sigma^{k} (B^\bu) \to \mathrm{Cone}(T\Sigma^{k} B^\bu) \to \Sigma (T \Sigma^{k}B^\bu) \simeq \Sigma^{k+1}(B^\bu).\]
Let $A^\bu \to I^\bu$ be a fibrant replacement. Since the functor $-\ \Box_{C^\bu}\ I^\bu $ is exact, we get long exact sequences for $k\geq 0$:
\begin{align*}
   \hdots\rightarrow \pi^{n-2} (\Sigma^{k+1}B^\bu\ \Box_{C^\bu}\ I^\bu)&\to \pi^n (c\pi^{k}(B^\bu )\ \Box_{C^\bu}\ I^\bu)\to \pi^{n} (\Sigma^k B^\bu\ \Box_{C^\bu}\ I^\bu) \\
                                                          &\to \pi^{n-1} (\Sigma^{k+1}B^\bu\ \Box_{C^\bu}\ I^\bu)\rightarrow\hdots 
\end{align*}
which can be spliced together to obtain an exact couple with 
  $$D_2^{p,q}=\pi^p (\Sigma^q B^\bu\ \Box_{C^\bu }\ I^\bu)\ \ \text{ and }\ \ E^{p,q}_2=\pi^p (\pi^q (B^\bu )\ \Box_{C^\bu}\ I^\bu ).$$
The spectral sequence defined by the exact couple has the required $E_2$-term and converges to $\pi^*(B^\bu \stackrel{R}{\Box}_{C^\bu} I^\bu) \cong 
\pi^*(B^\bu \stackrel{R}{\Box}_{C^\bu} A^\bu)$ by (a) above.
\end{proof} 

These spectral sequences for the derived cotensor product of $C^\bu$-comodules will be useful in the study of homotopy pullbacks in $c \CA^{\mc{E}}$. We note that 
given a pullback of cocommutative graded $\mathbb{F}$-coalgebras,
\[
\xymatrix{
E \ar[r] \ar[d] & A \ar[d]^f \\
B \ar[r]^g & C
}
\]
then the maps $f$ and $g$ define $C$-comodule structures on $A$ and $B$, respectively. Moreover, the cotensor product $B\, \Box_C\, A$ is naturally a cocommutative 
graded coalgebra. This uses the left exactness of the tensor product and the fact that the intersection of two subcoalgebras is again a coalgebra. It is 
easy to verify that the graded coalgebra $B\, \Box_C\, A$ can be identified with the pullback $E$. Thus, we obtain the following corollary.

\begin{corollary}\label{right-proper}
The model category $c \CA^{\mc{E}}$ is right proper. 
\end{corollary}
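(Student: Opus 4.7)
Let a pullback square
\[
\xymatrix{
A^\bu \ar[r] \ar[d] & B^\bu \ar[d]^p \\
C^\bu \ar[r]^w & D^\bu
}
\]
be given with $p$ an $\mc{E}$-fibration and $w$ an $\mc{E}$-equivalence. I need to show that $A^\bu\to B^\bu$ is an $\mc{E}$-equivalence. The plan is to identify this map, up to $\mc{E}$-equivalence, with a map of derived cotensor products to which the K\"unneth spectral sequence of Theorem~\ref{Kunneth spectral sequence 2} applies.

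First I would reduce to the case where $p$ is \emph{$\mc{E}$-cofree}. Since $\CA$ carries the discrete model structure, Corollary~\ref{all-about-cofree-maps2} exhibits every $\mc{E}$-fibration as a retract of an $\mc{E}$-cofree map. Retracts are preserved by pullback, and the $\mc{E}$-equivalences are closed under retracts, so the general case follows from the cofree case.

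Next I would analyze the cofree situation degreewise. If $p$ is $\mc{E}$-cofree on the sequence $(G_s)_{s\ge 0}$ of $\mc{E}$-injective unstable coalgebras, then at each cosimplicial level $n$ the map $p^n$ is (isomorphic to) the projection $D^n\otimes F^n\to D^n$ for an $\mc{E}$-injective unstable coalgebra $F^n$, where $\otimes$ is the categorical product in $\CA$. A direct computation with the cocommutative comultiplication shows that, as a $D^n$-comodule via $p^n$, the object $B^n=D^n\otimes F^n$ is the cofree $D^n$-comodule on the graded vector space underlying $F^n$; in particular it is an injective $D^n$-comodule, so $\Cotor^q_{D^n}(B^n,-)=0$ for $q>0$. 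The final clause of Theorem~\ref{Kunneth spectral sequence 2}(a) therefore gives, for every $D^\bu$-comodule $X^\bu$, a weak equivalence
\[
B^\bu\,\Box_{D^\bu}\, X^\bu\;\xrightarrow{\ \simeq\ }\;B^\bu\stackrel{R}{\Box}_{D^\bu} X^\bu.
\]

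Finally, I would identify the pullback $A^\bu$ with the cotensor product $B^\bu\Box_{D^\bu} C^\bu$. The forgetful functor $\CA\to\grCoalg$ preserves limits, and in cocommutative coalgebras the pullback of $p$ along $w$ is computed pointwise as the cotensor product over $D^n$; the instability and Steenrod structures are inherited from $B^n\otimes C^n$. Thus $A^\bu\to B^\bu$ is, in $\Comod_{D^\bu}$, the map
\[
B^\bu\,\Box_{D^\bu}\, C^\bu\;\longrightarrow\;B^\bu\,\Box_{D^\bu}\, D^\bu\;=\;B^\bu
\]
induced by $w$. Both sides coincide with the corresponding derived cotensor products by the previous paragraph, and the derived cotensor product $B^\bu\stackrel{R}{\Box}_{D^\bu}(-)$ is invariant under $\mc{E}$-equivalences in the second variable (as it is computed via a fibrant replacement in $\Comod_{D^\bu}$, which is a Quillen invariant construction). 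Since $w$ is an $\mc{E}$-equivalence, the induced map
\[
B^\bu\stackrel{R}{\Box}_{D^\bu} C^\bu\;\xrightarrow{\ \simeq\ }\;B^\bu\stackrel{R}{\Box}_{D^\bu} D^\bu
\]
is a weak equivalence, so $A^\bu\to B^\bu$ is an $\mc{E}$-equivalence.

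The principal obstacle is the bookkeeping at the third step: one must check carefully that pullbacks in $c\CA$ agree with cotensor products in $\Comod_{D^\bu}$, and that the cofree-comodule identification of $B^n$ is compatible with the cosimplicial structure maps so that the vanishing of $\Cotor^q_{D^n}(B^n,-)$ in each degree really triggers the final clause of Theorem~\ref{Kunneth spectral sequence 2}(a) cosimplicially.
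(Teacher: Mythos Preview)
Your proof is correct and follows essentially the same route as the paper's. Both arguments reduce to $\mc{E}$-cofree fibrations via Corollary~\ref{all-about-cofree-maps2}, identify the pullback with a cotensor product over the base, observe that the fibration source is degreewise a cofree (hence injective) comodule so that $\Cotor^q$ vanishes for $q>0$, and then invoke the last clause of Theorem~\ref{Kunneth spectral sequence 2}(a) together with the weak-equivalence invariance of the derived cotensor product. The only cosmetic difference is that the paper reduces one step further to a single coattachment square $(cG(V))^{\Delta^s}\to(cG(V))^{\partial\Delta^s}$ before making the cofree-comodule observation, whereas you treat the whole $\mc{E}$-cofree map at once; your version is slightly more direct and avoids appealing to the co-cell decomposition, at the cost of the small verification you flag in your final paragraph.
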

\begin{proof}
Consider a pullback square in $c \CA^{\mc{E}}$ 
\[
 \xymatrix{
E^{\bullet} \ar[r]^{\tilde{g}} \ar[d] &  A^{\bullet}\ar[d]^{f} \\
B^{\bullet} \ar[r]^{g}& C^{\bullet} 
}
\]
where $g$ is a weak equivalence and $f$ a fibration in $c\CA^{\mc{E}}$. We claim that $\tilde{g}$ is 
also a weak equivalence. By the characterization of fibrations in Proposition~\ref{all-about-cofree-maps2},
it suffices to consider the case where $f$ comes from a pullback square as follows
\[
\xymatrix{
A^{\bullet}\ar[d]_{f} \ar[r] & (cG(V))^{\Delta^s} \ar[d] \\
C^{\bullet} \ar[r]^-{h} & (cG(V))^{\partial \Delta^s} 
}
\]
for some map $h$ and $V \in \Vec$. Regarding $B^\bu$ and $A^\bu$ as $C^\bu$-comodules, we observe that 
$E^\bu = B^\bu\ \Box_{C^\bu}\ A^\bu$. Since $A^n$ is cofree as $C^n$-comodule, then the last statement of 
Theorem \ref{Kunneth spectral sequence 2} implies that $E^\bu \simeq B^\bu \stackrel{R}{\Box}_{C^\bu} A^\bu$ and the result follows. 
\end{proof}

\subsection{Homotopy excision} 
In this subsection, we prove a homotopy excision theorem in $c\CA^{\mc{E}}$ using the results on the homotopy theory of $C^\bu$-comodules. 

The homotopy excision theorem says that a homotopy pullback square in $c\CA^{\mc{E}}$ is also a homotopy pushout in a cosimplicial range depending 
on the connectivities of the maps involved. In a way familiar from classical obstruction theory of spaces, this theorem will be essential in later sections 
in identifying obstructions to extending maps and in describing the moduli spaces of such extensions. 

\begin{definition}\label{def:(co)cartesian}
A commutative square in $c\CA^{\mc{E}}$
  $$\xymatrix{ 
  E^\bu \ar[r] \ar[d] &  A^\bu\ar[d] \\
               B^\bu \ar[r] & C^\bu
               }
  $$
is called
\begin{itemize} 
\item[(a)] {\it homotopy $n$-cocartesian} if the canonical map 
  $$ \hocolim(A^\bu \leftarrow E^\bu \rightarrow B^\bu) \to C^\bu $$
is cosimplicially $n$-connected. 
\item[(b)] {\it homotopy $n$-cartesian} if the canonical map
$$E^\bu \to \holim(A^\bu \rightarrow C^\bu \leftarrow B^\bu)$$
is cosimplicially $n$-connected.   
\end{itemize}
Similarly we define homotopy $n$-(co)cartesian squares in $\Comod_{C^\bu}$ or $c\Vec$ by considering instead the homotopy (co)limits
in the respective categories.
\end{definition}

\begin{remark}
The general notion of ``cosimplicially $n$-connected'' is defined in Definition~\ref{def. n-cocon.}. In this particular case, 
a map $f\co D^\bu \to C^\bu$ in $c\CA^{\mc{E}}$ (or $c \Vec$) is cosimplicially $n$-connected if and only if it induces isomorphisms on 
cohomotopy groups in degrees $< n$ and a monomorphism in degree $n$ (cf. Remark \ref{F-equivalences-explicit} 
and Proposition \ref{F-equivalences-explicit2}). 
\end{remark}

\begin{lemma} \label{pull-n-conn}
Consider a homotopy pullback square in $c\CA^{\mc{E}}$ \emph{(}or $c\Vec$\emph{)}: 
$$
\xymatrix{
E^{\bullet} \ar[r] \ar[d]^g &  A^{\bullet}\ar[d]^{f} \\
B^{\bullet} \ar[r] & C^{\bullet} .
}
$$
If $f$ is $n$-connected, then so is $g$. 
\end{lemma}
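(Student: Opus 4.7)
The plan is to reduce, in each of the two cases, to a manipulation of long exact sequences induced by the pullback, treating the two categories separately but in parallel.

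For $c\Vec$: I would first replace $f$ functorially by a weakly equivalent $\mc{E}$-fibration, so that $E^\bu \cong B^\bu \times_{C^\bu} A^\bu$ is the strict pullback and connectivity of the new $f$ is unchanged. Under normalization, $Nf$ becomes an epimorphism of cochain complexes in positive degrees by Proposition \ref{F-equivalences-explicit2}. Its kernel is a cochain complex $K^\bu$, and since $Ng$ is the strict pullback of $Nf$ along $Nh$, it is also an epimorphism in positive degrees with $\ker Ng = K^\bu$. The two short exact sequences of cochain complexes
\begin{align*}
0 \to K^\bu \to NA^\bu \to NC^\bu \to 0, \qquad 0 \to K^\bu \to NE^\bu \to NB^\bu \to 0
\end{align*}
yield two long exact sequences in cohomology. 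The hypothesis that $f$ is $n$-connected translates into $H^s(K^\bu) = 0$ for $s \le n$, and inserting this vanishing into the second long exact sequence immediately gives $n$-connectivity of $g$.

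For $c\CA^{\mc{E}}$: Again by functorial factorization I reduce to the case where $f$ is an $\mc{E}$-fibration, hence by Corollary \ref{all-about-cofree-maps2} (using that $\CA$ carries the discrete model structure) a retract of an $\mc{E}$-cofree map; in particular, $A^n$ is a retract of a cofree $C^n$-comodule and so is injective over $C^n$. Pullbacks in $\CA$ are cotensor products, so $E^\bu = B^\bu \Box_{C^\bu} A^\bu$, and by the last assertion of Theorem \ref{Kunneth spectral sequence 2} the strict cotensor product already computes the derived one: $E^\bu \simeq B^\bu \stackrel{R}{\Box}_{C^\bu} A^\bu$.

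The K\"unneth spectral sequence of Theorem \ref{Kunneth spectral sequence 2}(b),
\begin{align*}
E_2^{p,q} = \Cotor^p_{\pi^*(C^\bu)}(\pi^*(B^\bu), \pi^*(A^\bu))^q \Longrightarrow \pi^{p+q}(E^\bu),
\end{align*}
together with its analogue for $B^\bu \simeq B^\bu \Box_{C^\bu} C^\bu$, form a map of spectral sequences induced by $f$. The hypothesis that $\pi^q(f)$ is an isomorphism for $q < n$ and a monomorphism for $q = n$, combined with the long exact sequence of $\Cotor^p_{\pi^*(C^\bu)}(\pi^*(B^\bu),-)$ associated to the short exact sequence $0 \to \pi^*(A^\bu) \to \pi^*(C^\bu) \to Q \to 0$ of $\pi^*(C^\bu)$-comodules (where $Q$ is concentrated in cohomotopy degrees $\ge n+1$), yields that the map of $E_2$-pages is an isomorphism in total degrees $< n$ and a monomorphism in total degree $n$. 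A standard filtration argument on the abutments then gives that $\pi^s(g)$ is an isomorphism for $s < n$ and a monomorphism for $s=n$, which is the desired $n$-connectivity.

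The main obstacle is the tracking of the induced map on $E_2$-pages: one must verify that the mono/iso behavior of $\pi^*(f)$ propagates through the derived cotensor product over the graded coalgebra $\pi^*(C^\bu)$. Once this is checked via the long exact sequence of $\Cotor$, the filtration argument on the converging abutments is standard.
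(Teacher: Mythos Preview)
Your overall approach matches the paper's: both reduce to an $\mc{E}$-cofree $f$, identify $E^\bu = B^\bu \Box_{C^\bu} A^\bu$, and invoke the K\"unneth spectral sequence of Theorem~\ref{Kunneth spectral sequence 2}(b). Your treatment of the $c\Vec$ case via normalized kernels and long exact sequences is correct and more explicit than the paper's one-line dismissal.

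There is, however, a genuine gap in your $c\CA^{\mc{E}}$ argument. The short exact sequence
\[
0 \to \pi^*(A^\bu) \to \pi^*(C^\bu) \to Q \to 0
\]
need not exist: $n$-connectivity of $f$ gives injectivity of $\pi^s(f)$ only for $s \le n$, with no control in degrees $s > n$. So you cannot feed this sequence into the long exact sequence of $\Cotor$ as written. The fix is straightforward. Factor $\pi^*(f)$ through its image $I = \im(\pi^*f)$ to obtain two genuine short exact sequences of $\pi^*(C^\bu)$-comodules,
\[
0 \to K \to \pi^*(A^\bu) \to I \to 0, \qquad 0 \to I \to \pi^*(C^\bu) \to Q \to 0,
\]
with $K$ concentrated in degrees $\ge n+1$ and $Q$ in degrees $\ge n$. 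Since everything is non-negatively graded, an injective (cofree) resolution of a comodule concentrated in degrees $\ge m$ can be chosen concentrated in degrees $\ge m$, so $\Cotor^p_{\pi^*C}(\pi^*B, K)^q = 0$ for $q \le n$ and $\Cotor^p_{\pi^*C}(\pi^*B, Q)^q = 0$ for $q < n$. Running the two long exact sequences of $\Cotor$ then yields the iso/mono behavior on $E_2$-pages in total degree $\le n$ that you need, and your filtration argument on the abutments goes through unchanged.
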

\begin{proof}
We may assume that $f$ is an $\mc{E}$-cofree map and $E^\bu = B^\bu \Box_{C^\bu} A^\bu$. Then the result follows 
easily from Theorem \ref{Kunneth spectral sequence 2}(b). The case of $c\Vec$ is well-known and can be shown using
standard results from homological algebra. 
\end{proof}

\begin{remark}\label{rem:coskeletal-connection}
Let $p\co E^\bu \to B^\bu$ be an $\mc{E}$-cofree map in $c\CA^{\mc{E}}$. Then we have a (homotopy) pullback square 
$$ 
\xymatrix{ 
\cosk_s(p) \ar[r]\ar[d]_{\gamma_{s-1}(p)} & \hom(\Delta^s,cG_s) \ar[d] \\
\cosk_{s-1}(p) \ar[r]   & \hom(\partial\Delta^s,cG_s) .
}
$$
By inspection, the right vertical map is $(s-1)$-connected. Hence, the left vertical map is $(s-1)$-connected. 
\end{remark}

There is a forgetful functor $V \co c\CA /C^\bu \to \Comod_{C^\bu}$
which sends $f\co D^\bu \to C^\bu$ to $D^\bu$ regarded as a $C^\bu$-comodule with structure map $(f, \mathrm{id})\co D^\bu \to C^\bu \otimes D^\bu$. The functor $V$ admits a right adjoint, but we will not need this fact here. Note that homotopy pullbacks in $\Comod_{C^\bu}$ define also homotopy pushouts, and that $V$ preserves homotopy pushouts. Using these facts, we will first express the homotopy excision property in $c\CA^{\mc{E}}$ as a comparison between taking homotopy pullbacks in $c \CA^{\mc{E}}$ and in $\Comod_{C^\bu}$, respectively.  

\begin{theorem}[Homotopy excision for cosimplicial unstable coalgebras] \label{homotopy-excision}
Let $$
\xymatrix{
E^{\bullet} \ar[r] \ar[d] &  A^{\bullet}\ar[d]^{f} \\
B^{\bullet} \ar[r]^{g}& C^{\bullet}
}
$$
be a homotopy pullback square in $c\CA^{\mc{E}}$ where $f$ is $m$-connected and $g$ is $n$-connected.
Then
\begin{itemize}
 \item[(a)] the square is homotopy $(m+n+1)$-cartesian in $\Comod_{C^\bu}$,
 \item[(b)] the square is homotopy $(m+n)$-cocartesian in $c \CA^{\mc{E}}$. 
\end{itemize}
\end{theorem}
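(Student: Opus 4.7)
My plan is to prove part (a) via the K\"unneth spectral sequences of Theorem~\ref{Kunneth spectral sequence 2}, and then to deduce part (b) from (a) by a fiber--cofiber duality in the abelian category $\Comod_{C^\bu}$.

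For part (a), I first functorially factor $f$ as a weak equivalence followed by an $\mc{E}$-cofree map using Proposition~\ref{prop:fact-G-equiv-G-cofree}; this preserves both the $m$-connectedness of $f$ and the homotopy pullback square. By the observation in the proof of Corollary~\ref{right-proper}, the strict pullback then becomes the cotensor product $E^\bu \cong B^\bu \Box_{C^\bu} A^\bu$, with each $A^n$ a cofree $C^n$-comodule, so $\Cotor^{\geq 1}_{C^n}(B^n,A^n)=0$ and the last clause of Theorem~\ref{Kunneth spectral sequence 2} yields a weak equivalence $E^\bu \simeq B^\bu \stackrel{R}{\Box}_{C^\bu} A^\bu$. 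The homotopy pullback $P^\bu$ in $\Comod_{C^\bu}$ of $B^\bu \to C^\bu \leftarrow A^\bu$ receives a natural comparison map $E^\bu\to P^\bu$. Using the K\"unneth spectral sequence of Theorem~\ref{Kunneth spectral sequence 2}(b)
\[
E_2^{p,q} = \Cotor^p_{\pi^*(C^\bu)}(\pi^*(B^\bu),\pi^*(A^\bu))^q \Longrightarrow \pi^{p+q}(E^\bu)
\]
together with the analogous spectral sequence for $P^\bu$, the connectivity hypotheses force $\pi^*(A^\bu)\to\pi^*(C^\bu)$ and $\pi^*(B^\bu)\to\pi^*(C^\bu)$ to be isomorphisms in the ranges $*<m$ and $*<n$, respectively. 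A standard bookkeeping argument on the relative $\Cotor^p$-terms yields vanishing in total degrees $\leq m+n$, whence $E^\bu\to P^\bu$ is $(m+n+1)$-connected, which is assertion (a).

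For part (b), cosimplicial connectivity in $c\CA^{\mc{E}}$ is detected on underlying graded vector spaces via cohomotopy (Remark~\ref{F-equivalences-explicit}, Proposition~\ref{F-equivalences-explicit2}). In the abelian category $\Comod_{C^\bu}$, the total homotopy fiber and total homotopy cofiber of a commutative square differ by a one-degree cohomological shift, so a $(k+1)$-cartesian square is automatically $k$-cocartesian. Applying this to (a) yields $(m+n)$-cocartesianness in $\Comod_{C^\bu}$. Finally, the forgetful functor $c\CA \to c\Vec$ preserves coproducts (which are direct sums) and, at the level of underlying cosimplicial vector spaces, allows one to compare the homotopy pushout in $c\CA^{\mc{E}}$ with the corresponding pushout in $\Comod_{C^\bu}$, giving (b).

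The main obstacle is step (a), specifically the identification of the homotopy pullback $P^\bu$ in $\Comod_{C^\bu}$ and the careful spectral-sequence bookkeeping needed to convert the $m$- and $n$-connectedness hypotheses into the sharp $(m+n+1)$-connectivity range for $E^\bu\to P^\bu$. A secondary obstacle in (b) is the compatibility of pushouts in $c\CA$ and in $c\Vec$: while coproducts coincide, coequalizers in $c\CA$ are quotients by coideals and may a priori differ from their $c\Vec$-counterparts, so one must check that this discrepancy does not affect cohomotopy in the relevant range.
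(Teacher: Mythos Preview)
The proposal for part (a) has a real gap at its central step. After replacing $f$ by an $\mc{E}$-cofree map you correctly identify $E^\bu \simeq B^\bu \Box_{C^\bu} A^\bu \simeq B^\bu \stackrel{R}{\Box}_{C^\bu} A^\bu$, and the K\"unneth spectral sequence (b) does converge to $\pi^*(E^\bu)$. But there is no ``analogous spectral sequence for $P^\bu$'': the homotopy pullback $P^\bu$ in $\Comod_{C^\bu}$ is the ordinary fiber product $A^\bu \oplus_{C^\bu} B^\bu$ in $c\Vec$, not a derived cotensor product, and its cohomotopy is governed by a Mayer--Vietoris sequence rather than by $\Cotor$ groups. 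Comparing the K\"unneth $E_2$-page for $E^\bu$ with the Mayer--Vietoris description of $\pi^*(P^\bu)$ is not a ``standard bookkeeping argument''; even the $E_2^{0,*}$-line $\pi^*(B^\bu)\Box_{\pi^*(C^\bu)}\pi^*(A^\bu)$ does not agree with the fiber product $\pi^*(A^\bu)\oplus_{\pi^*(C^\bu)}\pi^*(B^\bu)$, so the connectivity of the comparison map $E^\bu\to P^\bu$ is not visible from this data alone.

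The paper closes this gap by a different route. It first uses Proposition~\ref{n-conn-fact} to reduce to the case where $f$ and $g$ are each given by a \emph{single} coattachment, so that degreewise $A^s \cong C^s \otimes (\Omega^k cG(V))^s$ and similarly for $B^s$. In this explicit form one can compute directly that the kernel of the comparison map $c\colon E^\bu \to \widehat{E}^\bu$ is $\overline{B}^\bu \Box_{C^\bu} \overline{A}^\bu$, where $\overline{A}^\bu=\ker f$ and $\overline{B}^\bu=\ker g$. The K\"unneth spectral sequence (b) is then applied to \emph{this} cotensor product: since $\pi^*(\overline{A}^\bu)$ vanishes in degrees $\leq m$ and $\pi^*(\overline{B}^\bu)$ in degrees $\leq n$, one gets $E_2^{p,q}=0$ for $p+q\leq m+n+1$, hence $\pi^*(\ker c)=0$ in that range and $c$ is $(m+n+1)$-connected. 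The general case is deduced by induction on the coskeletal tower, using Lemma~\ref{pull-n-conn} and the fact that homotopy $k$-cartesian squares compose. This explicit identification of $\ker c$ is the missing idea in your sketch.

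Your part (b) is essentially the paper's argument, and your secondary worry there dissolves: the forgetful functor $c\CA \to c\Vec$ is left Quillen (it is a left adjoint, every object of $c\CA$ is cofibrant, and weak equivalences are detected on underlying vector spaces), so it preserves homotopy pushouts outright---no separate check on coequalizers is needed. One then compares long exact sequences of cohomotopy groups to pass from $(m+n+1)$-cartesian in $\Comod_{C^\bu}$ to $(m+n)$-cocartesian in $c\CA^{\mc{E}}$.
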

\begin{proof}
(a) The idea of the proof is to compare $E^\bu$ with the homotopy pullback in $\Comod_{C^\bu}$. By Proposition~\ref{n-conn-fact},
we may assume that $f$ and $g$ are $\mc{E}$-cofree maps of the form
\begin{align*} 
   f&\co A^\bu\to\hdots\to\cosk_{m+1}(f)\xrightarrow{f_{m}}\cosk_m(f)=C^\bu \\
   g&\co B^\bu\to\hdots\to\cosk_{n+1}(g)\xrightarrow{g_{n}}\cosk_n(g)=C^\bu 
\end{align*}
and $E^\bu$ is the strict pullback of $f$ and $g$. Therefore, we have that $E^\bu = B^\bu\ \Box_{C^\bu}\ A^\bu$ as $C^\bu$-comodules. 
Let $\widehat{E}^\bu := A^\bu \oplus_{C^\bu} B^\bu$ denote the pullback of $f$ and $g$ in $C^\bu$-comodules. Then it suffices to show that the canonical map 
$$c\co E^\bu \to \widehat{E}^\bu$$ 
is $(m+n+1)$-connected, i.e., the induced map $ \pi^sE^\bu\to\pi^s\wh{E}^\bu $ is an isomorphism for $s\le m+n$ and injective for 
$s=m+n+1$. 

We first consider the case where each of the maps $f$ and $g$ is defined by a single coattachment. This means that we have pullback squares in 
$c\CA$ as follows
\begin{equation*} \label{special-case-excision}
\xymatrix{
E^{\bullet} \ar[r] \ar[d] &  A^{\bullet}\ar[d]^{f} \ar[r] & cG(V)^{\Delta^{k}} \ar[d] \\
B^{\bullet} \ar[r]^{g} \ar[d] & C^{\bullet} \ar[r] \ar[d] & cG(V)^{\partial\Delta^{k}} \\
cG(W)^{\Delta^{l}} \ar[r] & cG(W)^{\partial\Delta^{l}}
}
\end{equation*}
where $k \geq m+1$ and $l \geq n+1$. Note that in each cosimplicial degree $s \geq 0$, there are isomorphisms of graded vector spaces
  $$A^s\cong C^s \otimes \bigl(\Omega^{k}cG(V)\bigr)^s$$ 
and
  $$B^s\cong C^s \otimes \bigl(\Omega^{l}cG(W)\bigr)^s $$ 
such that both $f$ and $g$ are isomorphic to the projections onto $C^s$. Thus, for all $s\ge 0$, there are isomorphisms of graded vector spaces 
\begin{align*}
   E^s   & \cong C^s\ \otimes\ \bigl(\Omega^{k}cG(V)\bigr)^s\ \otimes\ \bigr(\Omega^{l}cG(W)\bigr)^s \\
         &  \cong \left(C^s \otimes \bigl(\Omega^{k}G(V)\bigr)^s\right) \Box_{C^s} \left(C^s \otimes \bigl(\Omega^{l}G(W)\bigr)^s\right).
\end{align*}
Let $\ol{\Omega^{k}}cG(V)= \ker\left[\Omega^{k}cG(V) \to c\F \right]$. Then we have isomorphisms of graded vector spaces, for all $s \geq 0$,  
\begin{align*} \label{identification-2}
   \widehat{E}^s \cong C^s\ \oplus\  \left(C^s \otimes \bigl(\overline{\Omega^{k}}cG(V)\bigr)^s\right)\ \oplus\ \left(C^s \otimes \bigl(\overline{\Omega^{l}}cG(W)\bigr)^s\right).
\end{align*}
Using these identifications of $E^s$ and $\widehat{E}^s$, it is easy to see that the map $c$ is given by the canonical projection. As a consequence, we can identify 
the kernel of $c$ as follows
\begin{align*} 
   (\ker c)^s & \cong C^s\ \otimes\ \bigl(\overline{\Omega^{k}}G(V)\bigr)^s\ \otimes\ \bigl(\overline{\Omega^{l}}G(W)\bigr)^s \\
         &\cong \left(C^s \otimes \bigl(\overline{\Omega^{k}}G(V)\bigr)^s\right) \Box_{C^s} \left(C^s \otimes \bigl(\overline{\Omega^{l}}G(W)\bigr)^s\right).
\end{align*}
We estimate the connectivity of the cotensor product of these two $C^\bu$-comodules, 
$$\overline{A}^\bu :=\ker\left[f\co A^\bu \to C^\bu\right] = C^\bu \otimes \ol{\Omega^{k}}cG(V)$$  
$$\overline{B}^\bu :=\ker\left[g\co B^\bu \to C^\bu\right] = C^\bu \otimes \ol{\Omega^{l}}cG(W),$$
using the K\"unneth spectral sequence from Theorem~\ref{Kunneth spectral sequence 2}(b). We have
 \[E_2^{p,q}=\Cotor^p_{\pi^{\ast}(C^{\bu})}(\pi^{\ast}(\overline{B}^\bu),\pi^{\ast}(\overline{A}^\bu))^q\]
and the two arguments vanish in degrees less than or equal to $m$ and $n$ respectively. It follows that $E_2^{p,q}=0$ for $p+q\leq m+n+1$, and therefore 
$$\pi^*(\ker(c))=0$$
for $*\leq m+n+1.$ Since $\ker(c) \to E^\bu \xrightarrow{c} \widehat{E}^\bu$ defines a short exact sequence in $c\Vec$, we conclude that the map $c\co E^\bu\to\wh{E}^\bu$ is $(m+n+1)$-connected. This proves the claim in the special case of single coattachments. 

The general case follows inductively. For each new coattachment associated with $f$ or with $g$, the same argument applies to show that we obtain a new 
homotopy $(m+n+1)$-cartesian square. It is easy to check that homotopy $k$-cartesian squares are closed under composition which then completes the inductive
step. Also, Lemma \ref{pull-n-conn} shows that it suffices to consider only the coattachments up to a finite coskeletal degree. 

For Part (b), we note that homotopy colimits in $c \CA$ can be computed in the underlying category of cosimplicial graded vector spaces (or in $\Comod_{C^\bu}$) since the forgetful 
functor from $c\CA$ to $c\Vec$ (or $\Comod_{C^\bu}$) is left Quillen and detects colimits. The statement now follows from (a) by comparing the long exact sequences of cohomotopy groups.
\end{proof}

\section{Andr\'{e}-Quillen cohomology} \label{sec:AQ}

In this section, we discuss Andr\'{e}-Quillen cohomology in the context of unstable coalgebras. The basic definitions and constructions are recalled in Subsections \ref{coabelian-objects} 
and \ref{AQ-cohomology}. Then we introduce the objects of type $K_C(M,n)$ which play the role of twisted Eilenberg-MacLane spaces in this context, and also represent Andr\'{e}-Quillen cohomology. 

In Subsection \ref{Objects of type K}, we identify the homotopy types of the moduli spaces of $K$-objects. These results, together with the homotopy excision theorem 
from Section~\ref{sec:cosimplicial-unst-coalg}, are used in Subsection~\ref{postnikov_decomp_unst_coalg} (and Subsection~\ref{an_extension}) to analyze the Postnikov-type 
skeletal filtration of a cosimplicial unstable coalgebra. 

\subsection{Coabelian objects} \label{coabelian-objects}

We denote by $\V$ the full subcategory of all unstable right $\mathcal{A}$-modules $M$ such that 
\begin{align*}
  x P^n &= 0  \text{ for } |x|\le 2pn \text{ and } p \text{ odd, or } \\
 x Sq^n &= 0 \text{ for } |x|\le 2n \text{ and } p=2. 
\end{align*}
This subcategory is equivalent to the category of coabelian cogroup objects in $\CA_*$, i.e.\! the unstable coalgebras $C$ for which the diagonal $\mathcal{U}$-homomorphism $C \to C \oplus C$ is a morphism in $\CA_*$. This property implies that the coalgebra structure must be trivial. The relations above are forced by Definition \ref{unstable coalgebra}(b) so that the trivial coalgebra structure on $M \oplus\, \terminal$ is \emph{unstable}. These instability conditions imply, in particular, that $M$ is trivial in non-positive degrees. The category $\V$ is an abelian subcategory of $\mathcal{U}$ which has enough injectives (cf. \cite[8.5]{Bou:obstructions}). 

Given an unstable coalgebra $C$, a $C$-comodule is an unstable module $M \in \mathcal{U}$ equipped with a map in $\mathcal{U}$
  $$\Delta_M : M\to C \otimes M , $$ 
which satisfies the obvious comodule properties. The category of $C$-comodules is denoted by  $\mathcal{U}C$. 

Let $\V C$ be the full subcategory of the $C$-comodules which are in $\V$. This subcategory $\V C$ is equivalent to the category of coabelian cogroup objects in the category $C / \CA$ and, consequently, there is an adjunction as follows
   $$\iota_C: \V C \rightleftarrows C / \CA : Ab_C.$$ 
The left adjoint is defined by $\iota_C (M) =C \oplus M$, whose comultiplication is specified by the comultiplication of $C$ and by 
  $$\Delta_M + \tau \Delta_M: M \to (C \otimes M) \oplus (M \otimes C) , $$
where $\tau$ denotes the twist map. This combined comultiplication can also be expressed in terms of \emph{derivations}: for 
$M \in \V C$ and $D \in C/ \CA$, there is a natural isomorphism
\begin{equation} \label{derivations}
{\rm Der}_{\CA} (M, D) \cong \Hom_{C/ \CA}(\iota_C(M), D),
\end{equation}
where the $D$-comodule structure on $M$ is defined by the given map $f: C \to D$. 

The right adjoint $Ab_C$ is called the coabelianization functor and carries an object $f:C \to D$ to the kernel of the following map in $\mathcal{U}C$:
\[ {\rm Id} \otimes \Delta_D - ( {\rm Id} \otimes f\otimes {\rm Id} )(\Delta_C \otimes {\rm Id})- ({\rm Id} \otimes \tau)( {\rm Id} 
\otimes f \otimes {\rm Id})(\Delta_C \otimes {\rm Id}): C \otimes D \to C \otimes D \otimes D. \]

This definition generalizes the notion of primitive elements to the relative setting. We recall the definition of primitive elements. 
 
\begin{definition} Let $C$ be an object in $\grCoalg$ with a basepoint $\F \to C$. An element $x \in C$ is called \emph{primitive} if 
  $$\Delta_C(x) = 1 \otimes x + x \otimes 1 , $$
where $1$ denotes the image of the basepoint of $C$ at $1 \in \F$. The comodule $\Pr(C)$ is the sub-comodule of primitive elements
  $$\Pr(C) = \{x \in C \ | \ \Delta_C(x)= 1 \otimes x + x \otimes 1 \}.$$ 
\end{definition}

In the case where $C = \terminal$, the coabelianization functor $Ab_{\terminal}: \CA_* \to \V$ takes a pointed unstable 
coalgebra $D$ to the sub-comodule of primitives $\Pr(D)$ with the induced unstable $\mathcal{A}$-module structure.

An important example of coabelian objects is given in the following
\begin{definition}\label{def:internal-shift} 
For an object $M$ in $\mc{U}C$, we define its {\it internal shift} by
  $$ M[1]=M\otimes\widetilde{H}_*(S^1). $$
In other words, we have $M[1]_0=0$ and, for $n\ge 1$, $M[1]_n=M_{n-1}$ with shifted Steenrod algebra action. The $C$-coaction is defined as follows. For $m\in M_{n-1}$, we write $\ul{m}\in M[1]_n$. If $\Delta(m)=\sum_{i}c_i\otimes m_i$, then 
  $$ M[1]\to C\otimes M[1]\ , \ \ul{m}\mapsto \sum_{i}c_i\otimes \ul{m_i}\ .$$
This yields a functor from $\mc{U}C$ to $\mc{V}C$ because the stronger instability condition is automatically satisfied after 
shifting. Inductively, we define $M[n]=(M[n-1])[1]$. 
\end{definition}

Since an unstable coalgebra $C$ is a comodule over itself, the internal shift can be viewed as a functor from $\CA$ to $\mc{V}$. One obviously has a natural isomorphism 
$H_*(\Sigma X_+)\cong H_*(X)[1]$.

\subsection{Andr\'e-Quillen cohomology} \label{AQ-cohomology}
Following the homotopical approach to homology initiated by Quillen \cite{Quillen:HA}, the Andr\'{e}-Quillen cohomology groups 
of an unstable coalgebra are defined as the right derived functors of the coabelianization functor. For an unstable coalgebra $C$, we have an adjunction 
$$\iota_C\co \V C \rightleftarrows C/\CA:\! Ab_C, $$
which is given essentially by the inclusion of the full subcategory of coabelian cogroup objects in $C/ \CA$. Passing to the 
respective categories of cosimplicial objects, this can be extended to a Quillen adjunction: 
$$\iota_C\co c \V C \rightleftarrows c(C/ \CA) :\! Ab_C.$$
The category on the right is the under-category $c C / c\CA$ and is endowed with the model structure induced by $c \CA^{\mc{E}}$. 
The model category on the left is the standard pointed model category of cosimplicial objects in an abelian category (cf. \cite[4.4]{Bou:cos}).
As remarked in \cite{Quillen:HA}, for a given object $D \in C/ \CA$, we may regard the derived coabelianization 
$$(\mathbb{R}Ab_C)(D)$$
as the cohomology of $D$. Then, given an object $M \in \V C$, the $0$-th Andr\'{e}-Quillen cohomology group of $D$ with 
coefficients in $M$ is defined to be
$$\AQ^0_{C}(D; M) = [ cM, (\mathbb{R}Ab_C)(cD)].$$
In general, the $n$-th Andr\'{e}-Quillen cohomology group is 
\begin{equation} \label{AQ-groups} 
\AQ^n_{C}(D;M) = [\Omega^n (cM), (\mathbb{R}Ab_C)(cD)] \cong \pi^n \Hom_{\V C}(M, (\mathbb{R} Ab_C)(cD)) ,
\end{equation} 
where $\Omega$ denotes the derived loop functor in the pointed model category $c \V C$. These cohomology groups are 
also the non-additive right derived functors of the functor
$$\Hom_{\V C} (M, Ab_C(-))\co C / \CA \to {\rm Vec}.$$
We recall \eqref{derivations} that this is the same as the functor of derivations 
$${\rm Der}_{\CA}(M, D)\co C / \CA \to {\rm Vec}.$$
For an object $D \in C/ \CA$ and fibrant replacement $D \to H^{\bullet}$ in $c(C/ \CA)$, the $n$-th derived functor is defined to be 
(cf. \cite[5.5]{Bou:cos})
$${\Big(}R^n \Hom_{\V C} \bigl(M, Ab_C(-)\bigr){\Big)}(D) = \pi^n \Hom_{\V C}\bigl(M, Ab_C(H^\bullet)\bigr).$$
It follows from standard homotopical algebra arguments that this is independent of the choice of fibrant replacement up 
to natural isomorphism. 
A particularly convenient choice of fibrant replacement comes from the cosimplicial resolution 
defined by the monad on $C / \CA$ associated with the adjunction~(\ref{eqn:J:G})
$$J\co C / \CA \rightleftarrows J(C) / \Vec:\! G.$$
Therefore, Andr\'e-Quillen cohomology can also be regarded as monadic (or triple) cohomology, where $M$ is the choice of coabelian coefficients. 

\subsection{Objects of type $K_C(M,n)$} \label{Objects of type K} The adjoint of \eqref{AQ-groups} gives a natural isomorphism
  $$\AQ^n_C(D ; M) \cong [(\mathbb{L} \iota_C) (\Omega^n (cM)), cD],$$
where the right hand side denotes morphisms in $\ho{c(C/\CA)^{\mc{E}}}$.
This can be regarded as a representability theorem for Andr\'{e}-Quillen cohomology. Since the canonical natural transformation in $\ho{c \V C}$:
  $$\Sigma \Omega \to {\rm Id}$$
is a natural isomorphism, there are also natural isomorphisms 
\begin{equation} \label{AQ-groups 2}
\AQ^n_C(D ; M) \cong [(\mathbb{L} \iota_C) (\Sigma^k \Omega^{n + k} (cM)), cD].
\end{equation}

The representing object $(\mathbb{L} \iota_C)(\Omega^n (cM))$ is described explicitly as follows. First, the normalized 
cochain complex associated to $\Omega^n (cM)$ is quasi-isomorphic to the complex which has $M$ in degree $n$ and is 
trivial everywhere else. By the Dold-Kan correspondence, this means that $\Omega^n(cM)$ is given up to weak equivalence 
by a sum of copies of $M$ in each cosimplicial degree. 
Since every object in $c \V C$ is cofibrant, the functor $\iota_C$ preserves the weak equivalences, and so we may choose 
$\iota_C(\Omega^n(cM))$ for the value of the derived functor at $\Omega^n(cM)$. Explicitly, this consists of the semi-direct 
product of copies of $M$ with the coalgebra $C$ in each cosimplicial degree. The cohomotopy groups of the resulting object are
$$ \pi^s {\Big(}\iota_C\bigl(\Omega^n(cM)\bigr){\Big)} \cong\left\{
                   \begin{array}{cl}
                             M     & s=n \\
                             C     & s=0 \\
                             0     & \hbox{otherwise }
                   \end{array} 
                                        \right. $$
for all $n>0$. For $n=0$, clearly $\pi^0 (\iota_C(cM)))= c(\iota_C(M))$ with vanishing higher cohomotopy groups.
These are isomorphisms of unstable coalgebras and $C$-comodules respectively. Since $\V C$ is pointed, we obtain a 
retraction map $\iota_C(\Omega^n (cM)) \to cC$ in the homotopy category. It turns out that objects satisfying all 
these properties are homotopically unique (Proposition \ref{Moduli of K(C,k)}). First we formulate precisely the definition 
of this type of objects. 

\begin{definition}
Let $C$ be an unstable coalgebra. An object $D^\bu \in c \CA$ is said to be of {\it type $K(C,0)$} if it weakly equivalent to
the constant cosimplicial object $cC$. 
\end{definition}

\begin{definition} \label{K_{B}(M,n)} 
Let $C \in \CA $,  $M \in \V C$, and $n \geq 1$. An object $D^\bullet$ of $c \CA$ is said to be 
of {\it type $K_{C}(M,n)$} if the following are satisfied:
\begin{itemize} 
\item[(a)] there are isomorphisms of coalgebras and $C$-comodules respectively:
$$ \pi^sD^\bullet\cong\left\{
                   \begin{array}{cl}
                             C    & s=0 \\
                             M    & s=n \\
                             0    & \hbox{otherwise,}
                   \end{array} 
                                        \right. $$
\item[(b)] there is a map $D^\bu \to D_0^\bu$ to an object of type $K(C,0)$ such that the composite $\sk_1(D^\bu) \to D^\bu \to D_0^\bu$ is a weak equivalence. (Only the existence and not a choice of such a map is required here.)
\end{itemize} 
Occasionally, we will also use the notation {\it $K_C(M,0)$} to denote an object of type $K(\iota_C(M), 0)$.
\end{definition}

It is clear from the definition that an object of type $K_C(M,n)$ can be roughly regarded, up to weak 
equivalence, both as an object under and over $cC$, but the choices involved will be non-canonical. 
It will often be necessary to include such a choice in the structure, and view the resulting object as an 
object of a slice category instead. 

\begin{definition} \label{structured-K-obj}
Let $C \in \CA $ and  $M \in \V C$. 
\begin{itemize}
 \item[(a)] A \emph{pointed} object of type $K_C(M,n)$, $n \geq 1$, is a pair $(D^\bu, i)$ where $D^\bu$ is an object of 
 type $K_C(M,n)$ and $i\co cC \to D^\bu$ is a map which induces an isomorphism on $\pi^0$-groups. 
 \item[(b)] A \emph{structured} object of type $K_C(M,n)$, $n \geq 1$, is a pair $(D^\bu, \eta)$ where
$D^\bu$ is an object of type $K_C(M,n)$ and $\eta\co D^\bu \to D_0^\bu$ is a map as in (b) above. 
\item[(c)] A \emph{pointed} object of type $K_C(M,0)$ is a pair $(D^\bu, i)$ where $D^\bu$ is an object of 
 type $K(\iota_C(M),0)$ and $i\co cC \to D^\bu$ is a map such that $\pi^0(i)$ can be identified up to isomorphism  
 with the standard inclusion $C \to \iota_C(M)$.
 \item[(d)] A \emph{structured object} of type $K_C(M,0)$ is a pair $(D^\bu, \eta)$ where $D^\bu$ is an object 
of type $K(\iota_C(M), 0)$ and $\eta\co D^\bu \to D^\bu_0$ is a map to an object of type $K(C,0)$ such that 
$\pi^0(\eta)$ can be identified up to isomorphism with the canonical projection $\iota_C(M) \to C$. 
\end{itemize}
\end{definition}

\begin{remark} \label{shifting-K-obj}
Let $(D^\bullet, \eta)$ be a structured object of type $K_C(M,n)$ and $n > 0$. Then the homotopy pushout of the maps 
$$D_0^\bu \stackrel{\eta}{\leftarrow} D^\bu \stackrel{\eta}{\rightarrow} D_0^\bu$$
is a structured object of type $K_C(M, n-1)$.  
\end{remark}

\begin{construction} \label{alternative-direct-construction}
As explained above, the objects $\iota_C(\Omega^n (cM))$ are pointed and structured objects of type $K_C(M,n)$.
We sketch a different construction of objects of type $K_C(M,n)$ based on the homotopy excision theorem.
The construction is by induction. For $n=0$, the constant coalgebra $cC$ is an object of type $K(C, 0)$ by definition.
Assume that an 
object $D^\bu$ of type $K_C(M,n-1)$ has been constructed for $n > 1$. Consider the homotopy pullback square
\[
 \xymatrix{
 E^\bu \ar[d] \ar[r] & \sk_1(D^\bu) \ar[d] \\
 \sk_1(D^\bu) \ar[r] & D^\bu ,
 }
\]
where $\sk_1(D^\bu)$ is an object of type $K(C,0)$. Then, by homotopy excision (Theorem~\ref{homotopy-excision}), $\sk_{n+1}(E^\bu)$ is an object of 
type $K_C(M, n)$ (see also Proposition \ref{diff. constr. in CA} below). We can always regard this as structured by making a \emph{choice} 
among one of the maps to $\sk_1(D^\bu)$. The case $n=1$ is somewhat special. In this case, we consider the following homotopy pullback square
\[
\xymatrix{
 E^\bu \ar[r] \ar[d] & cC \ar[d] \\
 cC \ar[r] & c(\iota_C(M))
 }
\]
and an application of homotopy excision shows that $\sk_2(E^\bu)$ is of type $K_C(M,1)$. To be more precise, one applies Theorem 
\ref{refined diff. constr. in CA} and Corollary \ref{difference in dim 1}  below and uses the fact that $C\ \Box_{\iota_C(M)}\ C=C$. 
\end{construction} 

Next we show the homotopical uniqueness of $K$-objects, and moreover determine the homotopy type of the moduli space of structured $K$-objects. 
For $C$ an unstable coalgebra, $M$ a $C$-comodule in $\V C$, and $n \geq 0$, we denote this moduli space by 
  $$\mc{M}\bigl(K_C(M,n) \twoheadrightarrow K(C,0)\bigr).$$
This is an example of a moduli space for maps where the arrow $\twoheadrightarrow$ indicates that these maps satisfy an 
additional property. It is the classifying space of the category $\mc{W}\bigl(K_C(M,n) \twoheadrightarrow K(C,0)\bigr)$ whose objects are structured objects of type $K_C(M,n)$:
  $$D^\bu \xrightarrow{\eta} D^\bu_0$$
and morphisms are square diagrams as follows
\[
 \xymatrix{
 D^{\bullet} \ar[r]^{\eta} \ar[d]^-{\simeq} & D_0^{\bullet} \ar[d]^{\simeq} \\
 E^{\bullet} \ar[r]^{\eta'} & E_0^{\bullet} ,
}
\]
where the vertical arrows are weak equivalences in $c \CA^{\mc{E}}$. Note that we have also allowed the case $n=0$. We will also use the notation 
$\mc{W}(K(C,0))$ and $\mc{M}(K(C,0))$ to denote respectively the category and moduli space for objects weakly equivalent to $cC$.

An automorphism of $\iota_C(M)$ is an isomorphism $\phi\co \iota_C(M) \to \iota_C(M)$ which is compatible with an isomorphism $\phi_0$ 
of $C$ along the projection map:
\[
 \xymatrix{
 \iota_C(M) \ar[r]^{\phi}_{\cong} \ar[d] & \iota_C(M) \ar[d] \\
 C \ar[r]^{\phi_0}_{\cong} & C .
 }
\]
This group of automorphisms will be denoted by $\Aut_C(M)$. 

\begin{proposition} \label{Moduli of K(C,k)}
Let $C \in \CA$, $M \in \V C$, and $n \ge 0$. Then:
\begin{itemize} 
\item[(a)] There is a weak equivalence $ \mc{M}(K(C,0))\simeq B\Aut(C).$
\item[(b)] There is a weak equivalence $\mc{M}\bigl(K_C(M, 0) \twoheadrightarrow K(C,0)\bigr) \simeq B \Aut_C(M).$
\item[(c)] There is a weak equivalence
   $$\mc{M}\bigl(K_C(M, n +1) \twoheadrightarrow K(C,0)\bigr) \simeq \mc{M}\bigl(K_C(M, n) \twoheadrightarrow K(C,0)\bigr).$$
\end{itemize}
\end{proposition}
\begin{proof}
(a) Let $\mc{W}(C)$ denote the subcategory of unstable coalgebras isomorphic to $C$ and isomorphisms between them. This is 
a connected groupoid, so its classifying space is weakly equivalent to $B \Aut(C)$. There is a pair of functors 
  $$c\co \mc{W}(C) \rightleftarrows \mc{W}(K(C,0)):\! \pi^0 $$
and natural weak equivalences 
  $${\rm Id} \longrightarrow \pi^0 \circ c\ \ \text{ and }\ \ c \circ \pi^0 \longrightarrow {\rm Id} , $$
which show that this (adjoint) pair of functors induces an inverse pair of homotopy equivalences $\mc{M}(K(C,0)) \simeq B(\mc{W}(C))$, as required.

(b) Let $\CA^{\to}$ denote the category of morphisms and commutative squares in $\CA$. Let $\mc{W}(C; M)$ be the subcategory of $\CA^{\to}$ whose objects are 
isomorphic to the canonical projection
$$\iota_C(M) \to C$$
and whose morphisms are isomorphisms between such objects. Since this is a connected groupoid, we have 
$$B(\mc{W}(C;M)) \simeq B \Aut_C(M).$$
There is a pair of functors 
$$c\co \mc{W}(C;M) \rightleftarrows \mc{W}\bigl(K_C(M,0) \twoheadrightarrow K(C,0)\bigr):\! \pi^0$$
and the two composites are naturally weakly equivalent to the respective identity functors as before. The required 
result follows similarly. 

(c) There is a pair of functors 
$$F\co \mc{W}\bigl(K_C(M, n+1) \twoheadrightarrow K(C,0)\bigr) \rightleftarrows \mc{W}\bigl(K_C(M, n) \twoheadrightarrow K(C,0)\bigr):\! G$$
defined as follows:
\begin{itemize}
   \item[(i)] $F$ sends an object $\eta\co D^\bu \to D^\bu_0$ to the homotopy pushout of the diagram 
$$D^\bu_0 \stackrel{\eta}{\leftarrow} D^\bu \stackrel{\eta}{\rightarrow} D^\bu_0$$
as structured object of type $K_C(M, n)$ (see Remark \ref{shifting-K-obj}).
  \item[(ii)] Let $\eta\co D^\bu \to D^\bu_0$ be an object of $\mc{W}\bigl(K_C(M, n) \twoheadrightarrow K(C,0)\bigr)$. Suppose first that $n > 0$. In this case, we form the homotopy pullback
\[
 \xymatrix{
 E^\bu \ar[r] \ar[d]_{q} & \sk_1(D^\bu) \ar[d]^i \\
 \sk_1(D^\bu) \ar[r]^i & D^\bu ,
 }
\]
where $i$ is the natural inclusion map. $G$ sends the object $\eta$ to the natural map 
$$\sk_{n+2}(E^\bu) \rightarrow E^\bu \xrightarrow{q} \sk_1(D^\bu)$$
which is an object of $\mc{W}\bigl(K_C(M, n+1) \twoheadrightarrow K(C,0)\bigr)$ by Theorem \ref{homotopy-excision} and 
Construction  \ref{alternative-direct-construction} (see also Proposition \ref{diff. constr. in CA}). The case $n=0$ is treated 
similarly, following Construction \ref{alternative-direct-construction}, by first forming the homotopy pullback 
\[
 \xymatrix{
 E^\bu \ar[r] \ar[d]_{q} & cC \ar[d]^i \\
 cC \ar[r]^i & c \iota_C(M) .
 }
\]
\end{itemize}
There are zigzags of natural weak equivalences connecting the composite functors $F \circ G$ and $G \circ F$ to the 
respective identity functors. These are defined by using the properties of homotopy pushouts and homotopy pullbacks 
and by applying the skeleton functors. Thus, this pair of functors induces a pair of inverse homotopy equivalences 
between the classifying spaces as required. 
\end{proof}

As a consequence, we have the following

\begin{corollary}
Let $C \in \CA$, $M \in \V C$, and $n \ge 0$. Then there is a weak equivalence 
  $$\mc{M}\bigl(K_C(M,n) \twoheadrightarrow K(C,0)\bigr) \simeq B \Aut_C(M).$$
In particular, the moduli spaces of structured $K$-objects are path-connected.  
\end{corollary}

Since objects of type $K_C(M,n)$ are homotopically unique, we will denote by $K_C(M,n)$ a choice of such an object, 
even though it will be non-canonical, whenever we are only interested in the actual homotopy type.

\begin{theorem} \label{representability of AQ}
Let $C \in \CA$, $M \in \V C$, $D \in C/ \CA$, and $n, k \geq 0$. For every structured pointed object of type $K_C(M,n + k)$, there are isomorphisms
\begin{align*}
   \AQ^n_C(D ; M) &\cong \pi_{0} \map^{\rm der}_{c(C/\CA)}\bigl(K_C(M, n), cD\bigr) \\
                 &\cong \pi_{k} \map^{\rm der}_{c(C/\CA)}\bigl(K_C(M, n+ k), cD\bigr).
\end{align*}
Here $\map^{\rm der}(-,-)$ denotes the derived mapping space.
\end{theorem}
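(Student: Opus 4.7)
The plan is to obtain both isomorphisms from the adjunction $\iota_C \dashv Ab_C$ combined with the homotopical uniqueness of $K$-objects established in Proposition \ref{Moduli of K(C,k)}.

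For the first isomorphism, I would begin with the definition \eqref{AQ-groups} together with the derived adjunction
\[
   [\Omega^n(cM), (\mathbb{R} Ab_C)(cD)]_{c\V C} \;\cong\; [(\mathbb{L}\iota_C)(\Omega^n(cM)), cD]_{c(C/\CA)^{\mc{E}}},
\]
where on the left I use that every object of $c\V C$ is cofibrant, so the derived loop $\Omega^n(cM)$ needs no cofibrant replacement and $(\mathbb{L}\iota_C)(\Omega^n(cM)) = \iota_C(\Omega^n(cM))$. As observed after \eqref{AQ-groups 2}, the cohomotopy calculation shows that $\iota_C(\Omega^n(cM))$ is a structured pointed object of type $K_C(M,n)$. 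Since Proposition \ref{Moduli of K(C,k)} asserts that the moduli space of such structured objects is connected, any given structured pointed $K_C(M,n)$ is connected to $\iota_C(\Omega^n(cM))$ by a zigzag of weak equivalences in $c(C/\CA)^{\mc{E}}$. Replacing accordingly yields
\[
   \AQ^n_C(D;M) \;\cong\; [K_C(M,n), cD]_{c(C/\CA)^{\mc{E}}} \;=\; \pi_0 \map^{\rm der}_{c(C/\CA)}(K_C(M,n), cD).
\]

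For the second isomorphism, I would exploit the external simplicial structure. By the adjunction between external suspension and loop in the simplicial model category $c(C/\CA)^{\mc{E}}$,
\[
   \pi_k \map^{\rm der}_{c(C/\CA)}(K_C(M,n+k), cD) \;\cong\; [\Sigma^k_{\rm ext} K_C(M,n+k), cD]_{c(C/\CA)^{\mc{E}}}.
\]
So it suffices to verify that $\Sigma^k_{\rm ext} K_C(M,n+k)$ represents the same homotopy class as $K_C(M,n)$. Taking the specific model $K_C(M,n+k) = \iota_C(\Omega^{n+k}_{\rm ext}(cM))$ and using that $\iota_C$ is a left Quillen functor (hence commutes with the derived external suspension), one gets
\[
   \Sigma^k_{\rm ext}\,\iota_C(\Omega^{n+k}_{\rm ext}(cM)) \;\simeq\; \iota_C\bigl(\Sigma^k_{\rm ext}\Omega^{n+k}_{\rm ext}(cM)\bigr) \;\simeq\; \iota_C(\Omega^n_{\rm ext}(cM)),
\]
where the last equivalence uses that $c\V C$ is pointed and additive, so $\Sigma_{\rm ext}\Omega_{\rm ext} \simeq \mathrm{id}$ on the homotopy category. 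The right-hand side is a structured pointed object of type $K_C(M,n)$, and another appeal to uniqueness finishes the identification.

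The main subtlety I anticipate is justifying carefully that the weak equivalence between the two different choices of structured $K$-objects in $c(C/\CA)^{\mc{E}}$ is genuinely realized in the homotopy category (rather than merely at the level of cohomotopy with coalgebra and comodule structure). This is precisely the content of Proposition \ref{Moduli of K(C,k)}; the delicate point is that the \emph{structure} maps $\eta\colon D^\bu \to D_0^\bu$ are respected, which is why the notion of a structured pointed $K$-object was introduced. Once this uniqueness is invoked, the rest is a formal manipulation with the two adjunctions and the additive stability of $c\V C$.
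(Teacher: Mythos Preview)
Your proof is correct and follows essentially the same route as the paper: choose the specific model $\iota_C(\Omega^{n+k}(cM))$, use the derived adjunction together with $\Sigma\Omega \simeq \mathrm{id}$ in $c\V C$ (this is exactly \eqref{AQ-groups 2}), and invoke the homotopical uniqueness of structured $K$-objects to pass to an arbitrary choice. The paper's proof is simply a terser version of what you wrote, citing \eqref{AQ-groups 2} and the connectedness corollary directly rather than unpacking the suspension-loop manipulation.
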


\begin{proof}
Without loss of generality, we may choose $\iota_C(\Omega^{n + k}(cM))$ to be the model for an object of type 
$K_C(M, n+ k)$. Then the result follows from \eqref{AQ-groups 2}. We note that in general the mapping space is 
given a (non-canonical) basepoint by the structure map of $K_C(M,n + k)$ to an object of 
type $K(C, 0)$. 
\end{proof}

\subsection{Postnikov decompositions} \label{postnikov_decomp_unst_coalg}
The skeletal filtration of a cosimplicial unstable coalgebra is formally analogous to the Postnikov tower of 
a space. In this subsection, we prove that this filtration is \emph{principal}, i.e. it is defined in terms of 
attaching maps. This is the analogue of the ``difference construction'' from \cite[Proposition 6.3]{BlDG:pi-algebra}.
It is a consequence of the following proposition, which is an immediate application of Theorem \ref{homotopy-excision}.

\begin{proposition}  \label{diff. constr. in CA}
Let $f\co A^\bu \to C^\bu $ be an $n$-connected map in $c\CA^{\mc{E}}$, $n \geq 1$, and let $C = \pi^0(C^\bu)$. Let $D^\bu$ denote 
the homotopy cofiber of $f$ and $M = \pi^n(D^\bu)$. Consider the homotopy pullback square
\[
\xymatrix{
 E^\bu \ar[d] \ar[r] & A^\bu \ar[d]^{f} \\
 \sk_1 (C^\bu)  \ar[r] & C^\bu .
}
\]
Then:
\begin{itemize}
\item[(a)] $M$ is a $C$-comodule and $\sk_{n+2}(E^\bu)$ together with the canonical map 
$$\sk_{n+2}(E^\bu) \to \sk_1(C^\bu)$$
define a structured object of type $K_{C}(M,n+1).$
\item[(b)] If $\pi^sD^\bu=0$ for all $s>n$, then the diagram   
\[
\xymatrix{
 \sk_{n+2}(E^\bu) \ar[d] \ar[r] & A^\bu \ar[d]^{f} \\
 \sk_1(C^\bu)  \ar[r] & C^\bu 
}
\]
is a homotopy pushout.
\end{itemize}
\end{proposition}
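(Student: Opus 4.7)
My plan is to use homotopy excision (Theorem~\ref{homotopy-excision}) to transfer the problem to the essentially abelian category $\Comod_{C^\bu}$, where Mayer--Vietoris tools apply cleanly, and then to identify the resulting extension with $M$.

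\textbf{Part (a).} Since $\sk_1(C^\bu)$ has cohomotopy concentrated in degree $0$ with value $C$, it is an object of type $K(C,0)$, and the canonical map $g\co\sk_1(C^\bu)\to C^\bu$ is $1$-connected: recall that in $c\CA^{\mc{E}}$, natural and ordinary cohomotopy groups are linked by duality (Remark~\ref{F-equivalences-explicit}), so an $n$-connected map is one inducing an isomorphism on $\pi^s$ for $s<n$ and a monomorphism at $s=n$; the map $g$ is an isomorphism on $\pi^0$ and trivially a monomorphism on $\pi^1$. Applying Theorem~\ref{homotopy-excision}(a) with $f$ of connectivity $n$ and $g$ of connectivity $1$ shows that the comparison map $c\co E^\bu\to\widehat{E}^\bu$ from $E^\bu$ to the strict pullback $\widehat{E}^\bu$ in $\Comod_{C^\bu}$ is $(n+2)$-connected.

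Next I compute $\pi^*(\widehat{E}^\bu)$ by means of the Mayer--Vietoris long exact sequence associated to the short exact sequence $0\to\widehat{E}^\bu\to\sk_1(C^\bu)\oplus A^\bu\to C^\bu\to 0$ in $\Comod_{C^\bu}$ (valid after replacing $f$ by a surjective model). Plugging in the known cohomotopy of the corners yields $\pi^0(\widehat{E}^\bu)=C$, $\pi^s(\widehat{E}^\bu)=0$ for $1\le s\le n$, and a short exact sequence
\[
0\longrightarrow\coker\pi^n(f)\longrightarrow\pi^{n+1}(\widehat{E}^\bu)\longrightarrow\ker\pi^{n+1}(f)\longrightarrow 0.
\]
The long exact sequence of the cofiber $A^\bu\to C^\bu\to D^\bu$ presents $M=\pi^n(D^\bu)$ as exactly the same extension, and comparing the natural connecting maps identifies $\pi^{n+1}(\widehat{E}^\bu)\cong M$. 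Transferring along the $(n+2)$-connected map $c$, one gets $\pi^0(E^\bu)=C$, $\pi^s(E^\bu)=0$ for $1\le s\le n$, and $\pi^{n+1}(E^\bu)\cong M$. By Remark~\ref{rem:natural-Postnikov} the skeletal inclusion $\sk_{n+2}(E^\bu)\hookrightarrow E^\bu$ preserves cohomotopy below degree $n+2$ and kills cohomotopy above, so $\sk_{n+2}(E^\bu)$ has the cohomotopy profile of a $K_C(M,n+1)$-object. The projection $\sk_{n+2}(E^\bu)\to\sk_1(C^\bu)$ furnishes the required structure map, and $M$ inherits its $C$-comodule structure from its realization as a higher cohomotopy group of a cosimplicial unstable coalgebra with $\pi^0=C$ (Section~\ref{subsec:cos-res-unst-coalg}).

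\textbf{Part (b).} The truncation hypothesis $\pi^s(D^\bu)=0$ for $s>n$ forces, through the cofiber long exact sequence, $\pi^s(f)$ to be an isomorphism for $s\ge n+2$ and an epimorphism at $s=n+1$. Continuing the Mayer--Vietoris analysis upward yields $\pi^s(\widehat{E}^\bu)=0$ for all $s\ge n+2$. To verify that the stated square with $\sk_{n+2}(E^\bu)$ is a homotopy pushout, I pass to the pushout $P=\sk_1(C^\bu)\sqcup^h_{\sk_{n+2}(E^\bu)}A^\bu$ in $c\CA^{\mc{E}}$; since the forgetful functor to $c\Vec$ preserves cofibrations and colimits, $\pi^*(P)$ can be analyzed through the dual Mayer--Vietoris long exact sequence in $c\Vec$. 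A degreewise comparison with $\pi^*(C^\bu)$---using the decomposition $\pi^n(C^\bu)\cong\pi^n(A^\bu)\oplus\coker\pi^n(f)$ available over a field because $\pi^n(f)$ is a monomorphism---shows that the canonical map $P\to C^\bu$ induces an isomorphism on all cohomotopy groups, hence is a weak equivalence. The main obstacle is matching the connecting map $M\to\pi^{n+1}(A^\bu)$ in the pushout long exact sequence with the one arising from the cofiber sequence for $D^\bu$, as this is what implements the cancellations needed at degrees $n$ and $n+1$.
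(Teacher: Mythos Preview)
Your argument for (a) is correct and matches the paper's approach: homotopy excision (Theorem~\ref{homotopy-excision}(a)) yields the $(n+2)$-connectivity of $c\colon E^\bu\to\widehat{E}^\bu$, and the Mayer--Vietoris sequence in $\Comod_{C^\bu}$ identifies $\pi^{n+1}(\widehat{E}^\bu)$ with $M$. Your write-up is in fact considerably more detailed than the paper's one-line reference to excision.

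For (b), your strategy via the pushout Mayer--Vietoris and comparison with the cofiber sequence of $f$ is again what the paper intends (compare the proof of Theorem~\ref{refined diff. constr. in CA}(b), which invokes ``the long exact sequence of cohomotopy groups and the 5-lemma''). However, the ``obstacle'' you flag at the end is not actually one, and leaving it unresolved makes the argument look incomplete. The point is this: in part~(a) you identified $\pi^{n+1}(\widehat{E}^\bu)\cong M$ \emph{as extensions} of $\ker\pi^{n+1}(f)$ by $\coker\pi^n(f)$. In both presentations the map to $\pi^{n+1}(A^\bu)$ is precisely the quotient onto $\ker\pi^{n+1}(f)$ followed by the inclusion into $\pi^{n+1}(A^\bu)$; any isomorphism of such extensions automatically intertwines these quotient maps. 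Since $\pi^{n+1}(\sk_{n+2}E^\bu)=\pi^{n+1}(E^\bu)\cong\pi^{n+1}(\widehat{E}^\bu)$ and the map $\sk_{n+2}(E^\bu)\to A^\bu$ factors through $E^\bu\to\widehat{E}^\bu\to A^\bu$, the connecting map in your pushout sequence agrees with $\partial\colon M\to\pi^{n+1}(A^\bu)$ under the identification already made. Once you record this, the degreewise comparison goes through and the proof of (b) is complete.
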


As a consequence of Proposition \ref{diff. constr. in CA}, for every $C^\bu \in c \CA^{\mc{E}}$, the skeletal filtration 
$$ \sk_1(C^\bu) \to \sk_2(C^\bu) \to \cdots \to \sk_n(C^\bu) \to \cdots \to C^\bu$$
is defined by homotopy pushouts as follows
\[
 \xymatrix{
 K_C(M, n + 1) \ar[r]^(0.6){w_n} \ar[d] & \sk_n(C^\bu) \ar[d] \\
 K(C,0) \ar[r] & \sk_{n+1}(C^\bu),
 }
\]
where $C = \pi^0(C^\bu)$, $M = \pi^n(C^\bu)$, and the collection of maps $w_n$ may be regarded as 
analogues of the Postnikov $k$-invariants in this context. 

The next proposition reformulates and generalizes Proposition \ref{diff. constr. in CA} in terms of moduli spaces.
For $C^\bu \in c\CA^{\mc{E}}$ such that $\sk_n(C^\bu) \simeq C^\bu$, for some $n \geq 1$, and $M$ a coabelian $\pi^0(C^\bu)$-comodule, 
let $$\mc{W}(C^\bu + (M,n))$$ be the category with objects cosimplicial unstable coalgebras $D^\bu$ such that:
\begin{itemize} 
\item[(i)] $\sk_{n+1}(D^\bu) \simeq D^\bu$, 
\item[(ii)] $\sk_n(D^\bu)$ is weakly equivalent to $C^\bu$,
\item[(iii)] $\pi^n(D^\bu)$ is isomorphic to $M$ as comodules. 
\end{itemize}
The morphisms are given by weak equivalences of cosimplicial unstable coalgebras. The classifying space of this category, denoted by $$\mc{M}(C^\bu + (M,n)),$$
is the moduli space of $(n+1)$-skeletal extensions of the $n$-skeletal object $C^\bu$ by the $\pi^0(C^\bu)$-comodule $M$. 

We define 
  $$ \mc{W}\bigl(K(C,0) \twoheadleftarrow K_C(M, n+1) \stackrel{{\rm 1-con}}{\rightsquigarrow} C^\bu\bigr)$$
to be the category whose objects are diagrams $T\leftarrow U\to V$ in $c\CA$ such that $U\to T$ is a structured object of type $K_C(M,n+1)$, $V$ is \mc{E}-equivalent to $C^\bu$, and $U\to V$ is cosimplicially $1$-connected. The morphisms are weak equivalences of diagrams. Then we define 
 $$\mc{M}\bigl(K(C,0) \twoheadleftarrow K_C(M, n+1) \stackrel{{\rm 1-con}}{\rightsquigarrow} C^\bu\bigr)$$ 
as the classifying space of this category.
Constructions of such moduli spaces are also discussed in Subsection~\ref{subsec:moduli-spaces}.

\begin{proposition} \label{Moduli+Diff. = !}
Let $n \geq 1$ and suppose that $C^\bu$ is an object of $c \CA^{\mc{E}}$ such that $\sk_{n} C^\bu \xrightarrow{\simeq} C^\bu$. 
Let $M$ be an object of $\V C$ where $C=\pi^0(C^\bu)$. Then there is a natural weak equivalence 
  $$\mc{M}\bigl(C^\bu + (M,n)\bigr) \simeq \mc{M}\bigl(K(C,0) \twoheadleftarrow K_C(M, n+1) \stackrel{{\rm 1-con}}{\rightsquigarrow} C^\bu\bigr).$$
\end{proposition}
\begin{proof}
There is a pair of functors 
  $$F\co \mc{W}\bigl(K(C,0) \twoheadleftarrow K_C(M, n+1) \stackrel{{\rm 1-con}}{\rightsquigarrow} C^\bu\bigr) \rightleftarrows \mc{W}\bigl(C^\bu + (M,n)\bigr):\! G, $$ 
where:
\begin{itemize}
 \item[(i)] $F$ is defined to be the homotopy pushout of the diagram. This homotopy pushout is in $\mc{W}(C^\bu + (M,n))$ using the long exact 
 sequence of cohomotopy groups (cf. Proposition \ref{G-cofiber-to-LES}). 
 \item[(ii)] Given an object $D^\bu \in \mc{W}(C^\bu + (M,n))$, form the homotopy pullback 
 \[
  \xymatrix{
  \sk_{n+2}(E^\bu) \ar[r]^(.6)i & E^\bu  \ar[d] \ar[r] & \sk_n(D^\bu) \ar[d] \\
  & \sk_1(D^\bu) \ar[r] & D^\bu .
  }
 \]
The functor $G$ sends $D^\bu$ to the diagram $$\sk_1(D^\bu) \leftarrow \sk_{n+2}(E^\bu) \rightarrow \sk_n(D^\bu)$$
 which, by Proposition \ref{diff. constr. in CA}, is a diagram of the required type. 
\end{itemize}
The composites $F \circ G$ and $G \circ F$ are connected to the respective identity functors via zigzags of natural weak 
equivalences. These are defined by using the properties of homotopy pushouts and homotopy pullbacks and by applying 
the skeleton functors. Hence these functors induce a pair of inverse homotopy equivalences, as required.  
\end{proof}

\subsection{An extension of Proposition \ref{diff. constr. in CA}} \label{an_extension}
We discuss a generalization of Proposition \ref{diff. constr. in CA} to the case of maps which are $0$-connected but do not necessarily induce isomorphisms on $\pi^0$. This subsection will not be used in the rest of the paper.

Recall that $\grCoalg$ denotes the category of cocommutative, counital graded coalgebras over $\mathbb{F}$, that is, the category 
of coalgebras in $\Vec$ with respect to the graded tensor product. 

\begin{definition} Let  $K \rightarrowtail C \leftarrowtail L$ be inclusions of coalgebras in $\grCoalg$. We obtain a pushout of 
$C$-comodules 
\[
\xymatrix{
C \ar[r] \ar[d] & C/K \ar[d]  \ar[dr] \ar[drr] \\
C/L \ar[r] \ar@/_1pc/[rr] \ar@/_2pc/[rrr] & C/(K \oplus L) \ar@{..>}[r]^-{\phi} & C/K\ \Box_C\ C/L \ar[r] & C/K \otimes C/L ,
 }
\] 
\vspace*{5mm}

\noindent
where the $C$-comodule structures and the outer maps are defined by the comultiplication of $C$. Thus we obtain a canonical dotted arrow 
  $$\phi\co C/(K \oplus L)\to C/K \ \Box_C\  C/L$$ 
in $\Vec$. Then we define:
\begin{itemize}
\item[(a)] $C/K\ast_C C/L$ to be the kernel of $\phi$. 
\item[(b)] $C/K\circ_C C/L$  to be the image of $\phi$. 
\end{itemize}
\end{definition}

\begin{remark} The construction of $C/K\ast_C C/L$ is dual to the construction which associates to two ideals $I$,$J$ in a commutative 
ring $R$ the quotient $(I\cap J)/(IJ)$. The constructions $C/(K \oplus L)$ and $C/K \circ_C C/L$ are dual to $I\cap J$ and $IJ$ respectively.  
\end{remark}

\begin{theorem} \label{refined diff. constr. in CA}
Let  $f\co A^\bu \to C^\bu $ be an $n$-connected map in $c\CA^{\mc{E}}$, $n \geq 0$. 
Suppose that $L$ is an unstable coalgebra and $g\co K(L,0)\to B^\bu $ a $0$-connected map. Consider the 
homotopy pullback diagram
\[
\xymatrix{
 E^\bu \ar[d] \ar[r] & {A}^\bu \ar[d]^{f} \\
 {K(L ,0) }  \ar[r]^{g} & {C}^\bu .
}
\]
We denote the bigraded coalgebras 
$$ \mathscr{A}^*:= \pi^\ast (A^\bu )\ \ \text{ and }\ \ \mathscr{C}^*:= \pi^\ast (C^\bu )$$ 
in order to simplify the notation. Then:
\begin{itemize}
\item[(a)] The object $\sk_{n+2}E^\bu $ is of type $K_{D}(N,n+1)$, where 
\begin{itemize}
 \item[(i)] the unstable coalgebra $D$ is isomorphic to $\mathscr{A}^0\ \Box_{\mathscr{C}^0}\ L$.
\item[(ii)] the $D$-comodule $N$ fits into a short exact sequence as follows,
 \[
0 \to (\mathscr{C}/L\ \ast_{\mathscr{C}}\ \mathscr{C}/\mathscr{A})^n \to N \to (L\ \Box_{\mathscr{C}}\ \mathscr{A})^{n+1} \to 0
\]
If $f$ induces a monomorphism in degree $n+1$, then $(L\ \Box_{\mathscr{C}}\ \mathscr{A})^{n+1}=0$. 
\end{itemize}
 \item[(b)] If $\pi^s(C^\bu)$ vanishes for $s > n$, then the diagram   
\[
\xymatrix{ \sk_{n+2}E^\bu \ar[d] \ar[r] & {A}^\bu \ar[d]^{f} \\
           {K(L ,0) }  \ar[r]^{g} & {C}^\bu }
\]
is a homotopy pushout.
\end{itemize} 
\end{theorem}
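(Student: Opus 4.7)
The strategy is to apply the K\"unneth spectral sequence from Theorem \ref{Kunneth spectral sequence 2}(b) to compute $\pi^*(E^\bu)$, mirroring the proof of Proposition \ref{diff. constr. in CA} but accounting for the fact that $g$ need not induce an isomorphism on $\pi^0$. First I would replace $g$ by an $\mc{E}$-cofree map via Proposition \ref{prop:quasi-cofree-replacements}, so that the homotopy pullback $E^\bu$ coincides with the strict pullback $A^\bu \Box_{C^\bu} K(L,0)$ of cosimplicial $C^\bu$-comodules. The K\"unneth spectral sequence then takes the form
\begin{equation*}
E_2^{p,q} = \Cotor^p_{\mathscr{C}^*}(\mathscr{A}^*, L)^q \;\Longrightarrow\; \pi^{p+q}(E^\bu),
\end{equation*}
where $L$ is viewed as a $\mathscr{C}^*$-comodule concentrated in cosimplicial degree $0$ via the $0$-connected inclusion $L \hookrightarrow \mathscr{C}^0$.

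The core computation proceeds via the two short exact sequences
\begin{equation*}
0 \to L \to \mathscr{C}^* \to \mathscr{C}^*/L \to 0, \qquad 0 \to \mathscr{A}^* \to \mathscr{C}^* \to \mathscr{C}^*/\mathscr{A}^* \to 0
\end{equation*}
of graded $\mathscr{C}^*$-comodules. Since $\mathscr{C}^*$ is cofree as a $\mathscr{C}^*$-comodule, the associated long exact sequences express $\Cotor^p_{\mathscr{C}^*}(\mathscr{A}^*, L)$ first in terms of $\Cotor^{p-1}_{\mathscr{C}^*}(\mathscr{A}^*, \mathscr{C}^*/L)$ and then of $\Cotor^{p-2}_{\mathscr{C}^*}(\mathscr{C}^*/\mathscr{A}^*, \mathscr{C}^*/L)$. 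The $n$-connectivity of $f$ gives $(\mathscr{C}^*/\mathscr{A}^*)^s = 0$ for $s < n$, from which the relevant $E_2$-terms vanish or simplify. In low total degree one extracts $E_2^{0,0} = \mathscr{A}^0 \Box_{\mathscr{C}^0} L = D$ and $E_2^{p,q} = 0$ for $1 \le p + q \le n$. In total degree $n+1$ all contributions with $p \ge 2$ vanish for the same reason, leaving $E_\infty^{0,n+1} = (L \Box_{\mathscr{C}} \mathscr{A})^{n+1}$ and $E_\infty^{1,n} = (\mathscr{C}/L \ast_{\mathscr{C}} \mathscr{C}/\mathscr{A})^n$; the resulting filtration of $\pi^{n+1}(E^\bu)$ is exactly the claimed short exact sequence defining $N$. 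Identifying $E_\infty^{1,n}$ with $(\mathscr{C}/L \ast_{\mathscr{C}} \mathscr{C}/\mathscr{A})^n$ will be the principal technical obstacle: it requires unpacking the definition of $\ast_{\mathscr{C}}$ as the kernel of $\phi\co \mathscr{C}^*/(L \oplus \mathscr{A}^*) \to \mathscr{C}^*/L \Box_{\mathscr{C}^*} \mathscr{C}^*/\mathscr{A}^*$ and tracing a careful diagram chase through the two long exact sequences above.

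The structure map making $\sk_{n+2} E^\bu$ into an object of type $K_D(N, n+1)$ is provided by the canonical Postnikov truncation $E^\bu \to c\pi^0(E^\bu) = cD$, and the composite $\sk_1 E^\bu \hookrightarrow \sk_{n+2} E^\bu \to cD$ is a weak equivalence by the vanishing already established. When $f$ is moreover injective on $\pi^{n+1}$, the cotensor $(L \Box_{\mathscr{C}} \mathscr{A})^{n+1}$ vanishes, because the equalizer condition in cosimplicial degree $n+1$ forces elements into the kernel of $\mathscr{A}^{n+1} \to \mathscr{C}^{n+1}$. For part (b), the hypothesis $\pi^s(C^\bu) = 0$ for $s > n$ forces $\pi^s(E^\bu) = 0$ for $s > n+1$ via the same K\"unneth spectral sequence, so $\sk_{n+2} E^\bu \stackrel{\sim}{\to} E^\bu$. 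The square is homotopy $n$-cocartesian by the homotopy excision Theorem \ref{homotopy-excision}, and the coskeletality of $C^\bu$ upgrades this to a genuine homotopy pushout by the cohomotopical comparison argument used in the concluding step of Proposition \ref{diff. constr. in CA}.
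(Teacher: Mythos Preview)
Your treatment of part (a) is essentially the paper's approach. Both rely on the K\"unneth spectral sequence of Theorem~\ref{Kunneth spectral sequence 2}(b); the paper packages the two key computations as separate lemmas---Lemma~\ref{Cotor} for the vanishing of $\Cotor^p_{\mathscr{C}}(L,\mathscr{A})^q$ in the relevant range and Lemma~\ref{cotor computation} for the identification $\Cotor^1_{\mathscr{C}}(L,\mathscr{A}) \cong \mathscr{C}/L \ast_{\mathscr{C}} \mathscr{C}/\mathscr{A}$---while you propose obtaining both via dimension-shifting along the two short exact sequences. That is the same idea, and your identification of the two surviving $E_2$-terms at total degree $n+1$ matches the paper's. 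One small point: your dimension-shifting argument handles $p>0$, but the vanishing of $E_2^{0,q}=(L\Box_{\mathscr{C}}\mathscr{A})^q$ for $0<q\le n$ needs a separate remark (the paper uses left exactness of the cotensor and the inclusion into $(L\Box_{\mathscr{C}}\mathscr{C})^q=L^q=0$).

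Your argument for part (b), however, contains a genuine error. You assert that the hypothesis $\pi^s(C^\bu)=0$ for $s>n$ forces $\pi^s(E^\bu)=0$ for $s>n+1$ via the K\"unneth spectral sequence, and hence $\sk_{n+2}E^\bu\simeq E^\bu$. This is false: take $n=0$, $C^\bu=cC$, $L=C$, and $g=\id$. Then $E^\bu\simeq A^\bu$, and nothing in the hypotheses prevents $A^\bu$ from having nonzero cohomotopy in degrees above $n+1$. Concretely, the spectral sequence in this degenerate case gives $E_2^{0,q}=(C\Box_C\mathscr{A}^*)^q=\mathscr{A}^q$, which need not vanish. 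Once this step fails, the rest of your argument for (b) does not go through: homotopy excision only gives that the square with $E^\bu$ in the corner is homotopy $n$-cocartesian, and replacing $E^\bu$ by $\sk_{n+2}E^\bu$ is no longer innocuous. The paper does not attempt to show $\sk_{n+2}E^\bu\simeq E^\bu$; instead it compares the long exact sequence of cohomotopy groups for the pushout directly against that for the target square and invokes the 5-lemma.
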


A formula similar to (a) is obtained by Massey and Peterson in~\cite[Theorem 4.1]{Massey-Peterson:mod2}.
For the proof of this theorem, we will need the following technical lemmas.

\begin{lemma}\label{Cotor} 
Let $B \xrightarrow{g} C \xleftarrow{f} A$ be a diagram in $\grCoalg$. Assume that 
\begin{enumerate}
   \item[(a)] $B$ is concentrated in degree $0$.
   \item[(b)] $f$ is an isomorphism in degrees $<n$ and a monomorphism in degree $n$.  
\end{enumerate}
Then $\Cotor^p_{C}(B ,A )_q=0$ for all pairs $(p,q) \neq (1,n)$ with $p+q \leq n+1$ and $p>0$.
\end{lemma}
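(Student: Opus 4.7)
The plan is to compute $\Cotor^*_C(B,A)$ via the (unnormalized) cobar complex
$$\Cobar^p(B,C,A) = B \otimes C^{\otimes p} \otimes A,$$
with the standard cobar differential assembled from the coactions $\rho_B\co B\to B\otimes C$, $\Delta_C$, and $\lambda_A\co A\to C\otimes A$. This complex carries the internal grading inherited from $\Vec$, and the cobar differential preserves internal degree, so $\Cotor^p_C(B,A)_q$ is the $p$-th cohomology of the complex $\Cobar^*(B,C,A)_q$ in each fixed internal degree $q$.

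The key observation is a tight constraint on which copies of $A$ can appear in low internal degree. Since $B$ is concentrated in degree $0$, in internal degree $q$ one has
$$\Cobar^p(B,C,A)_q = B_0 \otimes \bigoplus_{i_1+\cdots+i_p+j=q} C_{i_1}\otimes\cdots\otimes C_{i_p}\otimes A_j,$$
with all $i_k,j\ge 0$, so necessarily $j\le q$. Assuming $q\le n-1$, every index $j$ appearing satisfies $j<n$, so by hypothesis $f\co A_j\to C_j$ is an isomorphism; since $f$ is a map of $C$-comodules, it induces an isomorphism of cochain complexes $\Cobar^*(B,C,A)_q \xrightarrow{\cong} \Cobar^*(B,C,C)_q$. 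Because $C$ is a cofree (hence injective) $C$-comodule, $\Cotor^p_C(B,C)=0$ for $p>0$, so we conclude $\Cotor^p_C(B,A)_q=0$ for all $p>0$ and $q\le n-1$.

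To finish, I check that the bidegree hypotheses of the lemma are exactly subsumed by the range $q\le n-1$: given $p\ge 1$ with $p+q\le n+1$ and $(p,q)\neq(1,n)$, the case $p=1$ forces $q\le n-1$ (since $q\le n$ and $q\ne n$), and the case $p\ge 2$ gives $q\le n-1$ directly. So every pair we care about lies in the range where the comparison with $\Cotor^*_C(B,C)$ is valid.

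There is no real obstacle here: the argument is essentially just unwinding the internal grading in the cobar complex and exploiting the fact that the only copies of $A$ that can appear in internal degree $\le n-1$ are those where $f$ is already an isomorphism. The mild subtlety worth flagging is that $f$ is not assumed globally injective, and $g\co B\to C$ has no injectivity assumption either — the proof is specifically arranged to avoid any use of a short exact sequence $0\to A\to C\to C/A\to 0$, since such a sequence is not available. All the work is done by the local (in internal degree) comparison map.
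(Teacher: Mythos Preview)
Your proof is correct and takes a genuinely different route from the paper. The paper argues by constructing a tailored injective $C$-resolution $I^*$ of $A$ in which $A\to I^0$ agrees with $f$ in degrees $\le n$; since $f$ is an isomorphism below degree $n$, the higher terms $I^p$ (for $p>0$) can then be arranged to vanish in internal degrees $q<n$, and as $B$ is concentrated in degree $0$ this forces $(B\Box_C I^p)_q=0$ in that range. Your argument instead keeps the standard cobar resolution and pushes the connectivity of $f$ through the comparison map $1\otimes 1^{\otimes p}\otimes f\co \Cobar^*(B,C,A)\to\Cobar^*(B,C,C)$, observing that in internal degree $q\le n-1$ only $A_j$ with $j\le q$ appears and hence the comparison is an isomorphism of complexes there. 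Both approaches exploit the same two facts (the degree-$0$ concentration of $B$ and the low-degree behaviour of $f$), but yours avoids having to justify the existence of a truncated injective resolution and makes explicit why no global injectivity of $f$ is needed; the paper's version, on the other hand, makes the vanishing completely transparent at the level of the complex itself. Your closing check that the excluded bidegrees all satisfy $q\le n-1$ is exactly the right bookkeeping.
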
 
\begin{proof}
Recall that
  $$ \Cotor^p_{C}(B, A )_q = H_p (B\ \Box_{C}\ I^*)_q $$
for any injective $C$-resolution $A \xrightarrow{\simeq} I^*$ of $A \in \Comod_{C}$. 
An injective $C$-resolution $I^*$ of $A$ can be given such that 
$A \to I^0$ agrees with $f$ in internal degrees $q \leq n$. Moreover, we can assume that the resolution 
has the property that $(I^p)_q = 0$ 
for $0 \leq q < n$ and $p > 0$.  Then  
  $$(A\ \Box_{C}\ I^p)_q = 0$$ 
for all $(p,q)$ with $p > 0$ and $0 \leq q < n$ and the assertion follows.
\end{proof}

\begin{lemma}\label{cotor computation} 
Let $K \rightarrowtail C \leftarrowtail L$ be inclusions of coalgebras \emph{(}ungraded, graded or bigraded\emph{)}.
Then there is an isomorphism 
  $$\mathrm{Cotor}^1_{C}(K,L) \cong (C /K \ast_C C/L).$$
\end{lemma}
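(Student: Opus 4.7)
The strategy is to compute $\Cotor^1_C(K, L)$ from a short injective resolution of $L$ and then identify the resulting quotient with the kernel defining $C/K \ast_C C/L$. Since $C$ is cofree---hence injective---as a comodule over itself, the short exact sequence
$$0 \to L \to C \to C/L \to 0$$
of $C$-comodules, upon applying $K \,\Box_C\, (-)$, yields the four-term exact sequence
$$0 \to K \,\Box_C\, L \to K \,\Box_C\, C \to K \,\Box_C\, C/L \to \Cotor^1_C(K, L) \to 0,$$
so $\Cotor^1_C(K, L)$ is identified with the cokernel of $K \,\Box_C\, C \to K \,\Box_C\, C/L$.

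Next, I would identify both of these cotensor products as subspaces of $C/L$. The canonical isomorphism $C \,\Box_C\, C/L \cong C/L$, obtained by applying the counit of $C$ on the first tensor factor, restricts to isomorphisms $K \,\Box_C\, C \cong K$ and $K \,\Box_C\, C/L \cong N/L$, where
$$N := \{c \in C : \Delta(c) \in K \otimes C + C \otimes L\}.$$
Here the identification of $K \,\Box_C\, C/L$ with $N/L$ is verified directly: an element $\sum k_i \otimes [x_i]$ of the cotensor product corresponds, via the inverse of $C \,\Box_C\, C/L \cong C/L$, to the unique class $[y]$ with $\Delta_{C/L}([y]) = \sum k_i \otimes [x_i] \in K \otimes C/L$, and this condition translates back to $\Delta(y) \in K \otimes C + C \otimes L$. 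Note that $L \subseteq N$ since $\Delta(L) \subseteq L \otimes L$, and $K \subseteq N$ since $\Delta(K) \subseteq K \otimes K$. Under these identifications, the map $K \,\Box_C\, C \to K \,\Box_C\, C/L$ becomes the natural $k \mapsto [k]$ from $K$ to $N/L$, whose image is $(K+L)/L$. Consequently
$$\Cotor^1_C(K, L) \;\cong\; (N/L)\big/\bigl((K+L)/L\bigr) \;\cong\; N/(K+L).$$

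Finally, reading off the definition of $\phi \co C/(K+L) \to C/K \,\Box_C\, C/L \subseteq C/K \otimes C/L$, the element $\phi([c]) = \sum [c_{(1)}] \otimes [c_{(2)}]$ vanishes precisely when $\Delta(c) \in K \otimes C + C \otimes L$, that is, when $c \in N$; hence $C/K \ast_C C/L = \ker(\phi) = N/(K+L)$, matching the computation above and giving the desired isomorphism. The only real technical point is the identification $K \,\Box_C\, C/L \cong N/L$, which requires careful bookkeeping of the coactions together with use of the counit and coassociativity axioms, but presents no conceptual difficulty. The same argument applies verbatim in the ungraded, graded, and bigraded settings.
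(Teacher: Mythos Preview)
Your proof is correct and follows essentially the same strategy as the paper: both extract $\Cotor^1_C(K,L)$ from the four-term exact sequence obtained by cotensoring a short exact sequence with $C$ in the middle (you use $0\to L\to C\to C/L\to 0$ against $K$, the paper uses the symmetric one against $L$), and then identify the resulting cokernel with $\ker\phi$. The only difference is in the endgame: the paper invokes a second cotensor exact sequence (applying $-\,\Box_C\,(C/K)$ to $0\to L\to C\to C/L\to 0$) to set up the comparison, whereas you compute both sides directly as $N/(K+L)$ via the explicit description $K\,\Box_C\, C/L\cong N/L$ --- a slightly more hands-on but entirely equivalent finish.
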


\begin{proof} Applying the functor $- \Box_C\ L$ to the short exact sequence of $C$-comodules:
  $$0 \to K \to C \to C/K \to 0$$
we obtain an exact sequence
  $$0 \to K \cap L \to L \to C/K \ \Box_C\  L \to \mathrm{Cotor}^1_{C}(K, L) \to 0.$$
On the other hand, applying the functor $- \Box_C\ (C/K)$ to the sequence, 
  $$0 \to L \to C \to C/L \to 0,$$
we get an exact sequence as follows,
  $$0 \to L \ \Box_C\  (C/K) \to C/K \to (C/L) \ \Box_C\  (C/K) \to \cdots$$
Note then that 
  $$(C/K) \ast_C (C/K): = \ker\bigl[(C/K \oplus L) \to (C/K) \otimes (C/L)\bigr]$$
is the image of the composition 
  $$L \ \Box_C\  (C/K) \to C/K \to (C/K \oplus L)$$
This shows that $(C/K) \ast_C (C/L)$ fits in a short exact sequence 
  $$0 \to (L/ K \cap L) \to L \ \Box_C\  (C/K) \to (C/K) \ast_C (C/L) \to 0$$
and the claim follows. 
\end{proof}

\begin{remark} Lemma~\ref{cotor computation}  above is the (graded) dual of the well known formula 
  $$\mathrm{Tor}_R^1(I,J)\cong (I\cap J)/(IJ),$$
where $I$ and $J$ are ideals in a commutative ring $R$.
\end{remark}

\noindent \textbf{Proof of Theorem~\ref{refined diff. constr. in CA}.} We use the spectral sequence (b) from Theorem~\ref{Kunneth spectral sequence 2} for the computation of $\pi^{s}(E^\bu )$ when $s \leq n+1$. The identification of 
$\pi^0(E^\bu)$ is then immediate. As a consequence of (a bigraded variant of) Lemma \ref{Cotor}, we have
  $$\Cotor_{\mathscr{C}}^p(L, \mathscr{A})^q=0,$$ 
as a graded vector space, for all $(p, q) \neq (1,n)$ with $p+q \leq n+1$ and $p>0$. This implies that $\pi^s(E^\bu)$ 
is trivial for all $0 < s < n+1$. Since the cotensor product is left exact, there is a monomorphism of graded vector 
spaces,
$$(L\ \Box_{\mathscr{C}}\ \mathscr{A})^q \rightarrowtail (L\ \Box_{\mathscr{C}}\ \mathscr{C})^q$$
for all $q \leq n$. The map is also injective for $q=n+1$ if $\pi^{n+1}(f)$ is injective. But 
$(L\ \Box_{\mathscr{C}}\ \mathscr{C})^q=0$ for all $q > 0$, and consequently, 
$$(L\ \Box_{\mathscr{C}}\ \mathscr{A})^{q}=0$$
for all $0 < q \leq n$, because $L$ is concentrated in degree $0$. By Lemma \ref{cotor computation}, we have an isomorphism 
  $$\Cotor^1_{\mathscr{C}} (L,\mathscr{A})^n \cong (\mathscr{C}/L \ast_{\mathscr{C}} \mathscr{C}/\mathscr{A})^n$$ 
This graded vector space and $(L\ \Box_{\mathscr{C}}\ \mathscr{A})^{n+1}$ are the only potentially non-trivial objects of 
total degree $n+1$ in the $E_2$-page. Since there is no place for non-trivial differentials in this degree, the proof of 
statement (a) is complete.

Claim (b) follows from the long exact sequence of cohomotopy groups and the Five Lemma.  \qed \\

We list a few immediate consequences of Theorem~\ref{refined diff. constr. in CA}. We will need a more general notion of primitivity.

\begin{definition} 
Let $C\in (\F/\grCoalg)$ be a pointed graded coalgebra and $N$ a $C$-comodule. The sub-comodule $\Pr_C (N)$ of the 
primitives under the $C$-coaction is the kernel of the map 
\[N \xrightarrow{\Delta_N} C\otimes N \to  (C/\mathbb{F}) \otimes N.\]
This is the subset of elements $n \in N$ such that $\Delta_N (n)= 1 \otimes n$.
\end{definition}

\begin{corollary} \label{trivial coaction} 
Consider the special case of \emph{Theorem~\ref{refined diff. constr. in CA}} where:
\begin{itemize} 
 \item $L = \mathbb{F}$ \emph{(}therefore $E^\bu$ is the homotopy fiber of $f$ at the chosen basepoint\emph{)}.
 \item $\pi^{n+1}(f)$ is injective.
\end{itemize}
Let $M = \mathscr{C}^n/\mathscr{A}^n$ as a $\mathscr{C}^0$-comodule where $A$ and $C$ are compatibly pointed. Then there is an isomorphism 
$$\pi^{n+1}(E^\bu) \cong \Pr_{\mathscr{C}^0}(M).$$
If $M$ is a trivial $\mathscr{C}^0$-comodule, then there is an isomorphism $\pi^{n+1}(E^\bu) \cong {M}.$
\end{corollary}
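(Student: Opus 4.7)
The plan is to derive the corollary directly from Theorem~\ref{refined diff. constr. in CA} and Lemma~\ref{cotor computation} by computing the relevant $\Cotor$ group. First, applying Theorem~\ref{refined diff. constr. in CA}(a) with $L = \mathbb{F}$ and using the hypothesis that $\pi^{n+1}(f)$ is injective to kill the right-hand term of the short exact sequence in (a)(ii), one obtains
\[
\pi^{n+1}(E^\bu) \;\cong\; N \;\cong\; (\mathscr{C}/\mathbb{F} \ast_{\mathscr{C}} \mathscr{C}/\mathscr{A})^n \;\cong\; \Cotor^1_{\mathscr{C}}(\mathbb{F}, \mathscr{A})^n,
\]
where the last isomorphism is Lemma~\ref{cotor computation} (and $\Cotor$ is computed in bigraded $\mathscr{C}$-comodules, with the superscript $n$ denoting the cohomotopy grading).

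Next I would compute this $\Cotor$ by applying the left exact functor $\mathbb{F} \Box_{\mathscr{C}} -$ to the short exact sequence of bigraded $\mathscr{C}$-comodules
\[
0 \to \mathscr{A} \to \mathscr{C} \to \mathscr{C}/\mathscr{A} \to 0.
\]
Since $\mathscr{C}$ is injective over itself, the long exact sequence in derived cotensor products collapses to the four-term exact sequence
\[
0 \to \mathbb{F} \Box_{\mathscr{C}} \mathscr{A} \to \mathbb{F} \to \mathbb{F} \Box_{\mathscr{C}} (\mathscr{C}/\mathscr{A}) \to \Cotor^1_{\mathscr{C}}(\mathbb{F}, \mathscr{A}) \to 0.
\]
The functor $\mathbb{F} \Box_{\mathscr{C}} -$ is naturally identified with the subcomodule of primitives relative to the basepoint $\sigma$, i.e.\ the elements $x$ satisfying $\Delta(x) = \sigma \otimes x$. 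In cohomotopy degree zero the $\mathscr{C}$-coaction on $(\mathscr{C}/\mathscr{A})^0 = M$ factors through $\mathscr{C}^0$, so $(\mathbb{F} \Box_{\mathscr{C}} (\mathscr{C}/\mathscr{A}))^0$ agrees with the paper's $\Pr_{\mathscr{C}^0}(M)$. Moreover, since $\mathscr{A}^0 \subseteq \mathscr{C}^0$ is a sub-coalgebra of the set-like coalgebra $\mathscr{C}^0$ (Remark~\ref{set-like-deg-0}), a short direct computation using the counit axiom gives $\mathbb{F} \Box_{\mathscr{C}} \mathscr{A} = \mathbb{F}\cdot\sigma$ concentrated in cohomotopy degree zero, with the induced map to $\mathbb{F} \Box_{\mathscr{C}} \mathscr{C} = \mathbb{F}$ being the canonical isomorphism.

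For $n = 0$, extracting the cohomotopy degree zero part of the four-term sequence yields
\[
0 \to \mathbb{F} \xrightarrow{\cong} \mathbb{F} \to \Pr_{\mathscr{C}^0}(M) \to \Cotor^1_{\mathscr{C}}(\mathbb{F}, \mathscr{A})^0 \to 0,
\]
whence $\pi^{1}(E^\bu) \cong \Pr_{\mathscr{C}^0}(M)$ as required; when $n \geq 1$ both sides vanish because $n$-connectedness of $f$ gives $\mathscr{A}^0 = \mathscr{C}^0$ and hence $M = 0$, with a corresponding counit argument killing the Cotor as well. The final statement of the corollary is immediate from the definition of primitives: a trivial $\mathscr{C}^0$-coaction on $M$ forces $\Pr_{\mathscr{C}^0}(M) = M$. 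The main technical obstacle is the careful bookkeeping between the bigraded cotensor product and the paper's pointwise definition of the $\mathscr{C}^0$-primitives of $M$, together with verifying that the induced map $\mathbb{F} \Box_{\mathscr{C}} \mathscr{A} \to \mathbb{F}$ is indeed the canonical isomorphism.
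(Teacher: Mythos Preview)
Your argument is correct in outline, but it takes a significantly longer route than the paper, and there is one misstep in the justification.

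The paper's proof is a single line: it observes directly from the definition of $\ast$ that $\mathscr{C}^0/\mathbb{F} \ast_{\mathscr{C}^0} M \cong \Pr_{\mathscr{C}^0}(M)$. Since $\mathbb{F} \subseteq \mathscr{A}^0$, the source of $\phi$ is $\mathscr{C}^0/(\mathbb{F} + \mathscr{A}^0) = M$, and the map $\phi \colon M \to (\mathscr{C}^0/\mathbb{F}) \Box_{\mathscr{C}^0} M \subset (\mathscr{C}^0/\mathbb{F}) \otimes M$ is precisely the reduced $\mathscr{C}^0$-coaction on $M$; its kernel is $\Pr_{\mathscr{C}^0}(M)$ by definition. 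No passage through $\Cotor$ or any long exact sequence is needed.

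Your approach instead invokes Lemma~\ref{cotor computation} to rewrite $\ast$ as $\Cotor^1$, and then recomputes that $\Cotor^1$ via the four-term sequence coming from $0 \to \mathscr{A} \to \mathscr{C} \to \mathscr{C}/\mathscr{A} \to 0$. This is essentially re-running the proof of Lemma~\ref{cotor computation} in this special case, so the detour is circular rather than genuinely different.

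One point to correct: you invoke Remark~\ref{set-like-deg-0} to say $\mathscr{C}^0$ is set-like, but that remark concerns the \emph{internal}-degree-zero part of an unstable coalgebra, whereas $\mathscr{C}^0 = \pi^0(C^\bu)$ is the full graded unstable coalgebra and is not set-like in general. Fortunately you do not need this: left exactness of $\mathbb{F} \Box_{\mathscr{C}} -$ gives an injection $\mathbb{F} \Box_{\mathscr{C}} \mathscr{A} \hookrightarrow \mathbb{F} \Box_{\mathscr{C}} \mathscr{C} = \mathbb{F}$, and since $\sigma \in \mathscr{A}$ lies in the source, the map is already the canonical isomorphism. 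With this fix your argument goes through.
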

\begin{proof}
It suffices to note that $( \mathscr{C}^0/\mathbb{F} \ast_{\mathscr{C}^0} M) \cong \Pr_{\mathscr{C}^0}(M)$.
\end{proof}

\begin{corollary}\label{difference in dim 1} 
Let $C \in \CA$, $M \in \V C$ and consider the homotopy pullback
\[
 \xymatrix{
 E^\bu \ar[r] \ar[d] & cC \ar[d] \\
 cC \ar[r] & c (\iota_C(M))
 }
\]
in $c\CA^{\mc{E}}$, where $C \to \iota_C(M)$ is the canonical inclusion. Then there is an isomorphism of $C$-comodules
$\pi^{1}(E^\bu) \cong {M}.$
\end{corollary}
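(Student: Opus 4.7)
The plan is to realize $E^\bu$ as a derived cotensor and then invoke the K\"unneth spectral sequence directly. After replacing the inclusion $cC \to c(\iota_C(M))$ by a fibration (e.g.\ a quasi-$\mc{E}$-cofree map via Proposition~\ref{prop:quasi-cofree-replacements}), the homotopy pullback $E^\bu$ becomes the strict pullback, namely the cotensor product $cC \,\Box_{c(\iota_C(M))}\, cC$ in $\Comod_{c(\iota_C(M))}$, and hence computes the derived cotensor $cC \stackrel{R}{\Box}_{c(\iota_C(M))} cC$.

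Applying the K\"unneth spectral sequence of Theorem~\ref{Kunneth spectral sequence 2}(a),
\[
E_2^{p,q} = \pi^p\bigl(\Cotor^q_{c(\iota_C(M))}(cC, cC)\bigr) \Longrightarrow \pi^{p+q}(E^\bu),
\]
all three cosimplicial objects involved are constant, so the cotensor resolution can be chosen constant in the $\bu$-direction; this identifies $\Cotor^q_{c(\iota_C(M))}(cC, cC)$ with the constant cosimplicial object on $\Cotor^q_{\iota_C(M)}(C, C)$. Therefore $E_2^{p,q} = 0$ for $p > 0$, the spectral sequence collapses on the column $p = 0$, and we obtain $\pi^q(E^\bu) \cong \Cotor^q_{\iota_C(M)}(C, C)$ as $(C \Box_{\iota_C(M)} C)$-comodules, i.e.\ as $C$-comodules via the standard isomorphism $C \Box_{\iota_C(M)} C \cong C$.

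The main computational step is then to identify $\Cotor^1_{\iota_C(M)}(C, C)$ with $M$ as a $C$-comodule. By Lemma~\ref{cotor computation}, $\Cotor^1_{\iota_C(M)}(C, C) \cong M \ast_{\iota_C(M)} M$. Inspecting the proof of that lemma with $K = L = C \subset \iota_C(M)$: since $K \cap L = C$, the term $L/(K \cap L) = 0$ vanishes, so one is left with $M \ast_{\iota_C(M)} M \cong C \,\Box_{\iota_C(M)}\, M$. Finally, the $C$-coaction $\Delta_M\co M \to C \otimes M$ factors through $C \,\Box_{\iota_C(M)}\, M$ by coassociativity of the $C$-coaction on $M$, and $\epsilon_C \otimes \id\co C \otimes M \to M$ provides a two-sided inverse. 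This yields an isomorphism of $C$-comodules $M \cong C \,\Box_{\iota_C(M)}\, M$, completing the identification $\pi^1(E^\bu) \cong M$.

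The main obstacle is bookkeeping: one must verify at each step that the $C$-comodule structure inherited through the chain of isomorphisms $\pi^1(E^\bu) \cong \Cotor^1_{\iota_C(M)}(C,C) \cong M \ast_{\iota_C(M)} M \cong C \,\Box_{\iota_C(M)}\, M \cong M$ agrees with the original $C$-comodule structure given on $M \in \V C$. This is a direct diagram chase through the constructions of the K\"unneth spectral sequence, the functor $\ast_{\iota_C(M)}$, and the natural coactions on cotensor products, using that every identification is natural in the $\iota_C(M)$-comodule $C$.
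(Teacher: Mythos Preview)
Your proof is correct. It differs from the paper's in two respects.

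First, where the paper invokes Theorem~\ref{refined diff. constr. in CA} (the refined difference construction, itself proved via the K\"unneth spectral sequence~(b)) to identify $\pi^1(E^\bu)$ with $M \ast_D M$ for $D = \iota_C(M)$, you work directly: spectral sequence~(a) collapses on constant cosimplicial objects, giving $\pi^1(E^\bu) \cong \Cotor^1_D(C,C)$, and then Lemma~\ref{cotor computation} supplies $\Cotor^1_D(C,C) \cong M \ast_D M$. This bypasses the general theorem at the cost of replicating part of its argument in the special case.

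Second, for the identification $M \ast_D M \cong M$, the paper takes a shorter route: it observes that the canonical map $\phi\co M \to M \Box_D M$ is zero, since it factors as
\[
M \hookrightarrow D \stackrel{\Delta_D}{\longrightarrow} D \otimes D \twoheadrightarrow M \otimes M,
\]
and by construction $\Delta_D|_M$ lands in $(C \otimes M) \oplus (M \otimes C)$, which maps to zero in $M \otimes M$. Hence $M \ast_D M = \ker\phi = M$. Your detour through $M \ast_D M \cong C \Box_D M \cong M$ is correct but rests on the same underlying fact (the $D$-coaction on $M = D/C$ factors through $C \otimes M$) packaged less directly. Your approach is more self-contained; the paper's is shorter once the ambient theorem is available.
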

\begin{proof} Let $D = \iota_C(M)$. It is enough to show that $M \ast_{D} M \cong M$. We have 
  $$M \ast_{D} M \cong \ker\{\phi\co M\to M\ \Box_{D}\ M\}.$$
The map $M \xrightarrow{\phi} M\ \Box_{D}\ M \subset M \otimes M$ can be factored as follows:
\[
\xymatrix{
 M \ar[d] \ar[r] & M \otimes M \\
 D \ar[r]^-{\Delta_D} & D \otimes D . \ar[u] 
 }
\]
This shows that the map is trivial by definition of $\Delta_D$ on $M$.   
\end{proof}

\begin{corollary}
Let $C^\bu$ be a pointed cosimplicial unstable coalgebra and $n\ge 1$. Then there is an isomorphism
  $$ \pi^n(\Omega^nC^\bu) \cong \Pr(\pi^0 C^\bu). $$ 
In particular, for a pointed unstable coalgebra $C$, there is an isomorphism 
  $$ \pi^n(\Omega^n cC) \cong\Pr(C).$$
\end{corollary}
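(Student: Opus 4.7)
The approach is induction on $n$, proving simultaneously the two statements
(a) $\pi^k(\Omega^n C^\bu) = 0$ for $1 \leq k < n$, and
(b) $\pi^n(\Omega^n C^\bu) \cong \Pr(\pi^0 C^\bu)$.
Throughout I regard $\Omega$ as the derived loop functor in the pointed homotopy category of $c\CA^{\mc{E}}$, so that $\Omega^{n-1} C^\bu$ carries a canonical basepoint coming from the universal property of the pullback and $\Omega^n C^\bu \simeq c\terminal \times^h_{\Omega^{n-1} C^\bu} c\terminal$.

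For the base case $n=1$, the pointing $f\co c\terminal \to C^\bu$ is $0$-connected (it is injective on $\pi^0$) and $\pi^1(f)$ is trivially injective since its source is $0$. Applying Corollary \ref{trivial coaction} with $n=0$ yields
\[\pi^1(\Omega C^\bu) \cong \Pr_{\pi^0 C^\bu}(\pi^0 C^\bu/\terminal),\]
and one identifies the right-hand side with $\Pr(\pi^0 C^\bu)$ by unwinding the definition of $\Pr_C(C/\F)$ in terms of the reduced coproduct on $C$ and noting that $x \in C$ projects into the kernel if and only if $\Delta_C(x) = 1 \otimes x + x \otimes 1$.

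For the inductive step, assume (a) and (b) for $n-1 \geq 1$ and set $D^\bu := \Omega^{n-1} C^\bu$. By the inductive hypothesis, the bigraded coalgebra $\mathscr{D} := \pi^*(D^\bu)$ satisfies $\mathscr{D}^0 = \F$, $\mathscr{D}^k = 0$ for $1 \leq k \leq n-2$, and $\mathscr{D}^{n-1} = P := \Pr(\pi^0 C^\bu)$. I then plan to feed $\mathscr{D}$ into the K\"unneth spectral sequence of Theorem \ref{Kunneth spectral sequence 2}(b) for the pullback $\Omega D^\bu = c\terminal \times^h_{D^\bu} c\terminal$,
\[E_2^{p,q} = \Cotor^p_{\mathscr{D}}(\F, \F)^q \Longrightarrow \pi^{p+q}(\Omega D^\bu).\]
Since $\bar{\mathscr{D}} = \mathscr{D}/\F$ is concentrated in cosimplicial degrees $\geq n-1$, each cobar piece $\bar{\mathscr{D}}^{\otimes p}$ sits in cosimplicial degrees $\geq p(n-1)$; combined with $p + q \leq n$ this forces $p \leq 1$. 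The only potentially nonzero $E_2$-terms in total degree $\leq n$ are thus $(p,q)=(0,0)$, contributing $\F$, and $(p,q)=(1, n-1)$, contributing $\Pr_{\mathscr{D}}(\bar{\mathscr{D}})^{n-1}$. The latter equals $\mathscr{D}^{n-1} = P$ because the reduced coaction on $\mathscr{D}^{n-1}$ must land in $(\bar{\mathscr{D}} \otimes \bar{\mathscr{D}})^{n-1}$, and this group vanishes for bidegree reasons (two summands each of cosimplicial degree $\geq n-1$ cannot total $n-1$). The same connectivity estimate shows that all $d_r$-differentials ($r \geq 2$) into or out of $E_r^{0,0}$ and $E_r^{1,n-1}$ hit zero groups, so $E_\infty = E_2$ in this range, establishing the two inductive claims for $n$.

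The main obstacle is the careful bigraded bookkeeping in the cobar complex, in particular verifying that the $(1,n-1)$ contribution is exactly $P$ and that no higher differentials can affect it; this is entirely a degree count once the connectivity of $\mathscr{D}$ supplied by the inductive hypothesis is in hand. The special case $C^\bu = cC$ for a pointed unstable coalgebra $C$ is then immediate from (b), since $\pi^0(cC) = C$.
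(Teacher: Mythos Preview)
Your argument is correct and matches the paper's approach: both establish the base case via Corollary~\ref{trivial coaction} and then induct, with your direct K\"unneth spectral sequence computation being exactly what underlies the paper's terse ``the result follows by induction'' (namely Theorem~\ref{refined diff. constr. in CA} applied to the $(n-1)$-connected map $c\terminal \to \Omega^{n-1}C^\bu$, whose proof unwinds to the same $\Cotor$ estimate). One minor bookkeeping omission: you invoke $\mathscr{D}^0 = \F$ in the inductive step but it is not among your stated hypotheses (a), (b), nor is it established in your base case; add $\pi^0(\Omega^k C^\bu)\cong\F$ for $k\geq 1$ to the induction---for $k=1$ this is $\F\Box_{\pi^0 C^\bu}\F=\F$, and your own $E_\infty^{0,0}$ term reproduces it at each subsequent step.
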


\begin{proof}
Corollary~\ref{trivial coaction} implies that $\pi^1(\Omega C^\bu) \cong \Pr(\pi^0(C^\bu))$. Then the 
result follows by induction.
\end{proof}

\section{Cosimplicial spaces}\label{sec:cosimplicial-spaces}

In this section, we study the resolution model structure on cosimplicial spaces with respect to the class of Eilenberg-MacLane space $K(\F,n)$.  The close connection with the resolution model category $c\CA^{\mc{E}}$, resulting from the representing property of the Eilenberg-MacLane spaces, is used in order to apply the homotopy theory developed in the previous sections to this model category of cosimplicial spaces. In particular, we deduce a homotopy excision 
theorem for cosimplicial spaces from the homotopy excision theorem 
for cosimplicial unstable coalgebras. This is used in the construction of Postnikov decompositions of cosimplicial spaces which will be important for our obstruction theory in the next section.  

The resolution model category of cosimplicial spaces is introduced in 
Subsection \ref{sec:cosimplicial-resolutions-spaces} and some basic facts on the comparison with the model category of cosimplicial unstable coalgebras are discussed. In Subsection 
\ref{sec:Kuenneth-thm}, using the K\"unneth theorem, we prove that 
the homology functor from cosimplicial spaces to cosimplicial unstable coalgebras preserves homotopy pullbacks and deduce from this the homotopy excision theorem for cosimplicial spaces (Theorem \ref{homotopy-excision 2}). 

In Subsection \ref{section-L-objects}, we define and construct the objects of type $L_C(M,n)$ which are analogues of the twisted Eilenberg-MacLane spaces in the context of cosimplicial spaces with respect to the natural homotopy groups. These are 
closely related to the objects of type $K_C(M,n)$ as we show in Proposition \ref{Representability of L(C,n)}. In the final Subsection 
\ref{section-postnikov-decomp-spaces}, we discuss the properties of the Postnikov decompositions of cosimplicial spaces that arise from the skeletal filtration. Lastly, we identify the moduli space of extensions of a cosimplicial space by attaching an $L$-object in terms of the moduli space of extensions of the associated cosimplicial unstable coalgebra by attaching a $K$-object (Theorem  \ref{Moduli-Comparison-of-Spaces}). 

\begin{notation} 
Let $\sk^c_n(-)\co c \mc{S} \to c\mc{S}$ denote the derived skeleton functor defined as the skeleton of a functorial Reedy 
cofibrant replacement. 
\end{notation}

\subsection{Cosimplicial resolutions of spaces} \label{sec:cosimplicial-resolutions-spaces}
Let $\mc{S}$ be the usual model category of simplicial sets and let $\F$ be any prime field. Consider 
  $$\mc{G}=\{\kfp{m}\, |\, m\ge 0\}$$ 
as a set of homotopy (abelian) group objects for the category $\mc{S}$. Recall that $\kfp{m}$ denotes a (fibrant) Eilenberg-MacLane space with only non-trivial homotopy group $\mathbb{F}$ 
in degree $m$ and that $\mc{E}=H_*(\mc{G})$, where \mc{E} was defined in~(\ref{eqn:def-of-mcE}). 

A map $f \co X \to Y$ in $\ho{\mc{S}}$ is $\mc{G}$-monic if and only if the induced map on  $\F$-cohomology is surjective or the induced map $H_*(f) \co H_*X \to H_*Y$ is injective. 
Recall that an $\F$-GEM is a product of spaces of type \kfp{m} for possibly various values of $m$. Every $\F$-GEM is $\mc{G}$-injective. 

\begin{lemma} A fibrant $X \in \mc{S}$ is $\mc{G}$-injective if and only if it is a retract of a fibrant space weakly equivalent to an $\F$-GEM. If a fibration $f\co X \to Y$ is \mc{G}-injective, then the induced map $H_*(f)\co H_*X\to H_*Y$ is \mc{E}-injective. 
\end{lemma}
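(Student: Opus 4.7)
The plan is to prove both halves by producing a canonical $\mc{G}$-monic cofibration $j_X\co X \hookrightarrow Y_X$ from any fibrant $X$ into a fibrant space $Y_X$ weakly equivalent to a GEM, and then extracting retractions from the $\mc{G}$-injectivity hypothesis.

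\textbf{Easy direction and canonical cogenerator.} Any product of objects of $\mc{G}$ is $\mc{G}$-injective because $[-, \prod G_i] \cong \prod[-, G_i]$ and products of surjections are surjective; since $\mc{G}$-injectivity is preserved by retracts and by weak equivalence, this disposes of the ``if'' half of the first assertion. For an arbitrary fibrant $X$ I would set
$$G_X := \prod_{m \ge 0}\ \prod_{\alpha \in H^m(X;\F)} \kfp{m},$$
and let $\gamma_X\co X \to G_X$ be the evaluation map whose $\alpha$-component represents $\alpha$. Then $\gamma_X^*$ surjects onto every $\F$-cohomology class of $X$, so $\gamma_X$ is $\mc{G}$-monic; since products of Kan complexes are Kan, $G_X$ is already fibrant, and I factor $\gamma_X = p \circ j_X$ into a cofibration $j_X\co X \hookrightarrow Y_X$ followed by a trivial fibration $p\co Y_X \stackrel{\sim}{\twoheadrightarrow} G_X$, giving the desired fibrant $Y_X$ weakly equivalent to the GEM $G_X$.

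\textbf{Extraction of retracts.} For the ``only if'' half, $\mc{G}$-injectivity of $X$ produces a lift of $\id_X$ along $j_X$ \emph{up to homotopy}; the cofibration-fibrant setup lets me convert it to an honest retraction $Y_X \to X$ via the standard homotopy extension argument on $Y_X \times \Delta^1$, yielding a strict retract. For the fibration version I would apply the same construction to $(f, \gamma_X)\co X \to Y \times G_X$, which is $\mc{G}$-monic because projection onto $G_X$ recovers $\gamma_X$. After factoring as $X \hookrightarrow X' \stackrel{\sim}{\twoheadrightarrow} Y \times G_X$, the lifting property defining $\mc{G}$-injective fibrations, applied to
$$\xymatrix{X \ar@{=}[r] \ar@{^(->}[d] & X \ar[d]^{f} \\ X' \ar[r] & Y,}$$
exhibits $f$ as a retract, in the arrow category over $Y$, of the map $X' \to Y$ whose source is weakly equivalent to $Y \times G_X$.

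\textbf{Passage to $\CA$.} Applying $H_*$ and invoking K\"unneth (Theorem~\ref{enhanced Kuenneth}) expresses $H_*(f)$ as a retract in $\CA$ of the projection $H_*Y \otimes H_*G_X \to H_*Y$. The main (and only mildly delicate) step is to verify that this projection is $\mc{E}$-injective: since the tensor product is the categorical product in $\CA$, a lift against any $\mc{E}$-monic $A \hookrightarrow B$ reduces to extending the $H_*G_X$-component $A \to H_*G_X$ along $A \hookrightarrow B$, which exists because $H_*G_X$ is a product of objects of $\mc{E}$ and hence $\mc{E}$-injective. Since classes of morphisms defined by a right lifting property are closed under retracts, $H_*(f)$ is $\mc{E}$-injective. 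The principal obstacle I anticipate is bookkeeping around the strictification of the homotopy retract, which I expect to be routine given the proper-model-category setup.
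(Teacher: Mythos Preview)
Your proof is correct and follows essentially the same line as the paper's. The paper constructs the same canonical $\mc{G}$-monic map into a product of Eilenberg--MacLane spaces, factors it, and then extracts a retraction; for the fibration statement it cites \cite[Lemma~3.10]{Bou:cos} to express a $\mc{G}$-injective fibration as a retract of a composite $E \stackrel{\sim}{\twoheadrightarrow} Y \times G \stackrel{\pi_1}{\twoheadrightarrow} Y$ and then invokes K\"unneth. Your argument simply unpacks that citation: you build the factorization $(f,\gamma_X)$ by hand, use the defining RLP to produce the retract, and verify explicitly that the projection $H_*Y \otimes H_*G_X \to H_*Y$ is $\mc{E}$-injective via the universal property of the categorical product in $\CA$. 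The only cosmetic difference is that you strictify the homotopy retraction via HEP, whereas the paper writes the lifting square directly (implicitly using that a fibrant $\mc{G}$-injective object has $X \to \ast$ a $\mc{G}$-injective fibration, which is the same HEP observation).
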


\begin{proof}
$\mc{G}$-injectives are closed under retracts and weak equivalences, so the \emph{if} part is immediate. There 
is a canonical $\mc{G}$-monic map (cf. \cite[4.5]{Bou:cos})
$$X \rightarrow \prod_{m \geq 0} \prod_{X \to \kfp{m}} \kfp{m}$$
which we can functorially factor as the composition of a $\mc{G}$-monic cofibration $i\co X \hookrightarrow Z_X$ followed 
by a trivial fibration. In particular, $Z_X$ is again fibrant. If $X$ is 
$\mc{G}$-injective and fibrant, then the square 
\[
 \xymatrix{
 X \ar@{=}[r] \ar[d]^i & X \ar[d] \\
 Z_X \ar[r] & \ast
 }
\]
admits a lift, which means that $X$ is a retract of $Z_X$. This shows the \emph{only if} part of the first statement.
The second statement then follows from the fact that every $\mc{G}$-injective fibration $f\co X \to Y$ is a retract of a fibration of the form (see \cite[Lemma 3.10]{Bou:cos})
  $$E \stackrel{\simeq}{\twoheadrightarrow} Y \times G \stackrel{\pi_1}{\twoheadrightarrow} Y,$$
where $G$ is weakly equivalent to an $\F$-GEM, and the K\"{u}nneth isomorphism. 
\end{proof}

Note that there are enough $\mc{G}$-injectives in the sense of Section \ref{resolution-model-cat}. We will consider the \mc{G}-resolution model structure on cosimplicial spaces 
in the sense of Section \ref{resolution-model-cat}. The following theorem is a direct application of Theorem \ref{bousfield}.

\begin{theorem}
There is a proper simplicial model category $c\mc{S}^{\mc{G}}$ where the underlying category is the category of cosimplicial 
spaces, the weak equivalences are the $\mc{G}$-equivalences, the cofibrations are the $\mc{G}$-cofibrations and the fibrations 
are the $\mc{G}$-fibrations. 
\end{theorem}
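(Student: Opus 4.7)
The plan is to reduce the theorem to a direct application of Theorem~\ref{bousfield} (the Dwyer-Kan-Stover-Bousfield existence result) by checking its two hypotheses for the pair $(\mc{S}, \mc{G})$. First I would recall that the standard model category $\mc{S}$ of simplicial sets is proper (both left and right), so the left properness requirement of Theorem~\ref{bousfield} is immediate, and each $K(\F, m) \in \mc{G}$ is an abelian group object in $\ho{\mc{S}}$ with a strictly pointed fibrant model closed under $\Omega$, which takes care of Assumption~\ref{strict-unit}.

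Next I would verify that $\mc{G}$ is a class of injective models in the sense of Definition~\ref{def:injective-models}. Since $\mc{G}$ is a \emph{set}, Bousfield's criterion \cite[4.5]{Bou:cos} already guarantees that $\mc{G}$-injective envelopes can be chosen functorially, so the only substantive point is that $\ho{\mc{S}}$ has enough $\mc{G}$-injectives. This however is exactly what the lemma preceding the theorem establishes: given any $X$, the canonical map
\[
  X \longrightarrow \prod_{m \geq 0}\,\prod_{X \to K(\F,m)} K(\F,m)
\]
is $\mc{G}$-monic (by construction of the target as a product over all homotopy classes of maps into the generators) and its target is a $\mc{G}$-injective GEM, and these can be chosen functorially by the functorial factorization of $\mc{S}$.

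Having verified both hypotheses, Theorem~\ref{bousfield} produces the left proper simplicial model structure $c\mc{S}^{\mc{G}}$ with the claimed three classes of maps. The only remaining assertion is right properness, which I would deduce by the same strategy as Corollary~\ref{right-proper}: using Proposition~\ref{all-about-cofree-maps2} one reduces to checking that pullbacks of $\mc{G}$-equivalences along quasi-$\mc{G}$-cofree maps are $\mc{G}$-equivalences, which follows from right properness of $\mc{S}$ itself together with the standard fact that the external cotensor $\hom(\Delta^s, cG_s) \to \hom(\partial\Delta^s, cG_s)$ is a Reedy fibration between Reedy fibrant objects.

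The only step that carries any real weight is the verification of enough $\mc{G}$-injectives with functoriality; everything else is either a formal invocation of known results or a standard propagation argument. Since both ingredients are essentially in place by the paragraph immediately preceding the theorem, the proof amounts to little more than quoting Theorem~\ref{bousfield}.
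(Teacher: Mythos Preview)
Your reduction to Theorem~\ref{bousfield} is exactly the paper's approach, and your verification that $\mc{G}$ is a class of injective models is correct (and indeed the paper notes, just before the theorem, that this follows from $\mc{G}$ being a set via \cite[4.5]{Bou:cos}). For everything except right properness, your argument is essentially identical to the paper's one-line proof.

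The gap is in your right properness sketch. You claim that after reducing to quasi-$\mc{G}$-cofree Reedy fibrations via Proposition~\ref{all-about-cofree-maps2}, the result ``follows from right properness of $\mc{S}$ itself together with the standard fact that $\hom(\Delta^s, cG_s) \to \hom(\partial\Delta^s, cG_s)$ is a Reedy fibration.'' But right properness of $\mc{S}$ only tells you that the pullback of a \emph{Reedy} equivalence along a Reedy fibration is again a Reedy equivalence. A $\mc{G}$-equivalence is in general not a degreewise weak equivalence---it only induces isomorphisms on $\pi^* H_*$---so right properness of the underlying model category $\mc{S}$ does not directly control what happens on pullback. You need an argument that is genuinely sensitive to the $\mc{G}$-structure.

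The paper's route (Corollary~\ref{right-proper2}, which is what the proof actually cites) is not ``the same strategy as Corollary~\ref{right-proper}'' in your sense: rather, it \emph{reduces to} Corollary~\ref{right-proper}. One first shows (Corollary~\ref{G-fib to H-fib}) that $H_*$ sends pullbacks along quasi-$\mc{G}$-cofree Reedy fibrations to pullbacks in $c\CA$, and sends such maps to $\mc{E}$-fibrations. Since $\mc{G}$-equivalences are exactly the maps inducing $\mc{E}$-equivalences on $H_*$, the question becomes right properness of $c\CA^{\mc{E}}$---and \emph{that} is Corollary~\ref{right-proper}, proved via the K\"unneth spectral sequence of Theorem~\ref{Kunneth spectral sequence 2}. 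So the right properness of $c\mc{S}^{\mc{G}}$ rests on nontrivial algebraic input (derived cotensor products of comodules), not on formal properties of $\mc{S}$.
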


\begin{proof}
Apart from the right properness, this is an application of Theorem \ref{bousfield}. Right properness will be shown in Corollary \ref{right-proper2}.
\end{proof}

As a consequence of the representing property \eqref{representing property of GEMs}, we have the following characterizations of $\mc{G}$-equivalences
and $n$-connected maps in $c \mc{S}^{\mc{G}}$ (see also Lemma \ref{lem:equivalent-formulations-n-connected}.)

\begin{proposition}
A map of cosimplicial spaces $f\co X^\bu \to Y^\bu$ is a $\mc{G}$-equivalence if and only if 
$H_*(f)\co H_*(X^\bu) \to H_*(Y^\bu)$ is an $\mc{E}$-equivalence of cosimplicial unstable coalgebras. Moreover,  
$f$ is $n$-connected in $c\mc{S}^\mc{G}$ if and only if the induced map $H_*(f)$ is $n$-connected in $c\CA^{\mc{E}}$.
\end{proposition}

In particular, the functor $H_*$ descends to a functor
$$ H_*\co\ho{c\mc{S}^{\mc{G}}}\to\ho{c\CA^{\mc{E}}}$$
which also preserves finite products as a consequence of Theorem \ref{enhanced Kuenneth}. Also immediate is the following characterization of 
$\mc{G}$-cofibrations.

\begin{proposition}
A map of cosimplicial spaces $f\co X^\bullet\to Y^\bullet$ is a \mc{G}-cofibration if and only if it 
 is a Reedy cofibration and the map $H_*(f)\co H_*(X^\bullet) \to H_*(Y^\bullet)$ is an \mc{E}-cofibration.
\end{proposition}

These observations provide some evidence for the utility of this model category in our realization problem. First, 
the weak equivalences are designed so that they are actually detected in $c \CA^{\mc{E}}$ while at the same time 
a map of constant cosimplicial spaces $cX \to cY$ is a $\mc{G}$-equivalence if and only if the underlying map of spaces 
$X \to Y$ is a homology equivalence. Second, the $\mc{G}$-equivalences are detected on the $E_2$-page of the homology spectral sequence of a cosimplicial space (see~\cite{BK:sscoeffring} and~\cite{Bou: homology SS})
  $$ E^2_{s,t}=\pi^sH_t(X^\bu)~\Longrightarrow~H_{t-s}(\Tot X^\bu). $$
Thus, assuming convergence of the latter, we retain control of the properties
of the resulting objects after totalization.

\begin{remark} \label{free-monad-res}
There is monad $\F\co \mc{S} \to \mc{S}$ that sends an unpointed simplicial set $X$ to the simplicial $\F$-vector space $\F[X]$ generated by $X$.
It is closely related to the Bousfield-Kan monad~\cite[Section 2.1]{BK:sscoeffring} on pointed simplicial sets; in fact, they become the same if one equips $X$ with a disjoint 
basepoint. This monad yields weak resolutions in $c \mc{S}^{\mc{G}}$ in the sense of \cite{Bou:cos} (see \cite[Section 7]{Bou:cos}), cf. Remark \ref{G-cofree-fact}.
\end{remark}

\begin{proposition} \label{G-ho po to H-ho po}
The homology functor $H_* \co c\mc{S}^{\mc{G}} \to c\CA^{\mc{E}}$ preserves the cofibers of $0$-connected $\mc{G}$-cofibrations, that is, when the induced map on $\pi^0 H_*(-)$ is injective. 
\end{proposition}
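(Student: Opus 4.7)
The plan is to first show that the hypotheses together force $H_*(i)$ to be injective in each cosimplicial degree, after which the preservation of cofibers follows from a levelwise comparison. Both cofibers are already computed levelwise: since a $\mc{G}$-cofibration is in particular a Reedy cofibration, the cofiber $C^\bu$ of $i$ in $c\mc{S}$ satisfies $C^n = Y^n/X^n$, and the cofiber of $H_*(i)$ in $c\CA$ is the levelwise pushout $H_*(Y^n) \sqcup_{H_*(X^n)} \terminal$ in $\CA$; moreover, since the forgetful functor $J\co\CA \to \Vec$ from~\eqref{cofree-adjunction} is a left adjoint, this pushout is computed in $\Vec$ by the corresponding pushout there.

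The key step is to prove that $H_*(i^n)$ is injective for every $n \geq 0$. Let $K^\bu := \ker H_*(i)$ in $c\Vec$. Since $i$ is a $\mc{G}$-cofibration, $H_*(i)$ is an $\mc{E}$-cofibration, so by Proposition~\ref{F-equivalences-explicit2}(2) the normalized map $NH_*(i)$ is injective in positive cosimplicial degrees; equivalently, $N^s K^\bu = 0$ for all $s \geq 1$. By the cosimplicial Dold--Kan correspondence this forces $K^\bu$ to be a constant cosimplicial object $cK^0$. The $0$-connectedness hypothesis, namely injectivity of $\pi^0 H_*(i)$, gives $\pi^0 K^\bu = 0$ by left exactness of $\pi^0$ applied to $0 \to K^\bu \to H_*(X^\bu) \to H_*(Y^\bu)$; but $\pi^0(cK^0) = K^0$, so $K^0 = 0$ and hence $K^\bu = 0$.

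With $H_*(i^n)$ injective in each cosimplicial level, the comparison becomes standard: for any cofibration $j\co A \hookrightarrow B$ of simplicial sets with $H_*(j)$ injective in each degree, the canonical map $\psi_j\co H_*(B) \sqcup_{H_*(A)} \terminal \to H_*(B/A)$ is an isomorphism of unstable coalgebras. In positive internal degrees this follows from the long exact sequence of the pair $(B,A)$: the injectivity of $H_*(j)$ kills the connecting homomorphism and leaves $H_q(B/A) = H_q(B)/H_q(A)$, matching the pushout of graded vector spaces. In degree $0$, the set-like description (Remark~\ref{set-like-deg-0}) identifies both sides with the coalgebra on the set $\pi_0(B) \sqcup_{\pi_0(A)} \{\ast\}$; the edge case $A = \emptyset$ yielding $B/A = B \sqcup \ast$ is immediate. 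Applied to $j = i^n$ for each $n \geq 0$ and assembled using naturality in $\Delta$, these isomorphisms produce the desired identification $H_*(Y^\bu) \sqcup_{H_*(X^\bu)} \terminal \stackrel{\cong}{\to} H_*(C^\bu)$ of cosimplicial unstable coalgebras. The subtlety is that the $\mc{E}$-cofibration condition alone controls only $N^s K^\bu$ for $s \geq 1$, and the vanishing of the remaining piece $N^0 K^\bu = K^0$ is precisely what the $0$-connectedness hypothesis supplies; without it, levelwise injectivity (and hence preservation of cofibers) would fail.
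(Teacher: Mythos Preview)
Your proof is correct and takes a genuinely different route from the paper's. Both arguments establish the same intermediate fact---that $H_*(i^n)$ is injective in every cosimplicial degree---but reach it and use it differently.

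You deduce levelwise injectivity via Dold--Kan: the $\mc{E}$-cofibration condition kills $N^sK^\bu$ for $s\ge 1$, forcing $K^\bu=\ker H_*(i)$ to be constant, and then the $0$-connectedness hypothesis kills $K^0$. The paper instead works dually with the simplicial groups $[Y^\bu,\kfp{m}]\to[X^\bu,\kfp{m}]$: the $\mc{G}$-cofibration condition makes this a Kan fibration, $0$-connectedness makes it $\pi_0$-surjective, and a $\pi_0$-surjective fibration of simplicial groups is levelwise surjective, yielding $0\to\tilde H^m(Z^k)\to H^m(Y^k)\to H^m(X^k)\to 0$. From there, you compare the cofibers by an explicit levelwise pushout calculation (long exact sequence in positive degrees, set-like description in degree~$0$), while the paper shows that both $C^\bu$ and $H_*(Z^\bu)$ give the same pullback square after applying $\Hom(-,H_*\kfp{m})$ and invokes the cogenerating property of Proposition~\ref{cogenerating property of GEMs}.

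Your approach is more elementary and self-contained; the paper's approach stays within the representable/Yoneda methodology that pervades the rest of the article and avoids the separate degree-$0$ analysis. Both are clean, and the equivalence of the two injectivity arguments is itself a nice instance of the duality between the $\pi_s[-,G]$ and $\pi^sH_*(-)$ viewpoints that Remark~\ref{F-equivalences-explicit} points to.
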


\begin{proof}
Let $i\co X^\bu \hookrightarrow Y^\bu$ be a $0$-connected $\mc{G}$-cofibration and let $Z^\bu$ be its cofiber. The 
induced map $H_*(i)\co H_*(X^\bu) \to H_*(Y^\bu)$ 
is an \mc{E}-cofibration and we denote its cofiber by $C^\bu$. We show that the canonical map $q\co C^\bu \to H_*(Z^\bu)$ 
is an isomorphism. Note that $\Hom(C^\bu, H_*\kfp{m})$ fits into a pullback square of simplicial groups
\[
\xymatrix{
\Hom(C^\bu, H_* \kfp{m}) \ar[r] \ar[d] & \Hom(H_*Y^\bu, H_*\kfp{m}) \ar[d]^{H_*(i)^*} \\
\Hom (c \terminal, H_* \kfp{m}) \ar[r] & \Hom(H_*X^\bu, H_*\kfp{m}).
}
\]
for all $m\ge 0$.
Since $i$ is a \mc{G}-cofibration, the right hand vertical map is a Kan fibration and the square is also a homotopy pullback. By 
\eqref{representing property of GEMs}, $H_*(i)^*$ can be identified with the 
fibration of simplicial groups
$$i^*\co [Y^\bu, \kfp{m}] \to [X^\bu, \kfp{m}].$$ 
Since $i$ is $0$-connected, the map $i^*$ is a $\pi_0$-surjective Kan fibration and, hence, surjective. Thus, for all $k, m \geq 0$, there are short exact sequences
$$0 \to \tilde{H}^m(Z^k) \to H^m(Y^k) \to H^m(X^k) \to 0.$$
Applying \eqref{representing property of GEMs} again, we conclude that the following is also a pullback square of 
simplicial groups 
\[
\xymatrix{
\Hom\bigl(H_*(Z^\bu), H_* (\kfp{m})\bigr) \ar[r] \ar[d] & \Hom\bigl(H_*(Y^\bu), H_*(\kfp{m})\bigr) \ar[d]^{H_*(i)^*} \\
\Hom \bigl(c \terminal, H_* (\kfp{m})\bigr) \ar[r] & \Hom\bigl(H_*(X^\bu), H_*(\kfp{m})\bigr).
}
\]
So the map $q\co C^\bu \to H_*(Z^\bu)$ induces natural isomorphisms
$$\Hom(C^\bu, H_* \kfp{m}) \cong \Hom(H_*(Z^\bu), H_*\kfp{m})$$
which implies that $q$ an isomorphism by Proposition \ref{cogenerating property of GEMs}. 
\end{proof}

More generally, we have the following statement about preservation of homotopy pushouts. Homotopy $n$-cocartesian squares were defined in~\ref{def:(co)cartesian}.

\begin{proposition} \label{G-ho po to H-ho po 2}
Let 
\[
 \xymatrix{
 X^\bu \ar[d] \ar[r]^i & Y^\bu \ar[d] \\
 W^\bu \ar[r] & Z^\bu
 }
\]
be a homotopy $n$-cocartesian square in $c \mc{S}^{\mathcal{G}}$ where the map $i$ is $0$-connected. 
Then the square 
\[
 \xymatrix{
H_*(X^\bu) \ar[r]^{H_*(i)} \ar[d] & H_*(Y^\bu) \ar[d] \\
H_*(W^\bu) \ar[r] & H_*(Z^\bu)
 }
\]
is homotopy $n$-cocartesian in $c \CA^{\mc{E}}$. 
\end{proposition}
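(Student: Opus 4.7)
The plan is to reduce to a comparison of strict pushouts via left properness, and then to identify these pushouts by combining Proposition~\ref{G-ho po to H-ho po} with the five-lemma applied after the forgetful functor $J\co c\CA \to c\Vec$.

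By functorial factorization in $c\mc{S}^{\mc{G}}$, we first replace $i$ by a $\mc{G}$-cofibration $j\co X^\bu \hookrightarrow Y'^\bu$, which remains $0$-connected. Let $P^\bu = W^\bu \cup_{X^\bu} Y'^\bu$ be the strict pushout; this is the homotopy pushout in $c\mc{S}^{\mc{G}}$ by left properness, and by hypothesis the induced map $P^\bu \to Z^\bu$ is $n$-connected in $c\mc{S}^{\mc{G}}$. Since $H_*$ detects cosimplicial connectivity, the map $H_*(P^\bu) \to H_*(Z^\bu)$ is $n$-connected in $c\CA^{\mc{E}}$. It thus suffices to exhibit an $\mc{E}$-equivalence between $H_*(P^\bu)$ and the homotopy pushout $Q^\bu$ in $c\CA^{\mc{E}}$ of $H_*(Y'^\bu) \leftarrow H_*(X^\bu) \to H_*(W^\bu)$; since $H_*(j)$ is an $\mc{E}$-cofibration and $c\CA^{\mc{E}}$ is left proper, $Q^\bu$ is simply the strict pushout in $c\CA$, and we obtain a canonical comparison map $\Psi\co Q^\bu \to H_*(P^\bu)$.

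The key observation is that the pushout map $W^\bu \to P^\bu$ is itself a $0$-connected $\mc{G}$-cofibration: it is a $\mc{G}$-cofibration as a pushout of one, and its $0$-connectedness follows from the $0$-connectedness of $j$ by a standard pullback argument applied to the Kan fibration $[Y'^\bu, G] \to [X^\bu, G]$. We may therefore apply Proposition~\ref{G-ho po to H-ho po} to both $j$ and $W^\bu \to P^\bu$; together with the canonical isomorphism of pointed cofibers $Y'^\bu/X^\bu \cong P^\bu/W^\bu$, this yields
\[
Q^\bu/H_*(W^\bu) \;\cong\; H_*(Y'^\bu)/H_*(X^\bu) \;\cong\; H_*(P^\bu/W^\bu) \;\cong\; H_*(P^\bu)/H_*(W^\bu)
\]
(the first isomorphism using only the pushout defining $Q^\bu$), and $\Psi$ induces this identification on cofibers.

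To conclude, pass to $c\Vec$ via the left adjoint $J$, which preserves colimits and detects $\mc{E}$-equivalences on the level of cohomotopy by Proposition~\ref{F-equivalences-explicit2}. Being left Quillen, $J$ sends the $\mc{E}$-cofibrations $H_*(W^\bu) \to Q^\bu$ and $H_*(W^\bu) \to H_*(P^\bu)$ to cofibrations in the injective model structure on $c\Vec$, so their cokernels compute homotopy cofibers and yield long exact sequences of cohomotopy groups. The map $J\Psi$ fits into a map of such long exact sequences whose leftmost vertical arrow is the identity and whose rightmost one is the isomorphism identified above; the five-lemma forces $J\Psi$ to induce isomorphisms on all cohomotopy groups, so $\Psi$ is an $\mc{E}$-equivalence. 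Composing with the $n$-connected map $H_*(P^\bu) \to H_*(Z^\bu)$ shows that the image square is homotopy $n$-cocartesian in $c\CA^{\mc{E}}$. The main subtle step is the $0$-connectedness of $W^\bu \to P^\bu$, without which Proposition~\ref{G-ho po to H-ho po} would not apply on the right-hand leg.
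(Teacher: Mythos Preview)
Your argument is correct and follows essentially the same route as the paper's proof: reduce to a $\mc{G}$-cofibration, form the strict pushout $P^\bu$, and show the comparison map from the $c\CA$-pushout to $H_*(P^\bu)$ is an $\mc{E}$-equivalence by identifying cofibers via Proposition~\ref{G-ho po to H-ho po} applied to both legs and then invoking the five-lemma on cohomotopy. The paper phrases the final step as a comparison of two cofiber sequences with equivalent cofibers $H_*(Q_1^\bu)\simeq H_*(Q_2^\bu)$ rather than explicitly passing to $c\Vec$, but this is the same five-lemma argument; you are also more explicit than the paper about why the pushout leg $W^\bu\to P^\bu$ remains $0$-connected (the paper simply asserts ``$i'$ is again $0$-connected'').
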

\begin{proof}
We may assume that $i$ is a $\mc{G}$-cofibration and consider the (homotopy) pushout 
\[
 \xymatrix{
  X^\bu \ar[d] \ar[r]^i & Y^\bu \ar[d] \\
 W^\bu \ar[r] & P^\bu.
 }
\]
Let $P^\bu \to Z^\bu$ be the canonical map, which is $n$-connected by assumption. Then the induced map $H_*(P^\bu) \to H_*(Z^\bu)$ is 
$n$-connected. Let $C^\bu$ be the (homotopy) pushout of the maps 
\begin{equation} \label{C-pushout}
H_*(Y^\bu) \leftarrow H_*(X^\bu) \rightarrow H_*(W^\bu)
\end{equation}
and $C^\bu \to H_*(P^\bu)$ the canonical map. Consider the diagram 
\[
 \xymatrix{
 X^\bu \ar[d] \ar[r]^i & Y^\bu \ar[d] \ar[r] & Q_1^\bu \ar[d]^{\simeq} \\
 W^\bu \ar[r]^{i'} & P^\bu \ar[r] & Q_2^\bu\ ,
 }
\]
where the horizontal sequences are (homotopy) cofiber sequences and so the right hand side vertical map is a 
$\mc{G}$-equivalence. The map $i'$ is again $0$-connected and hence by Proposition \ref{G-ho po to H-ho po}, 
the sequence of maps 
$$H_*(W^\bu) \to H_*(P^\bu) \to H_*(Q_2^\bu)$$
is a (homotopy) cofiber sequence in $c \CA^{\mc{E}}$. Now consider the following diagram
\[
\xymatrix{
H_*(W^\bu) \ar[r] \ar@{=}[d] & C^\bu \ar[d] \ar[r] & H_*(Q_1^\bu) \ar[d]^{\simeq}  \\
H_*(W^\bu) \ar[r]^{i'} & H_*(P^\bu) \ar[r] & H_*(Q_2^\bu) \ ,
}
\]
where $C^\bu$ is the pushout of \eqref{C-pushout}, the top sequence is a (homotopy) cofiber sequence by Proposition \ref{G-ho po to H-ho po}, 
and the right vertical map is an $\mc{E}$-equivalence. It follows that $C^\bu \to H_*(P^\bu)$ is an $\mc{E}$-equivalence. Since $H_*(P^\bu) \to H_*(Z^\bu)$ 
is $n$-connected, it follows that $C^\bu \to H_*(Z^\bu)$ is also $n$-connected, as required. 
\end{proof}

\subsection{Consequences of the K\"unneth theorem} \label{sec:Kuenneth-thm}
In this subsection, we prove that the homology functor $H_*$ preserves homotopy pullbacks as an application of the K\"unneth theorem (Proposition \ref{G-ho pb to H-ho pb}). An important 
consequence of this fact is the homotopy excision theorem for cosimplicial spaces (Theorem~\ref{homotopy-excision 2}). 

The following lemma shows that $H_*$ preserves cotensors with a finite simplicial set. It generalizes Theorem \ref{enhanced Kuenneth}. 

\begin{lemma}\label{H_* and external cotensors}
Let $K$ be a finite simplicial set and $X^\bullet$ in $c\mc{S}$. Then there is a natural isomorphism
   $$ H_*\bigl(\hom(K,X^\bullet)\bigr)\cong\hom\bigl(K,H_*(X^\bullet)\bigr).$$
\end{lemma}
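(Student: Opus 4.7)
The strategy is to induct on the number of non-degenerate simplices of the finite simplicial set $K$, reducing to a trivial base case and appealing throughout to the enhanced K\"unneth theorem (Theorem~\ref{enhanced Kuenneth}). The base case is $K = \emptyset$: both cotensors are the terminal object in their respective categories, and $H_*(\ast)=\terminal$, so the claim is immediate.

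For the inductive step, pick a top-dimensional non-degenerate $n$-simplex $\sigma$ of $K$ and write $K = L \cup_{\partial\Delta^n}\Delta^n$ where $L\subset K$ is obtained by removing the interior of $\sigma$; then $L$, $\partial\Delta^n$, and $\Delta^n$ each have strictly fewer non-degenerate simplices than $K$ (in the case $K=\Delta^n$, note that $L=\partial\Delta^n$). The contravariant functor $\hom(-, X^\bullet)$ turns this pushout into a pullback
\[
\hom(K,X^\bullet) \;\cong\; \hom(L,X^\bullet)\times_{\hom(\partial\Delta^n,X^\bullet)}\hom(\Delta^n,X^\bullet),
\]
and the analogous pullback holds in $c\CA^{\mc{E}}$. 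The inductive hypothesis identifies the three corners under $H_*$ with their counterparts in $c\CA^{\mc{E}}$, so the heart of the argument is that $H_*$ preserves this pullback degreewise.

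This preservation rests on a combinatorial identification. Unpacking the external simplicial structure recalled in Appendix~\ref{cosimplicial-objects}, the cotensor $\hom(\Delta^n,X^\bullet)^k$ is, in each cosimplicial degree $k$, naturally isomorphic to a finite product of copies of values of $X^\bullet$ (indexed by a finite set built from the pairing of $[n]$ with $[k]$), and similarly for $\hom(\partial\Delta^n,X^\bullet)^k$; moreover the restriction map between them is, levelwise, induced by an inclusion of the underlying finite index sets, i.e.\ a coordinate projection. The same combinatorics produces the analogous product decomposition for the corresponding objects in $c\CA^{\mc{E}}$. Since Theorem~\ref{enhanced Kuenneth} says that $H_*\co\mc{S}\to\CA$ preserves finite products, it sends such coordinate projections of finite products to the corresponding coordinate projections in $\CA$, and a pullback of finite products along coordinate projections is again a finite product. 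K\"unneth then yields the isomorphism at $\hom(K,X^\bullet)^k$.

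Naturality of the constructed isomorphism in both $K$ and $X^\bullet$ follows from the naturality of the K\"unneth isomorphism together with the inductive construction. The principal obstacle is the careful combinatorial identification of the external cotensors $\hom(\Delta^n,X^\bullet)$ and $\hom(\partial\Delta^n,X^\bullet)$ as finite products with projection structure maps, which must be carried out at the level of the external simplicial structure on both $c\mc{S}$ and $c\CA$ so that the two sides of the lemma match by construction; once this identification is in place, the proof is a chain of invocations of Theorem~\ref{enhanced Kuenneth}.
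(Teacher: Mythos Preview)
Your induction has a gap: the claim that $\Delta^n$ has strictly fewer non-degenerate simplices than $K$ is false in general. Take $K=\Delta^n/\partial\Delta^n$ for $n\ge 2$; it has exactly two non-degenerate simplices, whereas $\Delta^n$ has $2^{n+1}-1$. So the inductive hypothesis does not apply to the $\Delta^n$ corner of your pullback and the induction does not close. (Your parenthetical about $K=\Delta^n$ does not help either: in that case $\Delta^n$ has the same count as $K$, and the pushout decomposition is tautological.) The induction could be repaired, for instance by a lexicographic scheme on dimension first and then on the number of top cells, or by proving the case $K=\Delta^n$ directly.

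But the more important point is that the whole inductive apparatus is superfluous. You already record the key combinatorial fact in your third paragraph, but only for $K=\Delta^n$ and $K=\partial\Delta^n$: by Definition~\ref{Externe simpliziale Struktur} the external cotensor satisfies $\hom^{\rm ext}(K,X^\bullet)^s=\prod_{K_s}X^s$ for \emph{every} simplicial set $K$, not just for simplices. When $K$ is finite each $K_s$ is a finite set, so Theorem~\ref{enhanced Kuenneth} gives directly
\[
H_*\bigl(\hom(K,X^\bullet)^s\bigr)=H_*\Bigl(\prod_{K_s}X^s\Bigr)\cong\bigotimes_{K_s}H_*(X^s)=\hom\bigl(K,H_*(X^\bullet)\bigr)^s
\]
in each cosimplicial degree, naturally and compatibly with the structure maps. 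That is the paper's entire proof. Your cell-by-cell argument, once repaired, would be correct but reproves this degreewise identity via an unnecessary detour through pullbacks.
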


\begin{proof}
There is a natural isomorphism in each cosimplicial degree $s\ge 0$:
   $$ H_*\bigl(\hom(K,X^\bullet)^s\bigr)=H_*\Big(\prod_{K_s}X^s\Big)\cong\bigotimes_{K_s}H_*(X^s)=\hom\bigl(K,H_*(X^\bullet)\bigr)^s. $$
These isomorphisms are clearly compatible with the cosimplicial structure maps. 
\end{proof}

\begin{proposition}\label{G-cofree to H-cofree}
The functor $H_*\co c\mc{S}^{\mc{G}} \to c\CA^{\mc{E}}$ sends a \mc{G}-cofree map on $(G_s)_{s \geq 0}$ to an \mc{E}-cofree map
on $(H_*G_s)_{s \geq 0}$ with the induced co-attaching maps. Moreover, $H_*$ sends pullbacks along $\mc{G}$-cofree maps to pullbacks.
\end{proposition}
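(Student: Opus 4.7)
The plan is to unpack the definition of a \mc{G}-cofree map via its tower of co-attachments from Proposition \ref{co-cell decomposition of cofree maps} and reduce the problem to a degreewise application of the K\"unneth Theorem \ref{enhanced Kuenneth}, using Lemma \ref{H_* and external cotensors} to identify the external cotensors after applying $H_*$.

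First I would analyze the right-hand vertical map $i_s^*\co \hom(\Delta^s, cG_s) \to \hom(\partial\Delta^s, cG_s)$ in the defining pullback square of a \mc{G}-cofree map. In each cosimplicial degree $n$ this is the projection $G_s^{(\Delta^s)_n} \twoheadrightarrow G_s^{(\partial\Delta^s)_n}$ of a finite product onto the subproduct indexed by $(\partial\Delta^s)_n \subseteq (\Delta^s)_n$; writing $T_n := (\Delta^s)_n \smallsetminus (\partial\Delta^s)_n$, the pullback of this projection along any map $Z^n \to G_s^{(\partial\Delta^s)_n}$ is the product $Z^n \times G_s^{T_n}$. After applying $H_*$, Lemma \ref{H_* and external cotensors} identifies the right-hand map as $\hom(\Delta^s, cH_*G_s) \to \hom(\partial\Delta^s, cH_*G_s)$, which in cosimplicial degree $n$ is the analogous projection $H_*(G_s)^{\otimes (\Delta^s)_n} \to H_*(G_s)^{\otimes (\partial\Delta^s)_n}$. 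Recalling that the categorical product in $\CA$ is the tensor product, a direct check of the universal property shows that the pullback in $\CA$ of a map $W \to H_*(G_s)^{\otimes (\partial\Delta^s)_n}$ along this projection is $W \otimes H_*(G_s)^{\otimes T_n}$. Applying this with $W = H_*(\cosk_{s-1}(f)^n)$ and comparing with the K\"unneth computation $H_*(\cosk_s(f)^n) = H_*(\cosk_{s-1}(f)^n \times G_s^{T_n}) \cong H_*(\cosk_{s-1}(f)^n) \otimes H_*(G_s)^{\otimes T_n}$ shows that the image of each defining square under $H_*$ is a pullback in $\CA$, inductively exhibiting $H_*(f)$ as an \mc{E}-cofree map on $(H_*G_s)_{s \geq 0}$ with the induced co-attaching maps.

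The second assertion then follows by the same mechanism: any pullback along a \mc{G}-cofree map is built as a tower of base changes of the defining squares, and the base change of such a square along a map $E^\bu \to Y^\bu$ still has the property that its right-hand vertical map is a projection from a finite product in each cosimplicial degree, so the argument above applies verbatim. The main technical obstacle I would need to address carefully is the coherence of the various degreewise K\"unneth isomorphisms with the full cosimplicial structure, including coface maps and not only codegeneracies, so that the identification is an isomorphism of cosimplicial unstable coalgebras; this is dispatched by the naturality of the K\"unneth isomorphism in both variables together with the naturality of Lemma \ref{H_* and external cotensors} in $X^\bu$.
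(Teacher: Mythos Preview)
Your proposal is correct and follows essentially the same approach as the paper's proof: both unpack the co-cell decomposition of Proposition \ref{co-cell decomposition of cofree maps}, observe that in each cosimplicial degree the right vertical map is a projection off a finite product factor (indexed by the surjections $[n]\twoheadrightarrow[s]$, which is exactly your set $T_n$), and then apply K\"unneth together with Lemma \ref{H_* and external cotensors} to see that $H_*$ preserves each defining pullback square. The paper records the resulting identification $H_*(\cosk_s(f))\cong\cosk_s(H_*(f))$ explicitly, while you leave it implicit in the inductive phrasing; your additional remark about coherence with the full cosimplicial structure via naturality is a welcome clarification of a point the paper passes over.
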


\begin{proof}
Let $f\co X^\bullet\to Y^\bullet$ in $c\mc{S}$ be a \mc{G}-cofree map on \mc{G}-injectives $(G_s)_{s\ge 0}$. By Proposition~\ref{co-cell decomposition of cofree maps}, 
there is a pullback diagram in $c\mc{S}$ for each $s \geq 0$,
\diagr{ \cosk_s(f) \ar[r]\ar[d] & \hom(\Delta^s,cG_s) \ar[d] \\
        \cosk_{s-1}(f) \ar[r] & \hom(\partial\Delta^s,cG_s). }
For every cosimplicial degree $k\ge 0$, we have isomorphisms   $$\hom(\Delta^s,G_s)^k\cong\prod_{(\partial\Delta^s)^k}G_s \times\prod_{k\surj s}G_s$$
and the right vertical map is given by the projection onto the first factor. Hence, for every cosimplicial degree $k \geq 0$, the left vertical map is isomorphic to the projection
   $$ \cosk_{s-1}(f)^k \times \prod_{k\surj s}G_s \to \cosk_{s-1}(f)^k.$$
By Lemma \ref{H_* and external cotensors}, we conclude that the following square
\diagr{ H_*(\cosk_s (f)) \ar[r]\ar[d] & \hom(\Delta^s, cH_*G_s) \ar[d] \\
        H_*(\cosk_{s-1}(f)) \ar[r] & \hom(\partial\Delta^s, cH_*G_s) }
is a pullback diagram in $c\CA$. Then $H_*(\cosk_s(f)) \cong \cosk_s(H_*(f))$ and the map 
$H_*(f)\co H_*(X^\bullet)\to H_*(Y^\bullet)$ is \mc{E}-cofree as required. The second claim is similar. 
\end{proof}

\begin{corollary}\label{G-fib to H-fib}
The homology functor $H_*\co c\mc{S}^{\mc{G}} \to c\CA^{\mc{E}}$ sends quasi-\mc{G}-cofree maps to 
 $\mc{E}$-cofree maps and $\mc{G}$-fibrations to $\mc{E}$-fibrations. Moreover, $H_*$ sends 
 pullbacks along a quasi-\mc{G}-cofree Reedy fibration to pullbacks. 
\end{corollary}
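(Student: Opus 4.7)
The proof treats the three assertions in turn, with the first being the main technical work.

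\emph{Quasi-$\mc{G}$-cofree maps.} Let $f\co X^\bu \to Y^\bu$ be quasi-$\mc{G}$-cofree on $(G_s)_{s \geq 0}$. The strategy is to upgrade each defining Reedy homotopy-pullback square
\[
\xymatrix{ \cosk_s(f) \ar[r]\ar[d] & \hom(\Delta^s,cG_s) \ar[d] \\
           \cosk_{s-1}(f) \ar[r]   & \hom(\partial\Delta^s,cG_s) }
\]
to a strict pullback and then apply $H_*$. First one verifies that the right vertical map is a Reedy fibration via Lemma \ref{Reedy-ext-comp} and fibrancy of $G_s$. Right properness of the Reedy structure on $c\mc{S}$ then implies that the canonical map from $\cosk_s(f)$ to the strict pullback is a Reedy equivalence. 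The strict pullback is explicit in each cosimplicial degree $k$: since $\hom(\Delta^s, cG_s)^k \to \hom(\partial\Delta^s, cG_s)^k$ is the projection $G_s^{(\Delta^s)_k} \to G_s^{(\partial\Delta^s)_k}$, the strict pullback in degree $k$ is $\cosk_{s-1}(f)^k \times G_s^{(\Delta^s)_k \setminus (\partial\Delta^s)_k}$. Applying $H_*$, the K\"unneth theorem (Theorem \ref{enhanced Kuenneth}) converts this product into the corresponding tensor product in $c\CA$, while Lemma \ref{H_* and external cotensors} identifies the right-hand column with $\hom(\Delta^s, cH_*G_s) \to \hom(\partial\Delta^s, cH_*G_s)$. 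Since $H_*$ sends Reedy equivalences in $c\mc{S}$ to degreewise isomorphisms in $c\CA$, one obtains the strict pullback square characterizing $H_*(f)$ as $\mc{E}$-cofree on $(H_*G_s)_{s}$ via Proposition \ref{co-cell decomposition of cofree maps}. Note that $H_*G_s$ is $\mc{E}$-injective since $G_s$ is a retract of a fibrant GEM.

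\emph{$\mc{G}$-fibrations.} By Corollary \ref{all-about-cofree-maps2} every $\mc{G}$-fibration is a retract of a quasi-$\mc{G}$-cofree Reedy fibration. The previous step sends this to an $\mc{E}$-cofree map, in particular an $\mc{E}$-fibration, and closure of $\mc{E}$-fibrations under retracts yields the conclusion.

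\emph{Pullbacks.} Let $f\co A^\bu \to B^\bu$ be a quasi-$\mc{G}$-cofree Reedy fibration, $g\co Y^\bu \to B^\bu$ arbitrary, and $P^\bu$ the strict pullback. The plan is to induct along the tower $\{Y^\bu \times_{B^\bu} \cosk_s(f)\}_{s \geq -1}$, starting from $Y^\bu \times_{B^\bu} \cosk_{-1}(f) = Y^\bu$. At each stage the pullback of the defining Reedy homotopy-pullback square for $\cosk_s(f) \to \cosk_{s-1}(f)$ against $Y^\bu \to B^\bu$ remains a Reedy homotopy-pullback, so the strict-pullback-plus-K\"unneth computation of the first part applies verbatim to show $H_*$ preserves each step. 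Both towers stabilize in each cosimplicial degree (at $s = k$ in degree $k$), so the inductive conclusion yields $H_*(P^\bu) \cong H_*(Y^\bu) \times_{H_*(B^\bu)} H_*(A^\bu)$. The main obstacle lies in Part 1, where one must carefully match the level-wise product structure of the strict pullback with the K\"unneth identification and recognize the resulting configuration as the $\mc{E}$-cofree pattern in $c\CA$; once that is settled, Parts 2 and 3 are formal consequences.
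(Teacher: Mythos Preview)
Your proposal is correct and follows the same approach as the paper. The only noteworthy differences are organizational: the paper does not redo the K\"unneth computation in the first part but simply invokes Proposition~\ref{G-cofree to H-cofree} to conclude that the strict pullback $P^\bu$ (Reedy equivalent to $\cosk_s(f)$) is sent by $H_*$ to a strict pullback of the required form, and for the third part the paper spells out a cube diagram to justify the step you assert --- namely that the pulled-back square is again a Reedy homotopy pullback. That step uses that $\cosk_s(f)\to\cosk_{s-1}(f)$ is a Reedy fibration (from the hypothesis that $f$ is a Reedy fibration), so that pulling back along $\cosk_{s-1}(f')\to\cosk_{s-1}(f)$ preserves the homotopy-pullback property; you use this implicitly but should say so.
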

\begin{proof}
Let $f\co X^\bu \to Y^\bu$ be a quasi-\mc{G}-cofree map. Then there are \mc{G}-injective objects $(G_s)_{s \geq 0}$ and diagrams 
\[
 \xymatrix{
 \cosk_s(f) \ar[r]^-{\simeq} \ar[dr]_{\gamma_{s-1}(f)\phantom{x}} & P^\bu \ar[r] \ar[d] &\hom(\Delta^s,cG_s) \ar[d] \\
       & \cosk_{s-1}(f) \ar[r]   & \hom(\partial\Delta^s,cG_s),}
\]
where the square is a pullback, and thus also Reedy homotopy pullback, and the map indicated by $\simeq$ is a 
Reedy equivalence. Applying $H_*$ gives a diagram
\[
 \xymatrix{
 H_*(\cosk_s(f)) \ar[r]^{\cong} \ar[dr]_-{H_*(\gamma_{s-1}(f))\phantom{xxxx}} & H_*(P^\bu) \ar[r] \ar[d] &\hom(\Delta^s,cH_*(G_s)) \ar[d] \\
       & H_*(\cosk_{s-1}(f)) \ar[r]   & \hom(\partial\Delta^s,cH_*(G_s)),}
\]
where the square is a pullback by Proposition \ref{G-cofree to H-cofree} and the indicated map is an isomorphism. It follows that $H_*(f)$ is $\mc{E}$-cofree. If 
$f\co X^\bu \to Y^\bu$ is a $\mc{G}$-fibration, then by Proposition \ref{all-about-cofree-maps2}, $f$ is a retract of a quasi-\mc{G}-cofree map
and the second claim follows. 

For the final claim, consider a pullback square 
\[
 \xymatrix{
 E^\bu \ar[r] \ar[d]_{f'} & X^\bu \ar[d]^f \\
 Z^\bu \ar[r] & Y^\bu,
 }
\]
where $f$ is a quasi-\mc{G}-cofree Reedy fibration. Then there are \mc{G}-injective objects $(G_s)_{s \geq 0}$ and diagrams 
\[
 \xymatrix{
\cosk_s(f') \ar@{-->}[ddr] \ar[rr] \ar[dr]^-{\simeq} && \cosk_s(f) \ar@{-->}[ddr] \ar[dr]^-{\simeq} \ar@/^7pt/[drr] \\ 
&  Q^\bu \ar[rr] \ar[d] && P^\bu \ar[r] \ar[d] & \hom(\Delta^s,cG_s) \ar[d] \\
 & \cosk_{s-1}(f')  \ar[rr] && \cosk_{s-1}(f) \ar[r]   & \hom(\partial\Delta^s,cG_s),
 }
\]
where the two front squares and the back square are pullbacks by the definition of $Q^\bu$. The map 
$\cosk_s(f) \to \cosk_{s-1}(f)$ is a Reedy fibration, which shows that the back square is a Reedy homotopy pullback. Since 
the front square is also a Reedy homotopy pullback, it follows that the top square is again a Reedy homotopy pullback. This 
implies that the top left map is a Reedy equivalence. Applying the homology functor $H_*$, these Reedy equivalences become 
isomorphisms in $c \CA^{\mc{E}}$. Then the result follows from Lemma \ref{G-cofree to H-cofree}. 
\end{proof}

\begin{proposition}\label{G-ho pb to H-ho pb}
The homology functor $H_* \co c\mc{S}^{\mc{G}} \to c\CA^{\mc{E}}$ preserves homotopy pullbacks.
\end{proposition}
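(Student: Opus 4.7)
The plan is to reduce the statement to Corollary~\ref{G-fib to H-fib}, by replacing one leg of the cospan by a $\mc{G}$-fibration of a particularly nice form, and then appealing to right properness of $c\CA^{\mc{E}}$ (Corollary~\ref{right-proper}).

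More concretely, given a cospan $X^\bu\xrightarrow{f}Z^\bu\xleftarrow{g}Y^\bu$ in $c\mc{S}^{\mc{G}}$, I would first apply Proposition~\ref{prop:quasi-cofree-replacements} to factor $f$ as
$$X^\bu\xrightarrow{\;j\;}\tilde X^\bu\xrightarrow{\;q\;}Z^\bu,$$
with $j$ a trivial $\mc{G}$-cofibration and $q$ quasi-$\mc{G}$-cofree. Inspection of the construction in the proof of that proposition shows that $q$ is in addition a Reedy fibration; hence, by the corollary following Lemma~\ref{prop:recognize-quasi-G-cofree}, it is a $\mc{G}$-fibration. Thus, by right properness of $c\mc{S}^{\mc{G}}$, the strict pullback $P^\bu=\tilde X^\bu\times_{Z^\bu}Y^\bu$ represents the homotopy pullback of the original cospan.

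Applying $H_*$, Corollary~\ref{G-fib to H-fib} tells us two things simultaneously: first, the map $H_*(q)$ is $\mc{E}$-cofree, hence in particular an $\mc{E}$-fibration; and second, $H_*$ carries the pullback square along the quasi-$\mc{G}$-cofree Reedy fibration $q$ to a pullback square, so there is a natural isomorphism
$$H_*(P^\bu)\;\cong\;H_*(\tilde X^\bu)\times_{H_*(Z^\bu)}H_*(Y^\bu).$$
Moreover $H_*(j)$ is an $\mc{E}$-equivalence, because $j$ is a $\mc{G}$-equivalence. Since $c\CA^{\mc{E}}$ is right proper, the strict pullback along the $\mc{E}$-fibration $H_*(q)$ computes the homotopy pullback of $H_*(f)$ and $H_*(g)$. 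Putting these observations together yields a natural $\mc{E}$-equivalence between $H_*$ of the homotopy pullback and the homotopy pullback of the image cospan, as required.

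I do not anticipate a substantial obstacle: the content is entirely in Corollary~\ref{G-fib to H-fib}. The only subtle point is arranging a fibrant replacement that is simultaneously Reedy fibration and quasi-$\mc{G}$-cofree, which is precisely what the functorial factorization of Proposition~\ref{prop:quasi-cofree-replacements} delivers; everything else then reduces to transporting the strict pullback across $H_*$ and invoking right properness on both sides.
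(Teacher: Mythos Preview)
Your proof is correct and follows essentially the same route as the paper. The only difference is that the paper uses the $\mc{G}$-cofree factorization of Proposition~\ref{prop:fact-G-equiv-G-cofree} (available here via the monad of Remark~\ref{free-monad-res}) together with Proposition~\ref{G-cofree to H-cofree}, whereas you use the quasi-$\mc{G}$-cofree factorization of Proposition~\ref{prop:quasi-cofree-replacements} together with Corollary~\ref{G-fib to H-fib}; these are parallel arguments, and yours has the mild advantage of not relying on the monad assumption. One small remark: your appeal to right properness of $c\mc{S}^{\mc{G}}$ invokes Corollary~\ref{right-proper2}, which appears after this proposition in the paper, but there is no circularity since that corollary depends only on Corollary~\ref{G-fib to H-fib} and Corollary~\ref{right-proper}.
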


\begin{proof}
By Proposition~\ref{prop:quasi-cofree-replacements}, any homotopy pullback in the \mc{G}-resolution model structure can be replaced up to \mc{G}-equivalence 
by a strict pullback diagram where the right hand vertical map is a quasi-\mc{G}-cofree Reedy fibration.
Since $H_*$ sends \mc{G}-equivalences to \mc{E}-equi\-va\-len\-ces, the result follows from Proposition \ref{G-fib to H-fib}. 
\end{proof}

\begin{corollary} \label{right-proper2}
The model category $c \mc{S}^{\mc{G}}$ is right proper.
\end{corollary}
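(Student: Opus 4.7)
The plan is to reduce right properness of $c\mc{S}^{\mc{G}}$ to the already-established right properness of $c\CA^{\mc{E}}$ (Corollary~\ref{right-proper}) by transporting everything through the homology functor. Concretely, given a pullback square
\[
\xymatrix{
E^\bu \ar[r] \ar[d] & A^\bu \ar[d]^f \\
B^\bu \ar[r]^g & C^\bu
}
\]
in $c\mc{S}$ with $f$ a $\mc{G}$-fibration and $g$ a $\mc{G}$-equivalence, I want to show that $E^\bu \to A^\bu$ is a $\mc{G}$-equivalence.

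The first step is to reduce to the case where $f$ is a quasi-$\mc{G}$-cofree Reedy fibration. By Proposition~\ref{all-about-cofree-maps2} (together with Proposition~\ref{prop:quasi-cofree-replacements}, whose hypotheses are met since $\mc{S}$ is proper and $\mc{G}$ is a set of injective models thanks to Remark~\ref{free-monad-res}), $f$ is a retract of such a map $\tilde f$. Forming the pullback $\tilde E^\bu \to \tilde A^\bu$ along $\tilde f$, the original map $E^\bu \to A^\bu$ becomes a retract of $\tilde E^\bu \to \tilde A^\bu$ in the arrow category, so it suffices to treat the quasi-$\mc{G}$-cofree case, since weak equivalences are closed under retracts.

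Next, apply $H_*$ to the pullback square. By Corollary~\ref{G-fib to H-fib}, $H_*$ sends pullbacks along quasi-$\mc{G}$-cofree Reedy fibrations to pullbacks in $c\CA$, and it sends $\mc{G}$-fibrations to $\mc{E}$-fibrations, so one obtains a pullback square
\[
\xymatrix{
H_*(E^\bu) \ar[r] \ar[d] & H_*(A^\bu) \ar[d]^{H_*(f)} \\
H_*(B^\bu) \ar[r]^{H_*(g)} & H_*(C^\bu)
}
\]
in $c\CA^{\mc{E}}$ in which $H_*(f)$ is an $\mc{E}$-fibration. Moreover, since $\mc{G}$-equivalences are precisely those maps inducing $\mc{E}$-equivalences on $H_*$, the bottom map $H_*(g)$ is an $\mc{E}$-equivalence.

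Finally, by the already-proved right properness of $c\CA^{\mc{E}}$ (Corollary~\ref{right-proper}), the top map $H_*(E^\bu) \to H_*(A^\bu)$ is an $\mc{E}$-equivalence. By the characterization of $\mc{G}$-equivalences as the maps detected by $H_*$, this means $E^\bu \to A^\bu$ is a $\mc{G}$-equivalence, completing the proof. No step looks like a serious obstacle here: the whole argument is bookkeeping, with all the genuine content already packaged in Corollary~\ref{G-fib to H-fib} (preservation of suitable pullbacks by $H_*$) and in the previously proven right properness of $c\CA^{\mc{E}}$.
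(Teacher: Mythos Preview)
Your proof is correct and follows essentially the same route as the paper's: reduce to the quasi-$\mc{G}$-cofree Reedy fibration case via Corollary~\ref{all-about-cofree-maps2}, then transport the pullback through $H_*$ using Corollary~\ref{G-fib to H-fib} and invoke the right properness of $c\CA^{\mc{E}}$ from Corollary~\ref{right-proper}. One minor quibble: the reference to Remark~\ref{free-monad-res} is misplaced---that remark concerns the monad for Proposition~\ref{prop:fact-G-equiv-G-cofree}, whereas the hypothesis you need (that $\mc{G}$ is a class of injective models) holds simply because $\mc{G}$ is a set.
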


\begin{proof}
By Corollary \ref{all-about-cofree-maps2}, it suffices to show that the pullback of a 
$\mc{G}$-equivalence along a quasi-$\mc{G}$-cofree Reedy fibration is again a $\mc{G}$-equivalence.
This follows from Corollary \ref{G-fib to H-fib} and Corollary \ref{right-proper}.
\end{proof}

From Propositions \ref{G-ho po to H-ho po 2} and \ref{G-ho pb to H-ho pb} we also obtain the analogue of Theorem~\ref{homotopy-excision}.

\begin{theorem}[Homotopy excision for cosimplicial spaces] \label{homotopy-excision 2}
Let 
\[
 \xymatrix{
 E^\bu \ar[d] \ar[r] & X^\bu \ar[d]^f \\
 Y^\bu \ar[r]^g & Z^\bu
 }
\]
be a homotopy pullback square in $c \mc{S}^{\mathcal{G}}$ where $f$ is $m$-connected and $g$ is $n$-connected for $m, n \geq 0$. Then the square is homotopy $(m+n)$-cocartesian. 
\end{theorem}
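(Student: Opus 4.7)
The strategy is to reduce to the algebraic homotopy excision Theorem~\ref{homotopy-excision} by transporting the pullback square to $c\CA^{\mc{E}}$ via $H_*$ and then pulling the cocartesian conclusion back to $c\mc{S}^{\mc{G}}$, using the preservation of homotopy pushouts along $0$-connected maps established in Proposition~\ref{G-ho po to H-ho po 2}.

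First, form the homotopy pushout $P^\bu := Y^\bu \cup^h_{E^\bu} X^\bu$ in $c\mc{S}^{\mc{G}}$ and let $c \co P^\bu \to Z^\bu$ be the canonical map; it suffices to show that $c$ is $(m+n)$-connected. Because connectivity in $c\mc{S}^{\mc{G}}$ is detected by the homology functor (Lemma~\ref{lem:equivalent-formulations-n-connected} combined with the representing property of GEMs), this is equivalent to showing that $H_*(c)$ is $(m+n)$-connected in $c\CA^{\mc{E}}$.

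Applying $H_*$ to the given homotopy pullback square produces, by Proposition~\ref{G-ho pb to H-ho pb}, a homotopy pullback square in $c\CA^{\mc{E}}$ in which $H_*(f)$ and $H_*(g)$ remain $m$- and $n$-connected respectively. Theorem~\ref{homotopy-excision}(b) then asserts that this algebraic square is homotopy $(m+n)$-cocartesian, i.e.\ the canonical map
\[
H_*(Y^\bu) \cup^h_{H_*(E^\bu)} H_*(X^\bu) \longrightarrow H_*(Z^\bu)
\]
is $(m+n)$-connected in $c\CA^{\mc{E}}$.

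The remaining task is to identify $H_*(P^\bu)$ with the source of this map. By Lemma~\ref{pull-n-conn} applied to the algebraic pullback square, the parallel map $H_*(E^\bu)\to H_*(X^\bu)$ is $n$-connected; hence $E^\bu\to X^\bu$ is $n$-connected in $c\mc{S}^{\mc{G}}$ and, since $n\ge 0$, in particular $0$-connected. This $0$-connectedness is precisely the hypothesis required to invoke Proposition~\ref{G-ho po to H-ho po 2} (with cocartesian index $\infty$, i.e.\ for a genuine homotopy pushout) and conclude that $H_*$ carries the defining pushout of $P^\bu$ to a homotopy pushout in $c\CA^{\mc{E}}$. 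Concatenating this identification with the $(m+n)$-connected map above shows that $H_*(c)$, and therefore $c$, is $(m+n)$-connected. The main subtlety to check is the applicability of Proposition~\ref{G-ho po to H-ho po 2} to an actual homotopy pushout as opposed to a finitely $k$-cocartesian square, which is immediate since a homotopy pushout is $k$-cocartesian for every $k$.
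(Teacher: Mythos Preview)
Your proof is correct and follows essentially the same route as the paper's: form the homotopy pushout $P^\bu$ (the paper calls it $W^\bu$), push the pullback square to $c\CA^{\mc{E}}$ via Proposition~\ref{G-ho pb to H-ho pb}, apply algebraic homotopy excision~\ref{homotopy-excision}(b), and then use Proposition~\ref{G-ho po to H-ho po 2} to identify $H_*(P^\bu)$ with the algebraic homotopy pushout. The paper cites ``the homotopy excision theorem for coalgebras'' for the $n$-connectivity of $H_*(E^\bu)\to H_*(X^\bu)$ where you invoke Lemma~\ref{pull-n-conn} explicitly, but that lemma is exactly the relevant ingredient.
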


\begin{proof}
Let $W^\bu$ be the homotopy pushout of the diagram 
$$Y^\bu \leftarrow E^\bu \rightarrow X^\bu$$ 
and $W^\bu \to Z^\bu$ the canonical map. We need to show that the induced map 
$H_*(W^\bu) \to H_*(Z^\bu)$ is $(m+n)$-connected. 
By Proposition \ref{G-ho pb to H-ho pb}, the induced square 
\[
 \xymatrix{
 H_*(E^\bu) \ar[r] \ar[d] & H_*(X^\bu) \ar[d]^{H_*(f)} \\
 H_*(Y^\bu) \ar[r]^{H_*(g)} & H_*(Z^\bu) 
}
\]
is a homotopy pullback. The homotopy excision theorem for cosimplicial unstable coalgebras (Theorem~\ref{homotopy-excision}) shows that the square is 
also homotopy $(m+n)$-cocartesian. The same theorem (or Lemma \ref{pull-n-conn}) shows also that $H_*(E^\bu) \to H_*(X^\bu)$ is $n$-connected 
and $H_*(E^\bu) \to H_*(Y^\bu)$ is $m$-connected. Then, by Proposition \ref{G-ho po to H-ho po 2}, $H_*(W)$ is the homotopy pushout of
\[
H_*(Y^\bu) \leftarrow H_*(E^\bu) \rightarrow H_*(X),
\]
and therefore the canonical map $H_*(W^\bu) \to H_*(Z^\bu)$ is $(m+n)$-connected. 
\end{proof}

An important consequence of homotopy excision is the following version of the Freudenthal suspension theorem for the $\mc{G}$-resolution model category of cosimplicial spaces. 
In the following statements, $\Sigma_{c}$ and $\Omega_{f}$ denote the derived suspension and loop functors on the homotopy category $\ho{c\mc{M}_*^{\mc{G}}}$ of 
{\it pointed cosimplicial spaces}.

\begin{corollary} 
Let $X^{\bu}$ be a pointed cosimplicial space which is cosimplicially $n$-connected, i.e.\! $\pi^s H_t (X^\bu)= 0$ for $0 < s < n$, $\pi^0 H_0(X^\bu) = \F$, and $\pi^0H_t(X^\bu)=0$ for all $t>0$. Then the canonical map
  $$\Sigma_{c} \Omega_{f} (X^\bu) \to X^\bu$$
is $2n$-connected. 
\end{corollary}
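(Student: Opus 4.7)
The plan is to deduce this Freudenthal-type statement as a direct application of the Homotopy Excision Theorem~\ref{homotopy-excision 2}, by recognizing the counit $\Sigma_c\Omega_f X^\bu \to X^\bu$ as the comparison map from the homotopy pushout to the corner object of a pullback square both of whose legs are $n$-connected.

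First, I would form the (derived) homotopy pullback square in the pointed resolution model category
\[
\xymatrix{
\Omega_f X^\bu \ar[r] \ar[d] & \ast \ar[d] \\
\ast \ar[r] & X^\bu,
}
\]
which is the defining square of the derived loop object $\Omega_f X^\bu$. By the hypothesis that $X^\bu$ is cosimplicially $n$-connected (together with the translation provided by Lemma~\ref{lem:equivalent-formulations-n-connected} and the discussion after it in the cosimplicial space setting: $f\co X^\bu\to Y^\bu$ is $n$-connected in $c\mc{S}^{\mc{G}}$ iff $H_*(f)$ is $n$-connected in $c\CA^{\mc{E}}$), both maps $\ast \to X^\bu$ in this square are $n$-connected.

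Next, I would invoke Theorem~\ref{homotopy-excision 2} with $m=n$ to conclude that the above square is homotopy $(n+n)=2n$-cocartesian, i.e.\ the canonical map
\[
\hocolim\bigl(\ast \leftarrow \Omega_f X^\bu \rightarrow \ast\bigr) \longrightarrow X^\bu
\]
is cosimplicially $2n$-connected in $c\mc{S}^{\mc{G}}$. The homotopy pushout on the left is, by definition of the derived suspension in the pointed resolution model category, a model for $\Sigma_c\Omega_f X^\bu$, and the resulting comparison map agrees with the counit of the $(\Sigma_c,\Omega_f)$-adjunction. This yields exactly the claim that $\Sigma_c\Omega_f X^\bu \to X^\bu$ is $2n$-connected.

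There is essentially no obstacle here beyond bookkeeping: the whole content of the result has already been packaged in Theorem~\ref{homotopy-excision 2}, so one only has to verify that the two legs of the loop-space square are indeed $n$-connected in the sense of Definition~\ref{def. n-cocon.}. The mild point worth checking is the translation between the ``pointed'' $n$-connectivity stated in the corollary, given as vanishing of $\pi^s H_t(X^\bu)$ in a range, and the formal notion of a map being $n$-connected used in the homotopy excision theorem; this follows from Remark~\ref{F-equivalences-explicit}(1) combined with Lemma~\ref{lem:equivalent-formulations-n-connected} applied to the \mc{G}-cofibrant replacement of $\ast\to X^\bu$.
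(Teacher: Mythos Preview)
Your proposal is correct and is exactly the intended argument: the paper states the corollary as an immediate consequence of the homotopy excision Theorem~\ref{homotopy-excision 2} without supplying a separate proof, and your reconstruction (form the defining pullback square for $\Omega_f X^\bu$, observe both legs $\ast\to X^\bu$ are $n$-connected by hypothesis, apply excision to get $2n$-cocartesianness, identify the pushout with $\Sigma_c\Omega_f X^\bu$) is precisely the standard deduction of Freudenthal from excision. Your remark about translating the vanishing conditions on $\pi^s H_t(X^\bu)$ into $n$-connectedness of the map $\ast\to X^\bu$ via Lemma~\ref{lem:equivalent-formulations-n-connected} and the characterization of connectivity in $c\mc{S}^{\mc{G}}$ through $H_*$ is the only bookkeeping needed.
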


In particular, we have

\begin{corollary} 
Let $X^{\bu}$ be as above and assume in addition that $\pi^s H_*(X^\bu) = 0$ for $s > 2n-1$. Then the canonical map
 $$\Sigma_{c} \sk^c_{2n+1} \Omega_{f} (X^\bu) \to X^\bu$$
is a $\mc{G}$-equivalence. In particular, $X^\bu$ admits a desuspension in $c\mc{S}^{\mc{G}}$.
\end{corollary}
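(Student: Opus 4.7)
The plan is to factor the structure map as
\[
\phi \colon \Sigma_c \sk^c_{2n+1} \Omega_f X^\bu \xrightarrow{\Sigma_c \iota} \Sigma_c \Omega_f X^\bu \xrightarrow{\eta} X^\bu,
\]
where $\iota$ is the canonical skeletal inclusion and $\eta$ is the counit of the derived adjunction $\Sigma_c \dashv \Omega_f$, and then to argue that each piece has enough connectivity for the hypothesis on cohomology of $X^\bu$ to force $\phi$ to be a $\mc{G}$-equivalence.

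First, I would invoke the preceding (Freudenthal-type) corollary to get that $\eta$ is cosimplicially $2n$-connected. Next, by Remark~\ref{rem:natural-Postnikov}, $\iota$ induces isomorphisms on $\naturalpi{s}{-}{G}$ for $s < 2n+1$, with source natural homotopy vanishing for $s \ge 2n+1$, so $\iota$ is cosimplicially at least $2n$-connected (in fact $(2n+1)$-connected by Lemma~\ref{lem:equivalent-formulations-n-connected}); since $\Sigma_c$ preserves (and indeed improves) cosimplicial connectivity, $\Sigma_c \iota$ is more highly connected than $\eta$ and the composite $\phi$ remains cosimplicially $2n$-connected. By Lemma~\ref{lem:equivalent-formulations-n-connected} this yields that $\pi^s H_*(\phi)$ is an isomorphism for $s < 2n$ and an epimorphism for $s = 2n$.

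The vanishing hypothesis $\pi^s H_*(X^\bu) = 0$ for $s > 2n-1$ means that the target $H_*(X^\bu)$ has $\pi^s = 0$ for all $s \ge 2n$, so the epimorphism at $s = 2n$ is trivially an isomorphism. It remains to show that the source also has $\pi^s H_* = 0$ for $s \ge 2n$, so that $\pi^s H_*(\phi)$ is an isomorphism in the upper range as well. For this I would use that the cosimplicial skeleton and external suspension are compatible with $H_*$ (up to weak equivalence) in the style of Propositions~\ref{G-ho po to H-ho po} and \ref{G-ho po to H-ho po 2}: thus $H_*(\sk^c_{2n+1} \Omega_f X^\bu) \simeq \sk_{2n+1} H_*(\Omega_f X^\bu)$ and $H_*(\Sigma_c(-)) \simeq \Sigma_c H_*(-)$ in $c\CA^\mc{E}$. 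In the discrete resolution model structure on $c\CA^\mc{E}$ the skeleton truncates cohomotopy above its dimension (Remark~\ref{rem:natural-Postnikov} combined with Remark~\ref{F-equivalences-explicit}), and the external suspension shifts cohomotopy in the expected way, so the cohomotopy of $H_*$ of the source vanishes in degrees $\ge 2n$ as required.

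Combining both ranges, $\pi^s H_*(\phi)$ is an isomorphism for every $s$, so $\phi$ is a $\mc{G}$-equivalence by Corollary~\ref{naturale G-Aequivalenzen} (equivalently by the Lemma characterization). Setting $Z^\bu := \sk^c_{2n+1} \Omega_f X^\bu$ then exhibits $X^\bu \simeq \Sigma_c Z^\bu$ in $\ho{c\mc{S}^{\mc{G}}}$, proving that $X^\bu$ admits a desuspension. The most delicate step will be the cohomology bookkeeping in the skeletal-plus-suspension range: checking that $H_*$ genuinely commutes with $\Sigma_c$ and with $\sk^c_{n}$ (up to $\mc{E}$-equivalence) on the cofibrant cosimplicial spaces at hand, and tracking the precise truncation/shift on $\pi^\ast$ in $c\CA^\mc{E}$; once those compatibilities are in place the argument is a straightforward five-lemma type comparison.
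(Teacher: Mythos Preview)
Your overall strategy—factor through $\eta$ and $\Sigma_c\iota$, establish $2n$-connectivity of $\phi$, then kill the higher cohomotopy of the source—is reasonable, but two steps do not go through as written.

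First, your claim that ``$\Sigma_c$ preserves (and indeed improves) cosimplicial connectivity'' has the direction wrong. From the adjunction $\Sigma_{\rm ext}\dashv\Omega_{\rm ext}$ one has $\naturalpi{s}{\Sigma_c W^\bu}{G}\cong\naturalpi{s+1}{W^\bu}{G}$, so external suspension \emph{lowers} the cosimplicial connectivity of a map by~$1$. Thus $\Sigma_c\iota$ is only $2n$-connected (not $(2n+2)$-connected). This is harmless for your intermediate conclusion that $\phi$ is $2n$-connected, but the reasoning is incorrect.

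The real gap is the claimed compatibility of $H_*$ with $\sk^c$ and $\Sigma_c$. Propositions~\ref{G-ho po to H-ho po} and~\ref{G-ho po to H-ho po 2} require the relevant cofibration to be $0$-connected; the map $W^\bu\to *$ underlying the external suspension is \emph{not} $0$-connected (its induced map $\naturalpi{0}{*}{G}\to\naturalpi{0}{W^\bu}{G}$ is almost never surjective), so those propositions do not apply. Likewise, skeleta are built from latching objects, which are colimits that $H_*$ has no reason to preserve, so $H_*(\sk^c_m Y^\bu)\simeq \sk_m H_*(Y^\bu)$ is not available. Without these compatibilities your argument fails precisely at $s=2n+1$: the spiral exact sequence of $\Sigma_c W^\bu$ (using $\naturalpi{s}{\Sigma_c W^\bu}{G}=0$ for $s\ge 2n$) yields
\[
\pi_{2n+1}[\Sigma_c W^\bu,G]\;\cong\;\naturalpi{2n-1}{\Sigma_c W^\bu}{\Omega G}\;=\;\naturalpi{2n}{W^\bu}{\Omega G}\;=\;\naturalpi{2n}{\Omega_f X^\bu}{\Omega G},
\]
and nothing in your argument forces this group to vanish. (The degrees $s=2n$ and $s\ge 2n+2$ are fine: the former by the epimorphism from $\pi_{2n}[X^\bu,G]=0$, the latter directly from the spiral sequence.) So the ``straightforward five-lemma type comparison'' you anticipate does not close, and the step you flagged as most delicate is in fact the place where the proof breaks.
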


\subsection{Objects of type $L_C(M,n)$} \label{section-L-objects}

In this subsection we define and construct objects of type $L_C(M,n)$ in $c \mc{S}^{\mc{G}}$. They are formal analogues 
of the twisted Eilenberg-MacLane spaces in the setting of this resolution model category. These objects are closely related
to the objects of type $K_C(M,n)$. This relationship is a central fact that will allow the `algebraization' of our 
obstruction theory. 

Recall the general notion of an \Hun-algebra as defined in 
Subsection~\ref{subsec:algebraic-structure-on-G-homotopy-groups}. 
In our context, this structure is equivalent to the structure of an unstable algebra (Theorem~\ref{thm:UA-equiv-H-alg}). 

\begin{definition} \label{L(C,0)}
Let $C$ be an object of $\CA$. An object $X^\bullet$ in $c\mc{S}^{\mc{G}}$ is said to be of {\it type $L(C,0)$} if 
there are isomorphisms of \Hun-algebras and \mc{H}-algebras respectively,
    $$ \naturalpi{s}{X^\bullet}{G}\cong\left\{
                   \begin{array}{cl}
                       \Hom_{\CA}(C,H_*G) & s=0 \\
                                 0        & \hbox{otherwise.}
                   \end{array} 
                                        \right. $$                                  
\end{definition}

\begin{definition} \label{L(C,n)}
Let $C \in \CA$, $M\in \V C$, and $n \geq 1$.
An object $X^\bullet$ in $c\mc{S}^{\mc{G}}$ is said to be of {\it type $L_C (M,n)$} if the following are satisfied:
\begin{itemize} 
\item[(a)] there are isomorphisms
$$\naturalpi{s}{X^\bullet}{G} \cong \left\{
                   \begin{array}{cl}
                   \Hom_{\CA}(C, H_*G)             & s=0 \\
                   \Hom_{C / \CA}(\iota_C(M),H_*G) & s=n \\ 
                             0                  & \hbox{otherwise}
                   \end{array} 
                                        \right. $$ 
where $H_*G$ is viewed as an object under $C$ with respect to the map 
  $$C \xrightarrow{\epsilon} \terminal \to H_*G. $$  
The first isomorphism is required to be an isomorphism of $\Hun$-algebras. 
The second one has to be an isomorphism of $\pi_0$-modules which are defined in \ref{def:pi-0-modules} and made explicit in 
Subsection~\ref{subsec:unstable-alg-equals-H-alg}.. 
\item[(b)] there is a map $\eta\co X^\bu \to X_0^\bu$ to an object of type $L(C,0)$ such that the composite map 
$\sk^c_1 X^\bu \to X^{\bullet} \to X_0^\bu$ is a $\mc{G}$-equivalence. 
\end{itemize}
A \emph{structured} object of type $L_C(M,n)$ is a pair $(X^\bu, \eta)$ where
$X^\bu$ is an object of type $L_C(M,n)$ and $\eta\co X^\bu \to X_0^\bu$ is a map as in (b) above. 
\end{definition}

We have the following equivalent characterization for an object of type $L_C(M,n)$. We will make use of the internal shift functor $M\mapsto M[1]$ which was 
defined in~\ref{def:internal-shift}. Recall that $\sk^c_n(-)$ denotes the derived skeleton functor.

\begin{lemma} \label{Rechnung mit Spiralsequenz}
Let $C \in \CA$, $M\in \V C$, and $n \geq 1$. For a cosimplicial space $X^\bullet \in c\mc{S}^{\mc{G}}$, the following statements are 
equivalent:
\begin{enumerate}
\item $X^\bu$ is of type $L_C (M,n)$.
\item  $X^\bu$ is $(n+1)$-skeletal, i.e.\! $\sk^c_{n+1}(X^\bu) \simeq X^\bu$, and 
\begin{itemize} 
 \item[(a)] there are isomorphisms
    $$ \pi^sH_*(X^\bullet)\cong\left\{
                   \begin{array}{ll}
                             C & s = 0 \\
                           C[1]& s = 2 \\
                             M & s=n \\
                           M[1]& s = n+2 \\
                             0 & \hbox{otherwise. }
                   \end{array} 
                                        \right. $$
\emph{(}if $n \neq 2$, otherwise modify accordingly\emph{)} where the first is an isomorphism of unstable coalgebras 
and the others are isomorphisms of $C$-comodules.
\item[(b)] as above. 
\end{itemize}
\end{enumerate}
\end{lemma}
\begin{proof}
The equivalence of the two statements is an easy consequence of the spiral exact sequence once one recalls the following facts. First, in Appendix~\ref{appsec:H-alg},
it is explained that an \Hun-algebra can be identified with an unstable algebra. Moreover, the $\pi_0$-module structure translates into the usual $\pi_0H^*(X^\bu)$-action on 
$\pi_sH^*(X^\bu)$. Secondly, since we work over a field $\F$ the setting dualizes nicely to unstable coalgebras and comodules.  
\end{proof}

We will often make use of the following key observation.

\begin{proposition}\label{prop:recognizing-L-objects}
Let $C \in \CA$, $M\in \V C$, and $n \geq 1$. Suppose $X^\bu$ is a cosimplicial space such that 
\begin{itemize}  
\item[(a)] there are isomorphisms 
$$ \pi^sH_*(X^\bullet)\cong\left\{
                   \begin{array}{ll}
                             C & s = 0 \\
                           C[1]& s = 2 \\
                             M & s=n   \\
                             0 & 0< s < n\, ,\ s \neq 2
                   \end{array} 
                                        \right. $$
\emph{(}if $n \neq 2$, otherwise modify accordingly\emph{)}, where the first is an isomorphism of unstable coalgebras and the others are isomorphisms of $C$-comodules.
\item[(b)] as above. 
\end{itemize}
Then $\sk^c_{n+1}(X^\bu)$ is an object of type $L_C(M,n)$. 
\end{proposition}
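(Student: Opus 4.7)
Since $\sk^c_{n+1}(X^\bu)$ is $(n+1)$-skeletal by construction, by Remark~\ref{Rechnung mit Spiralsequenz} the task reduces to verifying that $Y^\bu := \sk^c_{n+1}(X^\bu)$ has the natural homotopy groups prescribed by Definition~\ref{L(C,n)}(a) and admits a structure map as in (b). Remark~\ref{rem:natural-Postnikov} (applied after Reedy cofibrant replacement) gives $\naturalpi{s}{Y^\bu}{G} \cong \naturalpi{s}{X^\bu}{G}$ for $s \le n$ and zero for $s \ge n+1$, so the issue is to compute $\naturalpi{s}{X^\bu}{G}$ for $s \le n$ from the hypothesis on $\pi^s H_*(X^\bu)$.

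First, I would translate the hypothesis on $\pi^s H_*(X^\bu)$ into information about $\pi_s[X^\bu, G]$ for $G \in \mc{G}$ a GEM. Using the representing property~\eqref{representing property of GEMs} together with exactness of the forgetful functor $J\co \CA \to \Vec$ applied to cofree targets, one obtains a natural isomorphism $\pi_s[X^\bu, K(V)] \cong \Hom_{\Vec}(J \pi^s H_*(X^\bu), V)$. Under the hypothesis this yields $\pi_0[X^\bu, G] \cong \Hom_{\CA}(C, H_*G)$, $\pi_2[X^\bu, K(V)] \cong \Hom_{\Vec}(J(C)[1], V)$, $\pi_n[X^\bu, K(V)] \cong \Hom_{\Vec}(J(M), V)$, and zero in all other degrees $s$ with $0 < s < n$.

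The second step is an induction on $s$ using the spiral exact sequence (Theorem~\ref{ses-statement}) for $X^\bu$, performed simultaneously for every $G \in \mc{G}$, which is closed under the loop functor $\Omega$ by Assumption~\ref{strict-unit}. The base case $s=0$ gives $\naturalpi{0}{X^\bu}{G} \cong \pi_0[X^\bu, G] \cong \Hom_{\CA}(C, H_*G)$ as $\mc{H}$-algebras. The critical observation is that the spiral connecting map
  $$\pi_2[X^\bu, G] \longrightarrow \naturalpi{0}{X^\bu}{\Omega G}$$
is an isomorphism in our situation: for $G = K(V)$ both source and target are canonically isomorphic to $\Hom_{\Vec}(J(C), V[-1])$, and unpacking the construction in Appendix~\ref{appsec:spiral} shows that the spiral map realizes the identity between these identifications. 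Combined with the vanishing of $\pi_1[X^\bu, G]$ and of $\pi_s[X^\bu, G]$ for $3 \le s \le n-1$, induction on $s$ (with $G$ successively replaced by $\Omega^k G$) gives $\naturalpi{s}{X^\bu}{G} = 0$ for $1 \le s \le n-1$. At $s = n$ the sequence collapses to $\naturalpi{n}{X^\bu}{G} \cong \pi_n[X^\bu, G]$, which matches $\Hom_{C/\CA}(\iota_C(M), H_*G)$ via the adjunction~\eqref{derivations}; the required $\pi_0$-module structure is tracked by Theorem~\ref{thm:pi-0-module-structure} and Corollary~\ref{cor:inverse-to-h}.

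For condition (b) of Definition~\ref{L(C,n)}, I would produce an object $X_0^\bu$ of type $L(C,0)$, for instance by applying the $\F$-free simplicial abelian group monad of Remark~\ref{free-monad-res} to any discrete simplicial set realizing $C$, together with a map $\eta\co Y^\bu \to X_0^\bu$ inducing the identity on $\pi^0 H_*$. The composite $\sk^c_1(Y^\bu) \to Y^\bu \to X_0^\bu$ is then a $\mc{G}$-equivalence by Corollary~\ref{naturale G-Aequivalenzen}, since both sides have natural homotopy concentrated in degree~$0$ and given there by $\Hom_{\CA}(C, H_*G)$. The main technical obstacle is the identification of the spiral connecting map at $s=2$ as an isomorphism, which rests on an explicit unpacking of its construction from Appendix~\ref{appsec:spiral} and a careful identification of the internal shift $C[1]$ with the homology of an externally looped $\mc{G}$-injective.
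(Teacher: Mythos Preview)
Your strategy matches the paper's: use the spiral exact sequence to pass from the given $\pi^s H_*(X^\bu)$ to the natural homotopy groups of the skeleton. There is one real gap and one confusion.

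The gap is your assertion that the spiral connecting map $\pi_2[X^\bu, G] \to \naturalpi{0}{X^\bu}{\Omega G}$ is an isomorphism because both sides are ``canonically'' $\Hom_{\Vec}(J(C), V[-1])$. The identification of the source with that Hom-group passes through the \emph{non-canonical} isomorphism $\pi^2 H_*(X^\bu) \cong C[1]$ supplied by hypothesis~(a). The connecting map itself is, dually, the $d_2$-differential of the homology spectral sequence of $X^\bu$ (Subsection~\ref{subsec:cohss}); it is intrinsic to $X^\bu$ and has no a priori relation to that chosen isomorphism, so no amount of unpacking Appendix~\ref{appsec:spiral} will exhibit it as the identity. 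The paper's (equally terse) argument is organized differently: it verifies condition~(2) of Remark~\ref{Rechnung mit Spiralsequenz} directly, using the vanishing of the natural homotopy groups above degree~$n$ together with the $\pi_0$-module structure of the spiral sequence (Theorem~\ref{thm:pi-0-module-structure}, Lemma~\ref{lem:pi0-identified}) to force the $C$-comodule identification $\pi^{n+2}H_*(\sk^c_{n+1}X^\bu)\cong M[1]$.

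The confusion concerns hypothesis~(b): it is \emph{given}, not to be constructed. The phrase ``as above'' in the proposition refers back to Definition~\ref{L(C,n)}(b), so a map $\eta\co X^\bu \to X_0^\bu$ to an object of type $L(C,0)$ is part of the data, and it restricts to $\sk^c_{n+1}(X^\bu)$ immediately since $\sk^c_1\sk^c_{n+1}(X^\bu)\simeq\sk^c_1(X^\bu)$. Your proposed construction of $X_0^\bu$ from ``a discrete simplicial set realizing $C$'' together with a map from $Y^\bu$ is circular: producing such a map for a general $Y^\bu$ is essentially the content of (b) itself.
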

\begin{proof}
Passing to the degreewise $\F$-vector space duals $(-)^{\dual}$, we obtain from (a) isomorphisms between the terms $\pi_sH^*(X^\bu)$ and the corresponding duals on the right side for $0\le s\le n$. We consider the spiral exact sequence for $\sk^c_{n+1}(X^\bu)$ whose natural homotopy groups vanish above dimension $n$. This forces an isomorphism
  $$ \bigl(\{\pi_{n+2}H^m(\sk^c_{n+1}(X^\bu))\}_{m \ge 0}\bigr)\cong\bigl(\{\naturalpi{n}{X^\bu}{K(\F,m-1)}\}_{m\ge 0}\bigr)^{\dual}\cong M[1] $$
as $C$-comodules. Here the middle term is a $C$-comodule by Proposition~\ref{lem:pi0-identified}. By the same lemma, the 
left side has the usual $C\cong\pi_0H^*(X^\bu)$-comodule structure. So we have checked condition (2) from Lemma~\ref{Rechnung mit Spiralsequenz}.
\end{proof}

Similarly to $K$-objects, it will also be convenient to allow objects of type $L_C(M,n)$ for $n = 0$.

\begin{definition}
Let $C \in \CA$ and $M\in \V C$. An object of \emph{type $L_C(M, 0)$} is simply an object of type $L(\iota_C(M), 0)$. A \emph{structured 
object} of type $L_C(M, 0)$ is a pair $(X^\bu, \eta)$ where $X^\bu$ is an object of type $L_C(M,0)$ and $\eta\co X^\bu \to X_0^\bu$ is a map 
to an object of type $L(C,0)$ which induces up to isomorphism the natural projection $\iota_C(M) \to C$ on $\pi^0 H_*$-groups. 
\end{definition}

We now show how to construct objects of type $L_C(M,n)$. 

\begin{construction} Let $C \in \CA$. We first construct an object of type $L(C,0)$. The start of a 
cofree resolution of $C$ gives a pullback square (in $\Vec$ - but the maps are in $\CA$)
\[ 
\xymatrix{
 C \ar[r] \ar[d] & I^0 \ar[d]^{d_0} \\
 I^0 \ar[r]^{d_1} & I^1 .
} 
\]
There exist $\F$-GEMs $G^0$ and $G^1$ and  maps $d'_0,d'_1\co G^0 \to G^1$ such that $d_j = H_*(d'_j)$. Consider the homotopy pullback square in $c\mc{S}^{\mc{G}}$
$$
\xymatrix{ 
X^\bu \ar[r]\ar[d] & c G^0 \ar[d] \\ 
c G^0 \ar[r] & c G^1
}
$$
By Proposition \ref{G-ho pb to H-ho pb} and Theorem~\ref{homotopy-excision}, the induced square after applying $H_*$ 
is homotopy $1$-cartesian in $c\Vec$. This means that $\pi^0H_*(X^\bu)$ is isomorphic to $C$ and the isomorphism is induced by a canonical map $cC \to H_*(X^\bu)$ in $\ho{c\CA^{\mc{E}}}$. It follows that $\sk^c_{1} X^\bu$ is of type $L(C,0)$. 
\end{construction}

\begin{construction}
Let $C \in \CA$ and $M \in \V C$. We give a construction of an object of
type $L_C(M,1)$. Consider a homotopy pullback square as follows:
\[
\xymatrix{
E^\bullet \ar[r] \ar[d] & L(C, 0) \ar[d] \\
L(C, 0) \ar[r] & L(\iota_C(M), 0)
}
\]
where the indicated $L$-objects are given by the construction above. 
We claim that
$\sk_2 H_*(E^\bullet)$ is an object of type $K_C(M, 1)$.
In fact, this
requires a little bit more than what is immediately deducible from the
homotopy excision theorem because the latter only shows
that $\pi^1 H_*(E^\bullet)$ injects into $M$. However, there is a map
of squares from the square below (cf. Construction \ref{alternative-direct-construction}):
\[
\xymatrix{
K_C(M, 1) \ar[r] \ar[d] & K(C, 0) \ar[d] \\
K(C, 0) \ar[r] & K(\iota_C(M), 0)
}
\]
to the square induced by the homotopy pullback above
\[
\xymatrix{
\sk_2 H_*(E^\bullet) \ar[r] \ar[d] & H_*L(C, 0) \ar[d] \\
H_*L(C, 0) \ar[r] & H_*L(\iota_C(M), 0)
}
\]
which implies that $M$ is also a retract of $\pi^1 H_*(E^\bullet)$, thus proving the claim.

Now we claim that $\sk_2^c(E^\bullet)$ is of type $L_C(M,1)$. The spiral
exact sequence shows the required isomorphism on
$\naturalpi{0}{-}{-}$-groups. Moreover, there is an epimorphism
$$\naturalpi{1}{\sk_2^c E^\bullet}{G} \to \pi_1 [\sk_2^c E^\bullet, G]
\cong \Hom_{C / \CA}(\iota_C(M),H_*G).$$
The last isomorphism is a consequence of the spiral exact sequence
and the fact that $\sk_2^c(E^\bu) \to E^\bu$ induces isomorphisms on
the first two natural homotopy groups. Thus it suffices to show
that this epimorphism is actually an isomorphism. This follows from
the spiral exact sequence after we note that there is a commutative
diagram
\[
\xymatrix{
\naturalpi{0}{L(C,0)}{\Omega G} \ar[d]^{\cong} \ar[r] &
\naturalpi{1}{L(C,0)}{G} = 0 \ar[d] \\
\naturalpi{0}{\sk_2^c E^\bu}{\Omega G} \ar[r] & \naturalpi{1}{\sk_2^c E^\bu}{G}
}
\]
which is induced by the map $\sk_2^c E^\bu \to E^\bu \to L(C,0)$ and
therefore the bottom connecting map is trivial.
\end{construction}

\begin{construction}
Let $ n > 1$, $C \in \CA$ and $M \in \V C$. We give an inductive construction of an object of type $L_C(M,n)$. Let $X^\bu$ be of type $L_C(M, n-1)$ and consider the homotopy pullback square 
\[
\xymatrix{
 E^\bu \ar[r] \ar[d] & \sk^c_1(X^\bu) \ar[d] \\
 \sk^c_1(X^\bu) \ar[r] & X^\bu
 }
\]
where $\sk_1^c(X^\bu)$ is an object of type $L(C,0)$. Then, by Theorem \ref{homotopy-excision 2}, it follows 
easily that $\sk^c_{n+1}(E^\bu)$ is an object of type $L_C(M, n)$. We can always regard this as structured by 
declaring one of the maps to $\sk^c_1(X^\bu)$ to be the structure map.
\end{construction}

\begin{remark}
Let $(X^\bullet, \eta\co X^\bu\to X_0^\bu)$ be a structured object of type $L_C(M,n)$ and $n > 0$. Then the homotopy pushout of the maps 
$$X_0^\bu \stackrel{\eta}{\leftarrow} X^\bu \stackrel{\eta}{\rightarrow} X_0^\bu$$
is a structured object of type $L_C(M, n-1)$.  
\end{remark}

It is a consequence of Proposition \ref{Moduli of L(C,k)} below that structured objects of type $L_C(M, n)$ are homotopically unique. To show this, we will compare objects of type $L_C(M,n)$ with objects of type $K_C(M, n)$. 

We fix a cofibrant choice of an object of type $L_C(M,n)$, denoted $L_C(M, n)$. 
Although $H_*(L_C(M,n))$ is not an object of type $K_C(M,n)$, we can extract such an object as follows. Consider the homotopy pullback square: 
\[
 \xymatrix{
 D^\bu \ar[r]^-{j} \ar[d] & H_*(L_C(M,n)) \ar[d] \\
 \sk_1H_*(L(C,0)) \ar[r] & H_*(L(C,0))
 }
\]
where the right hand side vertical map is the structure map and $\sk_1H_*(L(C,0))$ is an object of type $K(C,0)$. 
Then, by the homotopy excision theorem (Theorem \ref{homotopy-excision}), we can conclude that $\sk_{n+1}^c(D^\bu)$ is a structured object of type $K_C(M,n)$. 
Set $K_C(M,n) : = \sk_{n+1}^c(D^\bu)$. There is a functor
$$\phi( X^\bullet)\co \mc{W}^{\rm c}_{\rm Hom}\bigl(L_C(M,n), X^\bu\bigr) \to \mc{W}^{\rm c}_{\rm Hom}\bigl(K_C(M,n), H_*X^\bu\bigr)$$ 
which is defined on objects by sending 
$$L_C(M, n) \xrightarrow{f} U \stackrel{\simeq}{\longleftarrow} X^\bu$$
to the object
$$K_C(M, n) \to H_*(U) \stackrel{\simeq}{\longleftarrow} H_*(X^\bu)$$
where the first map is canonically defined by $j$ and $f$. See Remark~\ref{X-cof-zigzag} for the definition of $\mc{W}^{\rm c}_{\rm Hom}(-, -)$ and its properties.

\begin{proposition} \label{Representability of L(C,n)}
The functor  $\phi(X^\bu)$ induces a weak equivalence of classifying spaces.
\end{proposition}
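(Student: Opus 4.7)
The plan is to reformulate both sides as (unions of components of) derived mapping spaces and then proceed by induction on $n$, exploiting the parallel inductive constructions of $L$- and $K$-objects via homotopy pullbacks.

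First I would reduce the statement via the Dwyer-Kan machinery. The objects $L_C(M,n)$ may be chosen Reedy cofibrant (hence $\mc{G}$-cofibrant), and every object of $c\CA^{\mc{E}}$ is cofibrant by Remark \ref{F-equivalences-explicit}(2). Appendix \ref{DK-theory} then identifies $B\mc{W}^c_{\rm Hom}(L_C(M,n), X^\bu)$ with a union of components of $\map^{\rm der}_{c\mc{S}^{\mc{G}}}(L_C(M,n), X^\bu)$, and similarly for the target. Under this identification $\phi(X^\bu)$ becomes the composite
$$\map^{\rm der}(L_C(M,n), X^\bu) \xrightarrow{H_*} \map^{\rm der}(H_*L_C(M,n), H_*X^\bu) \xrightarrow{j^*} \map^{\rm der}(K_C(M,n), H_*X^\bu),$$
where $j\co K_C(M,n) \to H_*L_C(M,n)$ is the canonical map recorded in the construction preceding the proposition. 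The task reduces to showing this composite is a weak equivalence on the relevant components.

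The base case $n = 0$ I would dispatch directly. By construction $L(C,0)$ is the homotopy pullback of two copies of a map $cG^0 \to cG^1$ between constant GEMs chosen so that $H_*$ turns this into the start of an $\mc{E}$-cofree resolution of $cC$; in particular $H_*L(C,0)$ is $\mc{E}$-equivalent to $cC \simeq K(C,0)$. Invoking preservation of these homotopy pullbacks by $H_*$ (Proposition \ref{G-ho pb to H-ho pb}) and the representability property \eqref{representing property of GEMs}, both mapping spaces compute the same simplicial set of maps into $H_*X^\bu$.

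For the inductive step, $L_C(M,n) \simeq \sk^c_{n+1}(E^\bu)$, where $E^\bu$ is a homotopy pullback over $L_C(M,n-1)$ of two copies of $\sk^c_1 L_C(M,n-1)$; the same shape of construction produces $K_C(M,n)$ on the algebraic side. Applying $\map^{\rm der}(-, X^\bu)$ and $\map^{\rm der}(-, H_*X^\bu)$ turns these into homotopy pullback squares of mapping spaces, and three of the four corners are handled by the inductive hypothesis together with the base case. Homotopy excision in both categories (Theorems \ref{homotopy-excision} and \ref{homotopy-excision 2}), combined with the compatibility of $H_*$ with homotopy pullbacks (Proposition \ref{G-ho pb to H-ho pb}), identifies the fourth corner; the reduction from $E^\bu$ to $\sk^c_{n+1} E^\bu$ is harmless, since mapping out of the latter sees only the first $n+1$ natural homotopy groups, which agree with those of $E^\bu$ in that range.

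The main obstacle I expect is bookkeeping rather than conceptual: one must carefully track the canonical map $j$ and its inductive compatibility with the retractions to $L(C,0)$ and $K(C,0)$, and phrase the inductive hypothesis strongly enough on derived mapping spaces so that a five-lemma argument on the long exact sequences of homotopy groups attached to the two homotopy pullback squares actually closes. Matching components between the topological and algebraic sides, in particular verifying that $\phi(X^\bu)$ lands in the intended component, is the subtlest point.
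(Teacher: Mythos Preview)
Your reduction to derived mapping spaces via the Dwyer--Kan machinery is fine and matches the paper. The difficulty is in your inductive step, and I think there is a genuine gap.

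You construct $L_C(M,n)$ as $\sk^c_{n+1}(E^\bu)$ where $E^\bu$ is a homotopy \emph{pullback}, and then write that applying $\map^{\rm der}(-,X^\bu)$ ``turns these into homotopy pullback squares of mapping spaces.'' But that is the wrong variance: mapping out of a homotopy pullback does not give a homotopy pullback (or pushout) of mapping spaces. Only mapping out of a homotopy \emph{pushout} yields a homotopy pullback. Homotopy excision (Theorems~\ref{homotopy-excision} and~\ref{homotopy-excision 2}) does tell you the pullback square is $(m+n)$-cocartesian, but this is a connectivity statement, not an identification of mapping spaces; it will not let you recover $\map^{\rm der}(L_C(M,n),X^\bu)$ from the other three corners. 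Even if you rewrite things using the pushout description $L_C(M,n-1)\simeq L(C,0)\cup^h_{L_C(M,n)}L(C,0)$, the induction goes the wrong way: knowing a pullback and two of its legs does not determine the base.

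The paper avoids this by inducting on the \emph{target} rather than on $n$. One replaces $X^\bu$ by a $\mc{G}$-cofree object and uses the coskeletal decomposition of Proposition~\ref{co-cell decomposition of cofree maps}: both sides of $\phi(X^\bu)$ send homotopy pullbacks in $X^\bu$ to homotopy pullbacks, so the problem reduces to $X^\bu=(cG)^{\partial\Delta^s}$ and then, via the external simplicial adjunction, to $X^\bu=cG$. For a constant target $cG$ one computes directly: the only nontrivial homotopy groups of either mapping space lie in degrees $0$ and $n$, and the map induced by $K_C(M,n)\to H_*(L_C(M,n))$ is an isomorphism on $\pi^0$ and $\pi^n$ by construction, so $\phi(cG)$ hits both correctly. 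This sidesteps the variance problem entirely.
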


\begin{proof}
We can always replace $X^\bullet$ up to \mc{G}-equivalence by a quasi-\mc{G}-cofree object using Proposition \ref{prop:quasi-cofree-replacements}. So we may assume that 
$X^\bullet$ is quasi-\mc{G}-cofree. Since both source 
and target of the map $\phi(X^\bullet)$ preserve homotopy pullbacks in $X^\bu$, we can use inductively the decomposition in Definition \ref{def:quasi-G-cofree}, for 
the quasi-\mc{G}-cofree map $X^\bu \to \ast$, in order to reduce the proof to the special case 
  $$X^\bullet= \hom^{\rm ext}(\partial \Delta^s, cG)$$ 
for a fibrant \mc{G}-injective object $G$. 
Using Proposition \ref{DK-theory4} and Remark \ref{X-cof-zigzag}, we pass to the corresponding (derived) simplicial mapping spaces.  
We have
   $$\map^{\rm der}\bigl(L_C(M,n),(cG)^{\partial \Delta^s}\bigr) \simeq \map^{\rm der}\bigl(\partial \Delta^s, \map^{\rm der}(L_C(M,n), cG)\bigr)$$
and
  $$\map^{\rm der}\bigl(K_C(M,n), (cH_*G)^{\partial \Delta^s}\bigr) \simeq \map^{\rm der}\bigl(\partial \Delta^s, \map^{\rm der}(K_C(M,n), cH_*G)\bigr)$$
The map $\phi((cG)^{\partial \Delta^s})$ is homotopic to $(\phi(cG))^{\partial \Delta^s}$. Therefore, it suffices to consider only the 
case of $\phi(cG)$. Inspection shows that the maps on the two non-trivial homotopy groups of these mapping spaces, $\pi_0$ and $\pi_n$, are the duals of the 
maps on $\pi^0$ and $\pi^n$, respectively, induced by $K_C(M,n) \to D^\bu \to H_*(L_C(M,n))$, thus they are isomorphisms.
\end{proof}

We record an obvious variation of the weak equivalence $\phi(X^\bu)$ for later use. Let $\theta$ denote a property of maps in $c \CA^{\mc{H}}$ which is invariant under weak 
equivalences. Examples of such properties are: (a) the map induces isomorphisms on certain cohomotopy groups, or (b) that it is a weak equivalence (the latter appears later 
in Section \ref{moduli-spaces}). Let 
$$\mc{W}^{\rm c}_{\theta}\bigl(L_C(M,n), X^\bullet\bigr) \subset \mc{W}^{\rm c}_{\rm Hom}\bigl(L_C(M,n), X^\bullet\bigr)$$ 
denote the full subcategory defined by objects $f\co L_C(M,n) \to U \simeq  X^\bullet$ such that
$$\phi(X^\bu)(f): K_C(M, n) \to H_*(U) \simeq H_*(X^\bu)$$ 
has property $\theta$. Similarly let 
  $$\mc{W}^{\rm c}_{\theta}\bigl(K_C(M,n), D^\bullet\bigr) \subset \mc{W}^{\rm c}_{\rm Hom}\bigl(K_C(M,n), D^\bullet\bigr)$$ 
denote the full subcategory defined by maps $u\co K_C(M, n) \to V \simeq D^\bullet$ which have property $\theta$.
Note that each of these subcategories is a (possibly empty) union of connected components of the corresponding categories. 
Then the following is an immediate consequence of Proposition \ref{Representability of L(C,n)}. 

\begin{proposition} \label{Representability of L(C,n)-2} 
Let $\theta$ be a property of maps in $c \CA^{\mc{H}}$ which is invariant under weak equivalences. Then the functor $\phi(X^\bu)$ restricts to a weak equivalence between 
the classifying spaces of the subcategories associated with $\theta$,
$$B\mc{W}^{\rm c}_{\theta}\bigl(L_C(M,n), X^\bullet\bigr) \xrightarrow{\simeq} B\mc{W}^{\rm c}_{\theta}\bigl(K_C(M,n), D^\bullet\bigr).$$
\end{proposition}

\begin{remark}
Fix a structure map $L_C(M,n) \to L(C,0)$ and a section up to homotopy $L(C, 0) \to L_C(M, n)$. Let $Y^\bu$ be a cosimplicial space and $L(C,0) \to Y^\bu$ a map in $c \mc{S}^{\mc{G}}$. 
Using standard homotopical algebra arguments, it can be shown that Proposition \ref{Representability of L(C,n)} extends to yield weak equivalences between mapping spaces in the respective
slice categories. That is, there are weak equivalences
  $$ \map^{\rm der}_{L(C,0)/c \mc{S}^{\mc{G}}}\bigl(L_C(M, n), Y^\bu \bigr) \xrightarrow{\simeq} \map^{\rm der}_{cC/c\CA}\bigl(K_C(M, n), H_*(Y^\bu) \bigr).$$
In particular, it follows that the structured pointed object $L_C(M, n)$ represents Andr\'{e}-Quillen cohomology in cosimplicial spaces by applying 
Theorem~\ref{representability of AQ}.
\end{remark}

We now determine the homotopy type of the moduli space of structured objects of type $L_C(M,n)$:
  $$\mc{M}\bigl(L_C(M, n) \twoheadrightarrow L(C,0)\bigr).$$
The definition of this space is completely analogous to the corresponding moduli space of $K$-objects, see Subsection~\ref{Objects of type K}. It is the classifying space of the category $\mc{W}\bigl(L_C(M,n) \twoheadrightarrow L(C,0)\bigr)$ whose objects are structured objects of type $L_C(M,n)$:
  $$X^\bu \xrightarrow{\eta} X^\bu_0$$
and morphisms are square diagrams as follows
\[
 \xymatrix{
 X^{\bullet} \ar[r]^{\eta} \ar[d]_{\simeq} & X_0^{\bullet} \ar[d]^{\simeq} \\
 Y^{\bullet} \ar[r]^{\eta'} & Y_0^{\bullet}
}
\]
where the vertical arrows are weak equivalences in $c \mc{S}^{\mc{G}}$.

\begin{proposition} \label{Moduli of L(C,k)}
Let $C \in \CA$, $M \in \V C$, and $n \ge 0$. Then:
\begin{itemize} 
\item[(a)] There is a weak equivalence $\mc{M}(L(C,0)) \simeq B\Aut(C).$
\item[(b)] There is a weak equivalence $\mc{M}(L_C(M, 0) \twoheadrightarrow L(C,0)) \simeq B \Aut_C(M)$.
\item[(c)] There is a weak equivalence
   $$\mc{M}\bigl(L_C(M, n+1) \twoheadrightarrow L(C,0)\bigr) \simeq \mc{M}\bigl(L_C(M, n) \twoheadrightarrow L(C,0)\bigr).$$
\end{itemize}  
\end{proposition}
\begin{proof}
(a) By Proposition \ref{Moduli of K(C,k)}, it suffices to show that $\mc{M}(L(C,0)) \simeq \mc{M}(K(C,0))$. We apply
Proposition \ref{Representability of L(C,n)} and compare directly the spaces of homotopy automorphisms of these 
two objects. By Proposition \ref{Representability of L(C,n)}, we have a weak equivalence 
$$\map^{\rm der}\bigl(L(C,0), L(C,0)\bigr) \simeq \map^{\rm der}\bigl(K(C,0), H_*(L(C,0))\bigr).$$
There is a functor 
$$F\co \mc{W}^{\rm c}_{\rm Hom}\bigl(K(C,0), H_*(L(C,0))\bigr) \to \mc{W}_{\rm Hom}\bigl(K(C,0), \sk_2H_*(L(C,0))\bigr)$$
which takes a zigzag $K(C,0) \rightarrow U \stackrel{\simeq}{\longleftarrow} H_*(L(C,0))$ to the zigzag
$$K(C,0) \stackrel{\simeq}{\longleftarrow} \sk_2(K(C,0)) \to \sk_2 U \stackrel{\simeq}{\longleftarrow} \sk_2 H_*(L(C,0))$$
where the last object is of type $K(C,0)$. It is easy to see that this functor defines a homotopy inverse to 
the obvious map induced by $\sk_2 H_*(L(C,0)) \to H_*(L(C,0))$. Then the result follows by passing to the 
appropriate components. 

(b) Let
  $$\mathscr{MAP}_{\twoheadrightarrow}\bigl(L_C(M,0), L(C,0)\bigr)~\text{ and }~\mathscr{MAP}_{\twoheadrightarrow}\bigl(K_C(M,0), K(C,0)\bigr)$$
be the classifying spaces of the categories 
  $$\mc{W}^{\rm c}_{\twoheadrightarrow}\bigl(L_C(M,0), L(C,0)\bigr)~\text{ and }~\mc{W}^{\rm c}_{\twoheadrightarrow}\bigl(K_C(M,0), K(C,0)\bigr)$$ 
respectively, where $\twoheadrightarrow$ denotes the property that the map induces up to isomorphism the canonical projection on $\pi^0$ 
(see Proposition \ref{Representability of L(C,n)-2}).
The results of Appendix \ref{DK-theory} (Theorem \ref{calculus-with-moduli-spaces}) show that there is a homotopy fiber sequence
\[
\xymatrix{ 
\mathscr{MAP}_{\twoheadrightarrow}\bigl(L_C(M,0), L(C,0)\bigr) \ar[r] & \mc{M}\bigl(L_C(M, 0)\twoheadrightarrow L(C,0)\bigr) \ar[d] \\
& \mc{M}\bigl(L_C(M, 0)\bigr) \times \mc{M}\bigl(L(C,0)\bigl)
}
\]
The functor $\sk_2 H_*$ defines a map from this homotopy fiber sequence to the
associated homotopy fiber sequence of $K$-objects:
\[
\xymatrix{
\mathscr{MAP}_{\twoheadrightarrow}\bigl(K_C(M,0), K(C,0)\bigr) \ar[r] & \mc{M}\bigl(K_C(M, 0)\twoheadrightarrow K(C,0)\bigr)\ar[d] \\
& \mc{M}\bigl(K_C(M, 0)\bigr) \times \mc{M}\bigl(K(C,0)\bigr)
}
\]
which, by (a), is a weak equivalence on base spaces and fibers. The result
then follows from Proposition \ref{Moduli of K(C,k)}(b).

(c) Similarly to Proposition \ref{Moduli of K(C,k)} using Theorem \ref{homotopy-excision 2} (or Proposition \ref{diff. constr. in S}
below). 
\end{proof} 

As a consequence, we have the following

\begin{corollary}
Let $C \in \CA$, $M \in \V C$ and $n \ge 0$. Then there is a weak equivalence 
$$\mc{M}\bigl(L_C(M,n) \twoheadrightarrow L(C,0)\bigr) \simeq B \Aut_C(M).$$
In particular, the moduli spaces of structured $L$-objects are path-connected.  
\end{corollary}

\subsection{Postnikov decompositions} \label{section-postnikov-decomp-spaces}

Similarly to the case of unstable coalgebras, the homotopy excision theorem for 
cosimplicial spaces shows that the skeletal filtration of a cosimplicial space is principal, that is,
the successive inclusions can be described as homotopy pushouts of a certain type. This is a direct 
consequence of the following proposition.

\begin{proposition} \label{diff. constr. in S} 
Let $f\co X^\bu \to Y^\bu$ be an $n$-connected map in $c \mc{S}^{\mc{G}}$, $n\geq 1$, and $C = \pi^0 H_*(Y^\bu)$. 
Let $Z^\bu$ be the homotopy cofiber of $f$ and $M =\pi^n (H_* (Z^\bu ))$. Consider the homotopy pullback 
\[
\xymatrix{
E^\bu \ar[d]_{f'} \ar[r] & X^\bu \ar[d]^{f} \\
\sk^c_1(Y^\bu)  \ar[r] & Y^\bu 
}
\]
Then:
\begin{itemize}
\item[(a)] $M$ is naturally a $C$-comodule and the object $\sk^c_{n+2}(E^\bu)$ together with the canonical map 
$\sk^c_{n+2}(E^\bu) \to \sk^c_1(Y^\bu)$ define a structured object of type $L_{C}(M,n+1)$. 
\item[(b)] If $\naturalpi{s}{(Z^\bullet, \ast)}{G}$ vanishes for all $s \neq n$, then the diagram 
\[
\xymatrix{
\sk^c_{n+2}(E^\bu) \ar[d] \ar[r] & X^\bu \ar[d]^{f} \\
\sk^c_1(Y^\bu)  \ar[r] & Y^\bu 
}
\] 
is a homotopy pushout.
\end{itemize}
\end{proposition}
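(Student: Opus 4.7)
The plan is to mimic the proof of Proposition \ref{diff. constr. in CA}, substituting Theorem \ref{homotopy-excision 2} for its coalgebraic counterpart. The two essential inputs are: $f$ is $n$-connected by assumption, and $\sk^c_1(Y^\bu) \to Y^\bu$ is $1$-connected (since $\sk^c_1(Y^\bu)$ is of type $L(C,0)$, so $\pi^{\natural}_s(\sk^c_1(Y^\bu))(G)$ vanishes for $s\ge 1$). Theorem \ref{homotopy-excision 2} therefore gives that the square is homotopy $(n+1)$-cocartesian, and by the usual pasting of cofiber sequences the cofiber $V^\bu$ of $E^\bu \to \sk^c_1(Y^\bu)$ in $c\mc{S}^\mc{G}$ maps $(n+1)$-connectedly to $Z^\bu$. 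The $C$-comodule structure on $M=\pi^n H_*(Z^\bu)$ is inherited from the fact that $H_*(Z^\bu)$ is a pointed quotient $H_*(Y^\bu)$-comodule, using Proposition \ref{G-ho po to H-ho po} (applicable since $f$ is $0$-connected) so that $H_*$ preserves the relevant cofiber sequence.

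For part (a), I would verify the hypotheses of Proposition \ref{prop:recognizing-L-objects} with $n+1$ in place of $n$ for the object $E^\bu$. Applying $H_*$ to the pullback (preserved as a homotopy pullback by Proposition \ref{G-ho pb to H-ho pb}) and invoking Lemma \ref{pull-n-conn}, the projection $H_*(E^\bu)\to H_*(\sk^c_1(Y^\bu))$ is $n$-connected, while $H_*(E^\bu)\to H_*(X^\bu)$ is $1$-connected. A direct calculation with the spiral exact sequence, using $\pi^{\natural}_s(L(C,0))(G)=0$ for $s>0$ together with field duality, shows that $\pi^sH_*(L(C,0))$ is concentrated in degrees $0$ and $2$ with values $C$ and $C[1]$. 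Combined with the pullback connectivity, this transports to $\pi^sH_*(E^\bu)$ for $s\le n$, yielding the required pattern. The identification $\pi^{n+1}H_*(E^\bu)\cong M$ comes from the long exact sequence of cohomotopy attached to the $c\CA^{\mc{E}}$-cofiber sequence $H_*(E^\bu)\to H_*(\sk^c_1(Y^\bu))\to H_*(V^\bu)$, combined with the isomorphism $\pi^nH_*(V^\bu)\cong M$ supplied by the $(n+1)$-connectivity of $V^\bu\to Z^\bu$. Condition (b) of Proposition \ref{prop:recognizing-L-objects} follows because the canonical map $\sk^c_1(E^\bu)\to\sk^c_1(Y^\bu)$ is an isomorphism on $\pi^{\natural}_0$ (from the pullback and the $1$-connectivity of $\sk^c_1(Y^\bu)\to Y^\bu$) and both sides are of type $L(C,0)$; hence it is a $\mc{G}$-equivalence.

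For part (b), the hypothesis $\pi^{\natural}_s(Z^\bu,\ast)(G)=0$ for $s\ne n$ forces $Z^\bu$ to be concentrated in natural homotopy degree $n$, allowing the template of Proposition \ref{diff. constr. in CA}(b) to apply: a five-lemma argument on the natural-homotopy-group long exact sequences upgrades the $(n+1)$-cocartesianness obtained in part (a) to an honest $\mc{G}$-equivalence $X^\bu\sqcup^h_{\sk^c_{n+2}(E^\bu)}\sk^c_1(Y^\bu)\xrightarrow{\sim}Y^\bu$; the point is that replacing $E^\bu$ by $\sk^c_{n+2}(E^\bu)$ only perturbs the pushout above cohomotopy degree $n+1$, where $Z^\bu$ itself contributes nothing by hypothesis.

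The main obstacles I anticipate are: (i) the small-$n$ edge cases ($n=1,2$), where the intrinsic $C[1]$ in degree $2$ of $\pi^*H_*(L(C,0))$ collides with the $M$-component and Proposition \ref{prop:recognizing-L-objects} must be adapted in line with Remark \ref{Rechnung mit Spiralsequenz}; and (ii) tracking $C$-comodule structures through the various long exact sequences to ensure that the isomorphism $\pi^{n+1}H_*(E^\bu)\cong M$ holds as $C$-comodules rather than merely as graded vector spaces.
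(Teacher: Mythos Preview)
Your proposal is correct and follows the same overall strategy as the paper: apply $H_*$ to the homotopy pullback (Proposition~\ref{G-ho pb to H-ho pb}), compute the relevant cohomotopy groups of $H_*(E^\bu)$, and invoke the recognition principle for $L$-objects (Proposition~\ref{prop:recognizing-L-objects}/Remark~\ref{Rechnung mit Spiralsequenz}); part~(b) in both cases is a straightforward consequence of the long exact sequence of natural homotopy groups (Proposition~\ref{G-cofiber-to-LES}).

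The only noteworthy difference is in how $\pi^{n+1}H_*(E^\bu)\cong M$ is identified. The paper stays entirely on the coalgebra side: having passed to $c\CA^{\mc{E}}$ via $H_*$, it invokes Theorem~\ref{homotopy-excision}(a) directly on the resulting homotopy pullback square, which compares $H_*(E^\bu)$ to the comodule pullback and reads off $\pi^{n+1}$ in one step. You instead first use the \emph{space-level} excision Theorem~\ref{homotopy-excision 2} to obtain $(n{+}1)$-cocartesianness, extract the cofiber $V^\bu$ and its $(n{+}1)$-connected comparison map to $Z^\bu$, push this through $H_*$ via Proposition~\ref{G-ho po to H-ho po}, and finish with the cohomotopy long exact sequence of the cofiber sequence $H_*(E^\bu)\to H_*(\sk^c_1 Y^\bu)\to H_*(V^\bu)$. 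Both routes are valid; the paper's is shorter because it avoids introducing $V^\bu$ altogether. Your explicit acknowledgement of the small-$n$ collision with the $C[1]$ term and of the comodule-structure bookkeeping is appropriate, and the paper's terse proof implicitly relies on these being absorbed into Remark~\ref{Rechnung mit Spiralsequenz}.
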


\begin{proof}
(a) This is a consequence of the homotopy excision theorems. The induced square 
\[
 \xymatrix{
 H_*(E^\bu) \ar[d]_{H_*(f')} \ar[r] & H_*(X^\bu) \ar[d]^{H_*(f)} \\
H_*(\sk^c_1(Y^\bu))  \ar[r] & H_*(Y^\bu) 
}
\]
is a homotopy pullback by Proposition \ref{G-ho pb to H-ho pb}. Since $f$ is $n$-connected, Theorem~\ref{homotopy-excision}(a) shows that $f'$ is 
$n$-connected and $\pi^{n+1}H_*(E^\bu)$ is isomorphic to $M$. It follows from Lemma~\ref{Rechnung mit Spiralsequenz} and 
Proposition~\ref{prop:recognizing-L-objects} that $\sk^c_{n+2}(E^\bu)$ is an object of type $L_C(M, n+1)$.

Part (b) follows easily from the long exact sequence in Proposition~\ref{G-cofiber-to-LES}. 
\end{proof}

Similarly to the situation in $c\CA^{\mc{E}}$ in Subsection~\ref{postnikov_decomp_unst_coalg}, it follows that for every $X^\bu \in c \mc{S}^{\mc{G}}$, the skeletal filtration 
$$ \sk^c_1(X^\bu) \to \sk^c_2(X^\bu) \to \cdots \to \sk^c_n(X^\bu) \to \cdots \to X^\bu$$
is defined by homotopy pushouts as follows
\[
 \xymatrix{
 L_C(M, n + 1) \ar[r]^-{w_n} \ar[d] & \sk_n^c(X^\bu) \ar[d] \\
 L(C,0) \ar[r] & \sk_{n+1}^c(X^\bu)
 }
\]
where $C = \pi^0(H_*(X^\bu))$ and $M$ identifies the $n$-th natural homotopy group of $X^\bu$. The collection of maps $w_n$ may be regarded as analogues 
of the Postnikov $k$-invariants in the context of the resolution model category $c \mc{S}^{\mc{G}}$.

The next proposition reformulates and generalizes Proposition \ref{diff. constr. in S} in terms of moduli spaces. It is analogous to Proposition \ref{Moduli+Diff. = !}. 
First we need to introduce some new notation. 

For $n \geq 1$, let $X^\bu$ be a cosimplicial space such that the inclusion $\sk^c_n(X^\bu)\stackrel{\simeq}{\longrightarrow} X^\bu$ is a \mc{G}-equivalence. 
Let $C=\pi^0(H_*(X^\bu))$ and $M$ be a coabelian $C$-comodule. Then we define
  $$\mc{W}(X^\bu + (M,n))$$ 
to be the category with objects cosimplicial spaces $Y^\bu$ such that:
\begin{itemize} 
\item[(i)] $Y^\bu$ is $(n+1)$-skeletal, i.e., $\sk^c_{n+1}(Y^\bu) \simeq Y^\bu$, 
\item[(ii)] the derived $n$-skeleton $\sk^c_n(Y^\bu)$ is $\mc{G}$-equivalent to $X^\bu$,
\item[(iii)] $\pi^n(H_*(Y^\bu))$ is isomorphic to $M$ as a $C$-comodule.
\end{itemize}
The morphisms are given by $\mc{G}$-equivalences between cosimplicial spaces. The classifying space of this category, denoted by 
  $$\mc{M}(X^\bu + (M,n)),$$
is the moduli space of $(n+1)$-skeletal extensions of the $n$-skeletal object $X^\bu$ by the comodule $M$.

We define 
  $$ \mc{M}\bigl(L(C,0) \twoheadleftarrow L_C(M, n+1) \stackrel{{\rm 1-con}}{\rightsquigarrow} X^\bu\bigr),$$
analogously to Proposition \ref{Moduli+Diff. = !}, as the classifying space of the category
  $$ \mc{W}\bigl(L(C,0) \twoheadleftarrow L_C(M, n+1) \stackrel{{\rm 1-con}}{\rightsquigarrow} X^\bu\bigr).$$
This category has the following objects: diagrams $T\leftarrow U\to V$ in $c\mc{S}^{\mc{G}}$ 
such that $U\to T$ is a structured object of type $L_C(M,n+1)$, $V$ is \mc{G}-equivalent to 
$X^\bu$, and $U\to V$ is cosimplicially $1$-connected. The morphisms are given by weak equivalences between such diagrams. We refer to Subsection~\ref{subsec:moduli-spaces} for some background about these moduli spaces.

\begin{proposition} \label{Moduli+Diff. = !!}
Let $n \geq 1$ and suppose that $X^\bu$ is an object of $c\mc{S}^{\mc{G}}$ such that $\sk^c_{n} X^\bu \stackrel{\sim}{\to} X^\bu$. 
Let $M$ be an object of $\V C$, where $C=\pi^0H_*(X^\bu)$. Then there is a natural weak equivalence 
$$\mc{M}\bigl(X^\bu + (M,n)\bigr) \simeq \mc{M}\bigl(L(C,0) \twoheadleftarrow L_C(M, n+1) \stackrel{{\rm 1-con}}{\rightsquigarrow} X^\bu\bigr).$$
\end{proposition}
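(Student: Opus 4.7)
The plan is to adapt the proof of Proposition~\ref{Moduli+Diff. = !} to the setting of cosimplicial spaces, using the topological analogues of the ingredients in that proof, namely Proposition~\ref{diff. constr. in S} in place of Proposition~\ref{diff. constr. in CA} and the derived skeleton functor $\sk^c_n$ in place of $\sk_n$. Concretely, I would construct an adjoint-like pair of functors
$$ F \co \mc{W}\bigl(L(C,0) \twoheadleftarrow L_C(M, n+1) \stackrel{{\rm 1\text{-}con}}{\rightsquigarrow} X^\bu\bigr) \rightleftarrows \mc{W}(X^\bu + (M,n)) \co G $$
where $F$ sends a diagram $T \leftarrow U \to V$ to its homotopy pushout in $c\mc{S}^{\mc{G}}$, and $G$ sends an object $Y^\bu \in \mc{W}(X^\bu + (M,n))$ to the diagram
$$ \sk^c_1(Y^\bu) \longleftarrow \sk^c_{n+2}(E^\bu) \longrightarrow \sk^c_n(Y^\bu) $$
obtained from the homotopy pullback
$$ \xymatrix{ E^\bu \ar[r] \ar[d] & \sk^c_n(Y^\bu) \ar[d] \\ \sk^c_1(Y^\bu) \ar[r] & Y^\bu. } $$

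The first step is to verify that each functor is well-defined. For $F$, Proposition~\ref{diff. constr. in S}(b) identifies the homotopy pushout with an object whose derived $n$-skeleton is \mc{G}-equivalent to $V$ (hence to $X^\bu$) and whose $n$-th cohomology comodule is $M$. For $G$, Proposition~\ref{diff. constr. in S}(a) identifies $\sk^c_{n+2}(E^\bu)$, equipped with its canonical map to $\sk^c_1(Y^\bu)$, as a structured object of type $L_C(M, n+1)$. The map $\sk^c_{n+2}(E^\bu) \to \sk^c_n(Y^\bu)$ is cosimplicially $1$-connected because both the source and target have derived $1$-skeleton of type $L(C,0)$ corresponding to the same unstable coalgebra $C$.

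The second step is to exhibit natural zigzags of weak equivalences between $F \circ G$ and $G \circ F$ and the respective identity functors. For $F \circ G$, the homotopy pushout of $\sk^c_1(Y^\bu) \leftarrow \sk^c_{n+2}(E^\bu) \to \sk^c_n(Y^\bu)$ receives a natural map to $Y^\bu$, and one shows this is a \mc{G}-equivalence by combining Proposition~\ref{diff. constr. in S}(b) with the assumption that $\sk^c_n(Y^\bu) \simeq Y^\bu$, so that the natural cohomotopy groups of the pushout agree with those of $Y^\bu$ in every degree. For $G \circ F$, the natural zigzag comes from the universal property of the homotopy pullback applied to the original diagram $T \leftarrow U \to V$, together with the fact that $\sk^c_1$ and $\sk^c_n$ recover $T$ and $V$ up to \mc{G}-equivalence from $F(T \leftarrow U \to V)$, and $\sk^c_{n+2}$ recovers $U$ from the resulting homotopy pullback.

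The main obstacle will be checking that all these natural transformations are compatible with the structure maps and preserve the cosimplicial $1$-connectedness condition on $U \to V$. Specifically, one must confirm that the canonical comparison map from $\sk^c_{n+2}$ of the homotopy pullback computed from $F(T \leftarrow U \to V)$ back to the original $U$ is a \mc{G}-equivalence; this relies on the excision argument in Proposition~\ref{diff. constr. in S}(a) applied to the $n$-connected map $U \to V$, together with the identification of the first few cohomotopy groups of an $L_C(M, n+1)$-object as described in Remark~\ref{Rechnung mit Spiralsequenz}. Once these zigzags are in place, applying the classifying space functor yields the asserted weak equivalence of moduli spaces, naturally in the data.
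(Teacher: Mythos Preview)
Your proposal is correct and takes exactly the approach the paper intends: the paper's proof consists of the single sentence ``The proof of Proposition~\ref{Moduli+Diff. = !} translates using Proposition~\ref{diff. constr. in S},'' and you have spelled out precisely that translation, replacing $K$-objects by $L$-objects, $\sk_n$ by $\sk^c_n$, and Proposition~\ref{diff. constr. in CA} by Proposition~\ref{diff. constr. in S}. The only minor imprecision is that for the well-definedness of $F$ you cite Proposition~\ref{diff. constr. in S}(b), whereas what is actually needed is the long exact sequence of natural homotopy groups from Proposition~\ref{G-cofiber-to-LES} (the paper's proof of Proposition~\ref{Moduli+Diff. = !} is similarly casual here, writing ``by Proposition~\ref{diff. constr. in CA} or otherwise'').
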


\begin{proof}
Similar to the proof of Proposition \ref{Moduli+Diff. = !} using Proposition \ref{diff. constr. in S}.   
\end{proof}

The relationship between $L$- and $K$-objects can be further refined to show that `attachments' of structured $L$-objects 
are determined `algebraically' by the corresponding `attachments' of structured $K$-objects. This can be expressed elegantly
in terms of a homotopy pullback of the respective moduli spaces. 
See Appendix~\ref{DK-theory} for the definition and properties 
of these moduli spaces of maps. 

\begin{theorem} \label{Moduli-Comparison-of-Spaces} 
Let $X^\bu \in c \mc{S}^{\mc{G}}$, $C \in \CA$, and $M \in \V C$. Then there is a homotopy pullback square 
for every $n \geq 0$:
\[
\xymatrix{
\mc{M}\bigl(L(C,0) \twoheadleftarrow L_C(M, n) \stackrel{{\rm 1-con}}{\rightsquigarrow} X^\bu\bigr) \ar[r] \ar[d] & \mc{M}\bigl(K(C,0) \twoheadleftarrow K_C(M,n) \stackrel{{\rm 1-con}}{\rightsquigarrow} H_*(X^\bu)\bigr) \ar[d] \\
\mc{M}(X^\bu) \ar[r] & \mc{M}(H_*(X^\bu)).
}
\]
\end{theorem}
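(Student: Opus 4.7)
The plan is to prove that the square is a homotopy pullback by showing that the induced map on the homotopy fibers of the two vertical projections is a weak equivalence. This is a standard reduction (e.g. via Quillen's Theorem B or the long exact sequences of the fibrations in play).

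First I would apply the calculus of moduli spaces from Appendix \ref{DK-theory} (Theorem \ref{calculus-with-moduli-spaces}) to each vertical map. Projecting a zigzag $L(C,0) \twoheadleftarrow L_C(M,n) \stackrel{\rm 1\text{-}con}{\rightsquigarrow} X^\bu$ onto its rightmost vertex yields a homotopy fibration
\[
\tilde{F}_L \to \mc{M}\bigl(L(C,0) \twoheadleftarrow L_C(M, n) \stackrel{{\rm 1-con}}{\rightsquigarrow} X^\bu\bigr) \to \mc{M}(X^\bu),
\]
and similarly on the algebraic side a fibration
\[
\tilde{F}_K \to \mc{M}\bigl(K(C,0) \twoheadleftarrow K_C(M,n) \stackrel{{\rm 1-con}}{\rightsquigarrow} H_*(X^\bu)\bigr) \to \mc{M}(H_*X^\bu).
\]
It suffices to exhibit a weak equivalence $\tilde{F}_L \simeq \tilde{F}_K$ through which the top horizontal map in the square factors.

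Next I would iterate the construction: projecting out the middle vertex of the fixed-endpoint diagrams fibers $\tilde{F}_L$ over $\mc{M}\bigl(L_C(M,n) \twoheadrightarrow L(C,0)\bigr)$ with fiber the derived mapping space $\mathscr{MAP}^{\rm 1\text{-}con}\bigl(L_C(M,n), X^\bu\bigr)$ of $1$-connected maps, and analogously fibers $\tilde{F}_K$ over $\mc{M}\bigl(K_C(M,n) \twoheadrightarrow K(C,0)\bigr)$ with fiber $\mathscr{MAP}^{\rm 1\text{-}con}\bigl(K_C(M,n), H_*X^\bu\bigr)$. By Propositions \ref{Moduli of L(C,k)}(b) and \ref{Moduli of K(C,k)}(b), both of the base spaces are weakly equivalent to $B\Aut_C(M)$, and $H_*$ induces the canonical such equivalence. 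For the fibers I would invoke Proposition \ref{Representability of L(C,n)} in its refined form, Remark \ref{Representability of L(C,n)-2}: the property ``$1$-connected'' is invariant under weak equivalences of source and target (it depends only on the map induced on the cohomotopy groups $\pi^0$ and $\pi^1$), so the weak equivalence $\phi(X^\bu)$ restricts to a weak equivalence
\[
\mathscr{MAP}^{\rm 1\text{-}con}\bigl(L_C(M,n), X^\bu\bigr) \stackrel{\simeq}{\longrightarrow} \mathscr{MAP}^{\rm 1\text{-}con}\bigl(K_C(M,n), H_*X^\bu\bigr).
\]
A comparison of the two Serre-type fibrations then gives $\tilde{F}_L \simeq \tilde{F}_K$.

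The main obstacle is the naturality/compatibility check: one must verify that the map $\tilde{F}_L \to \tilde{F}_K$ induced by $H_*$ on fibers of the vertical projections actually agrees, up to a chain of natural weak equivalences, with the composite built from the identification of the bases with $B\Aut_C(M)$ and the map $\phi(X^\bu)$ on the fiberwise mapping spaces. This amounts to observing that the action of $\Aut_C(M)$ on the moduli spaces of structured $L$-objects and structured $K$-objects is compatible under $H_*$, a fact implicit in the proof of Proposition \ref{Moduli of L(C,k)}, together with the naturality of $\phi(X^\bu)$ in $X^\bu$ noted in the construction before Proposition \ref{Representability of L(C,n)}. Given these compatibilities, the pasting lemma for homotopy pullbacks yields the desired square.
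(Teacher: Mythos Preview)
Your proposal is correct and follows essentially the same route as the paper. Both arguments factor the vertical maps through the product $\mc{M}\bigl(L_C(M,n)\twoheadrightarrow L(C,0)\bigr)\times\mc{M}(X^\bu)$ (respectively its $K$-analogue), invoke Theorem~\ref{calculus-with-moduli-spaces} to identify the resulting homotopy fibers as the restricted mapping spaces, and then appeal to Proposition~\ref{Representability of L(C,n)} with Remark~\ref{Representability of L(C,n)-2} for the fiber comparison and to the identifications $\mc{M}\bigl(L_C(M,n)\twoheadrightarrow L(C,0)\bigr)\simeq B\Aut_C(M)\simeq\mc{M}\bigl(K_C(M,n)\twoheadrightarrow K(C,0)\bigr)$ for the base. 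The only cosmetic difference is that the paper presents this as a pasting of two homotopy pullback squares (top and bottom), whereas you phrase it as a comparison of iterated fibrations on the vertical fibers; the ``main obstacle'' you flag is exactly what the paper absorbs by noting that the middle row is connected and that $\phi(X^\bu)$ induces the equivalence of structured-object moduli.
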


\begin{proof}
The top map is induced by a functor 
$$\mc{W}\bigl(L(C, 0) \twoheadleftarrow L_C(M, n) \stackrel{{\rm 1-con}}{\rightsquigarrow} X^\bu\bigr) \to \mc{W}\bigl(K(C, 0) \twoheadleftarrow K_C(M,n) \stackrel{{\rm 1-con}}{\rightsquigarrow} H_*(X^\bu)\bigr)$$
which is defined following the recipe for the definition of the functor $\phi(X^\bu)$ and Proposition \ref{Representability of L(C,n)}. Consider the 
following factorization of the diagram
\[
\xymatrix{
\mc{M}\bigl(L(C, 0) \twoheadleftarrow L_C(M, n) \stackrel{{\rm 1-con}}{\rightsquigarrow} X^\bu\bigr) \ar[r] \ar[d] & \mc{M}\bigl(K(C, 0) \twoheadleftarrow K_C(M,n) \stackrel{{\rm 1-con}}{\rightsquigarrow} H_*(X^\bu)\bigr) \ar[d] \\
\mc{M}\bigl(L(C,0) \twoheadleftarrow L_C(M,n)\bigr) \times \mc{M}(X^\bu) \ar[r] \ar[d] & \mc{M}\bigl(K(C,0) \twoheadleftarrow K_C(M,n)\bigr) \times \mc{M}(H_*(X^\bu)) \ar[d] \\
\mc{M}(X^\bu) \ar[r] & \mc{M}(H_*(X^\bu)).
}
\]
It suffices to show that both squares are homotopy pullbacks. By Propositions~\ref{Representability of L(C,n)} and \ref{Representability of L(C,n)-2} 
and Theorem~\ref{calculus-with-moduli-spaces}, the induced map between the homotopy fibers of the top pair of vertical maps is a weak equivalence. 
Since the spaces in the middle are path-connected, the top square is a homotopy pullback. The bottom square is a homotopy pullback because the map
  $$ \mc{M}\bigl(L(C,0) \twoheadleftarrow L_C(M,n)\bigr)\stackrel{\simeq}{\longrightarrow}\mc{M}\bigl(K(C,0) \twoheadleftarrow K_C(M,n)\bigr), $$
induced by $\phi(X^\bu)$, is a weak equivalence by previous results on the moduli spaces of structured $K$- and $L$-objects (Proposition~\ref{Moduli of K(C,k)} 
and Proposition~\ref{Moduli of L(C,k)}).
\end{proof}

\begin{remark} \label{Moduli-Comparison-of-Spaces-2} 
Let 
  $$\mc{M}\bigl(L(C, 0) \twoheadleftarrow L_C(M,n) \Rightarrow X^\bu\bigr)$$ 
denote the moduli subspace of maps $W \leftarrow U \stackrel{f}{\rightarrow} V$, where $U \rightarrow W$ is a structured object of 
type $L_C(M,n)$, $V \simeq X^\bu$, and $\phi(V)(f)$ is a weak equivalence. Similarly, let 
  $$\mc{M}\bigl(K(C, 0) \twoheadleftarrow K_C(M,n) \xrightarrow{\sim} C^\bu\bigr)$$ 
denote the 
moduli subspace of maps $W \leftarrow U \stackrel{f}{\rightarrow} V$, where $U \rightarrow W$ is a structured object of type $K_C(M,n)$, 
$V \simeq C^\bu$, and $f$ is a weak equivalence. By Proposition~\ref{Representability of L(C,n)-2}, it follows that
the corresponding statement where we replace the arrows $\rightsquigarrow$ with arrows $\Rightarrow$ in Theorem \ref{Moduli-Comparison-of-Spaces} 
is also true with the same proof. 
\end{remark}

\section{Moduli spaces of topological realizations} \label{moduli-spaces}

In this section, we give a description of the moduli space $\mc{M}_{\rm Top}(C)$ of topological realizations 
of an unstable coalgebra $C$. This moduli space is a space whose set of path components is the set of 
non-equivalent realizations, and the homotopy type of each component is that of the homotopy automorphisms 
of the corresponding realization. That is, $\mc{M}_{\rm Top}(C)$ is homotopy equivalent to 
$$\bigsqcup_{X} B \mathrm{Aut^h}(X)$$
where the disjoint union is indexed over spaces $X$ with $H_*(X) \cong C$, one in each equivalence class. The actual 
definition of $\mc{M}_{\rm Top}(C)$ is given as a moduli space in the sense of Appendix \ref{DK-theory}. This point 
of view, due to Dwyer and Kan, is essential in what follows. We emphasize that we  work here with the Bousfield 
localization of spaces at $H_*(-, \mathbb{F})$-equivalences, and accordingly equivalence classes of realizations 
are understood in this localized sense. 

The description of the realization space $\mc{M}_{\rm Top}(C)$ is given in terms of a tower of moduli spaces of approximate realizations
$$\mc{M}_{\infty}(C) \to \ldots \to \mc{M}_n(C) \to \mc{M}_{n-1}(C) \to \ldots \to \mc{M}_0(C).$$
The precise meaning of this approximation is given by the notion of a \emph{potential $n$-stage for $C$} which will be explained 
in Subsection \ref{potential n-stages}. This tower of moduli spaces is determined recursively by the Andr\'{e}-Quillen 
cohomology spaces of $C$, i.e.\! spaces whose homotopy groups are Andr\'{e}-Quillen cohomology groups of $C$. The main results 
about the homotopy types of these moduli spaces are obtained in Subsection \ref{main-results}. In Subsection \ref{marked-moduli}, 
we discuss some variations of these moduli spaces where the objects are also equipped with appropriate markings by isomorphisms. 
Finally, in Subsection \ref{obstruction-theories}, we discuss how these results readily yield obstruction theories for the existence and 
uniqueness of realizations in terms of the Andr\'{e}-Quillen cohomology of the unstable coalgebra. 

The arguments of this section are heavily based on the homotopy excision theorems of the previous sections. This means 
that we are going to make frequent use of homotopy pullbacks and pushouts. Since these constructions will be required
to be functorial for the arguments, we assume from the start fixed functorial models for such constructions, which can be 
made using standard methods of homotopical algebra, and omit the details pertaining to this or related issues.

\subsection{Potential $n$-stages} \label{potential n-stages} 
Our description of the realization space of an unstable coalgebra will be given in terms of a sequence of moduli spaces 
of cosimplicial spaces which, in a certain sense, approximate actual topological realizations regarded as constant 
cosimplicial objects. The meaning of this approximation is expressed in the model category $c\mc{S}^{\mc{G}}$, and not 
in the model category of spaces. 

If $X$ is a realization of $C$, i.e.\! $H_*(X) \cong C$, then the associated cosimplicial object $cX \in c\mc{S}^{\mc{G}}$ 
obviously has the property that $H_*(cX)$ is an object of type $K(C,0)$. More generally, it will be shown in Theorem 
\ref{infty-stages-are-reals} that if $X^{\bu}$ is a (fibrant) cosimplicial space such that $H_*(X^\bu)$ is an object of 
type $K(C,0)$, then $\mathrm{Tot}(X^\bu)$ 
is a realization of $C$ \emph{assuming} the convergence of the homology spectral sequence for Tot. This motivates the 
following notion of a cosimplicial space realizing (a cosimplicial resolution of) $C$. 

\begin{definition}\label{def:infty-stage}
A cosimplicial space $X^\bu$ is called an \emph{$\infty$-stage} for $C$ if $H_*(X^\bu)$ is an object of type $K(C,0)$. 
\end{definition}

Before we state the following definition of an approximate $\infty$-stage, let us first discuss how one could 
attempt to construct an $\infty$-stage. The cosimplicial space $L(C,0)$, which is already at our disposal, 
satisfies 
  $$H_*(L(C,0)) \in \mc{W}\bigl(K(C,0) + (C[1], 2)\bigr).$$ 
A natural strategy then is to study the obstruction for killing the non-trivial $C$-comodule in $\pi^2$ of $H_*(L(C,0))$. 
Assuming this can be done, then the spiral exact sequence will force a new non-trivial $C$-comodule in $\pi^3$ of a 
recognizable form. This observation motivates the following definition. 

\begin{definition}\label{def:n-stage}
A cosimplicial space $X^\bu$ is called a \emph{potential $n$-stage} for $C$, where $n \geq 0$, if 
\begin{itemize}
 \item[(a)]
$H_*(X^\bu) \in \mc{W}\bigl(cC + (C[n+1], n + 2)\bigr)$, i.e.,
$$\pi^s H_*(X^\bullet) \cong \left\{
                   \begin{array}{cl}
                       C      & s = 0 \\ 
                       C[n+1] & s=n + 2 \\
                       0      & \hbox{otherwise, }
                   \end{array} 
                                        \right.$$
where the isomorphisms are between unstable coalgebras and $C$-comodules respectively.
 \item[(b)] $\naturalpi{s}{X^\bu}{G} = 0$ for $s > n$ and $G\in\mc{G}$.
  \end{itemize} 
We call $X^\bu$ a \emph{weak potential $n$-stage} in case it satisfies only (a).
\end{definition}

Potential $n$-stages were introduced by Blanc \cite[5.6]{Blanc:coalg} under the name of Postnikov sections or approximations. The name 
here is inspired by the corresponding notion introduced in \cite{BlDG:pi-algebra} for the realization problem of a $\Pi$-algebra. 

We have the following recognition property for potential $n$-stages.

\begin{proposition} \label{alt-def-potential}
A cosimplicial space $X^\bu$ is a potential $n$-stage for $C$  if and only if the following three conditions are 
satisfied:
\begin{itemize}
 \item[(a)] There is an isomorphism of \Hun-algebras
 $$\naturalpi{0}{X^\bu}{G} \cong \Hom_{\CA}(C, H_*(G)).$$
 \item[(b)] $\naturalpi{s}{X^\bu}{G} = 0$ for $s > n$ and $G\in\mc{G}$.
 \item[(c)] $\pi_s[X^\bu, G] = 0$ for $1 \leq s \leq n+1$ and $G\in\mc{G}$. 
\end{itemize}
\end{proposition}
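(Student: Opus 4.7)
The plan hinges on the spiral exact sequence of Theorem~\ref{ses-statement}, which connects $\naturalpi{s}{X^\bu}{G}$ with $\pi_s[X^\bu, G]$, together with the translation of $\pi_s[X^\bu,G]$ into the cohomotopy of $H_*(X^\bu)$ via the representability isomorphism~\eqref{representing property of GEMs}. Applied degreewise, the latter identifies $[X^\bu, G]$ with the simplicial abelian group $\Hom_\CA(H_*(X^\bu), H_*G)$; for $G = \kfp{m}$ this yields, as in Remark~\ref{F-equivalences-explicit}, natural isomorphisms $\pi_0[X^\bu, G] \cong \Hom_\CA(\pi^0 H_*(X^\bu), H_*G)$ and $\pi_s[X^\bu, G] \cong \bigl(\pi^s H_*(X^\bu)_m\bigr)^{\dual}$ for $s \geq 1$. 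By the cogenerating property of GEMs (Proposition~\ref{cogenerating property of GEMs}), $\pi_s[X^\bu, G] = 0$ for all $G \in \mc{G}$ if and only if $\pi^s H_*(X^\bu) = 0$.

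For the forward direction, assume $X^\bu$ is a potential $n$-stage. Conditions (a) and (c) are immediate from these identifications together with the edge isomorphism $\naturalpi{0}{X^\bu}{G} \cong \pi_0[X^\bu, G]$ of $\mc{H}$-algebras from the spiral. For (b), I feed the values $\pi_s[X^\bu, G'] = 0$ for $s > n+2$ and $1 \le s \le n+1$ (valid for every $G' \in \mc{G}$, in particular for the loop objects $\Omega^k G'$, since $\mc{G}$ is closed under $\Omega$), together with $\pi_{n+2}[X^\bu, G'] \cong \bigl(C[n+1]_m\bigr)^{\dual}$, into the spiral exact sequence. Iterating the spiral in degrees $1 \le s \le n$, where the relevant $\pi$-groups vanish, produces a chain of isomorphisms
\[
\naturalpi{n}{X^\bu}{\Omega G'} \cong \naturalpi{n-1}{X^\bu}{\Omega^2 G'} \cong \cdots \cong \naturalpi{0}{X^\bu}{\Omega^{n+1} G'} \cong \Hom_\CA\bigl(C, H_*(\Omega^{n+1} G')\bigr),
\]
where the last step uses (a). The spiral in degree $n+1$ then reads
\[
\pi_{n+2}[X^\bu, G'] \to \naturalpi{n}{X^\bu}{\Omega G'} \to \naturalpi{n+1}{X^\bu}{G'} \to 0,
\]
so surjectivity of the left map forces $\naturalpi{n+1}{X^\bu}{G'} = 0$; an upward induction on $s$ using the same sequence, now with both $\pi$-endpoints vanishing, then yields $\naturalpi{s}{X^\bu}{G'} = 0$ for all $s > n$.

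For the converse, assume (a)--(c). Condition (c) and the identifications above force $\pi^s H_*(X^\bu) = 0$ for $1 \le s \le n+1$, and (a) combined with the cogenerating property of GEMs identifies $\pi^0 H_*(X^\bu) \cong C$ as unstable coalgebras. The spiral together with (b) forces $\pi_s[X^\bu, G] = 0$ for $s > n+2$, hence $\pi^s H_*(X^\bu) = 0$ in that range. For $s = n+2$, the spiral yields $\pi_{n+2}[X^\bu, G] \cong \naturalpi{n}{X^\bu}{\Omega G}$; the same chain of iterated isomorphisms (driven now by (c) and (b)) combined with (a) evaluates this to $\Hom_\CA(C, H_*(\Omega^{n+1} G)) \cong \bigl(C[n+1]_m\bigr)^{\dual}$, so $\pi^{n+2} H_*(X^\bu) \cong C[n+1]$ as graded vector spaces. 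The $C$-comodule structure is read off from the $\pi_0$-module structure on the spiral exact sequence.

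The main obstacle, appearing in the forward direction, is that surjectivity of the connecting map $\pi_{n+2}[X^\bu, G'] \to \naturalpi{n}{X^\bu}{\Omega G'}$ does not follow from exactness alone, even though both sides have the same $\F$-dimension. One must use the $\pi_0$-module structure on the spiral from Theorem~\ref{ses-statement} and the identification of this sequence as the dual of the homology spectral sequence (Appendix~\ref{appsec:H-alg}) to recognise this map as a Hurewicz-type edge map, forced to be an isomorphism by the pattern of $\pi^{n+2} H_*(X^\bu) \cong C[n+1]$.
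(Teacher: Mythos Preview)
Your argument follows the same route as the paper's: translate (a) and (c) into statements about $\pi^* H_*(X^\bu)$ via representability and the spiral edge isomorphism, iterate the shift maps in the spiral to identify $\naturalpi{s}{X^\bu}{-}$ for $s \le n$, then analyse the connecting map at degree $n+2$ to obtain (b), with the converse run symmetrically. You also correctly isolate the crux---why the boundary map $b\colon \pi_{n+2}[X^\bu, G] \to \naturalpi{n}{X^\bu}{\Omega G}$ is an isomorphism---but your proposed resolution via the homology spectral sequence and ``Hurewicz-type edge maps'' is off target: the map in question is the boundary $b$, not a Hurewicz map, and the paper does not pass through the spectral sequence identification at this point. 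The paper's argument is more direct: since $b$ is a map of $\pi_0$-modules (Theorem~\ref{ses-statement}) and both source and target are identified---one by the potential $n$-stage hypothesis, the other via the chain of shift isomorphisms you wrote down---with the $\pi_0$-module dual to the $C$-comodule $C[n+1]$, the map $b$ is the dual of a $C$-comodule endomorphism of $C[n+1]$, and the paper concludes from this that $b$ is an isomorphism.
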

\begin{proof}
There is an equivalence of \Hun-algebras and unstable algebras by Theorem~\ref{thm:UA-equiv-H-alg} with an inverse functor $u\co\Hun{\rm -Alg}\to\UA$ described in 
Corollary~\ref{cor:inverse-to-h}. Proposition~\ref{lem:pi0-identified} identifies the relevant $\pi_0$-module structures with $C^\dual\cong u(\pi_{0})$-module structures, where $\pi_0=\naturalpi{0}{X^\bu}{-}$.

Using the spiral exact sequence from Theorem \ref{ses-statement} and the cogenerating property of the Eilenberg-MacLane spaces $K(\F,m)\in\mc{G}$ in Proposition 
\ref{cogenerating property of GEMs}, condition (a) is equivalent 
to $\pi^0 H_*(X^\bu) \cong C$, as unstable coalgebras. Similarly, condition (c) is equivalent to $\pi^s H_*(X^\bu) = 0$ for $1 \leq s \leq n+1$. So both (a) and (c) are certainly 
true for potential $n$-stages, and (b) holds by definition. 

The converse is similar. The spiral exact sequence yields isomorphisms of $\pi_{0}$-modules
\begin{equation} \label{nat_htpy_potential_n-stage}
\naturalpi{s}{X^\bu}{-} \cong \Hom_{\CA}\bigl(C, H_*(\Omega^s (-))\bigr)
\end{equation}
for all $s \leq n$. The spiral exact sequence continues with
$$\cdots \to \naturalpi{n+2}{X^\bu}{G} \to \pi_{n+2}[X^\bu, G] \to \naturalpi{n}{X^\bu}{\Omega G} \to \naturalpi{n+1}{X^\bu}{G} \to \cdots$$
By assumption, it follows that the middle connecting map is an isomorphism. Then using \eqref{nat_htpy_potential_n-stage}, we obtain an isomorphism of $C$-comodules 
$$\pi^{n+2} H_*(X^\bullet) \cong C[n+1].$$
\end{proof}

From Proposition \ref{alt-def-potential}, together with Lemma \ref{lem:equivalent-formulations-n-connected}, it follows that if $X^\bu$ is a potential $n$-stage for $C$, then 
$\sk^c_{m+1}(X^\bu)$ is a potential $m$-stage for $C$ for all $m \leq n$. Here $\sk^c_n(-)$ denotes the derived $n$-skeleton, i.e., the $n$-skeleton
of a functorial $\mc{G}$-cofibrant replacement. Thus, a potential $n$-stage should be thought of as the $n$-skeletal
truncation in $c\mc{S}^{\mc{G}}$ of a potential $\infty$-stage. 

\begin{proposition} \label{atomicity}
Let $C$ be an unstable coalgebra and  $f:X^\bu \to Y^\bu$ a map between potential $n$-stages for $C$. Suppose that $f$ induces isomorphisms on $\pi^0 H_*$-groups. Then $f$ is a \mc{G}-equivalence.  
\end{proposition}

\begin{proof} Using the naturality of the spiral exact sequence, we see that the map $f$ induces isomorphisms $\naturalpi{s}{Y^\bu}{G}\xrightarrow{\cong} \naturalpi{s}{X^\bu}{G}$ for all $s \geq 0$ and $G \in \mc{G}$. 
\end{proof}

When the characteristic of the ground field is positive, property (b) in the definition of a potential $n$-stage follows automatically. The key ingredients for the proof of this reduction are the extra structure of the spiral exact sequence obtained in Appendices~\ref{appsec:spiral} and \ref{appsec:H-alg} and a theorem of Goodwillie \cite{Good:cosimp} on the homology spectral sequence of a cosimplicial space. 

\begin{theorem} \label{weak_potential_stage}
Let $\mathbb{F} = \mathbb{F}_p$ for $p>0$. Let $C \in \CA$ be a connected unstable coalgebra and $X^\bu$ a weak potential n-stage for $C$. Then  $X^\bu$ is a potential n-stage for $C$.
\end{theorem}

\begin{proof} 
By the spiral exact sequence, there are isomorphisms of $\naturalpi{0}{X^\bu}{-}$-modules 
  $$\naturalpi{s}{X^\bu}{-} \cong \Hom_{\CA}\bigl(C, H_*(\Omega^s (-))\bigr)$$ 
for all $s \leq n$. The spiral exact sequence continues as follows
  $$\cdots \to \naturalpi{n+2}{X^\bu}{G} \to \pi_{n+2}[X^\bu, G] \to \naturalpi{n}{X^\bu}{\Omega G} \to \naturalpi{n+1}{X^\bu}{G} \to 0$$
where the middle connecting map can be identified with the dual of a map of $C$-comodules from $C[n+1]$ to itself. We claim that this is an isomorphism.

From the description of the homology spectral sequence of a cosimplicial space given by Bousfield in \cite{Bou: homology SS}, one sees that this comodule map can be identified with the differential $d_{n+2}$ in the  spectral sequence of $X^\bu$. (Here we identify $\naturalpi{n}{X^\bu}{\Omega G}$ with $\naturalpi{0}{X^\bu}{\Omega^{n+1} G}$ via the isomorphisms in the spiral exact sequence.)

According to the main result of \cite{Good:cosimp}, elements in $E^r_{p,q}$ for $p>q$ do not survive to $E_{p,q}^\infty$ in the homology spectral sequence for $\mathbb{F}_p$. 
Hence the summand $\Sigma^{n+1}\mathbb{F}_p \subseteq C[n+1]$ of $E^{n+2}_{n+2,n+1}$, given by the internal shift of the summand in degree $0$ of $C$, is mapped isomorphically by $d_{n+2}$.
But a $C$-comodule endomorphism of $C[n+1]$ which is nontrivial in degree $n+1$ must be an isomorphism. It follows that the connecting map in the spiral exact sequence above is an isomorphism and, inductively, we also obtain the vanishing of the higher natural homotopy groups.     
\end{proof}

\begin{remark} For $p=2$, a proof of Goodwillie's theorem \cite{Good:cosimp} was given earlier by Dwyer using his construction of higher divided power operations \cite{D-div}. 
These operations exist also at odd primes (Bousfield \cite{Bou:op}), but to the best of our knowledge only for $p=2$ have these operations been studied in the literature in connection with the homology spectral sequence. The interaction of these operations with the Steenrod action forces the $E^{\infty}$-terms to carry an unstable module structure. 
But an unstable module is trivial in negative degrees by the instability condition and the relation $Sq^0 = \id$.
\end{remark}

\begin{remark} 
Theorem~\ref{weak_potential_stage} fails in characteristic $0$. 
Let $C^\bu$ be the bigraded coalgebra which is $\mathbb{Q}$ in bidegree $(0,0)$ and $\mathbb{Q}$ in bidegree $(n+2,n+1)$. We can regard it as a graded differential $\mathbb{Q}$-coalgebra with trivial differential. 
One obtains a cosimplicial cocommutative $\mathbb{Q}$-coalgebra by using the Dold-Kan correspondence between cochain complexes of graded vector spaces and cosimplicial graded vector spaces. 
This cosimplicial graded $\mathbb{Q}$-coalgebra can be realized as the rational homology of a cosimplicial space which is generated by a point in degree $0$ and an $(n+1)$-sphere in degree $n+2$. 
This cosimplicial space is a weak potential $n$-stage, but it's not a potential $n$-stage because the differential $d_{n+2}$ in its homology spectral sequence is trivial by construction, and hence the spiral exact sequence shows that it has non-trivial higher natural homotopy groups (see the proof of Theorem \ref{weak_potential_stage}). 
\end{remark}

While our goal is to obtain topological realizations of $C$ from totalizations of $\infty$-stages for $C$, the totalization of a potential $n$-stage does not usually yield anything interesting. 

\begin{proposition} Let $\mathbb{F} = \mathbb{F}_p$ where $p > 0$, $C$ a simply-connected unstable coalgebra, and $X^\bu$ a potential $n$-stage for $C$. 
Then ${\rm Tot}(X^\bu)$ is weakly trivial.
\end{proposition}

\begin{proof} By \cite[Theorem 3.6]{Bou: homology SS}, ${\rm Tot}(X^\bu)$ is simply-connected and the homology spectral sequence of $X^\bu$ is strongly convergent to 
$H_*({\rm Tot}(X^\bu))$. Therefore $H_*({\rm Tot}(X^\bu))$ is trivial since $d_{n+2}$ is an isomorphism as shown in the proof of Theorem \ref{weak_potential_stage} above. 
\end{proof}

\begin{definition}   \label{def-stage-extension}
We say that a potential $n$-stage $Y^\bu$ for $C$ \emph{extends or is over a potential $(n-1)$-stage} $X^\bu$ if $\sk^c_n(Y^\bu) \simeq X^\bu$. 
\end{definition}

\begin{theorem}  \label{obstruction-theory-step}
Let $C \in \CA$ and $n \geq 1$. Suppose that $$\alpha \co X^\bullet_{n-1}\to X^\bu$$ is a map in $c\mc{S}^{\mc{G}}$ where $X^\bu_{n-1}$ is a potential $(n-1)$-stage for $C$. 
Then $X^\bu$ is a potential $n$-stage for $C$ over $X^\bu_{n-1}$ if and only if 
there is a homotopy pushout square
\begin{equation} \label{obstruction-theory-step-b}
\xymatrix{
L_C(C[n], n + 1) \ar[r]^(.6){w_n} \ar[d] & X^\bu_{n-1} \ar[d]^{\alpha} \\
L(C,0) \ar[r] & X^\bu
}
\end{equation}
where $w_{n}$ is a map such that the map $\phi(X^\bu_{n-1})(w_n)\co K_C(C[n], n+1) \to H_*(X^\bu_{n-1})$, defined in \emph{Proposition~\ref{Representability of L(C,n)}}, is a weak equivalence.
\end{theorem}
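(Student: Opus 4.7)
Both implications will be derived from a computation of natural homotopy groups combined with the recognition principle of Proposition \ref{alt-def-potential}. The hinge of the argument is that the pushout square, when probed by $\olmap^{\rm ext}(-, cG)$, becomes a homotopy pullback of pointed simplicial sets, while the cofiber of the skeletal inclusion, analyzed via Propositions \ref{G-ho po to H-ho po}, \ref{G-cofiber-to-LES}, and \ref{diff. constr. in S}, reconstructs the attaching map.

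For ($\Leftarrow$), applying $\olmap^{\rm ext}(-, cG)$ to the homotopy pushout yields a Mayer--Vietoris long exact sequence on $\naturalpi{*}{-}{G}$. The inputs are standard: $\naturalpi{*}{L(C,0)}{G}$ is concentrated in degree $0$ with value $\Hom_{\CA}(C, H_*G)$; $\naturalpi{*}{L_C(C[n], n+1)}{G}$ adds precisely $\Hom_{C/\CA}(\iota_C(C[n]), H_*G)$ in degree $n+1$; and $\naturalpi{s}{X^\bu_{n-1}}{G} \cong \Hom_{\CA}(C, H_*\Omega^s G)$ for $0 \leq s \leq n-1$, vanishing otherwise, by Proposition \ref{alt-def-potential}. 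The assumption that $\phi(X^\bu_{n-1})(w_n)$ is a weak equivalence pins down the relevant connecting map. A bookkeeping computation, together with the adjunction identification $\Hom_{C/\CA}(\iota_C(C[n]), H_*G) \cong \Hom_{\CA}(C, H_*\Omega^n G)$ (from $\iota_C \dashv Ab_C$ and the internal shift), yields $\naturalpi{s}{X^\bu}{G} \cong \Hom_{\CA}(C, H_*\Omega^s G)$ for $0 \leq s \leq n$ and zero for $s > n$. Proposition \ref{alt-def-potential} identifies $X^\bu$ as a potential $n$-stage. Applying $\sk^c_n$ to the pushout and noting that $\sk^c_n L_C(C[n], n+1)$ and $\sk^c_n L(C,0)$ are both $\mc{G}$-equivalent to $L(C,0)$ (since the contribution of $C[n]$ to $L_C(C[n], n+1)$ lives only in degree $n+1$) shows $\sk^c_n(X^\bu) \simeq X^\bu_{n-1}$.

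For ($\Rightarrow$), the calculation of Proposition \ref{alt-def-potential} applied to both $X^\bu$ and $X^\bu_{n-1}$ shows that the structure map $\alpha\co X^\bu_{n-1} \to X^\bu$ is cosimplicially $n$-connected. Apply Proposition \ref{diff. constr. in S} to $\alpha$: let $Z^\bu$ denote the homotopy cofiber and $M = \pi^n H_*(Z^\bu)$. Since $\alpha$ is $0$-connected, Proposition \ref{G-ho po to H-ho po} identifies $H_*(Z^\bu)$ with the cofiber of $H_*(\alpha)$ in $c\CA^{\mc{E}}$; reading off the long exact sequence of cohomotopy, using the known values of $H_*(X^\bu_{n-1})$ and $H_*(X^\bu)$ as potential $(n-1)$- and $n$-stages, yields $M \cong C[n]$ as $C$-comodules. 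The long exact sequence of Proposition \ref{G-cofiber-to-LES} applied to $(Z^\bu, \ast)$ then verifies $\naturalpi{s}{(Z^\bu, \ast)}{G} = 0$ for $s \neq n$, so Proposition \ref{diff. constr. in S}(b) supplies the desired homotopy pushout, with $\sk^c_{n+2}(E^\bu)$ of type $L_C(C[n], n+1)$ and $\sk^c_1(X^\bu) \simeq L(C, 0)$. Finally, the condition on $\phi$ reduces by Proposition \ref{F-equivalences-explicit2} to checking that $\phi(X^\bu_{n-1})(w_n)\co K_C(C[n], n+1) \to H_*(X^\bu_{n-1})$ induces isomorphisms on $\pi^0$ and $\pi^{n+1}$ (the only non-vanishing cohomotopy degrees); this is a diagram chase through the construction of $\phi$ and the defining homotopy pullback producing $w_n$.

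The principal obstacle is the careful bookkeeping in the ($\Leftarrow$) direction, where one must verify that the hypothesis on $\phi(w_n)$ correctly controls the connecting map of the Mayer--Vietoris sequence in degree $n+1$ to produce $\naturalpi{n}{X^\bu}{G} \cong \Hom_{\CA}(C, H_*\Omega^n G)$ rather than some extension involving extraneous terms. The identification $\Hom_{C/\CA}(\iota_C(C[n]), H_*G) \cong \Hom_{\CA}(C, H_*\Omega^n G)$ must also be compatible with the $\pi_0$-module structures provided by Theorem \ref{ses-statement}, which requires tracking the $\naturalpi{0}{X^\bu}{G}$-action on both sides through the adjunction.
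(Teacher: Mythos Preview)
Your ($\Rightarrow$) direction is essentially the paper's argument: it invokes Proposition~\ref{diff. constr. in S} after checking its hypotheses, and the additional bookkeeping you supply (via Propositions~\ref{G-ho po to H-ho po} and~\ref{G-cofiber-to-LES}) is correct.

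The ($\Leftarrow$) direction, however, has a genuine gap. You compute $\naturalpi{s}{X^\bu}{G}$ for all $s$ via Mayer--Vietoris and then appeal to Proposition~\ref{alt-def-potential}. But that proposition characterizes potential $n$-stages by three conditions, and you have only established (a) and (b). Condition (c) demands $\pi_s[X^\bu,G]=0$ for $1\le s\le n+1$, and this does \emph{not} follow from knowing the values of the natural homotopy groups alone: the spiral exact sequence involves shift maps $\naturalpi{s-1}{X^\bu}{\Omega G}\to\naturalpi{s}{X^\bu}{G}$ whose surjectivity you would need, and these are not determined by the groups as $\pi_0$-modules. Naturality along $\alpha$ handles $s\le n-1$, but for $s=n$ and $s=n+1$ the comparison with $X^\bu_{n-1}$ breaks down (since $\naturalpi{n}{X^\bu_{n-1}}{G}=0$ while $\naturalpi{n}{X^\bu}{G}\ne 0$), and your Mayer--Vietoris on natural groups gives no handle on the relevant shift map.

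The deeper issue is that the hypothesis on $\phi(X^\bu_{n-1})(w_n)$ is intrinsically a condition on $\pi^*H_*$ --- it says the induced map on $\pi^{n+1}H_*$ is an isomorphism of $C$-comodules --- and therefore lives on the $\pi_*[-,G]$ side of the spiral exact sequence, not the natural side. This is precisely why the paper runs the long exact sequence of Proposition~\ref{G-cofiber-to-LES-2} rather than Proposition~\ref{G-cofiber-to-LES}: working directly with $\pi_*[-,G]$ (equivalently $\pi^*H_*$), the hypothesis on $w_n$ feeds straight into the computation at degree $n+1$, and the definition of potential $n$-stage is verified without ever invoking Proposition~\ref{alt-def-potential}. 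Your route can be salvaged by running a parallel Mayer--Vietoris for $\pi_*[-,G]$, but at that point you have essentially reproduced the paper's argument.
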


\begin{proof}
Sufficiency is an easy consequence of the long exact sequence associated with the homotopy pushout that is obtained, by Proposition \ref{G-ho po to H-ho po 2}, 
after applying $H_*$ to \eqref{obstruction-theory-step-b} (cf. Proposition \ref{G-cofiber-to-LES-2}). The necessity follows from the discussion after Proposition~\ref{diff. constr. in S}.
\end{proof}

\subsection{The main results} \label{main-results}

For an unstable coalgebra $C$, let $\mc{W}_n(C)$ denote the 
subcategory of $c\mc{S}^{\mc{G}}$ whose objects are potential $n$-stages for $C$ and the morphisms are weak equivalences.
The moduli space of potential $n$-stages $\mc{M}_n(C)$ is defined to be the classifying space of this category. 

The homotopy type of $\mc{M}_0(C)$ has essentially already been determined, but we include the statement here 
again for completeness. 

\begin{theorem} \label{moduli-0-stages}
Let $C$ be an unstable coalgebra. There is a weak equivalence $$\mc{M}_0(C) \simeq B \Aut(C).$$
\end{theorem}

\begin{proof}
This is a reformulation of Proposition \ref{Moduli of L(C,k)} since a potential $0$-stage for $C$ is an object of type $L(C,0)$.
\end{proof}

Note that there is a functor $\sk^c_n\co \mc{W}_n(C) \to \mc{W}_{n-1}(C)$ that sends a potential $n$-stage to its derived $n$-skeleton.
Given a potential $(n-1)$-stage $X^\bu$ for $C$, we denote by $\mc{W}_n(C)_{X^\bu}$ the subcategory of $\mc{W}_n(C)$ over the component of $X^\bu \in \mc{W}_{n-1}(C)$. This may be empty,
of course. We denote its classifying space by $\mc{M}_n(C)_{X^\bu}$. This is the moduli space of potential $n$-stages over $X^\bu$ in the sense of Definition~\ref{def-stage-extension}. 

The following proposition is a refinement of Theorem~\ref{obstruction-theory-step} and is the first step to the description of the map $\sk^c_n\co \mc{M}_n(C) \to \mc{M}_{n-1}(C)$ in Theorem \ref{main-pullback-square} below. 

\begin{proposition} \label{obstruction-theory-step-moduli} 
Let $C$ be an unstable coalgebra and $X^\bu$ a potential $(n-1)$-stage for $C$ with $n\ge 1$. Then there is a homotopy pullback square
\[
\xymatrix{
\mc{M}_n(C)_{X^\bu} \ar[r] \ar[d]^{\sk^c_n} & \mc{M}\bigl(K(C, 0) \twoheadleftarrow K_C(C[n], n+1) \xrightarrow{\simeq} H_*(X^\bu)\bigr) \ar[d] \\
\mc{M}(X^\bu) \ar[r]^{H_*} & \mc{M}(H_*(X^\bu)).
}
\]
\end{proposition}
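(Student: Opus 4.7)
The strategy is to identify $\mc{M}_n(C)_{X^\bu}$ with a moduli space of attaching diagrams and then invoke the $\Rightarrow$-refinement of Theorem~\ref{Moduli-Comparison-of-Spaces} described in Remark~\ref{Moduli-Comparison-of-Spaces-2}.

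For the first step I would establish a natural weak equivalence $\mc{M}_n(C)_{X^\bu} \simeq \mc{M}\bigl(L(C,0) \twoheadleftarrow L_C(C[n], n+1) \Rightarrow X^\bu\bigr)$, mirroring the proof of Proposition~\ref{Moduli+Diff. = !!}. Construct a pair of functors
$$F \co \mc{W}\bigl(L(C,0) \twoheadleftarrow L_C(C[n], n+1) \Rightarrow X^\bu\bigr) \rightleftarrows \mc{W}_n(C)_{X^\bu} \co G,$$
where $F$ sends a diagram to its homotopy pushout (which is a potential $n$-stage over $X^\bu$ by the sufficiency direction of Theorem~\ref{obstruction-theory-step}; the $\Rightarrow$-condition supplies exactly the required hypothesis on the attaching map $w_n$), and $G$ sends a potential $n$-stage $Y^\bu$ with structural map $\alpha\co X^\bu \to Y^\bu$ to the diagram $\sk^c_1(Y^\bu) \twoheadleftarrow \sk^c_{n+2}(E^\bu) \to X^\bu$, where $E^\bu$ is the homotopy pullback of $\alpha$ along $\sk^c_1(Y^\bu) \to Y^\bu$. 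By Proposition~\ref{diff. constr. in S}(a) the middle object of $G(Y^\bu)$ is of type $L_C(C[n], n+1)$, and by the necessity direction of Theorem~\ref{obstruction-theory-step} the rightward arrow satisfies the $\Rightarrow$-condition. Standard natural zigzags connect $F \circ G$ and $G \circ F$ to the respective identity functors, as in Proposition~\ref{Moduli+Diff. = !!}, proving $F$ and $G$ are homotopy inverse.

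For the second step, the $\Rightarrow$-refinement of Theorem~\ref{Moduli-Comparison-of-Spaces} stated in Remark~\ref{Moduli-Comparison-of-Spaces-2}, applied with $M = C[n]$ and $n$ replaced by $n+1$, directly yields the required homotopy pullback square. Under the identification of the first step, the left vertical map corresponds to $\sk^c_n$, since the rightmost term of $G(Y^\bu)$ is $X^\bu \simeq \sk^c_n(Y^\bu)$ by hypothesis.

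The principal technical point is the verification in the first step that the $\Rightarrow$-condition precisely cuts out potential $n$-stages among arbitrary higher Postnikov extensions of $X^\bu$ (in contrast to Proposition~\ref{Moduli+Diff. = !!}, which concerns arbitrary skeletal extensions); this is ensured by the ``if and only if'' in Theorem~\ref{obstruction-theory-step}.
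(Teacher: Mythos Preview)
Your proposal is correct and follows essentially the same route as the paper: first identify $\mc{M}_n(C)_{X^\bu}$ with $\mc{M}\bigl(L(C,0) \twoheadleftarrow L_C(C[n], n+1) \Rightarrow X^\bu\bigr)$ via the pushout/difference-construction functors justified by Theorem~\ref{obstruction-theory-step}, then invoke the $\Rightarrow$-variant of Theorem~\ref{Moduli-Comparison-of-Spaces} from Remark~\ref{Moduli-Comparison-of-Spaces-2}. One small wording issue: objects of $\mc{W}_n(C)_{X^\bu}$ are just potential $n$-stages $Y^\bu$ whose derived $n$-skeleton lies in the component of $X^\bu$, with no specified map $\alpha\co X^\bu \to Y^\bu$; the functor $G$ should be built from the canonical map $\sk^c_n(Y^\bu) \to Y^\bu$ rather than an unspecified $\alpha$, but this is exactly what Proposition~\ref{diff. constr. in S} uses and does not affect the argument.
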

\begin{proof}
By Theorem~\ref{obstruction-theory-step}, there are functors as follows:
\[
 \xymatrix{
 \mc{W}_n(C)_{X^\bu} \ar@<1ex>[rr]^(.35){\eqref{obstruction-theory-step-b}} \ar[dr] && \mc{W}\bigl(L(C,0) \twoheadleftarrow L_C(C[n], n+1) \Rightarrow X^\bu\bigr) \ar@<1ex>[ll]^(.65){\rm pushout} \ar[dl] \\
 & \mc{W}(X^\bu) &
 }
\]
which make the diagram commute up to natural transformations (see Remark \ref{Moduli-Comparison-of-Spaces-2} for the meaning of $\Rightarrow$ in the diagram). 
The top pair of functors induces an inverse pair of homotopy equivalences
between the classifying spaces; the two composites are connected to the respective identity functors by zigzags of natural transformations. 
Then the result follows directly from Theorem \ref{Moduli-Comparison-of-Spaces} and Remark \ref{Moduli-Comparison-of-Spaces-2}. 
\end{proof}

\begin{theorem} \label{main-pullback-square} 
Let $C$ be an unstable coalgebra. For every $n \geq 1$, there is a homotopy pullback square
\begin{equation} \label{main-square-1}
\xymatrix{
\mc{M}_n(C) \ar[rr] \ar[d]^{\sk^c_n} && \mc{M}\bigl(K_C(C[n],n+2) \twoheadrightarrow K(C,0)\bigr) \ar[d]^{\Delta} \\
\mc{M}_{n-1}(C) \ar[rr]^(.3){H_*} && \mc{M}(K(C,0) \twoheadleftarrow K_C\bigl(C[n],n+2) \twoheadrightarrow K(C,0)\bigr).
}
\end{equation}
where the map $\Delta$ is induced by the functor $(V \twoheadleftarrow U) \mapsto (V \twoheadleftarrow U \twoheadrightarrow V)$. 
\end{theorem}
\begin{proof}
We will check this for each path component of $\mc{M}_{n-1}(C)$ by applying the last proposition. Let $X^\bu$ be a 
potential $(n-1)$-stage for $C$. Then $H_*(X^\bu)$ is an object of $\mc{W}\bigl(K(C,0) + (C[n], n+1)\bigr)$ 
and $\mc{M}(H_*(X^\bu))$ is a path component of the moduli space $\mc{M}\bigl(K(C,0) + (C[n], n+1)\bigr)$. 

By Proposition \ref{Moduli+Diff. = !}, there is a natural weak equivalence 
\begin{equation} \label{auxiliary-id}
\mc{M}\bigl(K(C,0) + (C[n], n+1)\bigr) \simeq \mc{M}\bigl(K(C,0) \twoheadleftarrow K_C(C[n],n+2) \twoheadrightarrow K(C,0)\bigr).
\end{equation}
Using this identification, we obtain the bottom map in \eqref{main-square-1}. The domain of $\Delta$ is path-connected, 
by Proposition \ref{Moduli of K(C,k)}, and its image lies in the component that corresponds to $\mc{M}(K_C(C[n], n+1))$ using the identification in \eqref{auxiliary-id}.

We claim that the following is a homotopy pullback square
\[
\xymatrix{
\text{\footnotesize$\mc{M}(K(C, 0) \twoheadleftarrow K_C(C[n], n+1) \xrightarrow{\sim} H_*(X^\bu))$} \ar[d] \ar[r] & \text{\footnotesize$\mc{M}(K_C(C[n],n+2) \twoheadrightarrow K(C,0))$} \ar[d]^{\Delta} \\
\text{\footnotesize$\mc{M}(H_*(X^\bu))$} \ar[r] & \text{\footnotesize$\mc{M}(K(C,0) \twoheadleftarrow K_C(C[n],n+2) \twoheadrightarrow K(C,0))$}
}
\]
where:
\begin{itemize}
 \item[(i)] The top map is induced by the obvious forgetful functor and the recipe of Proposition \ref{Moduli of K(C,k)}(c).
 \item[(ii)] The bottom map is induced by the recipe of Proposition \ref{Moduli+Diff. = !} as explained above.
 \item[(iii)] The map on the left is induced by the obvious forgetful functor. 
\end{itemize}
The homotopy commutativity and homotopy pullback property of this square can be checked by considering separately the 
following two cases:
\begin{itemize} 
 \item[(a)] $X^\bu$ can be extended to a potential $n$-stage. Then, by Proposition \ref{obstruction-theory-step-moduli}, 
$H_*(X^\bu)$ is weakly equivalent to $K_C(C[n], n+1)$ and hence the moduli space at the top left corner is non-empty. The weak equivalence 
indicated by $\xrightarrow{\sim}$ determines a homotopy between the two compositions in the square. Moreover, since the 
arrow $\xrightarrow{\sim}$ indicates a weak equivalence, it follows easily that the top map in the square is actually a weak 
equivalence. Therefore, the square is a homotopy pullback since the bottom map is up to homotopy the inclusion of a path component and the domain of $\Delta$ is connected. 
\item[(b)] $X^\bu$ cannot be extended to a potential $n$-stage. By Proposition \ref{obstruction-theory-step} and Proposition \ref{Representability of L(C,n)}, 
it follows that $X^\bu$ is not weakly equivalent to $K_C(C[n], n+1)$. Therefore, 
the space at the top left corner is empty and the commutativity of the diagram is a triviality. Moreover, the images of the bottom and 
right maps lie in different connected components because the image of $\Delta$ is in the connected component that corresponds to the 
moduli space $\mc{M}(K_C(C[n], n+1))$. Therefore, the square is a homotopy pullback in this case, too. 
\end{itemize}
Then the result follows from Proposition \ref{obstruction-theory-step-moduli}. 
\end{proof}

Next we will identify the map $\Delta$ in terms of the Andr\'{e}-Quillen cohomology of $C$ and reformulate Theorem \ref{main-pullback-square} in the form that was stated in the Introduction. 

Let $M \in \V C$, $D \in C/\CA$, and $K_C(M,n)$ be a pointed structured $K$-object. Following Theorem~\ref{representability of AQ}, we define the Andr\'{e}-Quillen space of $D$ with coefficients in $M$ as follows
\begin{equation} \label{AQ-space}
   \TAQ^n_C(D;M) : = \map^{\rm der}_{c(C/ \CA)}\bigl(K_C(M,n), cD\bigr).
\end{equation}
Note:
\begin{equation*}
   \pi_s \TAQ^n_C(D;M)\cong\left\{\begin{array}{cl}
                          AQ^{n-s}_C(D;M)& 0\le s\le n \\
                                0       & \text{otherwise}
                                 \end{array}\right. 
\end{equation*}
where the basepoint is given by the structure map $C\to D$.

We can identify the moduli space $\mc{M}\bigl(K(C, 0) \twoheadleftarrow K_C(C[n],n+2) \twoheadrightarrow K(C,0)\bigr)$ in terms of Andr\'e-Quillen cohomology as follows. 
Based on general results about moduli spaces (Theorem \ref{calculus-with-moduli-spaces}), there is a homotopy fiber sequence 
\begin{small}
\[
\xymatrix{
F = \map^{\rm der}_{\twoheadrightarrow}\bigl(K_C(C[n], n+2)), cC\bigr) \ar[r] & \mc{M}\bigl(K(C, 0) \twoheadleftarrow K_C(C[n],n+2) \twoheadrightarrow K(C,0)\bigr) \ar[d] \\
&  \mc{M}\bigl(K(C,0) \twoheadleftarrow K_C(C[n], n+2)\bigr) \times \mc{M}(K(C,0))
}
\]\end{small}

\vspace*{-2mm}\noindent
where the homotopy fiber is the subspace of $\map^{\rm der}\bigl(K_C(C[n], n+2), cC\bigr)$ defined by the maps which induce a $\pi^0$-isomorphism. The basepoint of $F$, which corresponds
to the zero Andr\'e-Quillen cohomology class, maps to the component of the image of $\Delta$. Recall that we have weak equivalences:
$$\mc{M}\bigl(K_C(C[n], n+2) \twoheadrightarrow K(C,0)\bigr) \simeq B \Aut_C(C[n])$$
$$\mc{M}(K(C,0)) \simeq B \Aut^{\rm h}(K(C,0)) \simeq B\Aut(C).$$
In particular, both spaces are path-connected. It follows that the total space of this homotopy fiber sequence is the homotopy quotient of the homotopy fiber
  $$F = \map^{\rm der}_{\twoheadrightarrow}\bigl(K_C(C[n], n+2), cC\bigl)$$ 
by the actions of the homotopy automorphisms of $K_C(C[n], n+2)$, as a structured $K$-object, and the homotopy automorphisms of $cC$. 

The homotopy quotient of $F$ under the action of $\Aut^{\rm h}(cC) \simeq \Aut(C)$ is homotopy equivalent to $\TAQ^{n+2}_C(C; C[n])$. This is obtained essentially by identifying 
sets of connected components of $F$. There is an induced action of the homotopy automorphisms $\Aut^{\rm h}\bigl(K_C(C[n], n+2) \twoheadrightarrow cC\bigr) \simeq  \Aut_C(C[n])$ 
on $\TAQ^{n+2}_C(C; C[n])$ which has a homotopy fixed point at the basepoint. Indeed, considering the associated action on 
$F = \map^{\rm der}_{\twoheadrightarrow}\bigl(K_C(C[n], n+2), cC\bigr)$, the restriction of the action on the basepoint factors through $\Aut(C)$. Let 
$$
\begin{array}{lcl} 
\widetilde{\TAQ}^{n+2}_C(C; C[n]) & : = & \TAQ^{n+2}_C(C; C[n]) //  \Aut^{\rm h}\bigl(K_C(C[n], n+2) \twoheadrightarrow cC\bigr) \\ 
                                        & \simeq & \TAQ^{n+2}_C(C; C[n]) //  \Aut_C(C[n]) \\
                                     
                                         \end{array} 
$$
denote the homotopy quotient. Then we have a canonical weak equivalence
$$\widetilde{\TAQ}^{n+2}_C(C; C[n]) \xrightarrow{\simeq} \mc{M}\bigl(K(C, 0) \twoheadleftarrow K_C(C[n],n+2) \twoheadrightarrow K(C,0)\bigr)$$
such that the square commutes up to canonical homotopy
\[
 \xymatrix{
 \widetilde{\TAQ}^{n+2}_C(C; C[n]) \ar[d] \ar[r]^(.3){\simeq} & \mc{M}\bigl(K(C, 0) \twoheadleftarrow K_C(C[n],n+2) \twoheadrightarrow K(C,0)\bigr) \ar[d] \\
 B \Aut_C(C[n]) \ar[r]^(.4){\simeq} & \mc{M}\bigl(K(C, 0) \twoheadleftarrow K_C(C[n], n+2) \bigr)
 }
\]
where the vertical maps are the canonical projections. 

The inclusion of the basepoint in $\TAQ^{n+2}_C(C; C[n])$, which of course represents the zero Andr\'e-Quillen class, yields a map 
$$[0]: B\Aut_C(C[n]) \rightarrow \widetilde{\TAQ}^{n+2}_C(C; C[n])$$
which is a section up to homotopy of the canonical projection above. Since the induced basepoint
$$\ast \stackrel{0}{\to} \TAQ^{n+2}_C(C; C[n]) \to \mc{M}\bigl(K(C, 0)\twoheadleftarrow K_C(C[n],n+2) \twoheadrightarrow K(C,0)\bigr)$$
factors through the map $\Delta$, it follows that there is a homotopy commutative diagram
\[
 \xymatrix{
  B \Aut_C(C[n]) \ar[r]^(.4){\simeq} \ar[d]^{[0]} & \mc{M}\bigl(K_C(C[n], n+2) \twoheadrightarrow K(C,0)\bigr) \ar[d]^{\Delta} \\
 \widetilde{\TAQ}^{n+2}_C(C; C[n]) \ar[r]^(.3){\simeq} & \mc{M}\bigl(K(C, 0) \twoheadleftarrow K_C(C[n],n+2) \twoheadrightarrow K(C,0)\bigr)
}
\]
Using this identification, we arrive at the following reformulation of Theorem~\ref{main-pullback-square}.

\begin{theorem} \label{main-pullback-square-alternative} 
Let $C$ be an unstable coalgebra. For every $n \geq 1$, there is a homotopy pullback square
\begin{displaymath}
\xymatrix{
\mc{M}_n(C) \ar[rr] \ar[d]^{\sk^c_n} && B\Aut_C(C[n]) \ar[d]^{[0]} \\
\mc{M}_{n-1}(C) \ar[rr]^{H_*} && \widetilde{\TAQ}^{n+2}_C(C; C[n])
}
\end{displaymath}
where the map on the right is defined by the zero Andr\'e-Quillen cohomology class. 
\end{theorem}

The fiber of the map $[0]$ is $\Omega \TAQ^{n+2}_C(C; C[n]) \simeq \TAQ^{n+1}_C(C; C[n]))$, so we obtain the following as a 
corollary. 

\begin{corollary} \label{main-pullback-square2}
Let $X^\bu$ be a potential $n$-stage for an unstable coalgebra $C$. Then there is a homotopy pullback square 
\begin{displaymath}
\xymatrix{
\TAQ^{n+1}_C(C; C[n]) \ar[rr] \ar[d] && \mc{M}_n(C) \ar[d]^{\sk^c_n} \\
\ast \ar[rr]^{\sk^c_n X^\bu} && \mc{M}_{n-1}(C). 
}
\end{displaymath}
\end{corollary}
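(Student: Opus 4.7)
The plan is to derive the corollary directly from Theorem~\ref{main-pullback-square} by identifying the homotopy fiber of the right-hand vertical map and locating the image of $[\sk^c_n X^\bu]$.

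First, applying Theorem~\ref{main-pullback-square} and using that homotopy pullbacks are stable under base change, it suffices to identify the homotopy fiber of
$$\Delta\co \mc{M}\bigl(K_C(C[n],n+2)\twoheadrightarrow K(C,0)\bigr)\longrightarrow \mc{M}\bigl(K(C,0)\twoheadleftarrow K_C(C[n],n+2)\twoheadrightarrow K(C,0)\bigr)$$
over the image of $[\sk^c_n X^\bu]$ under the bottom map $H_*$. By the discussion preceding the corollary (based on Proposition~\ref{Moduli of K(C,k)} and Theorem~\ref{calculus-with-moduli-spaces}), the source of $\Delta$ is weakly equivalent to $B\Aut_C(C[n])$, the target is weakly equivalent to the homotopy orbit space $\widetilde{\TAQ}^{n+2}_C(C;C[n])$, and under these identifications $\Delta$ becomes the canonical inclusion of the basepoint (the fixed zero Andr\'e-Quillen class), which is a section of the projection $\widetilde{\TAQ}^{n+2}_C(C;C[n])\to B\Aut_C(C[n])$ in the homotopy orbit fiber sequence
$$\TAQ^{n+2}_C(C;C[n])\to \widetilde{\TAQ}^{n+2}_C(C;C[n])\to B\Aut_C(C[n]).$$

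Second, one checks that $H_*$ sends $[\sk^c_n X^\bu]$ to the basepoint of $\widetilde{\TAQ}^{n+2}_C(C;C[n])$. Since $X^\bu$ is a potential $n$-stage, the $(n-1)$-stage $\sk^c_n X^\bu$ extends to a potential $n$-stage. By Theorem~\ref{obstruction-theory-step} and Proposition~\ref{Moduli+Diff. = !}, this is precisely the condition that the associated $k$-invariant vanishes, i.e.\ that $H_*(\sk^c_n X^\bu)$ corresponds, via $\mc{M}\bigl(cC+(C[n],n+1)\bigr)\simeq \widetilde{\TAQ}^{n+2}_C(C;C[n])$, to the zero Andr\'e-Quillen cohomology class.

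Third, one computes the homotopy fiber of $\Delta$ at this basepoint. Since $\Delta$ is a section of the projection in the orbit fiber sequence above, the long exact sequence of $\text{hofib}(\Delta)\to B\Aut_C(C[n])\to \widetilde{\TAQ}^{n+2}_C(C;C[n])$ splits the contribution of $\pi_*B\Aut_C(C[n])$ in $\pi_*\widetilde{\TAQ}^{n+2}_C(C;C[n])$, yielding
$$\text{hofib}(\Delta)\simeq \Omega\,\TAQ^{n+2}_C(C;C[n])\simeq \TAQ^{n+1}_C(C;C[n]),$$
where the last weak equivalence uses Theorem~\ref{representability of AQ}. Combining the three steps gives the required homotopy pullback square.

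The only non-formal step is the identification of the image of $[\sk^c_n X^\bu]$ with the basepoint of $\widetilde{\TAQ}^{n+2}_C(C;C[n])$; everything else is a matter of assembling the earlier results. I expect this identification to be the main (albeit mild) technical point, since it requires tracing the chain of natural equivalences between $\mc{M}\bigl(cC+(C[n],n+1)\bigr)$, the moduli space of structured diagrams of $K$-objects, and the homotopy orbit space $\widetilde{\TAQ}^{n+2}_C(C;C[n])$, and checking that the obstruction class constructed in Theorem~\ref{obstruction-theory-step} goes to zero under this chain precisely when the extension exists.
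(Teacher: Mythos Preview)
Your proof is correct and follows essentially the same approach as the paper: the paper also deduces the corollary from Theorem~\ref{main-pullback-square} by identifying the right vertical map $\Delta$ with the basepoint inclusion $B\Aut_C(C[n]) \to \widetilde{\TAQ}^{n+2}_C(C;C[n])$ and noting that its homotopy fiber is $\Omega\TAQ^{n+2}_C(C;C[n]) \simeq \TAQ^{n+1}_C(C;C[n])$. You have simply spelled out more carefully the step that $[\sk^c_n X^\bu]$ lands at the basepoint (which the paper leaves implicit, since the existence of the lift $X^\bu \in \mc{M}_n(C)$ forces this via the homotopy pullback of Theorem~\ref{main-pullback-square}).
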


Our next goal is to compare the tower of moduli spaces $\{\mc{M}_n(C)\}$ with the moduli space $\mc{M}_{\infty}(C)$. The following 
theorem is an application of the results and methods of \cite{DK-classification}; see Appendix \ref{DK-theory}.

\begin{theorem} \label{infty-stages-holim}
Let $C$ be an unstable coalgebra. There is a weak equivalence 
  $$\mc{M}_{\infty}(C) \simeq \holim_n \mc{M}_n(C).$$ 
\end{theorem}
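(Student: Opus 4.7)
The plan is to promote the sequence of derived skeleton functors
$\sk^c_{n+1}\co \mc{W}_\infty(C)\to\mc{W}_n(C)$
to a map $\mc{M}_\infty(C)\to\holim_n\mc{M}_n(C)$ and show this map is a weak equivalence, following the general recipe from Appendix \ref{DK-theory} (and the techniques of \cite{DK-classification}). First one checks that the functors are well-defined: if $X^\bu$ is an $\infty$-stage, then by definition $\pi^0 H_*(X^\bu)\cong C$ and $\pi^s H_*(X^\bu)=0$ for $s\geq 1$, so via the spiral exact sequence conditions (a) and (c) of Proposition \ref{alt-def-potential} hold for every $n$; applying $\sk^c_{n+1}$ kills the natural homotopy groups in degrees $>n$, verifying condition (b) as well. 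The strictly commuting diagram $\sk^c_{n}\circ\sk^c_{n+1}\simeq\sk^c_n$ then assembles into the required map to the homotopy limit.

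To show surjectivity on $\pi_0$, I would start from a point of $\holim_n\mc{M}_n(C)$, which by standard rectification (replacing zig-zags with actual maps of Reedy cofibrant objects along $\mc{G}$-cofibrations) can be modeled by a strict tower $X_0^\bu\hookrightarrow X_1^\bu\hookrightarrow\cdots$ of $\mc{G}$-cofibrations between $\mc{G}$-cofibrant objects such that each $X_n^\bu$ is a potential $n$-stage and the inclusion $X_n^\bu\hookrightarrow X_{n+1}^\bu$ realizes $X_n^\bu$ as the derived $(n+1)$-skeleton of $X_{n+1}^\bu$ (here one uses Proposition \ref{diff. constr. in S} and Theorem \ref{obstruction-theory-step} to arrange the attaching cells correctly). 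Setting $X_\infty^\bu:=\colim_n X_n^\bu$, Proposition \ref{alt-def-potential} together with the fact that both $\naturalpi{s}{-}{G}$ and $\pi_s[-,G]$ in a fixed simplicial degree $s$ commute with such skeletal colimits forces the invariants of $X_\infty^\bu$ to coincide with those of the truncations in each fixed range, so $X_\infty^\bu$ is an $\infty$-stage whose tower of skeleta realizes the prescribed system.

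For the statement about higher homotopy, it is enough to check that over a chosen $\infty$-stage $X^\bu\in\mc{M}_\infty(C)$ the homotopy fibers of the two towers agree. On the $\holim$-side, Corollary \ref{main-pullback-square2} shows that the fiber of $\mc{M}_n(C)\to\mc{M}_{n-1}(C)$ at $\sk^c_n X^\bu$ is $\TAQ^{n+1}_C(C;C[n])$, which is $(n+1)$-truncated; so for fixed $k$ the induced map on $\pi_k$ of the $\holim$ stabilizes at finite stage. On the $\mc{M}_\infty(C)$-side, one compares the homotopy automorphism space of $X^\bu$ with the homotopy inverse limit of the automorphism spaces of its truncations, using that the spiral exact sequence identifies, in each fixed range, $[\sk^c_n X^\bu,\sk^c_n X^\bu]$ with $[X^\bu,X^\bu]$ in that range. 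The general classification theorem in Appendix \ref{DK-theory} converts these compatible matchings into the weak equivalence $\mc{M}_\infty(C)\simeq\holim_n\mc{M}_n(C)$.

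The main obstacle is the rectification step producing a strict tower from a point of the homotopy limit: one must arrange the attaching maps in Theorem \ref{obstruction-theory-step} to be functorial and compatible across all stages of the tower simultaneously, so that passing to the actual colimit $\colim_n X_n^\bu$ recovers the correct homotopy type as an $\infty$-stage. This is where the full apparatus of Dwyer-Kan moduli spaces and cofibrant replacement in the resolution model structure must be used, and it is essentially the categorical analogue of the standard telescoping argument, whose execution requires care in tracking the $\mc{G}$-cofibrancy and coherence data.
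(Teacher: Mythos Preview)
You have the right ingredients but an indirect assembly. The paper does not separate $\pi_0$-surjectivity from higher homotopy; instead it replaces each $\mc{W}_n(C)$ (including $n=\infty$) by an equivalent category $\mc{W}'_n(C)$ whose objects are already towers $F\co\mathbf{n}\to c\mc{S}^{\mc{G}}$ (resp.\ $F\co\mathbb{N}\to c\mc{S}^{\mc{G}}$) with $F(k)$ a potential $k$-stage and $F(k)\simeq\sk^c_{k+1}F(m)$ for $k<m$. The inverse equivalences $\mc{W}_n(C)\leftrightarrows\mc{W}'_n(C)$ are given by the skeletal filtration in one direction and the homotopy colimit in the other, and this packaging absorbs your rectification step into the very definition of the categories---no separate $\pi_0$ argument is needed. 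With this in place, one restricts to a single conjugacy class and applies Theorem~\ref{DK-Postnikov tower} verbatim. The paper also dispatches the empty case first: if there is no $\infty$-stage, every potential $n$-stage eventually fails to extend to some $m>n$, so the homotopy limit is empty as well.

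Your third paragraph on higher homotopy is not a proof as written: its final sentence invokes ``the general classification theorem in Appendix~\ref{DK-theory}'', but that theorem \emph{is} Theorem~\ref{DK-Postnikov tower}, and it already delivers the full weak equivalence once the tower-category reformulation is in place. The fiber-and-stabilization heuristics you sketch (via Corollary~\ref{main-pullback-square2} and the spiral exact sequence) are correct intuition for why the result should hold, but they become redundant the moment you cite Theorem~\ref{DK-Postnikov tower}; and without that citation they do not by themselves establish that the comparison map on classifying spaces---as opposed to on homotopy classes of self-maps---is a weak equivalence.
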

\begin{proof}
If $\mc{W}_{\infty}(C)$ is empty, i.e., if there is no $\infty$-stage, then for every potential $n$-stage 
$X^\bu$ there is an $m > n$ such that no potential $m$-stage $Y^\bu$ satisfies $\sk^c_{n+1}(Y^\bu) \simeq X^\bu$. It follows 
that the homotopy limit is also empty. 

Let $\mathbb{N}$ denote the poset of natural numbers. In order to approximate better the setting of \cite{DK-classification}, we first 
replace $\mc{W}_n(C)$, $0 \leq n \leq \infty$, with a new category of weak equivalences denoted by $\mc{W}'_n(C)$. The objects are functors 
$F\co \mathbf{n} \to c\mc{S}^{\mc{G}}$ (or $F\co \mathbb{N} \to c \mc{S}^{\mc{G}}$ if $n = \infty$) such that:
\begin{itemize} 
\item[(i)] $F(k)$ is a potential $k$-stage for $C$, 
\item[(ii)] for every $k < m$, the map $F(k) \simeq \sk^c_{k+1} F(k) \to \sk^c_{k+1} F(m)$ is a weak equivalence.
\end{itemize}
The skeletal filtration of a potential $n$-stage $X^\bu$:
  $$\cdots \to \sk^c_k(X^\bu) \to \sk^c_{k+1}(X^\bu) \to \cdots$$
defines a functor 
  $$\{\sk^c_{\bu}(-)\}\co \mc{W}_n(C) \rightarrow \mc{W}'_n(C).$$ 
On the other hand, it is easy to see that the homotopy colimit functor defines a functor in the other direction 
  $$\mathrm{hocolim}\co \mc{W}'_n(C) \rightarrow \mc{W}_n(C)$$
and that the two compositions are naturally weakly equivalent to the respective identity functors. 
Hence the classifying spaces of these categories are canonically identified up to homotopy equivalence for all $n$. 

The connected components of $\mc{W}'_{\infty}(C)$ correspond to the equivalence classes of $\infty$-stages. Following 
\cite{DK-classification}, we say that two $\infty$-stages are \emph{conjugate} if their $n$-skeleta are weakly 
equivalent for all $n$. By standard cofinality arguments, it suffices to prove the theorem for the components associated with 
a single conjugacy class of $\mc{W}'_{\infty}(C)$. Such a class is represented by the skeletal filtration $\{ \sk^c_n X^\bu \}$ of a
$\infty$-stage $X^\bu$. Let $\mathrm{Conj}(X^{\bu}) \subseteq \mc{W}'_{\infty}(C)$ be the full subcategory of objects conjugate to $X^\bu$, i.e.\! 
diagrams $F$ such that $F(n) \simeq \sk^c_{n+1}X^\bu$. Let $\mc{W}'_n(C)_X \subseteq \mc{W}'_n(C)$ be the component containing $\{\sk^c_{k+1}(X^\bu)\}_{k \leq n}$. 
We emphasize that while $\mc{W}'_n(C)_X$ is connected, by definition, this will not be true for $\mathrm{Conj}(X^{\bu})$ in general. Then an application of Theorem \ref{DK-Postnikov tower} shows the required weak equivalence
  $$\mc{M}_{\rm conj}(X^\bu) : = B(\mathrm{Conj}(X^{\bu})) \xrightarrow{\simeq} \holim_n B \mc{W}'_n(C)_X \simeq \holim_n \mc{M}(X(n))$$
which is induced by the restriction maps.  
\end{proof}

Finally, we come to the comparison between $\infty$-stages and actual topological realizations of $C$. Let 
$\mc{W}_{\rm Top}(C)$ denote the category of spaces whose objects are topological realizations of $C$, i.e.\! simplicial 
sets $X$ such that $H_*(X) \cong C$ in $\CA$, and morphisms are $\mathbb{F}_p$-homology equivalences between simplicial sets. We define the classifying space of this category $\mc{M}_{\rm Top}(C)$ to be the \emph{realization space of $C$}. 
By Theorem \ref{DK-theory5}, there is a weak equivalence 
\begin{equation} 
\mc{M}_{\rm Top}(C) \simeq \bigsqcup_{X} B \Aut^{\rm h}(X)
\end{equation}
where the index set of the coproduct is the set of equivalence classes of realizations of $C$ with respect to $\mathbb{F}_p$-homology, with $X$ chosen in each class to be fibrant in the Bousfield localization with respect to $\mathbb{F}_p$-homology equivalences, and $\Aut^{\rm h}(X)$ denotes the simplicial monoid of self-homotopy equivalences. 

The meaning of strong convergence in the next theorem should be taken as defined by Bousfield in~\cite[p. 364]{Bou: homology SS}.

\begin{theorem} \label{infty-stages-are-reals}
Let $C$ be an unstable coalgebra. Suppose that the homology spectral sequence of each $\infty$-stage $ X^\bu$ for $C$ converges strongly to $H_*(\Tot X^\bu)$.
Then the totalization functor induces a weak equivalence 
  $$\mc{M}_{\infty}(C) \simeq \mc{M}_{\rm Top}(C).$$ 
\end{theorem}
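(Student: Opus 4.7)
The plan is to exhibit a pair of functors between $\mc{W}_{\rm Top}(C)$ and $\mc{W}_\infty(C)$ whose composites are naturally connected, via zigzags of weak equivalences, to the respective identity functors, and then to invoke the standard machinery of Appendix~\ref{DK-theory} to conclude that the induced maps on classifying spaces are homotopy inverse equivalences. The two functors are the constant functor $c\co X\mapsto cX$, which sends $\mc{W}_{\rm Top}(C)$ to $\mc{W}_\infty(C)$ since $H_*(cX)=cC$ is of type $K(C,0)$, and the derived totalization $R\Tot:=\Tot\circ(-)^{\rm fib}$, where $(-)^{\rm fib}$ is a functorial $\mc{G}$-fibrant replacement in $c\mc{S}^{\mc{G}}$.

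To verify that $R\Tot$ lands in $\mc{W}_{\rm Top}(C)$, note that for an $\infty$-stage $X^\bu$ with $\mc{G}$-fibrant replacement $\hat{X}^\bu$, the Bousfield-Kan homology spectral sequence
\[
E_2^{s,t}=\pi^sH_t(X^\bu)\ \Longrightarrow\ H_{t-s}(\Tot\hat{X}^\bu)
\]
has its $E_2$-page concentrated on the row $s=0$ where it equals $C$, so by the convergence hypothesis $H_*(R\Tot(X^\bu))\cong C$, as unstable coalgebras. The next step is to analyse the two composites. For $R\Tot\circ c$, a $\mc{G}$-fibrant replacement of $cX$ can be taken to be the Bousfield-Kan cosimplicial resolution $\F^\bu X$ produced by the monad $\F$ of Remark~\ref{free-monad-res}; its levels are GEMs and its image under $H_*$ is the standard cofree triple resolution of $C$, which is $\mc{E}$-equivalent to $cC$ and so confirms both that $cX\to \F^\bu X$ is a $\mc{G}$-equivalence and that $\F^\bu X$ is an $\infty$-stage. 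Then $R\Tot(cX)\simeq\Tot(\F^\bu X)=X^\wedge_{\F}$, and the convergence hypothesis forces the Bousfield-Kan completion map $X\to X^\wedge_{\F}$ to be an $\F$-homology equivalence, supplying a natural weak equivalence $\mathrm{id}\Rightarrow R\Tot\circ c$ in $\mc{W}_{\rm Top}(C)$.

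For $c\circ R\Tot$, consider the canonical natural transformation $c\Tot(\hat{X}^\bu)\to\hat{X}^\bu$ whose level-$n$ component is the edge map $\Tot(\hat{X}^\bu)\to\hat{X}^n$. On $\pi^sH_*$, this transformation induces the edge map of the homology spectral sequence in degree $s=0$ (an isomorphism $H_*(\Tot\hat{X}^\bu)\stackrel{\cong}{\to}\pi^0H_*(\hat{X}^\bu)=C$ by convergence) and is trivially an isomorphism between zero groups in positive degrees. It is therefore a $\mc{G}$-equivalence, and composing with the canonical $\mc{G}$-equivalence $X^\bu\to\hat{X}^\bu$ gives the required zigzag in $\mc{W}_\infty(C)$. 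By Appendix~\ref{DK-theory} these natural zigzags of weak equivalences translate into a homotopy between the induced maps $Bc$ and $BR\Tot$ and the identities on $\mc{M}_{\rm Top}(C)$ and $\mc{M}_\infty(C)$, which establishes the desired weak equivalence.

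The main technical burden is the identification of $\F^\bu X$ as a $\mc{G}$-fibrant replacement of $cX$ together with the identification $\Tot(\F^\bu X)\simeq X^\wedge_{\F}$, both of which I would verify using the representing property of GEMs, the K\"{u}nneth isomorphism, and the explicit description of the natural homotopy groups from Subsection~\ref{subsection:natur+spirale}. Once this bookkeeping is in place the argument is essentially formal. For the last assertion of the theorem, convergence of the homology spectral sequence of every $\infty$-stage in the simply-connected case (i.e.\ when $C_0=\F$ and $C_1=0$) follows from the standard convergence theorems for the Bousfield-Kan homology spectral sequence, e.g.\ \cite{BK:sscoeffring} and \cite{Bou: homology SS}.
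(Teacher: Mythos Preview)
Your overall strategy matches the paper's: exhibit $c$ and a derived $\Tot$ as inverse functors between $\mc{W}_{\rm Top}(C)$ and $\mc{W}_\infty(C)$ up to natural zigzags of weak equivalences, then pass to classifying spaces. The treatment of $R\Tot\circ c$ via the Bousfield--Kan resolution and $\F$-completion is correct, though heavier than necessary; the paper instead uses the Reedy equivalence $\Delta^\bu\times X\to cX$ together with the Quillen adjunction $(\Delta^\bu\times -,\Tot)$, which avoids invoking $\F$-completion or goodness statements.

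There is, however, a genuine gap in your treatment of $c\circ R\Tot$. You assert a ``canonical natural transformation $c\Tot(\hat X^\bu)\to\hat X^\bu$ whose level-$n$ component is the edge map $\Tot(\hat X^\bu)\to\hat X^n$.'' No such cosimplicial map exists. The evaluation maps $\Tot(\hat X^\bu)\to\hat X^n$ depend on a choice of vertex of $\Delta^n$, and no coherent choice commutes with all cofaces: for instance, with any fixed vertex convention the two compositions $\Tot(\hat X^\bu)\to\hat X^0\rightrightarrows\hat X^1$ via $d^0$ and $d^1$ land at different vertices of $\Delta^1$. Equivalently, $c$ is left adjoint to $\lim_\Delta$, not to $\Tot$, and there is no natural map $\Tot\to\lim_\Delta$.

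The fix is exactly what the paper does: replace your direct map by the zigzag
\[
c\,\Tot^f(X^\bu)\ \stackrel{\sim}{\longleftarrow}\ \Delta^\bu\times\Tot^f(X^\bu)\ \longrightarrow\ (X^\bu)^f\ \stackrel{\sim}{\longleftarrow}\ X^\bu,
\]
where the middle arrow is the counit of the Quillen adjunction $(\Delta^\bu\times -,\Tot)$ and the left arrow is the levelwise projection $\Delta^n\times Z\to Z$, a Reedy (hence $\mc{G}$-) equivalence. Your spectral-sequence verification that this zigzag consists of $\mc{G}$-equivalences then goes through unchanged once applied to the counit rather than to a nonexistent direct map.
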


\begin{proof}
Let $X^\bu$ be an $\infty$-stage and assume that it is Reedy fibrant. The $E^2$-page of the homology spectral sequence
of the cosimplicial space $X^\bu$,
$$\pi^s H_t(X^\bu) \Rightarrow H_{t-s}({\rm Tot}(X^\bu)),$$
is concentrated in the $0$-line and so the spectral sequence collapses at this stage. The spectral sequence converges strongly by assumption and therefore the edge homomorphism
\begin{equation} \label{edge homomorphism}
H_*({ \rm Tot}(X^{\bullet})) \cong E^{\infty}_{0, *} \xrightarrow{\cong} E^2_{0, *} = \pi^0 H_*(X^{\bullet}) \cong C 
\end{equation}
is an isomorphism. Since this isomorphism is induced by a map of spaces (see also \cite[2.4]{Bou: homology SS}):
$${\rm Tot}(X^\bu) \to X^0,$$ 
it follows that it is an isomorphism of unstable coalgebras. Hence ${\rm Tot}(X^\bu)$ is a topological realization of $C$.
This way we obtain a functor 
$${\rm Tot^f}\co \mc{W}_{\infty}(C) \to \mc{W}_{\rm Top}(C)$$
which sends an $\infty$-stage to the totalization of its (Reedy) fibrant replacement. 
On the other hand, there is a functor
$$ c\co \mc{W}_{\rm Top}(C) \to \mc{W}_{\infty}(C)$$
which sends a topological realization $Y$ of $C$ to the constant cosimplicial space $cY$. 
The composite functors ${\rm Tot^f} \circ c$ and $c \circ {\rm Tot^f}$ are connected to the identity functors by natural weak equivalences
$${\rm Id} \to {\rm Tot^f} \circ (\Delta^\bu\times -) \to {\rm Tot^f} \circ c $$
$$c \circ {\rm Tot^f} \leftarrow (\Delta^\bu\times -) \circ {\rm Tot^f} \to {\rm (-)^f} \leftarrow {\rm Id}.$$
Therefore, this pair of functors induces a pair of inverse homotopy equivalences between the associated moduli spaces and hence the result follows.
\end{proof}

Bousfield's convergence theorem of the homology spectral sequence (see \cite[Theorem 3.4]{Bou: homology SS}) applies to the  
$\infty$-stage for $C$ if $C$ is a simply-connected unstable coalgebra, i.e.\! $C_1 = 0$ and $C_0 = \mathbb{F}$. Thus, we obtain the following 

\begin{corollary}
Let $C$ be a simply-connected unstable coalgebra. Then the totalization functor induces a weak equivalence $\mc{M}_{\infty}(C) \simeq \mc{M}_{\rm Top}(C).$
\end{corollary}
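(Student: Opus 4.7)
The plan is to reduce the corollary to Theorem~\ref{infty-stages-are-reals} by verifying its hypothesis: that for every cosimplicial $\infty$-stage $X^\bu$ for a simply-connected unstable coalgebra $C$, the homology spectral sequence
\[ E_2^{s,t} = \pi^s H_t(X^\bu) \Rightarrow H_{t-s}(\Tot X^\bu) \]
converges. Once convergence is established, Theorem~\ref{infty-stages-are-reals} immediately produces the claimed weak equivalence $\mc{M}_{\infty}(C) \simeq \mc{M}_{\rm Top}(C)$.

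The convergence itself is to be deduced from Bousfield's convergence criterion \cite[Theorem 3.4]{Bou: homology SS}. The input consists of the control on the $E_2$-page coming from the definition of an $\infty$-stage: since $H_*(X^\bu)$ is of type $K(C,0)$, we have $E_2^{s,t} = 0$ for all $s > 0$, and in the zeroth column
\[ E_2^{0,t} = \pi^0 H_t(X^\bu) \cong C_t. \]
Thus the spectral sequence is concentrated on the 0-line and trivially collapses. The hypotheses of Bousfield's criterion for a cosimplicial space reduce in this situation to a mild condition on $E_2^{0,0}$ and $E_2^{0,1}$; specifically, one needs $E_2^{0,0}$ to be a ``nice'' (set-like) coalgebra and the $E_2^{0,1}$-term to vanish. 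The simple-connectivity assumption $C_0 = \mathbb{F}$ and $C_1 = 0$ supplies exactly this input, so Bousfield's criterion applies and the spectral sequence converges strongly to $H_*(\Tot X^\bu)$.

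With convergence in hand, the argument then proceeds exactly as in the proof of Theorem~\ref{infty-stages-are-reals}: the edge homomorphism
\[ H_*(\Tot X^\bu) \cong E_\infty^{0,*} \xrightarrow{\cong} E_2^{0,*} \cong C \]
is an isomorphism of unstable coalgebras (being induced by the map $\Tot X^\bu \to X^0$), so $\Tot X^\bu$ is a genuine topological realization of $C$. The pair of functors $\mathrm{Tot}^{\rm f}$ and $c$ between $\mc{W}_\infty(C)$ and $\mc{W}_{\rm Top}(C)$, together with the natural weak equivalences connecting their composites to the identities, then induce inverse homotopy equivalences on classifying spaces.

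I expect the only nontrivial point to be the verification that Bousfield's convergence hypotheses are correctly packaged by the simple-connectivity assumption; this is essentially bookkeeping once one unwinds the definitions, but it is the sole place where the hypothesis on $C$ enters. Everything else is a direct appeal to the already-proven Theorem~\ref{infty-stages-are-reals}.
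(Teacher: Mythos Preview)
Your proposal is correct and takes essentially the same approach as the paper: the paper simply observes that Bousfield's convergence theorem \cite[Theorem 3.4]{Bou: homology SS} applies to any $\infty$-stage when $C$ is simply-connected, whence the hypothesis of Theorem~\ref{infty-stages-are-reals} is satisfied and the corollary follows immediately. Your third paragraph (re-deriving the edge-homomorphism isomorphism and the $\mathrm{Tot}^{\mathrm f}$/$c$ equivalence) is redundant, since that argument is already contained in the proof of Theorem~\ref{infty-stages-are-reals} and need not be repeated.
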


\begin{remark}
Other convergence results for the homology spectral sequence were obtained by Shipley~\cite{Shipley:convergence}. For example, the homology spectral sequence of an $\infty$-stage $X^\bu$ converges strongly if $H_*(X^n)$ and $H_*(\mathrm{Tot^f}(X^\bu))$ have finite type and $\mathrm{Tot^f}(X^\bu)$ is $p$-good (see \cite[Theorem 6.1]{Shipley:convergence}). 
\end{remark}

\subsection{Moduli spaces of marked topological realizations} \label{marked-moduli}

All of the moduli spaces considered so far lie over the moduli space of objects of type $C$ (or, equivalently, $K(C,0)$ and $L(C,0)$). That is, we have considered spaces of automorphisms of topological realizations and potential $n$-stages without any restrictions on the induced automorphisms of $C$.
In other words, while certain identifications with $C$ were required to exist, specifying choices of isomorphisms was not. 

In this subsection, we briefly comment on the relation 
with the moduli spaces of objects together with a marking by $C$, i.e.\! a choice of identification with $C$ where applicable. The situation we have considered so far will turn out to 
be the same as for these marked moduli spaces once we pass to the homotopy quotients of the actions by automorphisms of $C$ that identify the different choices of markings.

We define a moduli space of marked topological realizations $\mc{M}'_{\rm Top}(C)$ as follows. Let $\mc{W}'_{\rm Top}(C)$ be the category whose objects are pairs $(X, \sigma)$ 
consisting of a topological realization $X$ of $C$ and an isomorphism $\sigma : C \cong  H_*(X)$. Such a pair is called a \emph{marked topological realization}. A morphism 
$f\co (X, \sigma) \to (Y, \tau)$ in $\mc{W}'_{\rm Top}(C)$ is given by an $H_*$-equivalence $f\co X \to Y$ 
such that $\tau = H_*(f) \sigma$. Then $\mc{M}'_{\rm Top}(C)$ is defined to be the classifying space of $\mc{W}'_{\rm Top}(C)$. 

There is an obvious free action on $\mc{W}'_{\rm Top}(C)$ by $\Aut(C)$ with quotient $\mc{W}_{\rm Top}(C)$. The quotient map is induced by the obvious forgetful functor 
$\mc{W}'_{\rm Top}(C) \to \mc{W}_{\rm Top}(C)$. As a consequence, it is easy to conclude that the functor $H_*\co \mc{W}_{\rm Top}(C) \to \mc{W}(C)$ induces a homotopy fiber sequence 
\begin{equation} \label{marked-realn}
\mc{M}'_{\rm Top}(C) \to \mc{M}_{\rm Top}(C) \to B \Aut(C).
\end{equation}

Let $L(C,0)$ be a fixed cofibrant choice of an object of type $L(C,0)$. A marking of a potential $n$-stage $X^\bu$ for $C$ corresponds to a map $\sigma\co L(C,0) \to X^\bu$ 
which induces an isomorphism on $\pi^0 H_*$ (or, equivalently, a \mc{G}-equivalence $L(C,0) \simeq \sk_1^c(X^\bu)$). By Theorem \ref{Representability of L(C,n)}, this 
structure is homotopically equivalent to providing the $\pi^0$-isomorphism 
  $$\tau = \phi(X^\bu)(\sigma)\co K(C,0) \to H_*(X^\bu).$$ 
Moreover, the latter piece of structure is also 
homotopically equivalent to providing the induced isomorphism $\pi^0(\tau)\co C \cong \pi^0 H_*(X^\bu)$.
The space of markings of a 
potential $n$-stage for $C$ is homotopically discrete and its connected components are in bijection with the automorphisms of $C$. Therefore, we may define the moduli 
space of \emph{marked potential $n$-stages} $\mc{M}'_n(C)$ as the homotopy fiber of the map $\sk^c_1$, i.e., we have a homotopy fiber sequence
\begin{equation} \label{marked-pot-n-stage}
\mc{M}'_n(C) \stackrel{J_n}{\longrightarrow} \mc{M}_n(C) \stackrel{\sk^{c}_1}{\longrightarrow} \mc{M}_0(C) \simeq B{\rm Aut}(C).
\end{equation}
Then the main results of the previous subsection have the following obvious analogues:
\begin{itemize}
\item $\mc{M}'_0(C)$ is contractible.
\item There is a homotopy pullback square 
 \[
  \xymatrix{
  \mc{M}'_n(C) \ar[d] \ar[rr]^-{J_n} && \mc{M}_n(C) \ar[d] \\
  \mc{M}'_{n-1}(C) \ar[rr]^-{J_{n-1}} && \mc{M}_{n-1}(C) 
  }
 \]
Combining this with Theorem \ref{main-pullback-square-alternative} and the discussion that followed, we obtain a homotopy pullback 
\[
 \xymatrix{
\mc{M}'_n(C) \ar[rr] \ar[d] && B {\rm Aut}_C(C[n]) \ar[d] \\
\mc{M}'_{n-1}(C) \ar[rr] && \widetilde{\TAQ}^{n+2}_C(C; C[n])
 }
\]
\item $\mc{M}'_{\infty}(C) \simeq \holim_n \mc{M}'_n(C)$.
\item Under the assumptions of Theorem \ref{infty-stages-are-reals}, $\mc{M}'_{\rm Top}(C) \simeq \mc{M}'_{\infty}(C)$. 
\end{itemize}

Next we may also consider moduli spaces of marked potential $n$-stages $X^\bu$ for $C$ with further additional structure, namely, a choice of a Postnikov invariant of $H_*(X^\bu) \in \mc{W}\bigl(K(C,0) + (C[n+1], n+2)\bigr)$,
$$w_{n+2}\co K_C(C[n+1], n + 3) \to \sk_{n+2} H_*(X^\bu).$$
We recall that this map is generally only specified up to the
action of the automorphisms of the structured $K$-object. Taking $\pi^{n+2}H_*(-)$ defines a functor  
$$\mc{W}_n(C) \to \mc{W}(C; C[n+1])$$ 
and thus we also obtain a map 
$$\mc{M}'_n(C) \stackrel{J_n}{\longrightarrow} \mc{M}_n(C) \to B {\rm Aut}_C(C[n+1]).$$
We define the moduli space of \emph{framed potential $n$-stages} $\mc{M}^f_n(C)$ to be the homotopy fiber of this composite map. 

The relation between these moduli spaces is clear: the original moduli space $\mc{M}_n(C)$ is the homotopy quotient of $\mc{M}^f_n(C)$ by the actions of the groups 
${\rm Aut}(C)$ and ${\rm Aut}_C(C[n+1])$. The analogue of Theorem~\ref{moduli-0-stages} says
$$
\mc{M}^f_0(C) \simeq {\rm Aut}_C(C[1]).
$$
More interestingly, the analogue of Theorem \ref{main-pullback-square-alternative} for the framed moduli spaces is an unwinding of the action by the automorphism groups ${\rm Aut}_C(C[n])$. 
Indeed, in the first place, it is easy to see that the map $\mc{M}_{n-1}(C) \to B {\rm Aut}_C(C[n])$ defined above agrees up to homotopy with the composition coming from Theorem \ref{main-pullback-square-alternative}:
$$\mc{M}_{n-1}(C) \longrightarrow \widetilde{\TAQ}^{n+2}_C(C;C[n]) \longrightarrow B {\rm Aut}_C(C[n]).$$
Moreover, the composite 
$$\mc{M}_n(C) \stackrel{\sk^c_n}{\longrightarrow} \mc{M}_{n-1}(C) \longrightarrow \widetilde{\TAQ}^{n+2}_C(C;C[n]) \longrightarrow B {\rm Aut}_C(C[n])$$
is essentially induced by the functor $\pi^{n+1} H_* (\sk^c_n(-))$ which can also be identified, up to natural isomorphism, with the functor $\pi^{n+2} H_*$: 
\[
 \xymatrix{
 \mc{W}_n(C) \ar[rr]^{\pi^{n+2} H_*} \ar[d]^{\sk^c_n} && \mc{W}(C; C[n+1]) \ar[d]^{\cong} \\
 \mc{W}_{n-1}(C) \ar[rr]^{\pi^{n+1}H_*} && \mc{W}(C; C[n]) 
 }
\]
As a consequence, the homotopy pullback of Theorem \ref{main-pullback-square-alternative}, as well as the one for the marked moduli spaces, lies over $B{\rm Aut}_C(C[n])$ naturally in $n$, where naturality in $n$ is exactly described by the commutativity of the previous square. By taking homotopy fibers, we obtain a homotopy pullback square as follows,
\[
 \xymatrix{
\mc{M}^f_n(C) \ar[rr] \ar[d] && \ast \ar[d]^0 \\
\mc{M}^f_{n-1}(C) \ar[rr] && \TAQ^{n+2}_C(C; C[n])
 }
\]
Finally, we also have the analogue of Theorem \ref{infty-stages-holim}:
$$\mc{M}^f_{\infty}(C) \simeq \holim_n \mc{M}^f_n(C).$$

\subsection{Obstruction theories} \label{obstruction-theories} In this subsection, we spell out in detail how the homotopical description of the realization 
space $\mc{M}_{\rm Top}(C)$ in terms of moduli spaces of potential $n$-stages gives rise to obstruction theories for the 
existence and uniqueness of realizations of $C$. As is usually the case with obstruction theories, these are also described 
as branching processes: at every stage of the obstruction process, an obstruction to passing to the 
next stage can be defined, but this obstruction (and so also its vanishing) depends on the state of the process 
created so far by choices made at earlier stages.

Clearly, $C$ is topologically realizable if and only if $\mc{M}_{\rm Top}(C)$ is non-empty. By Theorems 
\ref{infty-stages-holim} and \ref{infty-stages-are-reals}, this is equivalent to the existence of an object 
in $\mc{W}'_{\infty}(C)$.  Theorem \ref{moduli-0-stages} shows that a potential $0$-stage always exists and 
that it is essentially unique. Suppose that this has been extended to a potential $(n-1)$-stage $X^\bu$. This 
defines a point $[X^\bu] \in \mc{M}_{n-1}(C)$. Theorems \ref{main-pullback-square} and \ref{main-pullback-square-alternative} 
give a map 
  $$w_{n-1} \co \mc{M}_{n-1}(C) \to \widetilde{\TAQ}^{n+2}_C(C; C[n])$$
which associates to a potential $(n-1)$-stage $X^\bu$ the equivalence class of Andr\'{e}-Quillen cohomology classes 
which defines the Postnikov decomposition of 
  $$H_*(X^\bu) \in \mc{W}\bigl(K(C,0) + (C[n], n+1)\bigr).$$ 
This is, in other words, the unique Postnikov $k$-invariant of $H_*(X^\bu)$ in its invariant form. We recall here that the component of $\TAQ^{n+2}_C(C; C[n])$ corresponding to the zero Andr\'{e}-Quillen cohomology class is fixed under the action of $\Aut_C(C[n])$. 
Now Theorem \ref{main-pullback-square-alternative} shows 
\begin{theorem}
The potential $(n-1)$-stage $X^\bu$ is up to weak equivalence the $n$-skeleton of a potential $n$-stage if and only if $w_{n-1}([X^\bu])$ lies in the component of $($the image of$)$ the zero class in $\widetilde{\TAQ}^{n+2}_C(C; C[n])$.  
\end{theorem}

Let us emphasize that the first such Postnikov invariant in the process, 
  $$w_0(C): = w_0([X^\bu]) \in \widetilde{\TAQ}^3_C(C; C[1]),$$
associated to a potential $0$-stage $X^\bu$, is actually an invariant of $C$ because $\mc{M}_0(C)$ is connected. From the general 
comparison results of Baues and Blanc~\cite{Baues-Blanc:comparison}, this invariant seems to relate to an obstruction to endowing $C$ (or its dual) 
with secondary operations. 

Assuming that $w_{n-1}([X^\bu])$ corresponds to (the reduction of) the zero Andr\'{e}-Quillen cohomology class, and so $X^\bu$ can be extended to 
a potential $n$-stage, there will then be choices for an extension given by Corollary \ref{main-pullback-square2}. More specifically, by Proposition 
\ref{obstruction-theory-step-moduli} and Corollary \ref{main-pullback-square2}, there are homotopy fiber sequences as follows
\[
\xymatrix{
\TAQ^{n+1}_C(C; C[n]) \ar@{=}[r] \ar[d] & \TAQ^{n+1}_C(C; C[n]) \ar[d] \\
\mc{M}_n(C)_{X^\bu} \ar[r] \ar[d] & \mc{M}\bigl(K_C(C[n], n+1) \twoheadrightarrow K(C,0)\bigr) \ar[d] \\
B \mathrm{Aut^h}(X^\bu) \ar[r] & B \mathrm{Aut^h}(K_C(C[n], n+1))
}
\]
The set of equivalence classes of potential $n$-stages over $X^\bu$ is exactly the set of components of $\mc{M}_{n}(C)_{X^\bu}$. 
There is an action of the group 
  $$\mathrm{hAut}\bigl(K_C(C[n], n+1)\bigr) : = \pi_1 \bigl(\mathrm{B Aut^h}(K_C(C[n], n+1))\bigr)$$ 
on the path-components of the fiber 
$$\AQ^{n+1}_C(C; C[n])): = \pi_0\bigl(\TAQ^{n+1}_C(C; C[n])\bigr),$$ 
as well as an induced action of $\pi_1 \bigl( \mathrm{B Aut^h}(X^\bu) \bigr)$ by restriction. Using the long exact sequence of homotopy groups, 
we can identify the corresponding set of orbits with the required set of possible equivalence classes of extensions of $X^\bu$.

\begin{proposition} \label{number_extensions}
Let $X^\bu$ be a potential $(n-1)$-stage for an unstable coalgebra $C$. Suppose that $X^\bu$ extends to a potential $n$-stage. Then there is a bijection 
\begin{equation*} 
\pi_0(\mc{M}_{n}(C)_{X^\bu}) \cong \frac{\AQ^{n+1}_C(C; C[n]))}{\pi_1 \bigl(\mathrm{B Aut^h}(X^\bu) \bigr)}.
\end{equation*}
\end{proposition}
For every such choice of potential $n$-stage $Y^\bu$ over $X^\bu$, the same procedure leads to an obstruction 
$$w_n([Y^\bu], [X^\bu]): = w_n([Y^\bu]) \in \widetilde{\TAQ}^{n+3}_C(C; C[n+1])$$ 
for finding a potential $(n+1)$-stage \emph{over} $Y^\bu$ (which in turn is over $X^\bu$). We emphasize 
that the Postnikov invariant of $H_*(Y^\bu)$ is not determined by $X^\bu$. According to Theorems \ref{infty-stages-holim} 
and \ref{infty-stages-are-reals}, $C$ admits a topological realization if and only if 
this procedure can be continued indefinitely to obtain an element in $\mc{W}'_{\infty}(C)$. We summarize this discussion 
in the following statement. 

\begin{theorem}
Let $C$ be a simply-connected unstable coalgebra. There is a space $X$ such that $H_*(X) \cong C$ if and only if there can be made choices 
in the obstruction process above such that an infinite sequence of obstructions can be defined
$$w _0([X_0^\bu]), \  w_1([X_1^\bu], [X_0^\bu]), \ \cdots, w_n([X_n^\bu], [X_{n-1}^\bu]), \cdots,$$
where the first one is $w_0(C)$ and every other is defined if and only if the previous one vanishes. 
\end{theorem}

The problem of uniqueness of topological realizations of $C$ can also be analyzed by considering the tower of the moduli spaces. 
We have that $C$ is uniquely realizable if and only if $\mc{M}_{\rm Top}(C)$ is connected. This means that at each stage of the 
obstruction process, there is exactly \emph{one} choice that can be continued unhindered indefinitely. Assume that a topological 
realization $X$ of $C$ is given, which then defines basepoints in all moduli spaces. Then the 
number of different realizations is given by the Milnor exact sequence 
$$ \star \to \varprojlim{}^1 \pi_1 \left(\mc{M}_n(C), [\sk^c_{n+1}(cX)]\right) \to \pi_0 \mc{M}_{\mathrm{Top}}(C) \to \varprojlim \pi_0\bigl(\mc{M}_n(C)\bigr) \to \star$$
where
$$\pi_1\bigl(\mc{M}_n(C), [\sk^c_{n+1}(cX)]\bigr) \cong \pi_0 \mathrm{Aut^h}(\sk^c_{n+1}(cX)).$$
Moreover, associated to this tower of pointed fibrations, there is a Bousfield-Kan homotopy spectral sequence which starts from Andr\'{e}-Quillen 
cohomology and ends at the homotopy of the realization space. 

We now turn to an obstruction theory for realizing maps between \emph{realized} unstable coalgebras. That is, given a map $\phi\co H_*(X) \to H_*(Y)$, is 
there a realization $f\co X \to Y$, i.e.\! $\phi = H_*(f)$, and if so, what is the moduli space of all such realizations?
This cannot be answered directly from the main theorems above, but an answer can be obtained by similar methods based on the established 
Postnikov decompositions of cosimplicial spaces. 

For the question of existence, we proceed as follows. Under the assumption that $C$ is simply-connected, this realization problem is 
equivalent to finding a realization $f_{\infty}\co cX \to cY$ in $\ho{\mc{S}^{\mc{G}}}$ such that $H_*(f_{\infty}) = c(\phi)$. 
The existence of $f_{\infty}$ is equivalent to the existence of a compatible sequence of maps in $c \mc{S}^{\mc{G}}$:
$$f_n\co \sk^c_n(c X) \to (cY)^f.$$
Let $C = H_*(X)$, $D = H_*(Y)$ and a given map $\phi\co C \to D$.  This map extends trivially to a map of cosimplicial objects 
$$\phi\co K(C, 0) \to H_*(cY)$$
which then gives by Proposition \ref{Representability of L(C,n)}, the starting point of the obstruction theory:
$$f_1\co L(C, 0) \simeq \sk^c_1(cX) \to (cY)^f.$$
Assuming that $f_n$ has been constructed, the obstruction to passing to the next step is expressed in the following diagram:
\[
 \xymatrix{
 & L(C,0) \ar[d] \ar[dl] \ar[dr]^{f_1} & \\
 L_C(C[n], n+1) \ar[d] \ar[r] & \sk^c_n(cX) \ar[d] \ar[r]^{f_n} & (cY)^f \\
 L(C, 0) \ar[r] & \sk^c_{n+1}(cX) \ar@{-->}[ur]
 }
\]
The dotted arrow exists if and only if the top composite factors up to homotopy through $f_1\co L(C,0) \to (cY)^f$. 
By Proposition \ref{Representability of L(C,n)}, this is equivalent to a factorization in $\ho{c\CA^{\mc{E}}}$ 
of the associated map (which depends on $f_n$)
$$w(\phi; f_n)\co K_C(C[n], n+1) \to cH_*(Y)$$
through $\phi\co K(C,0) \to cH_*(Y)$ (in $\ho{c\CA^{\mc{E}}}$). Therefore, $w(\phi; f_n)$ determines an element in 
$\AQ^{n+1}_C(D; C[n])$ whose vanishing is equivalent to the existence of an extension $\sk^c_{n+1}(cX) \to (cY)^f$. 

As long as $\phi$ can be realized, the problem of the uniqueness of realizations can be addressed similarly. In the diagram 
above, the choices of homotopies exhibiting the vanishing of the obstruction $w(\phi; f_n)$, which correspond to elements 
of $\AQ^n_C(D; C[n])$, lead to different extensions in general. The identification of these extensions depends on $f_n$ 
and can be determined using standard obstruction-theoretic methods. The realization is unique if and only if among all such 
intermediate extensions exactly one sequence of choices makes it to the end.

\appendix

\section{The spiral exact sequence}
\label{appsec:spiral}

A spiral exact sequence was first found in the context of $\Pi$-algebras and simplicial spaces by Dwyer-Kan-Stover~\cite{DKSt:bigraded}.
This was generalized and used in the context of moduli problems for 
$E_\infty$-ring spectra by Goerss-Hopkins~\cite{GoHop:moduli2}.
As we will see in this appendix, there is spiral exact sequence associated to each resolution model category.
We work here cosimplicially and in the unpointed case, in contrast to previous instances in the literature, but the generality of the construction will 
be apparent. 

After the sequence is constructed in Subsection~\ref{subsec:contruction}, it is endowed with a module structure over its 
zeroth term in Subsection~\ref{subsec:ses}. The associated spectral sequence is mentioned in Subsection~\ref{subsec:sss}.
All parts here are written in the language of universal algebra and we assume throughout that the conditions of Assumption~\ref{strict-unit} are 
satisfied.
 
\subsection{Constructing the sequence}\label{subsec:contruction}

We follow the outline given in \cite[3.1.1]{GoHop:moduli2}. We write \mc{S} for the category of simplicial sets and $\mc{S}_*$ for pointed simplicial sets.
Let \mc{M} and \mc{G} be as in Assumption~\ref{strict-unit}. In particular, each $G$ in \mc{G} is assumed to be fibrant.
 
For any simplicial set $K$, we have a pair of adjoint functors
   $$ -\otimes^{\rm ext} K\co c\mc{M}\rightleftarrows c\mc{M}:\!\hom^{\rm ext}(K,-)$$
which can be composed with the pair of adjoint functors
   $$ (-)^0\co c\mc{M}\rightleftarrows\mc{M}:\!c .$$

\begin{definition}
We have a functor
  $$ T\co c\mc{M}\times\mc{S}\to\mc{M},\ \ T(X^\bu,K)=(X^\bu\otimes^{\rm ext} K)^0 .$$
For a fixed simplicial set $K$, we abbreviate 
$$T(X^\bu,K)=T_KX^\bu.$$
\end{definition}

\begin{example}\label{T-Delta-n}
There are canonical isomorphisms $T_{\Delta^n}X^\bu\cong X^n$ and $T_{\partial\Delta^n}X^\bu\cong L^nX^\bu$.
\end{example}

\begin{example}
The object $T_{\Lambda^n_k}X^\bu$ is isomorphic to the {\it partial latching object} $L^n_k X^\bu$ of \cite[3.12]{Bou:cos}.
\end{example}

\begin{definition}
Let $K$ be a pointed simplicial set. We define a functor 
  $$C_K\co c\mc{M}\to\mc{M}_{\ast}$$ 
by 
  $$C_KX^\bu := (X^\bu\oltimes K)^0.$$
More generally, as for $T$ above, there is also a functor $C\co c\mc{M}\times\mc{S}_*\to\mc{M}_{\ast}$.
\end{definition}

\begin{remark}\label{TKCK-pushouts}
The functor $C_K$ is left adjoint to the functor
  $$ K\bigl(c(-)\bigr)=\olhom\bigl(K,c(-)\bigr)\co \mc{M}_{\ast}\to c\mc{M},$$
and there are natural pushout diagrams 
\diagr{ T_KX^\bu \ar[d]\ar[r] & C_KX^\bu \ar[d]\ar[r] & \ast\ar[d] \\
        T_LX^\bu  \ar[r] &  C_LX^\bu \ar[r] & C_{L/K}X^\bu }
where $K\subset L$ is an inclusion of pointed simplicial sets.
\end{remark}

\begin{remark}
Both adjoint pairs $(\otimes^{\rm ext},\hom^{\rm ext})$ and $((-)^0,c)$ are simplicially enriched but for different simplicial structures on $c\mc{M}$. In particular, the functors $T_K$ and $C_K$ are {\it not} enriched.
\end{remark}

\begin{lemma}\label{TKCK left Quillen}
We have:
\begin{enumerate}
   \item
For a fixed $($pointed$)$ $K$, the left adjoint functors
  $$T_K\co c\mc{M}\to\mc{M}\ \text{ and }\ C_K\co c\mc{M}\to\mc{M}_* $$
are left Quillen if $c\mc{M}$ is equipped with the Reedy model structure.
   \item
For a fixed Reedy cofibrant $X^\bu$, the functors
  $$T_{(-)}X^\bu\co \mc{S}\to\mc{M}\ \text{ and }\ C_{(-)}X^\bu\co \mc{S}_*\to\mc{M}_* $$
map cofibrations to cofibrations.
\end{enumerate}
\end{lemma}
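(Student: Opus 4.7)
The plan treats the two parts separately. For part (1), I recognize $T_K$ and $C_K$ as composites of two left Quillen functors. For part (2), I exploit the colimit-preservation of $T_{(-)}X^\bu$ to reduce to the generating cofibrations of $\mc{S}$, where the problem becomes a statement about the latching maps of $X^\bu$.

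For part (1), I would factor $T_K$ as the composite
\[
c\mc{M}^{\rm Reedy} \xrightarrow{-\otimes^{\rm ext} K} c\mc{M}^{\rm Reedy} \xrightarrow{(-)^0} \mc{M}.
\]
The first functor is left Quillen: applying the pushout-product statement of Lemma~\ref{Reedy-ext-comp} with $f\co\emptyset\to Y^\bu$ a Reedy (trivial) cofibration reduces the pushout-product map to $Y^\bu\otimes^{\rm ext}K$, showing that $-\otimes^{\rm ext}K$ preserves both Reedy cofibrations and Reedy trivial cofibrations. The second functor is left Quillen by Proposition~\ref{lem:c-Reedy-right-Quillen}. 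Hence their composite $T_K$ is left Quillen. The same argument, with $\oltimes$ in place of $\otimes^{\rm ext}$ and the pointed form of Lemma~\ref{Reedy-ext-comp}, handles $C_K$.

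For part (2), the formula $T_K X^\bu=X^\bu\otimes_\Delta K$ expresses $T_{(-)}X^\bu\co\mc{S}\to\mc{M}$ as a coend in $K$, hence as a colimit-preserving functor (with right adjoint $Z\mapsto\map^{\rm ext}(X^\bu,cZ)$). Since the class of $\mc{G}$-monic cofibrations in $\mc{M}$ is closed under pushouts, transfinite compositions, and retracts, it suffices to verify the claim on the generating cofibrations $\partial\Delta^n\hookrightarrow\Delta^n$ of $\mc{S}$. By Example~\ref{T-Delta-n}, $T_{(-)}X^\bu$ sends this inclusion to the $n$-th latching map $L^nX^\bu\to X^n$. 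Since $X^\bu$ is Reedy cofibrant, it is $\mc{G}$-cofibrant as noted following Theorem~\ref{bousfield}, and hence by Bousfield's characterization \cite[Proposition~3.13]{Bou:cos} every such latching map is a $\mc{G}$-monic cofibration in $\mc{M}$. The pointed version for $C_{(-)}X^\bu$ follows by the same reduction, using the identification $X^\bu\oltimes K_+\cong T_K X^\bu$ (with an adjoined basepoint) to translate the generating cofibrations $\partial\Delta^n_+\hookrightarrow\Delta^n_+$ into the unpointed case; the separate verification for $\ast\hookrightarrow\Delta^0_+=S^0$ amounts to the trivial basepoint inclusion $\ast\to X^0$, which is $\mc{G}$-monic in $\mc{M}_*$ for formal reasons.

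The main potential obstacle I foresee is the appeal to the equivalence between Reedy cofibrancy and $\mc{G}$-cofibrancy for objects, a non-trivial feature of the resolution model structure under Assumption~\ref{strict-unit}; I would invoke this as already established in Subsection~\ref{subsec:res-model-cat} rather than reproving it. Once this is granted, the remainder of the argument is essentially bookkeeping with adjunctions and the standard closure properties of $\mc{G}$-monic cofibrations.
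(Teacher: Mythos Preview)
For part~(1) your argument is correct and is exactly the paper's: both invoke Lemma~\ref{Reedy-ext-comp} together with the Quillen pair $((-)^0,c)$ of Proposition~\ref{lem:c-Reedy-right-Quillen}.

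For part~(2) you take a more explicit route than the paper, which simply appeals to the compatibility of the external simplicial structure with the $\mc{G}$-resolution model structure. Your reduction to the generating cofibrations is valid and correctly identifies the problem with the latching maps $L^nX^\bu\to X^n$. However, the justification you give for why these maps are $\mc{G}$-monic does not go through as written. Bousfield's \cite[Proposition~3.13]{Bou:cos} characterizes \emph{trivial} $\mc{G}$-cofibrations as those Reedy cofibrations whose relative latching maps are $\mc{G}$-monic cofibrations (this is precisely how the paper invokes it in the proof of Proposition~\ref{sk-left-Quillen}); applying that to $\emptyset\to X^\bu$ would require $X^\bu$ to be $\mc{G}$-acyclic, not merely $\mc{G}$-cofibrant. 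The equivalence between Reedy cofibrancy and $\mc{G}$-cofibrancy for objects only tells you that the latching maps are cofibrations.

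The conclusion you need is nonetheless correct, and your strategy can be completed with a different ingredient. What is required is the full-latching analogue of Proposition~\ref{latching-matching-isos}: for Reedy cofibrant $X^\bu$ and $G\in\mc{G}$ there is a natural isomorphism $[L^nX^\bu,G]_{\mc{M}}\cong M_n[X^\bu,G]_{\mc{M}}$ (this comes from the same sources, \cite[VII,~1.24--1.25]{GoJar:simp} and \cite[Proposition~3.14]{Bou:cos}). Under this identification the map $[X^n,G]\to[L^nX^\bu,G]$ becomes the matching map of the simplicial abelian group $[X^\bu,G]$, and matching maps of simplicial abelian groups are split surjections by the Dold--Kan decomposition. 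Since $\mc{G}$ is closed under $\Omega$ by Assumption~\ref{strict-unit}(4), this yields the required surjectivity of $[X^n,G]_m\to[L^nX^\bu,G]_m$ for all $m\ge 0$, hence $\mc{G}$-monicity of the latching maps.
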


\begin{proof}
Both follow easily from the more general Proposition~\ref{Reedy-ext-comp}. 
\end{proof}

\begin{definition}
Let $F^\bu$ be a simplicial group and $n\ge 0$. We define the {\it partial matching objects} $M_n^0 F^\bu$ of $F^\bu$ by
  $$ M_n^0F_\bu \colon = \hom(\Lambda^n_0,F_\bu)_0. $$
Here the cotensor is given by the external simplicial structure on the category of simplicial groups.
\end{definition}

The following statement is a special case of \cite[Proposition 3.14]{Bou:cos} and the proof remains valid in our unpointed situation. 
It relies on \cite[Lemma VII.1.26, Proposition VII.1.27]{GoJar:simp2}.

\begin{proposition}\label{latching-matching-isos}
Let $X^\bu$ be a Reedy cofibrant cosimplicial object in $\mc{M}$ and $G\in\mc{G}$. Then for any $n\ge 0$, there is a canonical isomorphism
  $$[L^n_0X^\bu,G]_{\mc{M}}\cong M_n^0[X^\bu,G]_{\mc{M}}. $$
\end{proposition}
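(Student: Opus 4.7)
Proof plan for Proposition~\ref{latching-matching-isos}.

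First, I would reduce the proposition to a more uniform statement by noting that the partial latching object at index $(n,0)$ is just a special value of the functor $T_K$: one has $L^n_0 X^\bu \cong T_{\Lambda^n_0}X^\bu$, directly from the construction of $L^n_0$ as a colimit of the $X^k$'s indexed by face inclusions landing in $\Lambda^n_0\subset\Delta^n$ (cf. Example~\ref{T-Delta-n}). Dually, by definition, $M^0_n[X^\bu,G] = \hom(\Lambda^n_0,[X^\bu,G])_0$. Hence it suffices to prove that for \emph{every} simplicial set $K$ and every Reedy cofibrant $X^\bu$ there is a natural bijection
\begin{equation}\label{eqn:key-iso-app}
[T_K X^\bu, G]_{\mc{M}} \;\cong\; \hom\bigl(K,[X^\bu,G]\bigr)_0,
\end{equation}
and then specialize to $K=\Lambda^n_0$.

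Next, I would establish the analogue of~\eqref{eqn:key-iso-app} at the level of raw Hom-sets. Combining the $((-)^0,c)$ adjunction with the external adjunction of Proposition~\ref{prop:external-adjunctions-(un)pointed}(1), and using Example~\ref{exam:ext-mapping-space-constant-target} to identify $\map^{\rm ext}(X^\bu,cG)$ with the simplicial set $\Hom_{\mc{M}}(X^\bu,G)$ whose $n$-simplices are $\Hom_{\mc{M}}(X^n,G)$, one obtains a natural bijection
\[
\Hom_{\mc{M}}(T_K X^\bu, G) \;\cong\; \hom\bigl(K,\Hom_{\mc{M}}(X^\bu,G)\bigr)_0
\]
for all $K\in\mc{S}$, compatible with pushouts in $K$ and with maps in $\mc{M}$ in the target. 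Note that Reedy cofibrancy of $X^\bu$ ensures each $X^n$ is cofibrant, so that $[X^\bu,G]$ is the quotient of $\Hom_{\mc{M}}(X^\bu,G)$ by the degreewise (left) homotopy relation. Moreover, by Lemma~\ref{TKCK left Quillen}(1), $T_K X^\bu$ is cofibrant in $\mc{M}$, so $[T_KX^\bu,G]_{\mc{M}}$ is the quotient of $\Hom_{\mc{M}}(T_KX^\bu,G)$ by left homotopy as well.

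The crux is then to show that this raw bijection descends to homotopy classes to yield~\eqref{eqn:key-iso-app}. I plan to do this inductively along a cell-by-cell construction of $K$ (which is finite in the only case of interest, $K=\Lambda^n_0$). The functor $T_{(-)}X^\bu$ commutes with colimits in $K$, since $-\otimes^{\rm ext}K$ and $(-)^0$ are both left adjoints, and by Lemma~\ref{TKCK left Quillen}(2) it sends cofibrations of simplicial sets to $\mc{G}$-monic cofibrations between cofibrant objects. Thus for a pushout $K = L\cup_{\partial\Delta^k}\Delta^k$, the object $T_K X^\bu$ is obtained from $T_L X^\bu$ by attaching along a cofibration between cofibrant objects. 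Standard model-categorical arguments (cube lemma with $G$ fibrant; or equivalently the dual of \cite[VII.1.24, 1.25]{GoJar:simp}) show that both sides of~\eqref{eqn:key-iso-app} take such pushouts in $K$ to the evident limits: for the left-hand side, because the maps involved are cofibrations between cofibrants mapping into the fibrant object $G$, so that homotopy classes are detected by a genuine pullback of sets; for the right-hand side, because $\hom(-,[X^\bu,G])_0$ trivially converts pushouts of simplicial sets into pullbacks. The base case $K=\Delta^k$ is tautological, since then $T_K X^\bu = X^k$ and $\hom(\Delta^k,[X^\bu,G])_0 = [X^k,G]_{\mc{M}}$.

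The main obstacle is the descent in the last step, i.e.\ identifying left homotopy of two maps $f,g\colon T_KX^\bu\to G$ in $\mc{M}$ with the condition that the corresponding simplicial maps $K\to\Hom_{\mc{M}}(X^\bu,G)$ agree after projection to $[X^\bu,G]$. The inductive formalism via cell attachments circumvents this by reducing the question in each step to the case of a single simplex $\Delta^k$, which is trivial. An alternative would be a direct path-object argument: form a path object $G\to G^I\twoheadrightarrow G\times G$ in $\mc{M}_*$, apply the Hom-level bijection to both $G$ and $G^I$, and trace through the adjunction; but this requires more bookkeeping of left-versus-right homotopies, and the inductive route is cleaner.
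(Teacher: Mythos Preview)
Your reduction to a general isomorphism $[T_K X^\bu, G]_{\mc{M}} \cong \hom(K, [X^\bu, G])_0$ for \emph{all} finite $K$ is too strong: it fails already for $K = \Delta^1/\partial\Delta^1$. Take $\mc{M}=\mc{S}$, $X^\bu = \Delta^\bu \times S^1$ (which is Reedy cofibrant), and $G = K(\mathbb{Z},1)$. Since $T_K(\Delta^\bu\times Y)\cong |K|\times Y$, the left side is $[S^1\times S^1,K(\mathbb{Z},1)]\cong H^1(S^1\times S^1;\mathbb{Z}) \cong \mathbb{Z}^2$; on the other hand $[X^n,G]\cong H^1(\Delta^n\times S^1;\mathbb{Z})\cong\mathbb{Z}$ for every $n$, so $[X^\bu,G]$ is the constant simplicial group $\mathbb{Z}$ and the right side is $\Hom_{\mc{S}}(\Delta^1/\partial\Delta^1,c\mathbb{Z})\cong\mathbb{Z}$. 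The flaw lies in your inductive step. For a pushout $D=B\cup_A C$ of cofibrant objects along a cofibration $A\hookrightarrow B$, with $G$ a fibrant homotopy abelian group object, the comparison $[D,G]\to [B,G]\times_{[A,G]}[C,G]$ is always surjective but its kernel is the cokernel of $[B,\Omega G]\oplus[C,\Omega G]\to[A,\Omega G]$; a generic cell attachment gives no reason for this to vanish, and the cube lemma does not circumvent it.

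What makes $K=\Lambda^n_0$ work is not a general principle but a feature specific to horns and simplex boundaries: one can assemble $\Lambda^n_0$ (and, inductively, each $\partial\Delta^{k}$ needed along the way) by pushouts in which the cofibration being pushed out is of the form $T_{\partial\Delta^{k}}X^\bu \hookrightarrow X^{k}$ or $T_{\Lambda^{k}_j}X^\bu \hookrightarrow X^{k}$. These cofibrations admit retractions built from codegeneracies, so after applying $[-,\Omega G]$ one obtains the (partial) matching maps of the simplicial group $[X^\bu,\Omega G]$, which are split surjections; this forces the Mayer--Vietoris obstruction to vanish at every stage. That is precisely the content of Bousfield's argument via \cite[VII, Lemma~1.24 and Proposition~1.25]{GoJar:simp}, which the paper simply cites. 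Your parenthetical identification of those results with ``the cube lemma'' misreads them: they encode exactly this horn-specific filtration, and your plan must restrict to it rather than attempt the claim for arbitrary $K$.
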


\noindent It will be convenient to use the abbreviation $ \Delta^n_0= \Delta^n/\Lambda^n_0$
for any $n \geq -1$. Here $\Delta^{-1}$, $\Lambda^0_0$ and $\partial\Delta^0$ are defined to be the empty (simplicial) set.

\begin{definition}\label{def:CnZn}
Let $X^\bu$ be a cosimplicial object in $\mc{M}$ and $n\ge 1$. We define
  $$ C^nX^\bu= C_{\Delta^n_0}X^\bu\ \ \ \ \ \text{ and }\ \ \ \ \ Z^nX^\bu= C_{\Delta^n/\partial\Delta^n}X^\bu .$$
and for $n = 0, -1$,
  $$Z^0X^\bu=C^0X^\bu=C_{\Delta^0_+}X^\bu= X^0\sqcup\ast\ \ \text{ and }\ \ Z^{-1}X^\bu=\ast.$$
\end{definition}

\medskip

\noindent For $n\ge 0$, the inclusion map $d^0\co\Delta^{n-1}\to\Delta^n$ induces a cofibration
   $$ d^0\co \Delta^{n-1}/\partial\Delta^{n-1}\to\Delta^n_0$$
whose cofiber is isomorphic to $\Delta^n/\partial\Delta^n$. 
Given a Reedy cofibrant $X^\bu \in c\mc{M}$, this induces by Remark~\ref{TKCK-pushouts} and Lemma~\ref{TKCK left Quillen}(2), a cofiber sequence
\begin{equation}\label{spiral-cofiber}
   Z^{n-1}X^\bu \stackrel{d^0}{\longrightarrow} C^nX^\bu \to Z^nX^\bu.
\end{equation}
From this cofiber sequence we obtain for any $G \in \mc{G}$ a long exact sequence as follows,
  $$ \hdots\to[Z^{n-1}X^\bu,\Omega^{q+1} G]\stackrel{\beta}{\longrightarrow}[Z^{n}X^\bu,\Omega^qG]\stackrel{\gamma}{\longrightarrow}[C^{n}X^\bu,\Omega^qG]\stackrel{\delta}{\longrightarrow}[Z^{n-1}X^\bu,\Omega^qG] \to \hdots,$$
where the brackets $[-,-]$ denote homotopy classes of maps in $\mc{M}_*$.
This can be spliced together to form an exact couple:
\diagram{[Z^{n-1}X^\bu,\Omega^{q}G]_{\mc{M}_*} \ar@{.>}[rr]^{\beta}_-{(1,-1)} && [Z^{n}X^\bu,\Omega^{q}G]_{\mc{M}_*} \ar[dl]^-{\gamma_{n,q}}  \\
               &   [C^{n}X^\bu,\Omega^qG]_{\mc{M}_*}=E^{n,q}_1 \ar[ul]^-{\delta_{n,q}} &  }{spiral-les}
where the dotted boundary map $\beta_{n-1,q+1}\co[Z^{n-1}X^\bu,\Omega^{q+1} G]\to[Z^{n}X^\bu,\Omega^qG]$ has bidegree $(1,-1)$. 

\begin{definition}
We denote the differential of this exact couple by
  $$ \ul{d}_{n,q}=\gamma_{n-1,q}\circ\delta_{n,q}\co [C^{n}X^\bu,\Omega^qG]\stackrel{\delta}{\longrightarrow} [Z^{n-1}X^\bu,\Omega^{q}G]\stackrel{\gamma}{\longrightarrow} [C^{n-1}X^\bu,\Omega^qG].$$
\end{definition}

\begin{definition}\label{spiral-ex-sequ}
The first derived couple of the exact couple~\eqref{spiral-les} is called the {\it spiral exact sequence}. The associated spectral sequence will be called the {\it spiral spectral sequence}.
\end{definition}

We proceed to describe the spiral exact sequence and its associated spectral sequence more explicitly.

\begin{definition}\label{simpl-normalization}
Let $F^\bu$ be a simplicial group and $n\ge 1$. We denote by
  $$N_nF_\bu=\bigcap_{i=1}^{n}\ker\left[d_i\co F_n\to F_{n-1}\right]\ \ \text{ and }\ \  N_0F_\bu=F_0$$
the {\it normalized cochain complex} associated to $F^\bu$ with differential $d_0=d_0|_{N_nF_\bu}$ and $d_0|_{F_0}=\ast$.
\end{definition}

\begin{lemma}\label{lem:spiral1}
Let $X^\bullet$ be a Reedy cofibrant cosimplicial object in $\mc{M}$ and $G\in\mc{G}$. 
\begin{enumerate}
  \item
For any $n\ge 0$, there is a natural isomorphism
  $$ [C^nX^\bu, G]_{\mc{M}_{\ast}}\cong N_n[X^\bu,G]_{\mc{M}} .$$
  \item
The differential $\ul{d}$ of the exact couple \emph{(\ref{spiral-les})} fits into the following commutative diagram:
\diagr{ [C^nX^\bu, G]_{\mc{M}_{\ast}}\ar[d]_{\cong}\ar[r]^-{\ul{d}} & [C^{n-1}X^\bu, G]_{\mc{M}_{\ast}}\ar[d]^{\cong} \\
        N_n[X^\bu,G]_{\mc{M}} \ar[r]^-{d_0} & N_{n-1}[X^\bu,G]_{\mc{M}} }
  \item
For any $n\ge 0$, there is a natural exact sequence
  $$ [C^{n+1}X^\bu,G]_{\mc{M}_{\ast}}\xrightarrow{\delta_{n+1}} [Z^nX^\bu,G]_{\mc{M}_{\ast}} \to \naturalpi{n}{X^\bu}{G}\to 0, $$
where the surjection can be identified via the adjunction isomorphisms with the surjection
  $$ [X^\bu,\Omega^ncG]_{c\mc{M}^{\rm Reedy}}\to[X^\bu,\Omega^ncG]_{c\mc{M}^{\mc{G}}}. $$
\end{enumerate}
\end{lemma}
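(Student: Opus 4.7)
The proof of (1) proceeds by unwinding the defining pushouts. Applying Remark \ref{TKCK-pushouts} to the inclusion $\Lambda^n_0 \hookrightarrow \Delta^n$ and taking the outer pushout, combined with Example \ref{T-Delta-n} (so that $T_{\Delta^n}X^\bu \cong X^n$ and $T_{\Lambda^n_0}X^\bu \cong L^n_0 X^\bu$), yields
\[ C^n X^\bu \;\cong\; X^n \cup_{L^n_0 X^\bu} \ast \;\in\; \mc{M}_\ast. \]
Since $X^\bu$ is Reedy cofibrant the map $L^n_0 X^\bu \hookrightarrow X^n$ is a cofibration in $\mc{M}$, and since $G$ is fibrant the pointed homotopy classes $[C^n X^\bu, G]_{\mc{M}_\ast}$ compute as the kernel of the restriction $[X^n, G]_{\mc{M}} \to [L^n_0 X^\bu, G]_{\mc{M}}$. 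Proposition \ref{latching-matching-isos} identifies this restriction with the $0$-th partial matching map $[X^\bu, G]_n \to M^0_n[X^\bu, G]$, and the kernel of that map is by definition $N_n[X^\bu, G]_{\mc{M}}$.

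For (2), I would trace the map $C^{n-1} X^\bu \to C^n X^\bu$ inducing $\text{\dj}_n = \gamma_{n-1} \circ \delta_n$. On the simplicial-set side it is the composite
\[ \Delta^{n-1}_0 \twoheadrightarrow \Delta^{n-1}/\partial \Delta^{n-1} \stackrel{d^0}{\hookrightarrow} \Delta^n/\Lambda^n_0 = \Delta^n_0, \]
which descends from the coface $d^0\co \Delta^{n-1}\to\Delta^n$. By functoriality of $C_{-}$ (Lemma \ref{TKCK left Quillen}), the induced map of cofibers is computed from $d^0\co X^{n-1}\to X^n$, and precomposition with $d^0$ corresponds, under the identification of (1), to the face operator $d_0\co N_n[X^\bu,G]_{\mc{M}} \to N_{n-1}[X^\bu,G]_{\mc{M}}$ of the normalized complex; this gives the required commutative square.

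For (3), I would combine the adjunction $C_K \dashv \olhom(K, c(-))$ of Remark \ref{TKCK-pushouts} with the Quillen pair of Lemma \ref{lem:c-Reedy-right-Quillen}, specialized to $K = S^n$, to obtain the natural Reedy-homotopy-class identification
\[ [Z^n X^\bu, G]_{\mc{M}_\ast} \;\cong\; [X^\bu, \Omega^n_{\rm ext} cG]_{c\mc{M}^{\rm Reedy}}, \]
since $X^\bu$ is Reedy cofibrant and $\Omega^n_{\rm ext} cG$ is Reedy fibrant. By definition one has $\naturalpi{n}{X^\bu}{G} = [X^\bu, \Omega^n_{\rm ext} cG]_{c\mc{M}^{\mc{G}}}$, and the map in the statement is the canonical localization from Reedy to $\mc{G}$-homotopy classes, which is surjective since $\Omega^n_{\rm ext} cG$ is already $\mc{G}$-fibrant. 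The main obstacle is exactness at $[Z^n X^\bu, G]_{\mc{M}_\ast}$: one must show that the image of $\delta_{n+1}$ equals the kernel of the localization map. The plan is to exploit the cofiber sequence \eqref{spiral-cofiber} and its associated long exact sequence \eqref{spiral-les}, coupled with the fact from Proposition \ref{co-cell decomposition of cofree maps} that $C^{n+1} X^\bu$ detects precisely the data of co-attachments by $\mc{G}$-injective objects; a Reedy class in $[X^\bu, \Omega^n_{\rm ext} cG]_{\rm Reedy}$ becomes $\mc{G}$-trivial exactly when it extends over such a co-attachment, and recording that extension realizes the class as a value of $\delta_{n+1}$.
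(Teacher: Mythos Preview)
Your approach to all three parts follows the same outline as the paper's, but there are two gaps worth flagging.

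In part (1) you assert that ``since $G$ is fibrant the pointed homotopy classes $[C^n X^\bu, G]_{\mc{M}_\ast}$ compute as the kernel of the restriction''. Fibrancy alone does not give this: the cofiber sequence only yields exactness of
\[
[L^n_0 X^\bu,\Omega G]\longrightarrow [C^nX^\bu,G]_{\mc{M}_\ast}\longrightarrow [X^n,G]\longrightarrow [L^n_0X^\bu,G],
\]
and you still need the first map to be zero, equivalently $[X^n,\Omega G]\to[L^n_0X^\bu,\Omega G]$ to be surjective. The paper closes this by identifying the last map, via Proposition~\ref{latching-matching-isos}, with the canonical map $[X^\bu,G']_n\to M_n^0[X^\bu,G']$, which is surjective for any simplicial group; since $\mc{G}$ is closed under $\Omega$ this applies for every $G'=\Omega^kG$ and the long exact sequence breaks into short ones. (Equivalently, one could invoke Lemma~\ref{TKCK left Quillen}(2) directly: $L^n_0X^\bu\to X^n$ is $\mc{G}$-monic.) Part (2) is fine and matches the paper's diagram chase.

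In part (3) your identification of $[Z^nX^\bu,G]_{\mc{M}_\ast}\cong[X^\bu,\Omega^n cG]_{c\mc{M}^{\rm Reedy}}$ and the surjectivity onto $\naturalpi{n}{X^\bu}{G}$ are correct, but the exactness argument is not: Proposition~\ref{co-cell decomposition of cofree maps} concerns the coskeletal tower of a $\mc{G}$-cofree map and is not relevant here. The paper's argument is more direct. Since $\Delta^{n+1}_0$ is a pointed simplicial cone on $\Delta^n/\partial\Delta^n$, the induced map $\Delta^{n+1}_0(cG)\to\Omega^n cG$ is a $\mc{G}$-fibration with $\mc{G}$-contractible source. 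Hence a map $X^\bu\to\Omega^n cG$ is $\mc{G}$-nullhomotopic precisely when it lifts along this fibration, and such a lift corresponds under the adjunction $[C^{n+1}X^\bu,G]_{\mc{M}_\ast}\cong[X^\bu,\Delta^{n+1}_0(cG)]_{c\mc{M}^{\rm Reedy}}$ to a preimage under $\delta_{n+1}$.
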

\begin{proof}
The cofiber sequence $ \Lambda^n_0\to\Delta^n\to\Delta^n_0$ induces by Lemma~\ref{TKCK left Quillen} a cofiber sequence in $\mc{M}$
   $$ L^n_0X^\bu\to X^n\to C^nX^\bu.$$
Therefore we obtain a long exact sequence of abelian groups as follows,
   $$ \hdots\to[L^n_0X^\bu,\Omega G]\to[C^nX^\bu,G]\to[X^n,G]\to[L^n_0X^\bu,G].$$
The map $[X^n,G]\to [L^n_0X^\bu,G]$ can be identified with the canonical map
  $$[X^n,G]\to M_n^0[X^\bu,G] $$
using the isomorphism from Proposition \ref{latching-matching-isos}. We recall that this map is surjective for simplicial groups. Moreover, its kernel is $N_n[X^\bu,G]$ by Definition~\ref{simpl-normalization}. Therefore, we obtain short exact sequences
   $$ 0\to [C^nX^\bu,G]\to[X^n,G]\to[L^n_0X^\bu,G]\to 0$$
and (1) follows. To identify the differential in (2), we use Proposition~\ref{latching-matching-isos} repeatedly and obtain 
the following commutative diagram:
  $$ \xymatrix@C=10pt@R=15pt{ [C^nX^\bu,G] \ar@/^22pt/[rr]_-{\ul{d}}\ar[r]_-{d_0}\ar[d]^{\cong} & [Z^{n-1}X^\bu,G] \ar[r]\ar[d] & [C^{n-1}X^\bu,G] \ar[d]^{\cong} \\
   \ker\left\{\mc{X}_n\to[L^n_0X^\bu,G]\right\} \ar[r]_-{d_0}\ar[d]^{\cong} & \ker\left\{\mc{X}_{n-1}\to[L^{n-1}X^\bu,G]\right\}\ar[r] \ar[d] & \ker\left\{\mc{X}_{n-1}\to[L^{n-1}_0X^\bu,G]\right\} \ar[d]^{\cong} \\
   \ker\left\{\mc{X}_n\to M_n^0[X^\bu,G]\right\} \ar[r]_-{d_0}\ar[d]^{\cong} & \ker\left\{\mc{X}_{n-1}\to M_{n-1}[X^\bu,G]\right\}\ar[r] & \ker\left\{\mc{X}_{n-1}\to M_{n-1}^0[X^\bu,G]\right\} \ar[d]^{\cong} \\
   N_n[X^\bu,G] \ar[rr]^-{d_0} & &   N_{n-1}[X^\bu,G] } $$
Here $\mc{X}_k$ stands for $[X^k,G]$.

For (3), we recall from the adjunction isomorphisms from Definition~\ref{def:CnZn}:
\begin{align*}   
   \Hom_{\mc{M}_{\ast}}(Z^nX^\bu,G)&\cong\Hom_{c\mc{M}}(X^\bu,\Omega^n cG) \ \ \text{ and } \\
   \Hom_{\mc{M}_{\ast}}(C^{n+1}X^\bu,G)&\cong \Hom_{c\mc{M}}(X^\bu,\Delta^{n+1}_0 cG).
\end{align*}
Using Proposition~\ref{int-ext-commute}, these induce derived isomorphisms as follows,
\begin{align*}   
   [Z^nX^\bu,G]_{\mc{M}_{\ast}}&\cong [X^\bu,\Omega^n cG]_{c\mc{M}^{\rm Reedy}} \ \ \text{ and } \\
   [C^{n+1}X^\bu,G]_{\mc{M}_{\ast}}&\cong [X^\bu,\Delta^{n+1}_0 cG]_{c\mc{M}^{\rm Reedy}}.
\end{align*}
By our (co-)fibrancy conditions on $X^\bu$ and $G$, the canonical map
  $$ \kappa\co[Z^nX^\bu,G]_{\mc{M}_{\ast}}\cong[X^\bu,\Omega^n cG]_{c\mc{M}^{\rm Reedy}}\to  [X^\bu,\Omega^ncG]_{c\mc{M}^{\mc{G}}}=\naturalpi{n}{X^\bu}{G} $$
is well-defined and surjective. It remains to prove that
$\delta_{n+1}$ maps onto the kernel of $\kappa$. Note that 
$\Delta^{n+1}_0$ is a simplicial cone on $\Delta^n/\partial\Delta^{n}$. Thus, an element in the kernel of $\kappa$ has a representative in $\Hom_{c\mc{M}}(X^\bu,\Omega^ncG)$ that admits a lift as follows:
\diagr{ & \Delta^{n+1}_0(cG) \ar[d] \\
        X^\bu \ar[r]\ar@{.>}[ur] & \Omega^n(cG) }
since the vertical map is a \mc{G}-fibration. Therefore, every element in the kernel of $\kappa$ has a preimage in $[C^{n+1}X^\bu,G]_{\mc{M}_*}$, which concludes the proof of (3). 
\end{proof}

\begin{theorem}\label{thm:how--spiral-exact-seq-looks}
The spiral exact sequence is natural in $X^\bu$ and $G\in\mc{G}$ and takes the form
\begin{align*}
   \hdots\to\pi_{p+1}[X^\bu,G]\toh{b}\naturalpi{p-1}{X^\bu}{\Omega G}\toh{s}\naturalpi{p}{X^\bu}{G}\toh{h}\pi_{p}[X^\bu,G]\to\hdots
\end{align*}
ending with a surjection $\naturalpi{1}{X^\bu}{G}\to\pi_1[X^\bu,G]$. Moreover, there is an isomorphism
  $$ \naturalpi{0}{X^\bu}{G}\cong\pi_0[X^\bu,G]=:\pi_0$$
which is also natural in $X^\bu$ and $G \in \mc{G}$. 
\end{theorem}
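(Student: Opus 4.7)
The plan is to extract the spiral exact sequence from the exact couple~(\ref{spiral-les}) coming from the cofiber sequences~(\ref{spiral-cofiber}) $Z^{n-1}X^\bu\to C^nX^\bu\to Z^nX^\bu$. Setting $Z_n^q:=[Z^nX^\bu,\Omega^qG]_{\mc{M}_*}$ and $C_n^q:=[C^nX^\bu,\Omega^qG]_{\mc{M}_*}$, three identifications from Lemma~\ref{lem:spiral1} are fundamental: part~(1) identifies $C_n^0$ with the normalization $N_n[X^\bu,G]$; part~(2) identifies the couple differential $\text{\dj}=\gamma\delta$ with the face map $d_0$ on this normalized chain complex, so that Moore's theorem gives $\pi_n[X^\bu,G]\cong H_n(N_*[X^\bu,G],d_0)$; and part~(3) yields $\naturalpi{n}{X^\bu}{G}\cong\coker\bigl(\delta_{n+1}\co C_{n+1}^0\to Z_n^0\bigr)$. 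The task then reduces to defining $h$, $s$, and $b$ directly from the couple maps $\gamma$, $\beta$, $\delta$ and verifying exactness by diagram chase.

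The first two maps are direct: $h([z]):=[\gamma z]$ is well-defined because $\delta\gamma=0$ makes $\gamma z$ a cycle and $\gamma\circ\delta_{n+1}=\text{\dj}$ sends $\im\delta_{n+1}$ onto the boundaries, while $s([z]):=[\beta z]$ requires $\beta(\im\delta^{(1)}_n)\subseteq\im\delta_{n+1}$. The key observation, and the conceptual heart of the construction, is that the composite $\beta\circ\delta^{(1)}_n\co C_n^1\to Z_n^0$ actually vanishes: it is induced by the Puppe composite $Z^nX^\bu\to\Sigma Z^{n-1}X^\bu\to\Sigma C^nX^\bu$ of two consecutive maps in the cofiber sequence, which is null. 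For the connecting map $b$, given a cycle $c\in C_{n+1}^0$ one has $\gamma\delta c=\text{\dj}(c)=0$, so $\delta c\in\ker\gamma=\im\beta$; picking $z\in Z_{n-1}^1$ with $\beta z=\delta c$ defines $b([c]):=[z]$, well-defined since $\ker\beta=\im\delta^{(1)}_n$ and since changing $c$ by a boundary $\text{\dj}(c')=\gamma\delta c'$ replaces $\delta c$ by $\delta\gamma\delta c'=0$.

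Exactness at each spot then follows by a short diagram chase using the exactness of the underlying triangles. As an illustration, at $\pi_n[X^\bu,G]$: if $b([c])=0$, the chosen lift $z$ has the form $\delta^{(1)}_{n-1}(c')$, hence $\delta c=\beta z=\beta\delta^{(1)}_{n-1}(c')=0$ by the vanishing $\beta\delta=0$, so $c\in\ker\delta=\im\gamma$ provides the required preimage under $h$. The verifications at $\naturalpi{n}{X^\bu}{G}$ and $\naturalpi{n-1}{X^\bu}{\Omega G}$ are entirely analogous. Naturality in $X^\bu$ and $G$ is immediate from naturality of the cofiber sequence and of the couple.

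For the endpoint assertions, observe that $C^0X^\bu=Z^0X^\bu=X^0\sqcup\ast$, so $\gamma_0\co Z_0^0\to C_0^0$ is the identity and in particular $\ker\gamma_0=0$. The identifications above then directly give $\naturalpi{0}{X^\bu}{G}\cong N_0[X^\bu,G]/d_0N_1[X^\bu,G]=\pi_0[X^\bu,G]$, and the same vanishing makes the surjectivity argument for $h\co\naturalpi{1}{X^\bu}{G}\to\pi_1[X^\bu,G]$ succeed, forcing the sequence to terminate there rather than continuing to a non-existent $\naturalpi{-1}{X^\bu}{\Omega G}$. The main obstacle in carrying out this plan is not a single difficult step but rather the careful bookkeeping with indices and the two-directional maps of the couple; the conceptual payoff is the Puppe vanishing $\beta\delta=0$, which simultaneously licenses $s$ and drives the exactness argument at $\pi_n$.
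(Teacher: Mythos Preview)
Your proof is correct. Both you and the paper rely on Lemma~\ref{lem:spiral1} for the key identifications, but the paper's argument is shorter: by Definition~\ref{spiral-ex-sequ} the spiral exact sequence \emph{is} the first derived couple of~(\ref{spiral-les}), and a derived exact couple is automatically exact, so the paper simply reads off $E^2_{p,q}\cong\pi_p[X^\bu,\Omega^qG]$ from part~(2) and $\im\beta_{p,q}\cong\coker\delta_{p+1,q}\cong\naturalpi{p}{X^\bu}{\Omega^qG}$ from part~(3). You instead rebuild the derived couple by hand, defining $h$, $s$, $b$ explicitly and chasing exactness; in particular your ``conceptual heart'' $\beta\delta=0$ is nothing but exactness of the original couple at the $D$-term (equivalently, two consecutive maps in a Puppe sequence compose to zero) and needs no separate justification. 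Your version has the virtue of making the maps concrete, at the cost of reproving a standard fact about exact couples.
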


\begin{proof}
The naturality of the sequence in both $X^\bu$ and $G\in\mc{G}$ is clear. Without loss of generality, we may assume that $X^\bu$ is 
Reedy cofibrant. Next we identify the terms of the derived couple 
of (\ref{spiral-les}). The first is given by
   $$ E^2_{p,q}=\ker \ul{d}_{p,q}/\im\ul{d}_{p+1,q}\cong\pi_{p}[X^\bu,\Omega^q G],$$
as follows from Lemma~\ref{lem:spiral1}(1)-(2). By Lemma~\ref{lem:spiral1}(3), this receives a map from
   $$   \im\, \{\beta_{p,q}\co[Z^{p}X^\bu,\Omega^{q}G]\to[Z^{p+1}X^\bu,\Omega^{q-1}G]\, \}\cong\coker\delta_{p+1,q} \cong\naturalpi{p}{X^\bu}{\Omega^{q}G} $$
and also maps to
   $$   \im\,\beta_{p-2,q+1}\cong\coker\delta_{p-1,q+1} \cong\naturalpi{p-2}{X^\bu}{\Omega^{q+1}G}, $$
with $\beta_{p-2,q+1}\co[Z^{p-2}X^\bu,\Omega^{q+1}G]\to[Z^{p-1}X^\bu,\Omega^qG]$. The surjection for $p=1$ follows from Lemma~\ref{lem:spiral1}(3) and the fact that $[Z^1 X^\bu, \Omega^q G]$ surjects onto the kernel of $\ul{d}_{1,q} = \delta_{1,q}$.

Lastly, the isomorphism for $p=0$ follows from Lemma~\ref{lem:spiral1}(3) and the fact that $C^0X^\bu=Z^0X^\bu=X^0\sqcup\ast.$
\end{proof}

\begin{definition}
In the spiral exact sequence, we call the maps 
\begin{enumerate}
   \item $h_{p,q}\co\naturalpi{p}{X^\bu}{\Omega^q G}\to\pi_{p}[X^\bu,\Omega^qG]$ the {\it Hurewicz maps}.
   \item $s_{p,q}\co\naturalpi{p}{X^\bu}{\Omega^q G}\to\naturalpi{p+1}{X^\bu}{\Omega^{q-1}G}$ the {\it shift maps}.
   \item $b_{p,q}\co\pi_{p}[X^\bu,\Omega^qG]\to\naturalpi{p-2}{X^\bu}{\Omega^{q+1} G}$ the {\it boundary maps}.
\end{enumerate}
\end{definition}

The names are kept from \cite{DKSt:bigraded}. The name ``Hurewicz map'' was given because it satisfies a ''Hurewicz theorem'', cf. Lemma~\ref{lem:equivalent-formulations-n-connected}. The map $b$ is really a boundary map in view of Proposition~\ref{derived-fiber-sequence}.

\subsection{The spiral exact sequence and $\pi_0$-modules}
\label{subsec:ses}
Under Assumption~\ref{strict-unit} the spiral exact sequence carries extra structure: it is an exact sequence of modules over the \Hun-algebra 
$$\pi_0 : =\naturalpi{0}{X^\bu}{G} \cong \pi_0 [X^\bu, G].$$ 
This structure enters crucially in Propositions~\ref{prop:recognizing-L-objects} and~\ref{alt-def-potential}.  The purpose of this subsection is to give a detailed proof of this property of the spiral exact sequence. The reader is invited to compare with Blanc-Dwyer-Goerss \cite{BlDG:pi-algebra}, and Goerss-Hopkins \cite{GoHop:moduli2, GoHop:moduli}.

We start with the Hurewicz maps. Examples~\ref{exam:ext-mapping-space-constant-target} and \ref{T-Delta-n} show an isomorphism
\begin{equation*}
    \Hom_{\mc{M}}(X^\bullet,G)\cong \map^{\rm ext}(X^\bullet, cG) .
\end{equation*}
The canonical functor $\mc{M}\to\ho{\mc{M}}$ induces a map
  $$\map^{\rm ext}(X^\bu,cG)\cong\Hom_{\mc{M}}(X^\bullet,G)\stackrel{\chi}{\longrightarrow}[X^\bullet,G].$$

\begin{proposition}
The Hurewicz map $h$ is induced by $\chi$. 
\end{proposition}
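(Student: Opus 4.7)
The approach is to directly verify the equality of the two maps on representatives, using the identifications developed in Lemma~\ref{lem:spiral1}. First I would observe that, by Example~\ref{exam:ext-mapping-space-constant-target} and the definition of the natural homotopy groups, there is a canonical isomorphism of pointed simplicial sets $\map^{\rm ext}(X^\bu, cG) \cong \Hom_{\mc{M}}(X^\bu, G)$ (basepoint the zero map) whose $\pi_p$ equals $\naturalpi{p}{X^\bu}{G}$ when $X^\bu$ is Reedy cofibrant, as we may assume. The map $\chi$ is then a levelwise map of pointed simplicial sets into $[X^\bu, G]$, and thus induces $\chi_*$ on $\pi_p$. The task is to show $h = \chi_*$.

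Next, I would unfold the derived-couple description of $h$. A class $\alpha \in \naturalpi{p}{X^\bu}{\Omega^q G}$ is represented, via the identification $\naturalpi{p}{X^\bu}{\Omega^q G} \cong \coker \delta_{p+1,q}$ from Lemma~\ref{lem:spiral1}(3), by an $[\tilde\sigma] \in [Z^p X^\bu, \Omega^q G]_{\mc{M}_*}$. Under the pointed adjunction, $\tilde\sigma$ corresponds to a morphism $\sigma\co X^p \to \Omega^q G$ in $\mc{M}$ with $\sigma \circ d^i = 0$ for all $i = 0, \ldots, p$, i.e. to a $p$-sphere in $\Hom_{\mc{M}}(X^\bu, \Omega^q G)$ based at the zero map; this sphere represents the same $\alpha$ in the other description. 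In the derived couple, $h(\alpha)$ is the class of $\gamma([\tilde\sigma]) \in [C^p X^\bu, \Omega^q G]_{\mc{M}_*}$ modulo the image of $\dj$, and under the isomorphism of Lemma~\ref{lem:spiral1}(1) this is precisely the class of $[\sigma] \in N_p[X^\bu, \Omega^q G]_{\mc{M}}$ viewed in $\pi_p[X^\bu, \Omega^q G]$.

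On the other hand, applying $\chi$ levelwise to the representative $\sigma$ gives $[\sigma] \in [X^p, \Omega^q G]_{\mc{M}}$, which by the vanishing $d_i[\sigma] = [\sigma \circ d^i] = 0$ lies in $N_p[X^\bu, \Omega^q G]_{\mc{M}}$ and is a cycle for the differential $d_0$. This produces the same class in $\pi_p[X^\bu, \Omega^q G]$, establishing $h(\alpha) = \chi_*(\alpha)$.

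The main technical step, and essentially the only bookkeeping, is to verify that the isomorphism in Lemma~\ref{lem:spiral1}(1) is compatible with the levelwise map $\chi$. Inspecting its proof, this isomorphism arises from the short exact sequence $0 \to [C^p X^\bu, G]_{\mc{M}_*} \to [X^p, G]_{\mc{M}} \to M^0_p[X^\bu, G]_{\mc{M}} \to 0$ obtained by applying $[-, G]_{\mc{M}_*}$ to the cofiber sequence $L^p_0 X^\bu \to X^p \to C^p X^\bu$ together with Proposition~\ref{latching-matching-isos}. Naturality of passing to homotopy classes in $\mc{M}$ guarantees that $\chi$ identifies the element $\gamma([\tilde\sigma]) \in [C^p X^\bu, \Omega^q G]_{\mc{M}_*}$ with the class $[\sigma]$ in $N_p[X^\bu, \Omega^q G]_{\mc{M}}$, which is the identification used in both the derived-couple description of $h$ and the levelwise formula for $\chi_*$. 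With this compatibility in hand, the equality $h = \chi_*$ follows from the parallel traces above.
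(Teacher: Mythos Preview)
Your proof is correct and follows essentially the same route as the paper. Both arguments pick a representative $\sigma\co X^p\to G$ (vanishing on all cofaces) for a class in $\naturalpi{p}{X^\bu}{G}\cong\coker\delta_{p+1}$, and then observe that both $h$ and the map induced by $\chi$ send this representative to the class of $[\sigma]$ in $\pi_p[X^\bu,G]$. The only cosmetic difference is that you invoke the derived-couple map $\gamma$ explicitly, whereas the paper packages the same step as a map $h'\co[Z^pX^\bu,G]\to Z_p[X^\bu,G]$ on the level of representatives and then passes to cokernels; your compatibility check with Lemma~\ref{lem:spiral1}(1) is exactly what makes these two descriptions agree.
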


\begin{proof}
We may assume that $X^\bu$ is Reedy cofibrant. The Hurewicz map $h$ is defined, as the map between cokernels, by
\diagr{  [C^{p+1}X^\bu,G] \ar[r]^-{\cong} \ar[d]_{d^0} & N_{p+1}[X^\bu,G]\ar[d]_-{d_0}\ar[dr]^-{d_0} & \\
        [Z^pX^\bu,G] \ar@{->>}[d]\ar[r]^-{h'} & Z_p[X^\bu,G] \ar@{->>}[d]\ar[r]^-{\subset} & N_{p}[X^\bu,G] \\
        \naturalpi{p}{X^\bu}{G} \ar[r]^-h &  \pi_p[X^\bu,G] & }
where 
  $$Z_p[X^\bu,G]:=\bigcap_{i=0}^p\ker \left\{(d^i)^*=d_i\co[X^p,G]\to[X^{p-1},G]\right\} . $$ 
Then every element $\bar x$ in $\naturalpi{p}{X^\bu}{G}$ has a representative 
  $$x\in \Hom_{\mc{M}_*}(Z^pX^\bu,G)\surj [Z^pX^\bu,G]_{\mc{M}_*}\surj \naturalpi{p}{X^\bu}{G}. $$
Note that the map $h'$ is induced by $\chi$, and therefore the same is true for $h$ as well. 
\end{proof}

\begin{corollary}\label{cor:Hur-is-pi-0}
The Hurewicz map is a map of $\pi_0$-modules.
\end{corollary}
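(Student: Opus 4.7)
The plan is to trace the two $\pi_0$-module structures through the identification of $h$ as being induced by $\chi$ given in the preceding proposition. Both module structures are manufactured from the same piece of data, namely the split fibration sequence
\begin{equation*}
    \Omega^pcG \stackrel{\gamma_{\ext}}{\longrightarrow} (cG)^{\Delta^p/\partial\Delta^p} \stackrel{\eps_*}{\longrightarrow} cG
\end{equation*}
of Example~\ref{exam:action-on-natural}, whose splitting uses that $G$ (hence $cG$) is strictly pointed and an abelian group object in $\ho{\mc{M}_*}$. What I would do is show that $\chi$ carries this fibration sequence to the one defining the module structure on $\pi_p[X^\bu,G]$ in Example~\ref{exam:action-on-bigraded}, compatibly with the splittings.

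First I would apply $\map^{\ext}(X^\bu,-)$ to the displayed sequence. Using Lemma~\ref{int-ext-commute} together with the identification $\map^{\ext}(X^\bu,(cG)^{\Delta^p/\partial\Delta^p}) \cong \Omega_+^p\,\map^{\ext}(X^\bu,cG)$ already recorded in the text, this produces a split fibration sequence of pointed simplicial sets
\begin{equation*}
    \Omega^p\map^{\ext}(X^\bu,cG) \to \Omega_+^p\map^{\ext}(X^\bu,cG) \to \map^{\ext}(X^\bu,cG).
\end{equation*}
Taking $\pi_0$ and applying the surjection $\pi_p\map^{\ext}(X^\bu,cG)\twoheadrightarrow\naturalpi{p}{X^\bu}{G}$ of Lemma~\ref{lem:spiral1}(3) recovers, by naturality, the split short exact sequence of Example~\ref{exam:action-on-natural} that defines the $\pi_0$-action on $\naturalpi{p}{X^\bu}{G}$.

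Second, the canonical natural transformation $\chi\co\map^{\ext}(X^\bu,cG)\to[X^\bu,G]$ is a map of simplicial \mc{H}-algebras (it is the componentwise map to the homotopy category), and it sends the sequence above to the split fibration sequence
\begin{equation*}
    \Omega^p[X^\bu,G] \to \Omega_+^p[X^\bu,G] \to [X^\bu,G]
\end{equation*}
from \eqref{eqn:action-on-bigraded}, respecting the splittings — both splittings are induced by the collapse $\Delta^p/\partial\Delta^p \to \ast$ combined with the abelian group structure. Applying $\pi_0$ to the map of fiber sequences and using the previous proposition to identify the induced map on fibers with $h$ and the map on bases with the canonical isomorphism $\naturalpi{0}{X^\bu}{G}\cong\pi_0[X^\bu,G]$, we conclude that $h$ commutes with the $\pi_0$-actions.

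No serious obstacle is expected. The only subtlety is to remember that all splittings are interpreted in $\ho{c\mc{M}^{\mc{G}}}$, where an Eckmann–Hilton argument ensures that the abelian group structure coming from $G$ coincides with any loop-coordinate structure; this is exactly what lets one trivialize both fibration sequences in a compatible way and what makes the whole argument a formal naturality check.
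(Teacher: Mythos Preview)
Your proposal is correct and follows essentially the same approach as the paper: use $\chi$ to produce a morphism between the split fibration sequence $\Omega^p\map(X^\bu,cG)\to\Omega_+^p\map(X^\bu,cG)\to\map(X^\bu,cG)$ and the sequence $\Omega^p[X^\bu,G]\to\Omega_+^p[X^\bu,G]\to[X^\bu,G]$, then apply $\pi_0$. The paper's proof is just the terse diagram version of exactly what you wrote out.
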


\begin{proof}
The $\pi_0$-module structure on $\pi_p[X^\bu,G]$ is explained in Example~\ref{exam:action-on-bigraded}. The natural homotopy group is
a $\pi_0$-module by Example~\ref{exam:action-on-natural}. Let us assume that $X^\bu$ is Reedy cofibrant.
The map $\chi$ directly yields a morphism of (split) \mc{G}-homotopy fiber sequences:
\diagr{ \Omega^p\map(X^\bu,cG)\ar[r]\ar[d]_{\Omega^p\chi=\chi^p} & \Omega_+^p\map(X^\bu,cG) \ar[r]\ar[d]^{\Omega^p_+\chi=\chi^p_+} & \map(X^\bu,cG) \ar[d]^{\chi} \\
        \Omega^p[X^\bu,G] \ar[r] & \Omega_+^p[X^\bu,G] \ar[r] & [X^\bu,G] }
Applying $\pi_0$ yields the required morphism of $\pi_0$-modules as abelian objects in $\Hun\text{-Alg} / \pi_0$.
\end{proof}

To show that the shift maps 
  $$ s_{p,1}\co \naturalpi{p}{X^\bu}{\Omega G}\to\naturalpi{p+1}{X^\bu}{G} $$
are compatible with the $\pi_0$-module structures, it will be convenient to construct a span as follows, 
  $$\Omega^pc(\Omega G)\stackrel{\simeq}{\longleftarrow}\bar\Omega^pc(\Omega G)\to\Omega^{p+1}cG $$
where the map on the left is a \mc{G}-equivalence. It induces the shift map after applying $[X^\bu,-]_{c\mc{M}^\mc{G}}$. We will see
that this span extends to morphisms of split homotopy fiber sequences over $cG$. This way of constructing the shift map is outlined in \cite{DKSt:bigraded} in the ``dual'' setting of $\Pi$-algebras. The following discussion remains valid for all of the maps $s_{p,q}$ with $q\ge 1$, but we will focus on the case $s_{p,1}$. 

\begin{definition}\label{def:Path-P-Omegabar}
For $G \in \mc{G}$, let $\Path(G)$ denote a functorial path object for $G$. It comes with a fibration $({\rm ev}_0,{\rm ev}_1)\co\Path(G)\to G\times G$ and a section $s_\inter\co G\to\Path(G)$ in $\mc{M}$. These induce maps of group objects in $\ho{\mc{M}}$. We write
  $$PG = \fib\{ {\rm ev}_0\co\Path(G)\to G \}.$$
This object is pointed and weakly trivial in \mc{M}.
It inherits a map $e_1\co PG\to G$ induced by ${\rm ev}_1$ with homotopy fiber $\Omega G$. Note that $e_1$ is a fibration, but not a \mc{G}-injective fibration.

The object $LG$ is defined via the following pullback in \mc{M}
  $$\xymatrix{ LG \ar[r]\ar[d]_{\lambda} & \Path (G) \ar[d]^{({\rm ev}_0,{\rm ev}_1)} \\
                G \ar[r]^-{{\rm diag}} & G\times G. } $$
It can be regarded as an analogue of a free loop space in \mc{M}.
The map $\lambda$ is a fibration because $({\rm ev}_0,{\rm ev}_1)$ is one. The map $G\to\Path (G)$ that is part of the structure of a path object induces a section for $\lambda$. Since all maps in the diagram are morphisms of homotopy abelian group objects, $LG$ splits as $G\times \Omega G$ implying that $\lambda$ is a \mc{G}-injective fibration.

We define an object $\bar{\Omega}^p_{\rm ext}c(\Omega G)$ and maps $\alpha$ and $\omega$ in $c\mc{M}_*$ via the pullback
\diagr{ \bar{\Omega}^pc(\Omega G) \ar[r]^-{\alpha}\ar[d]_{\omega}^{\simeq} &  \Delta^{p+1}_0(cPG) \ar[d]^{\simeq} \\
        \Omega^pc(\Omega G) \ar[r]  & \Omega^pcPG ,}
where the lower map is induced by the fiber inclusion map $\Omega G\to PG$. The vertical maps are \mc{G}-fibrations and Reedy equivalences because the right vertical map 
is a \mc{G}-fibration between objects which are degreewise weakly trivial. Moreover, the targets of these maps are \mc{G}-fibrant. As a consequence, 
$\bar{\Omega}^p_{\rm ext}c(\Omega G)$ is also \mc{G}-fibrant.

We wish to think of the collection of objects $\bar{\Omega}^p_{\rm ext}c(\Omega G)$ as a functor in $G \in \mc{G}$. The defining pullback square of $\bar{\Omega}^p_{\rm ext}c(\Omega G)$ is the left side of the following commutative cube:
\begin{align}\begin{split}
\xy
   \xymatrix"*"@=17pt{ \Omega^pc(\Omega G) \ar'[d] [dd] \ar[rr] & &  \ast  \ar[dd] \\
                                & &                \\
                 \Omega^pc(PG)  \ar'[r]^-{e_1} [rr]   & &  \Omega^pcG }
   \POS(-22,-10)
   \xymatrix@=23pt{  \bar{\Omega}^pc(\Omega G) \ar[rr]^-{\sigma} \ar[dd]_{\alpha} \ar["*"]^-{\omega}_-{\simeq} & &   \Omega^{p+1}cG \ar[dd]  \ar["*"]\\
                                                & &                         \\
             \Delta^{p+1}_0(cPG) \ar[rr]^-{\simeq}_-{e_1} \ar["*"]_-{\simeq}  & & \Delta^{p+1}_0(cG) \ar["*"]}
\endxy
\end{split}\label{proof:Omega-bar}
\end{align}
The map $\sigma$ is induced as the map between pullbacks.
\end{definition}

\begin{lemma}\label{lem:construction-of-sigma}
The front square of Diagram~\emph{(\ref{proof:Omega-bar})}
\diagr{ \bar{\Omega}^pc(\Omega G) \ar[r]^{\sigma}\ar[d]_{\alpha} &  \Omega^{p+1}cG \ar[d] \\
     \Delta^{p+1}_0(cPG) \ar[r]^-{e_1} & \Delta^{p+1}_0(cG)  }
is a pullback square.
\end{lemma}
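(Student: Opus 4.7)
My plan is to prove that the front square is a pullback by combining the pasting law for pullbacks with the observation that three other faces of the cube~\eqref{proof:Omega-bar} are pullback squares. The argument is essentially formal once these facts are in place.

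First I would identify three pullback squares among the six faces. The left face is a pullback tautologically, as it is the defining pullback of $\bar\Omega^p c(\Omega G)$ in Definition~\ref{def:Path-P-Omegabar}. The back face is a pullback because $\Omega G \to PG \xrightarrow{e_1} G$ is a fiber sequence in $\mc{M}_*$, so that $\Omega G \cong PG\times_G \ast$, and the composite functor $\olhom(\Delta^p/\partial\Delta^p,\, c(-))\colon \mc{M}_* \to c\mc{M}_*$ is a right adjoint and therefore preserves this pullback. The right face is a pullback by a dual argument: the cofibration $d_0\colon \Delta^p/\partial\Delta^p \hookrightarrow \Delta^{p+1}_0$ has cofiber $\Delta^{p+1}/\partial\Delta^{p+1}$, and applying $\olhom(-,\, cG)$ turns this cofiber sequence into the fiber sequence $\Omega^{p+1}cG \to \Delta^{p+1}_0(cG) \xrightarrow{d_0^*} \Omega^p cG$, exhibiting $\Omega^{p+1}cG$ as the pullback of $d_0^*$ along the basepoint $\ast \to \Omega^p cG$.

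Next I would paste horizontally in two ways. Gluing the left face to the left of the back face yields a rectangle with corners $\bar\Omega^p c(\Omega G)$, $\ast$, $\Delta^{p+1}_0(cPG)$, and $\Omega^p cG$, whose bottom edge factors as $\Delta^{p+1}_0(cPG) \to \Omega^p c(PG) \xrightarrow{e_1} \Omega^p cG$; by the pasting law this rectangle is a pullback. Gluing the front face to the left of the right face gives a rectangle with the same four corners and the same left and right edges (the right edge being the basepoint $\ast\to \Omega^p cG$ in both cases), but whose bottom edge now factors as $\Delta^{p+1}_0(cPG) \xrightarrow{e_1} \Delta^{p+1}_0(cG) \xrightarrow{d_0^*} \Omega^p cG$. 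Commutativity of the bottom face of the cube identifies these two bottom composites, and the top composites both agree with the unique map to $\ast$, so the two rectangles coincide. Since the common outer rectangle is a pullback and the right face is a pullback, the pasting law forces the front face to be a pullback. The only real bookkeeping is verifying that the two outer rectangles are literally equal, which comes down to naturality of $\olhom(-,\, c(-))$ in both variables; there is no hard step here.
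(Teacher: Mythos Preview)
Your argument is correct and follows exactly the approach of the paper, only with considerably more detail spelled out. The paper's proof consists of the single observation that the back, left, and right faces of the cube are pullback squares; the pasting-law argument you carry out is precisely what this observation is meant to invoke.
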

\begin{proof}
Diagram~(\ref{proof:Omega-bar}) consists of pullback squares on the back, left and right sides of the commutative cube. 
\end{proof}

\begin{remark} Some comments on the previous diagrams are in order.
\begin{enumerate}
   \item
The defining pullback square for $\bar\Omega^pc(\Omega G)$ is obviously a homotopy pullback for the resolution model structure. The pullback square in Lemma~\ref{lem:construction-of-sigma} is not. 
All maps labeled $\simeq$ in Diagram~(\ref{proof:Omega-bar}) are \mc{G}-equivalences (and some are even Reedy equivalences), but neither the back nor the front 
square is a \mc{G}-homotopy pullback. The map $cPG\to cG$ is not a \mc{G}-fibration because the map $e_1\co PG\to G$ is not \mc{G}-injective. Neither the vertical maps nor the maps from left to right are \mc{G}-fibrations.
   \item
Giving a map $X^\bu\to\bar\Omega^pc\Omega G$, where $X^\bu$ is Reedy cofibrant, is essentially the same as giving the following:
\begin{itemize}
   \item a map $f\co Z^{p+1}X^\bu\to G$ in $\mc{M}_*$ together with
   \item a null homotopy of the composite $C^{p+1}X^\bu\to Z^{p+1}X^\bu\to G$ in $\mc{M}_*$.
\end{itemize}
\end{enumerate}
\end{remark}

\begin{definition}\label{def:sigma-ss}
The following span in $c\mc{M}_*$
\begin{equation*}
   \Omega^pc(\Omega G)\stackrel{\simeq}{\longleftarrow}\bar{\Omega}^pc(\Omega G)\stackrel{\sigma}{\longrightarrow}\Omega^{p+1}c(G)
\end{equation*}
yields the map in $\ho{c \mc{M}_*^{\rm Reedy}}$ (and $\ho{c\mc{M}_*^{\mc{G}}})$
$$\ss_{p}\co\Omega^pc(\Omega G)\to\Omega^{p+1}c(G).$$ 
\end{definition}

\begin{proposition}\label{lem:shiftmap-revisited}
The map $\naturalpi{p}{X^\bu}{\Omega G}\to\naturalpi{p+1}{X^\bu}{G}$ that is induced by $\ss_{p}$ agrees with the shift map $s_{p,1}$ in the spiral exact sequence.
\end{proposition}
\begin{proof}
We may assume that $X^\bu$ is Reedy cofibrant. The following diagram commutes:
\diagr{ [Z^pX^\bu,\Omega G] \ar[r]^-{\beta_{p,1}}\ar[d]^{\cong} & [Z^{p+1}X^\bu,G] \ar[d]^{\cong} \\
        [X^\bu,\Omega^pc(\Omega G)]_{c\mc{M}^{\rm Reedy}} \ar[r]^-{\ss_{p,1}} & [X^\bu,\Omega^{p+1}c(G)]_{c\mc{M}^{\rm Reedy}} }
This surjects onto \mc{G}-homotopy classes.
\end{proof}

\begin{corollary}\label{cor:shift-is-pi-0}
The shift map $s_{p,1}$ is a $\pi_0$-module map.
\end{corollary}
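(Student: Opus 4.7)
The strategy mirrors the proof of Corollary \ref{cor:Hur-is-pi-0}: I will promote the zig-zag of Definition \ref{def:sigma-ss} that represents $\ss_{p}$ to a zig-zag of split $\mc{G}$-fibration sequences over $cG$, each one encoding the relevant $\pi_0$-action on its fibre, and then apply $[X^\bu,-]_{c\mc{M}^\mc{G}}$ to obtain the desired morphism of the split short exact sequences of $\mc{H}$-algebras from Examples \ref{exam:naturalpi-with-Omega} and \ref{exam:action-on-natural}.

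First I would set up the endpoints: on the source side, the trivial bundle $\Omega^{p}c(\Omega G)\times cG\stackrel{\pi_2}{\longrightarrow}cG$ with section the inclusion of $cG$, whose associated split short exact sequence is the one that defines the $\pi_0$-module structure on $\naturalpi{p}{X^\bu}{\Omega G}$ in Example \ref{exam:naturalpi-with-Omega}; on the target side, the split $\mc{G}$-fibration $(cG)^{\Delta^{p+1}/\partial\Delta^{p+1}}\stackrel{\eps_{*}}{\longrightarrow}cG$ with section $s_{\ext}$ from Example \ref{exam:action-on-natural}. The middle term of the zig-zag will be $\bar{\Omega}^{p}c(\Omega G)\times cG\stackrel{\pi_2}{\longrightarrow}cG$ with the evident section, and the two structural maps will be
\[
\Omega^{p}c(\Omega G)\times cG\stackrel{\omega\times\id}{\longleftarrow}\bar{\Omega}^{p}c(\Omega G)\times cG\stackrel{\tilde\sigma}{\longrightarrow}(cG)^{\Delta^{p+1}/\partial\Delta^{p+1}},
\]
where $\tilde\sigma$ is the composite of $\sigma\times\id_{cG}$ with the $\mc{G}$-equivalence $(\gamma_{\ext},s_{\ext})\co\Omega^{p+1}cG\times cG\stackrel{\simeq}{\longrightarrow}(cG)^{\Delta^{p+1}/\partial\Delta^{p+1}}$ from Example \ref{exam:action-on-natural}. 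The left arrow $\omega\times\id$ is a $\mc{G}$-equivalence over $cG$ because $\omega$ is one and is pointed.

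Next I would verify that $\tilde\sigma$ genuinely sits over $cG$ and is compatible with sections. The identity $\eps_{*}\circ(\gamma_{\ext},s_{\ext})=\mathrm{pr}_{cG}$ from Example \ref{exam:action-on-natural} immediately gives $\eps_{*}\circ\tilde\sigma=\pi_{2}$, so $\tilde\sigma$ lies over the base $cG$; compatibility with the two chosen sections is then a one-line check using that $\sigma$ is a pointed map and that $(\gamma_{\ext},s_{\ext})$ sends the canonical section of the trivial bundle to $s_{\ext}$. Applying $[X^\bu,-]_{c\mc{M}^\mc{G}}$ to this zig-zag therefore produces a commutative ladder of the two split short exact sequences of $\mc{H}$-algebras, in which by Lemma \ref{lem:shiftmap-revisited} the induced map between kernels coincides with $s_{p,1}$ and the map between cokernels is $\id_{\pi_0}$. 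This exhibits $s_{p,1}$ as a morphism of $\pi_0$-modules, as required.

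The only nontrivial piece of bookkeeping is the compatibility check of $\tilde\sigma$ with the two fibration-with-section structures over $cG$; this is a mechanical diagram chase that rests on the homotopy-abelian group structures guaranteed by Assumption \ref{strict-unit}. An entirely analogous argument, with $\Omega G$ replaced by $\Omega^{q}G$ throughout, handles the general shift map $s_{p,q}$ for $q\ge 1$.
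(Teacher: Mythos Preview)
Your proposal is correct and follows essentially the same approach as the paper: both construct a zig-zag of split $\mc{G}$-fibration sequences over $cG$ with middle term $\bar{\Omega}^{p}c(\Omega G)\times cG$ and left arrow $\omega\times\id$, then apply $[X^\bu,-]_{c\mc{M}^{\mc{G}}}$. The only cosmetic difference is on the right-hand end: the paper stays with the trivial bundle $\Omega^{p+1}cG\times cG\to cG$ and the map $(\sigma,\id)$, whereas you compose one step further with the equivalence $(\gamma_{\ext},s_{\ext})$ of Example~\ref{exam:action-on-natural} to land in $(cG)^{\Delta^{p+1}/\partial\Delta^{p+1}}$; since that equivalence is precisely what identifies the two presentations of the $\pi_0$-module structure, this is a harmless extra step.
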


\begin{proof}
The description of the $\pi_0$-module structure of the natural homotopy groups is given in Examples~\ref{exam:action-on-natural} and \ref{exam:naturalpi-with-Omega}. The relevant diagram to consider is \eqref{proof:Omega-bar}:
the shift map is induced by $\sigma\omega^{-1}$. We need to exhibit that cube as the fiber of a morphism of cubes whose target cube is constant at $cG$ and that is objectwise a split \mc{G}-fibration. Note however that it suffices to do all this up to Reedy equivalence.

Let us begin by considering the back square of \eqref{proof:Omega-bar}.
Here the necessary split \mc{G}-fibration for $\Omega^pc\Omega G$ has been exhibited in Example \ref{exam:naturalpi-with-Omega}:
  $$ \Omega^pc\Omega G \to P\stackrel{\curvearrowleft}{\longrightarrow} cG . $$
In an analogous way as in Example \ref{exam:naturalpi-with-Omega}, one expresses $\Omega^pcPG$ as the total fiber of the square
\begin{equation*}
 \xymatrix{  c\Path(G)^{S^p} \ar[r]\ar[d] & c\Path(G) \ar[d] \\ 
             (cG)^{S^p} \ar[r] & cG }
\end{equation*}
and, by an appropriate base change, we obtain a split fiber sequence
  $$ \Omega^pcPG \to P_2 \stackrel{\curvearrowleft}{\longrightarrow} cG, $$
where
  $$ P_2=cG\times_{(c\Path(G)\times_{cG}(cG)^{S^p})}c\Path(G)^{S^p}\simeq cG\times_{(cG)^{S^p}}c\Path(G)^{S^p} $$
since $G\simeq \Path(G)$. Up to Reedy equivalence, we could simplify the term on the right further but that would obscure the commutativity of the cube that we are about to construct.
Now the back square of \eqref{proof:Omega-bar} is the fiber of the map from the square 
  $$\xymatrix{ P\ar[r]\ar[d] & cG \ar[d] \\ 
               P_2 \ar[r] & (cG)^{S^p}} $$
to the constant square at $cG$. 

Next consider the bottom square of~\eqref{proof:Omega-bar}. For the corner at $\Delta^{p+1}_0cPG$, we consider a split fiber sequence
  $$ \Delta^{p+1}_0cPG \to P_3 \stackrel{\curvearrowleft}{\longrightarrow} cG, $$
with a simplified
  $$ P_3\simeq cG\times_{(cG)^{\Delta^{p+1}_0}} c\Path (G)^{\Delta^{p+1}_0}.$$
Thus, the commutative square
  $$ \xymatrix{ P_3 \ar[r]\ar[d] & (cG)^{\Delta^{p+1}_0} \ar[d] \\ 
             P_2 \ar[r]^s & (cG)^{S^p} } $$
maps via a split \mc{G}-fibration to the constant cube at $cG$ and has the bottom square of~\eqref{proof:Omega-bar} as fiber. 

The right hand square of the cube~\eqref{proof:Omega-bar} is obviously the fiber of
  $$\xymatrix{ (cG)^{S^{p+1}}\ar[r]\ar[d] & cG\ar[d] \\
              (cG)^{\Delta^{p+1}_0} \ar[r] & (cG)^{S^p} } $$
mapping to the constant square at $cG$. 

Lastly, one defines the total space $P_4$ for the top left front corner at $\bar{\Omega}^pc(\Omega G)$ in~\eqref{proof:Omega-bar} via the pullback
  $$ \xymatrix{ P_4 \ar[r]\ar[d] & P\ar[d] \\  
                P_3 \ar[r] & P_2} $$
By construction, this object sits in a split \mc{G}-fibration sequence
  $$ \bar{\Omega}^pc(\Omega G)\to P_4\stackrel{\curvearrowleft}{\longrightarrow} cG.$$
In summary, the following cube of total spaces
\begin{align*}
\xy
   \xymatrix"*"@=22pt{ P \ar[rr]\ar[dd] & &  cG \ar[dd] \\
                           & &               \\
     P_2 \ar[rr] & &  (cG)^{S^p } }
   \POS(-20,-10)
   \xymatrix@=23pt{  P_4 \ar[dd]_{\alpha'}\ar["*"]^{\omega'}\ar[rr]_<<<<<<<<{\sigma'} & &   cG^{S^{p+1}} \ar[dd]  \ar["*"] \\
                      & &                     \\
     P_3 \ar[rr]\ar["*"] & & (cG)^{\Delta^{p+1}_0} \ar["*"] }
\endxy
\end{align*}
commutes and yields the cube~\eqref{proof:Omega-bar} by taking the fiber over the constant cube at $cG$. The maps $\sigma'$ and $\omega'$ provide the necessary data for the claim that the shift map is a $\pi_0$-module morphism.
\end{proof}

It remains to inspect the boundary maps in the spiral exact sequence. It is useful to observe that most terms of the sequence are representable as functors of 
$X^\bu$ in the resolution model category. We are going to explain that this is the case for the $E_2$-homotopy groups $\pi_p[X^\bu,G]$, $p\neq 1$, too. For $p=0$, 
this is clear from the isomorphism $\pi_0[X^\bu,G]\cong\naturalpi{0}{X^\bu}{G}$.

\begin{definition}\label{def:Psiqp} 
Let $p\ge 2$. We define $\Psi^p(G)$ in $c\mc{M}_{\ast}$ by the following pullback square:
\diagr{ \Psi^p(G) \ar[r]^-{\zeta}\ar[d]^{\tau} &  \bar{\Omega}^{p-2}c(\Omega G)\ar[d]^{\sigma} \\
          \Delta^p_0(cG) \ar[r] & \Omega^{p-1}cG.}
\end{definition}

Note that the lower horizontal map is a \mc{G}-fibration. Hence, $\zeta$ is a \mc{G}-fibration. In particular, $\Psi^p(G)$ is \mc{G}-fibrant. 
By looping the defining pullback square, we get natural \mc{G}-equivalences for $p\ge 2$,
  $$ \Omega^s_{\rm ext}\Psi^p(G)\simeq \Psi^{p+s}(G). $$ 

\begin{remark}\label{rem:hopull-criterium} 
The pullback of a diagram in a model category
  $$ A\stackrel{f}{\longrightarrow} B\stackrel{g}{\longleftarrow} C $$
is weakly equivalent to the respective homotopy pullback if all objects are fibrant and one of the maps $f$ or $g$ is a fibration, see e.g. \cite[Proposition A.2.4.4]{Lurie:higher-topoi}.  
\end{remark}

\begin{definition}\label{def:pitchfork}
For an object $X$ in $\mc{M}_*$, we denote by $X\to \cone(X)$ a functorial cofibration to a weakly trivial object.
Let $\pitchfork^p X^\bu$ be the pushout of
  $$ \cone(C^{p-1}X^\bu)\leftarrow C^{p-1}X^\bu\xrightarrow{d^0} C^pX^\bu,  $$
which is a model for the mapping cone of $d^0$.
\end{definition}

\begin{lemma}\label{lem:what-is-Psip}
For all $p\ge 2$ and every Reedy cofibrant $X^\bu$ we have:
\begin{enumerate}
   \item
Maps in $c\mc{M}$ of the form $X^\bu\to\Psi^p(G)$ correspond bijectively to maps $C^pX^\bu\to G$ in $\mc{M}_*$ together with a null homotopy of the composite
  $$ C^{p-1}X^\bu\stackrel{d^0}{\longrightarrow} C^p X^\bu\to G. $$ 
   \item
There is a natural isomorphism
  $$ [X^\bu,\Psi^p(G)]_{c\mc{M}^{\rm Reedy}}\cong [\pitchfork^p X^\bu,G]_{\mc{M}_*}.$$
   \item
In the Reedy homotopy category, $\Psi^p(G)$ is a group object and the map $\tau_*$, induced by $\tau\co\Psi^p(G)\to\Delta^{p}_0(cG)$, is part of the long exact sequence
  $$ \hdots\to N_{p-1}[X^\bu,\Omega G]_{\mc{M}}\to [X^\bu,\Psi^p(G)]_{c\mc{M}^{\rm Reedy}} \xrightarrow{\tau_*} N_p[X^\bu, G] \xrightarrow{d_0} N_{p-1}[X^\bu, G]\to \hdots $$
   \item
In particular, $\tau_*$ factors through the map 
  $$ \vartheta\co[X^\bu,\Psi^p(G)]_{c\mc{M}^{\rm Reedy}}\surj\bigcap_{i=0}^p\ker \left\{(d^i)^*\co[X^p,G]\to[X^{p-1},G]\right\}= Z_p[X^\bu,G]_{\mc{M}} $$ 
and this map is a surjection.
\end{enumerate}
\end{lemma}

\begin{proof}
Combining~\ref{def:Psiqp} and~\ref{lem:construction-of-sigma} yields a composed pullback square:
\diagram{ \Psi^p(G) \ar[r]^-{\zeta}\ar[d]^{\tau} &  \bar{\Omega}^{p-2}c(\Omega G)\ar[d]^{\sigma} \ar[r]^-{\alpha} & \Delta^{p-1}_0(cPG) \ar[d]^{e_1} \\
        \Delta^{p}_0(cG) \ar[r] & \Omega^{p-1}cG \ar[r] & \Delta^{p-1}_0(cG)}{combined-square-for-Psiqp} 
Examining this large square one deduces the characterization of maps into $\Psi^p(G)$ given in (1). 
Claim (2) is a direct consequence of (1). 

Since $G$ is a group object in \ho{\mc{M}} the isomorphism (2) yields that, for every Reedy cofibrant input $X^\bu$, the functor $[-,\Psi^p(G)]_{c\mc{M}^{\rm Reedy}}$ is group valued. The map $\tau$ is induced by the canonical map $C^pX^\bu\to\pitchfork^pX^\bu$. 
Since the homotopy category has finite products, we can use the Yoneda lemma, and deduce that $\Psi^p(G)$ is a group object in $\ho{c\mc{M}}_{{\rm Reedy}}$ and $\tau$ is a morphism of homotopy group objects. 

The adjunction isomorphism
  $$ \Hom_{\mc{M}_*}(C^p X^\bu, G)\cong\Hom_{c\mc{M}}\bigl(X^\bu,\Delta^{p}_0(cG)\bigr) ,$$
induces an isomorphism
  $$ [C^pX^\bu,G]_{\mc{M}_*}\cong [X^\bu,\Delta^{p}_0(cG)]_{c\mc{M}^{\rm Reedy}}. $$
Since $\Delta^{p-1}_0(cPG)$ is Reedy contractible, the Reedy homotopy fiber sequence
  $$\Psi^p(G) \xrightarrow{\tau} \Delta_0^p(cG) \xrightarrow{d^0} \Delta_0^{p-1}(cG) $$
induces a long exact sequence after applying $[X^\bu, -]_{c\mc{M}^{\rm Reedy}}$. Using~\ref{lem:spiral1}(a), this long exact sequence is the one stated in (3).
Therefore, $[X^\bu, \Psi^p(G)]$ surjects onto $Z_p[X^\bu, G]\cong\ker \{N_p[X^\bu, G] \xrightarrow{d_0} N_{p-1}[X^\bu, G]\}$ which proves (4). 
\end{proof}

Following the method of \cite{DKSt:bigraded}, the proof of the representability of the $E_2$-homotopy groups by the objects $\Psi^p(G)$ uses an explicit construction of an object $P' \Psi^p(G)$ which plays the role of a path object in the $\mc{G}$-model structure. 

\begin{definition}\label{def:PprimePsiG}
Let $p\ge 2$. We define $P' \Psi^p(G)$ by the following pullback square:
  $$ \xymatrix{  P' \Psi^p(G) \ar[r]^-{\psi} \ar[d] & \Psi^p(G) \ar[d]^{\tau} \\ 
                 \Delta^{p+1}_0(cG) \ar[r]^{d^0} & \Delta^{p}_0(c G). } $$
\end{definition}

\begin{remark}
With some care, one can show that $P'\Psi^p(G)$ also fits into the Reedy homotopy pullback square:
  $$ \xymatrix{ P'\Psi^p(G) \ar[r]\ar[d] & \Delta^{p}_0(c\Path(G)) \ar[d]^{({\rm ev}_0,{\rm ev}_1)} \\ 
                \Psi^p(G) \times \Delta^{p+1}_0(cG) \ar[r]^-{(\tau,d^0)} & \Delta^{p}_0(cG)\times\Delta^{p}_0(cG) } $$
This is analogous to the construction of the corresponding object in~\cite[Proposition 7.5]{DKSt:bigraded}. We will not need this description.
\end{remark}

\begin{lemma}\label{lem:P-prime-Psi}
There is an isomorphism $P'\Psi^p(G)\cong\Delta^{p-1}_0(c\Omega G)\times\Delta^{p+1}_0(cG)$.
\end{lemma}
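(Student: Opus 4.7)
The proof has three steps: recognize $P'\Psi^p(G)$ as a homotopy equalizer, exhibit a null-homotopy of $d^0$, and compute the resulting homotopy fiber.

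First, Diagram (\ref{diagram:def-PprimePsiG}) is a homotopy pullback in $c\mc{M}_*^{\mathrm{Reedy}}$: since $\text{Path}(G)$ is a path object, $({\rm ev}_0,{\rm ev}_1):\text{Path}(G)\to G\times G$ is a fibration in $\mc{M}$, so $c({\rm ev}_0,{\rm ev}_1)$ is a Reedy fibration, and the right Quillen functor $\olhom(\Delta^p_0,-)$ preserves this. All four objects are Reedy fibrant, so by Remark \ref{rem:hopull-criterium} the strict pullback is the homotopy pullback. Because $\Delta^p_0(c\text{Path}(G))$ is obtained from a path object in $\mc{M}$ by applying the right Quillen functor $\olhom(\Delta^p_0,c(-))$, it is a Reedy path object for $\Delta^p_0(cG)$; consequently the homotopy pullback models the homotopy equalizer of the two maps $\tau\circ\text{pr}_1,\, d^0\circ\text{pr}_2 : \Psi^p(G)\times\Delta^{p+1}_0(cG)\rightrightarrows\Delta^p_0(cG)$.

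Second, $d^0:\Delta^p_0\to\Delta^{p+1}_0$ is null-homotopic as a map of pointed simplicial sets. Indeed $\Delta^{p+1}$ is the simplicial cone on its $0$-th face $d^0\Delta^p$ with apex the vertex $0$; the standard prism decomposition yields a simplicial homotopy $H:\Delta^p\times\Delta^1\to\Delta^{p+1}$ from the constant map at vertex $0$ to $d^0$. Vertex $0$ lies in every face $d^i\Delta^p$ for $i\geq 1$ and hence in $\Lambda^{p+1}_0$, so it becomes the basepoint of $\Delta^{p+1}_0$. Using the cosimplicial identity $d^0d^i=d^{i+1}d^0$ one verifies that $H$ carries $\Lambda^p_0\times\Delta^1$ into $\Lambda^{p+1}_0$, so $H$ descends to a pointed null-homotopy $\Delta^p_0\wedge\Delta^1_+\to\Delta^{p+1}_0$. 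Applying $\olhom(-,cG)$, which transports pointed simplicial homotopies via the external enrichment, yields a null-homotopy of $d^0:\Delta^{p+1}_0(cG)\to\Delta^p_0(cG)$ in $c\mc{M}_*^{\mathrm{Reedy}}$.

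Since $d^0$ is null-homotopic through the basepoint, the homotopy equalizer above is Reedy equivalent to the homotopy equalizer of $(\tau\circ\text{pr}_1,\, 0)$, which equals $\text{hfib}(\tau\circ\text{pr}_1)\simeq\text{hfib}(\tau)\times\Delta^{p+1}_0(cG)$. It remains to compute $\text{hfib}(\tau)$. From Diagram (\ref{combined-square-for-Psiqp}), $\tau$ is the pullback of $\sigma:\bar\Omega^{p-2}c(\Omega G)\to\Omega^{p-1}cG$ along the Reedy fibration $\Delta^p_0(cG)\to\Omega^{p-1}cG$ (itself coming from the cofibration $d_0:S^{p-1}\to\Delta^p_0$ and Reedy-fibrant $cG$); and by Lemma \ref{lem:construction-of-sigma}, $\sigma$ is the pullback of the Reedy fibration $(e_1)_*:\Delta^{p-1}_0(cPG)\to\Delta^{p-1}_0(cG)$ along the map $\Omega^{p-1}cG\to\Delta^{p-1}_0(cG)$. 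Therefore $\sigma$, and hence $\tau$, is itself a Reedy fibration, and the fiber of $\tau$ at the basepoint coincides with that of $(e_1)_*$, namely $\olhom(\Delta^{p-1}_0,c\,\text{fib}(e_1))=\Delta^{p-1}_0(c\Omega G)$. Combining gives $P'\Psi^p(G)\simeq\Delta^{p-1}_0(c\Omega G)\times\Delta^{p+1}_0(cG)$. The main obstacle lies in the second step: one must carefully check that the cone homotopy descends to the horn quotient (via the cosimplicial identities) and then transport the resulting simplicial-set nullhomotopy through the pointed external cotensor to obtain a genuine Reedy nullhomotopy.
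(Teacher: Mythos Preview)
Your reduction to a homotopy equalizer and your computation of $\mathrm{hfib}(\tau)$ are both correct, and your verification that the cone homotopy descends to a pointed null-homotopy $\Delta^p_0\wedge\Delta^1_+\to\Delta^{p+1}_0$ in $\mc{S}_*$ is valid. The gap is in the passage from this simplicial-set null-homotopy to a \emph{Reedy} null-homotopy of $d^0:\Delta^{p+1}_0(cG)\to\Delta^p_0(cG)$. Applying $\olhom(-,cG)$ to the homotopy $\Delta^p_0\wedge\Delta^1_+\to\Delta^{p+1}_0$ produces a map into $\hom^{\mathrm{ext}}(\Delta^1,\Delta^p_0(cG))$, i.e.\ an \emph{external} simplicial homotopy. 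But the external simplicial structure is not compatible with the Reedy model structure (see the remark preceding Lemma~\ref{Reedy-ext-comp}); in particular $\hom^{\mathrm{ext}}(\Delta^1,B)$ is not a Reedy path object, since the diagonal $B^n\to (B^n)^{n+2}$ is not a weak equivalence. In fact $d^0$ and $0$ are not even degreewise homotopic: for $n\ge p$ the map $(\Delta^p_0)_n\to(\Delta^{p+1}_0)_n$ sends the class of a surjection $[n]\twoheadrightarrow[p]$ to a non-basepoint element, so in that degree $d^0$ is a nontrivial projection $G^k\to G$. Hence the step ``the homotopy equalizer of $(\tau\circ\mathrm{pr}_1,\,d^0\circ\mathrm{pr}_2)$ is Reedy equivalent to that of $(\tau\circ\mathrm{pr}_1,\,0)$'' does not follow.

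The paper's argument circumvents this by first replacing $\Psi^p(G)$ via its defining Reedy homotopy pullback (diagram~(\ref{combined-square-for-Psiqp})), thereby converting the homotopy limit of $\Psi^p(G)\xrightarrow{\tau}\Delta^p_0(cG)\xleftarrow{d^0}\Delta^{p+1}_0(cG)$ into the homotopy pullback of $\Delta^{p-1}_0(cPG)\to\Delta^{p-1}_0(cG)\leftarrow\Delta^{p+1}_0(cG)$, where the right-hand map is induced by the composite $d^0\circ d^0:\Delta^{p-1}_0\to\Delta^{p+1}_0$. By the cosimplicial identity $d^0d^0=d^1d^0$ this composite lands in $d^1\Delta^p\subset\Lambda^{p+1}_0$ and is therefore \emph{strictly} constant, so the pullback is a product on the nose. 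The extra unfolding is precisely what trades a merely null-homotopic map for a constant one.
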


\begin{proof}
Put the pullback squares from~\ref{def:PprimePsiG},~\ref{def:Psiqp}, and~\ref{lem:construction-of-sigma} together:
\begin{equation}\begin{split}\label{eqn:a-lot-of-pullbacks}
   \xymatrix{
P'\Psi^p(G) \ar[r]^-{\psi} \ar[d] & \Psi^p(G) \ar[r]^-{\zeta} \ar[d]^{\tau} & \bar{\Omega}^{p-2}(c \Omega G) \ar[d]^{\sigma}\ar[r]^-{\alpha} & \Delta^{p-1}_0(cPG) \ar[d]^{e_1} \\
\Delta^{p+1}_0(cG) \ar[r]^{d^0} & \Delta^{p}_0(cG) \ar[r]^{i_p^*} & \Omega^{p-1}(cG) \ar[r] & \Delta^{p-1}_0(cG) } \end{split}
\end{equation}
Now observe that the composition of the three lower maps is $d^0\circ d^0=\ast$.
It follows that $P'\Psi^p(G)$ is isomorphic to $\Delta^{p+1}_0(cG)\times\fib (e_1)\cong\Delta^{p+1}_0(cG)\times \Delta^{p-1}_0(c\Omega G)$. 
\end{proof}

\begin{lemma}\label{lem:psi-morphism-of-group-objects}
The map $\psi\co P'\Psi^p(G)\to\Psi^p(G)$ is a morphism of homotopy group objects in the Reedy model structure.
\end{lemma}

\begin{proof}
Analogously to Definition~\ref{def:pitchfork}, there is an adjoint construction to $P'\Psi^p(G)$, based on the pushout:
  $$\xymatrix{ C^{p-1}X^\bu\ar[r]^-{d^0}\ar[d] & C^pX^\bu\ar[r]^-{d^0}\ar[d] & C^{p+1}X^\bu \ar[d] \\                
               \cone(C^{p-1}X^\bu) \ar[r] & \pitchfork^pX^\bu \ar[r]^-{(d^0)'} & \cone'\pitchfork^pX^\bu  }  $$
with a canonical map $\pitchfork^pX^\bu \xrightarrow{(d^0)'} \cone'\pitchfork^pX^\bu$ that induces $\psi$.
An argument as in the proof of Lemma \ref{lem:what-is-Psip}(3) shows that $\psi$ is a morphism of homotopy group objects.
\end{proof}

\begin{lemma}\label{lem:P-prime-Psi-2}
The map $\psi$ is Reedy equivalent to a $\mc{G}$-fibration.
\end{lemma}

\begin{proof}
We extend the left part of~\ref{eqn:a-lot-of-pullbacks} to the following commutative diagram:
  $$ \xymatrix{
\Delta^{p+1}_0(cG) \times \Delta^{p-1}_0(c \Omega G) \ar[r]^-{q} \ar[d]_{\cong} & \Omega^p(cG) \times \Delta^{p-1}_0(c \Omega G) \ar[r] \ar[d]^{\rho} & \Delta^{p-1}_0(c \Omega G) \ar[d]^j \\
P'\Psi^p(G) \ar[r]^-{\psi} \ar[d] & \Psi^p(G) \ar[r]^-{\zeta} \ar[d]^{\tau} & \bar{\Omega}^{p-2}(c \Omega G) \ar[d]^{\sigma} \\
\Delta^{p+1}_0(cG) \ar[r]^-{d^0} & \Delta^{p}_0(c G) \ar[r]^-{i_p^*} & \Omega^{p-1}(cG) 
} $$
The map $j$ is the inclusion of $\fib(\sigma)\cong\fib(e_1)$ into the total space. Hence, the composite of the right vertical maps is constant. It follows that the top right square is a (Reedy homotopy) pullback (using Remark~\ref{rem:hopull-criterium}).

We claim that the map $\psi$ is Reedy equivalent to a \mc{G}-fibration. The top map $q$ is the product $(i_{p+1}^*,\mathrm{id})$, so it is a \mc{G}-fibration. 
Therefore it suffices to show that $\rho$ is Reedy equivalent to a \mc{G}-fibration. Composing with the Reedy equivalence $\omega$ (see Definition \ref{def:Path-P-Omegabar}), we obtain a diagram as follows:
$$
\xymatrix{
\Omega^p(cG) \times \Delta^{p-1}_0(c \Omega G) \ar[r]^(.7){c} \ar[rdd]_{\rho} & E \ar[r] \ar[dd]^{\rho'} &  \Delta^{p-1}_0(c \Omega G) \ar[d] \\
&& \bar{\Omega}^{p-2}(c \Omega G) \ar[d]^{\omega}_{\simeq} \\
& \Psi^p(G) \ar[r]^-{\omega \circ \zeta} & \Omega^{p-2}(c \Omega G) 
}
$$
where $E$ denotes the pullback of the square. The right vertical composition is $i_{p-1}^*$ and therefore it is a \mc{G}-fibration. It follows that the induced map $\rho'$ is a 
\mc{G}-fibration. Moreover, the square is a Reedy homotopy pullback by Remark~\ref{rem:hopull-criterium}. Since $\omega$ is a Reedy equivalence, the induced map $c$ (from the Reedy homotopy pullback of the top right square of the first diagram in this proof) to $E$ is a Reedy equivalence. This achieves the desired factorization of $\rho$ and completes the proof. 
\end{proof}

\begin{proposition}\label{lem:representing-object-for-bigraded-homotopy}
There are natural isomorphisms of $\mc{H}$-algebras
  $$ \pi_p[X^\bu,G]\cong [X^\bu,\Psi^p(G)]_{c\mc{M}^\mc{G}} $$
for all $p\ge 2$. 
\end{proposition}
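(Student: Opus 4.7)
The plan is to identify $\Psi^p(G)$ in the $\mc{G}$-resolution homotopy category as the homotopy fiber of a representative of the shift map, and then compare the resulting long exact sequence with the spiral exact sequence via the five-lemma. The Reedy computation of Lemma~\ref{lem:what-is-Psip} already gives us the cycles $Z_p[X^\bu,G]$; the task is to recover the quotient by boundaries after localizing at $\mc{G}$-equivalences.

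First I would observe that the defining pullback of $\Psi^p(G)$ in Definition~\ref{def:Psiqp} is already a Reedy homotopy pullback: the map $i_p^*\co\Delta^p_0(cG)\to\Omega^{p-1}cG$ is a Reedy fibration (it is induced by the cofibration of pointed simplicial sets $\Delta^{p-1}/\partial\Delta^{p-1}\to\Delta^p_0$ and a Reedy fibrant target), and all four objects are Reedy fibrant. Since $\Delta^p_0$ is a contractible pointed simplicial set, the cotensor $\Delta^p_0(cG)$ is Reedy (and hence $\mc{G}$-) weakly trivial, so taking the homotopy pullback in the $\mc{G}$-resolution model structure gives a $\mc{G}$-homotopy fiber sequence
\[
   \Psi^p(G)\to\bar\Omega^{p-2}c(\Omega G)\stackrel{\sigma}{\longrightarrow}\Omega^{p-1}cG.
\]
By Definition~\ref{def:sigma-ss} and Lemma~\ref{lem:shiftmap-revisited}, $\sigma$ represents (via the Reedy equivalence $\omega$) the shift map $\ss_{p-2}\co \Omega^{p-2}c(\Omega G)\to\Omega^{p-1}cG$.

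Next I would apply the derived mapping space $\map^{\ext}(X^\bu,-)$, which preserves $\mc{G}$-homotopy pullbacks since $X^\bu$ is Reedy cofibrant. Using the standard identification $\pi_n\map^{\ext}(X^\bu,Y^\bu)\cong [X^\bu,\Omega^n_{\ext}Y^\bu]_{c\mc{M}^{\mc{G}}}$ and $\naturalpi{s}{X^\bu}{\Omega^q G}\cong\pi_0\map^{\ext}(X^\bu,\Omega^s_{\ext}c\Omega^q G)$, the Puppe-style long exact sequence of the fiber sequence reads
\begin{align*}
  \cdots\to\naturalpi{p-1}{X^\bu}{\Omega G}\stackrel{s_{p-1,1}}{\longrightarrow}\naturalpi{p}{X^\bu}{G}\to[X^\bu,\Psi^p(G)]_{c\mc{M}^{\mc{G}}} & \\
   \to\naturalpi{p-2}{X^\bu}{\Omega G}\stackrel{s_{p-2,1}}{\longrightarrow}\naturalpi{p-1}{X^\bu}{G}&,
\end{align*}
where the identification of the connecting maps with the shift maps of the spiral sequence comes directly from Lemma~\ref{lem:shiftmap-revisited} (applied to $\sigma$ and $\Omega\sigma$).

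Finally I would invoke the spiral exact sequence of Theorem~\ref{thm:how--spiral-exact-seq-looks}, place it side by side with the long exact sequence above, and construct a natural comparison map. The natural candidate is obtained by sending a Reedy homotopy class in $[X^\bu,\Psi^p(G)]_{c\mc{M}^{\rm Reedy}}\cong Z_p[X^\bu,G]$ (from Lemma~\ref{lem:what-is-Psip}) to its class in $\pi_p[X^\bu,G]$; this descends to the $\mc{G}$-homotopy classes because the image is detected by the same shift map on either side, and naturality in $G$ makes it a morphism of $\mc{H}$-algebras. The five-lemma then yields the natural isomorphism $[X^\bu,\Psi^p(G)]_{c\mc{M}^{\mc{G}}}\cong\pi_p[X^\bu,G]$.

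The main technical obstacle will be checking that the defining pullback square of $\Psi^p(G)$ really is a $\mc{G}$-homotopy pullback (so that $\map^{\ext}(X^\bu,-)$ produces the expected long exact sequence), together with the identification of the induced boundary maps as $s_{p-1,1}$ and $s_{p-2,1}$; the first relies on $i_p^*$ being a Reedy (not merely $\mc{G}$-) fibration between Reedy fibrant objects, and the second on carefully tracking the zig-zag $\omega\co\bar\Omega^{p-2}c(\Omega G)\stackrel{\simeq}{\to}\Omega^{p-2}c(\Omega G)$ used to define $\ss_{p-2}$.
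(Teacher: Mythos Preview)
Your first two steps are correct: the defining pullback of $\Psi^p(G)$ is indeed a $\mc{G}$-homotopy pullback (the map $i_p^*$ is a $\mc{G}$-fibration between $\mc{G}$-fibrant objects), $\Delta^p_0(cG)$ is $\mc{G}$-contractible, and so $\map^{\ext}(X^\bu,-)$ produces the long exact sequence you write down, with the shift maps at the ends identified via Lemma~\ref{lem:shiftmap-revisited}. This is a genuinely different route from the paper's, and in fact the paper records exactly this fiber-sequence picture \emph{a posteriori} in Corollary~\ref{derived-fiber-sequence}.

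The gap is in your third step. To run the five-lemma you need an actual map of exact sequences, with commuting squares, between your fiber-sequence long exact sequence and the spiral exact sequence. Your candidate is the composite
\[
   Z_p[X^\bu,G]\;\cong\;[X^\bu,\Psi^p(G)]_{c\mc{M}^{\rm Reedy}}\longrightarrow \pi_p[X^\bu,G],
\]
but this goes the wrong way: both $\pi_p[X^\bu,G]$ and $[X^\bu,\Psi^p(G)]_{c\mc{M}^{\mc{G}}}$ are \emph{quotients} of $Z_p[X^\bu,G]$, and you have not shown that one kernel is contained in the other. The sentence ``this descends to the $\mc{G}$-homotopy classes because the image is detected by the same shift map on either side'' is not an argument; knowing that two five-term exact sequences share the outer four terms and the two outer maps only tells you that the middle terms are extensions of the same kernel by the same cokernel, not that they are naturally isomorphic. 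You would still need to exhibit a map making the two inner squares commute, and nothing in your outline does that.

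The paper avoids this problem by computing the kernel of $Z_p[X^\bu,G]\twoheadrightarrow[X^\bu,\Psi^p(G)]_{c\mc{M}^{\mc{G}}}$ directly. It builds an explicit ``path object'' $P'\Psi^p(G)$ (Lemmas~\ref{lem:P-prime-Psi} and~\ref{lem:P-prime-Psi-2}) and shows two things: that $[X^\bu,P'\Psi^p(G)]_{c\mc{M}^{\rm Reedy}}$ surjects onto $N_{p+1}[X^\bu,G]$ via the Reedy equivalence $P'\Psi^p(G)\simeq\Delta^{p-1}_0(c\Omega G)\times\Delta^{p+1}_0(cG)$, and that the image of $\psi_*$ is exactly the kernel of the localization map. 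This yields a map of short exact sequences
\[
  \xymatrix{
  [X^\bu,P'\Psi^p(G)]_{\rm Reedy}\ar[r]^{\psi_*}\ar@{->>}[d] & Z_p[X^\bu,G]\ar[r]\ar[d]^{\cong} & [X^\bu,\Psi^p(G)]_{\mc{G}}\ar@{-->}[d]\\
  N_{p+1}[X^\bu,G]\ar[r]^{d_0} & Z_p[X^\bu,G]\ar[r] & \pi_p[X^\bu,G]
  }
\]
and the induced map on cokernels is the desired natural isomorphism. If you want to salvage your approach, the missing ingredient is precisely to show that every boundary $d_0(\alpha)\in Z_p$ becomes $\mc{G}$-nullhomotopic as a map $X^\bu\to\Psi^p(G)$; once you have that, you get a well-defined map $\pi_p[X^\bu,G]\to[X^\bu,\Psi^p(G)]_{\mc{G}}$, and then checking the two inner squares commute and applying the five-lemma is straightforward. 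But that missing ingredient is exactly what Lemmas~\ref{lem:P-prime-Psi} and~\ref{lem:P-prime-Psi-2} provide.
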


\begin{proof} 
Since $\Psi^p(G)$ is $\mc{G}$-fibrant, the canonical map 
  $$\pi: [X^\bu,\Psi^p(G)]_{c\mc{M}^{\rm Reedy}} \to [X^\bu,\Psi^p(G)]_{c\mc{M}^\mc{G}}$$ 
is surjective and a group homomorphism by Lemma~\ref{lem:what-is-Psip}. 
By Lemma~\ref{lem:psi-morphism-of-group-objects}, $\psi$ induces a group homomorphism $\psi_*$ in the sequence 
  $$ [X^\bu, P'\Psi^p(G)]_{c\mc{M}^{\rm Reedy}}\xrightarrow{\psi_*} [X^\bu,\Psi^p(G)]_{c\mc{M}^{\rm Reedy}} \xrightarrow{\pi} [X^\bu,\Psi^p(G)]_{c\mc{M}^\mc{G}} \to 0.$$
Since $P'\Psi^p(G)$ is $\mc{G}$-trivial by Lemma~\ref{lem:P-prime-Psi}, one deduces that $\psi_*$ maps into the kernel of $\pi$. Since, moreover, $\psi$ is Reedy equivalent to a \mc{G}-fibration by Lemma~\ref{lem:P-prime-Psi-2}, it can be shown easily that $\psi_*$ maps onto the kernel of $\pi$. So the sequence is exact.

Recall the isomorphisms 
  $$ [X^\bu,\Delta^{p+1}_0(cG)]_{c\mc{M}^{\rm Reedy}}\cong [C^{p+1}X^\bu,G]_{\mc{M}_*}\cong N_{p+1}[X^\bu,G]_{\mc{M}}  $$
from~\ref{def:CnZn} and~\ref{lem:spiral1} for Reedy cofibrant $X^\bu$. 
They yield the following left hand square
  $$\xymatrix{ [X^\bu, P'\Psi^p(G)]_{c\mc{M}^{\rm Reedy}}\ar[r]^-{\psi_*}\ar[d] & [X^\bu,\Psi^p(G)]_{c\mc{M}^{\rm Reedy}}\ar[r]^-{\pi}\ar[d]_{\vartheta}^{{\rm \ref{lem:what-is-Psip}}} & [X^\bu,\Psi^p(G)]_{c\mc{M}^\mc{G}} \ar[r] \ar@{-->}[d]^{\vartheta'} & 0 \\
        N_{p+1}[X^\bu,G]_{\mc{M}}  \ar[r] &  Z_p[X^\bu,G]_{\mc{M}}\ar[r] & \pi_p[X^\bu,G] \ar[r] & 0  
}$$
where left vertical map is induced, using Lemma~\ref{lem:P-prime-Psi}, by the projection
  $$ P'\Psi^p(G)\cong \Delta^{p-1}_0(c\Omega G)\times\Delta^{p+1}_0(cG)\to\Delta^{p+1}_0(cG) $$
and the lower horizontal map is the canonical one.
Commutativity of this square follows since the left square in~\eqref{eqn:a-lot-of-pullbacks} commutes.
We prolong the square to the displayed morphism of exact sequences.
The map $\vartheta$ is surjective by Lemma~\ref{lem:what-is-Psip}. So the induced map $\vartheta'$ between cokernels is surjective as well. 

The left vertical map is surjective because it is a projection. We are going to prove $\ker\vartheta\subset\im\psi_*$. First, one notes $\ker\vartheta=\ker\tau_*$ where $\tau_*$ is part of the long exact sequence
  $$ N_{p-1}[X^\bu,\Omega G]_{\mc{M}}\xrightarrow{\mathfrak{b}} [X^\bu,\Psi^p(G)]_{c\mc{M}^{\rm Reedy}} \xrightarrow{\tau_*} N_p[X^\bu, G] \xrightarrow{d^0} N_{p-1}[X^\bu, G] $$
as shown in Lemma~\ref{lem:what-is-Psip}. Then the left square in~\eqref{eqn:a-lot-of-pullbacks} yields $\ker\tau_*=\im\mathfrak{b}\subset\im\psi_*$.
Finally, a short diagram chase shows that $\vartheta'$ is also injective.
\end{proof}

We now have to promote the isomorphism of \mc{H}-algebras to one of $\pi_0$-modules.
\begin{lemma}\label{lem:pi0-for-bigraded-Psi}
For all $p\ge 2$, the isomorphism from \emph{Proposition~\ref{lem:representing-object-for-bigraded-homotopy}}
  $$ \pi_p[X^\bu,G]\cong [X^\bu,\Psi^p(G)]_{c\mc{M}^\mc{G}} $$
is an isomorphism of $\pi_0$-modules.
\end{lemma}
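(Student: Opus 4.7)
The strategy is to upgrade the proof of Proposition~\ref{lem:representing-object-for-bigraded-homotopy} to a morphism of split short exact sequences of \mc{H}-algebras, from which the $\pi_0$-module compatibility follows automatically. I would first endow $[X^\bu,\Psi^p(G)]_{c\mc{M}^{\mc{G}}}$ with its $\pi_0$-module structure using a trivial bundle construction: since $\Psi^p(G)$ is pointed and $cG$ is an abelian group object in $\ho{c\mc{M}^{\mc{G}}}$ (by Assumption~\ref{strict-unit} and Lemma~\ref{lem:c-preserves-G-inj-fib}), the diagram
$$
\xymatrix@C=30pt{\Psi^p(G) \ar[r]^-{(\id,0)} & \Psi^p(G)\times cG \ar[r]^-{\mathrm{pr}} & cG}
$$
with section induced by the basepoint makes $\Psi^p(G)$ into a trivial $cG$-module object in $\ho{c\mc{M}^{\mc{G}}}$. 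Applying $[X^\bu,-]_{c\mc{M}^{\mc{G}}}$ yields a split short exact sequence of \mc{H}-algebras, exactly analogous to Examples~\ref{exam:action-on-natural} and~\ref{exam:action-on-bigraded}.

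Second, I would run the argument of Proposition~\ref{lem:representing-object-for-bigraded-homotopy} over the base $cG$: each object in the proof is replaced by its product with $cG$, and each map by its product with $\id_{cG}$. The Reedy equivalence $P'\Psi^p(G) \simeq \Delta^{p-1}_0(c\Omega G)\times \Delta^{p+1}_0(cG)$ from Lemma~\ref{lem:P-prime-Psi}, the factorization of $\psi$ as Reedy equivalent to a \mc{G}-fibration (Lemma~\ref{lem:P-prime-Psi-2}), and the identification via Lemma~\ref{lem:what-is-Psip} all extend verbatim over $cG$ because they arise from constructions preserving the canonical basepoints. Taking cokernels as in the original proof then produces the commutative diagram
$$
\xymatrix@C=18pt{
0 \ar[r] & [X^\bu,\Psi^p(G)]_{c\mc{M}^{\mc{G}}} \ar[r] \ar[d]^-{\cong} & [X^\bu,\Psi^p(G)\times cG]_{c\mc{M}^{\mc{G}}} \ar[r] \ar[d]^-{\cong} & \pi_0 \ar@{=}[d] \ar[r] & 0 \\
0 \ar[r] & \pi_p[X^\bu,G] \ar[r] & \pi_0\Omega^p_+[X^\bu,G] \ar[r] & \pi_0 \ar[r] & 0
}
$$
whose bottom row is the split extension of Example~\ref{exam:action-on-bigraded}. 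The middle vertical isomorphism decomposes as the product of the two outer isomorphisms via Proposition~\ref{lem:representing-object-for-bigraded-homotopy} together with $[X^\bu, A \times B]_{c\mc{M}^{\mc{G}}} \cong [X^\bu, A]_{c\mc{M}^{\mc{G}}} \times [X^\bu, B]_{c\mc{M}^{\mc{G}}}$.

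Third, commutativity of the right-hand square reduces to the observation that both splittings are induced by the basepoint $*\to G$: on the top row by the canonical pointing $cG \to \Psi^p(G) \times cG$, and on the bottom row by the split cofiber sequence $S^0 \to (\Delta^p/\partial\Delta^p)_+ \to \Delta^p/\partial\Delta^p$ from Example~\ref{exam:action-on-bigraded}. Both describe the constant (zero) section at the basepoint of $G$, and naturality of the representability isomorphism in $G\in\mc{G}$ identifies them. This will show that the isomorphism of Proposition~\ref{lem:representing-object-for-bigraded-homotopy} is an isomorphism of $\pi_0$-modules, as required.

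The main obstacle is the bookkeeping in the second step: verifying that the Reedy equivalence $P'\Psi^p(G)\simeq \Delta^{p-1}_0(c\Omega G)\times\Delta^{p+1}_0(cG)$ from the proof of Lemma~\ref{lem:P-prime-Psi} is compatible with the projections to $cG$ and with all the intermediate comparisons with the diagrams~(\ref{diagram:homotopy-limit1}) and~(\ref{diagram:homotopy-limit2}). Because every object involved is pointed and every comparison is induced by a morphism of pointed objects, no nontrivial twist can arise, and the verification reduces to a routine diagram chase.
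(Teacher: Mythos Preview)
Your proposal is correct and lands on the same commutative diagram of split short exact sequences that the paper uses. The paper's execution is considerably more direct, however: rather than re-running the entire proof of Proposition~\ref{lem:representing-object-for-bigraded-homotopy} over the base $cG$ and then observing that the middle isomorphism decomposes as a product, the paper simply \emph{defines} the middle map $\kappa_+$ component-wise as $\kappa\times h_0$, using the explicit product splitting $\pi_0\Omega^p_+[X^\bu,G]\cong\pi_p[X^\bu,G]\times\pi_0[X^\bu,G]$ from Example~\ref{exam:action-on-bigraded} and the analogous splitting of the top row. Since both extensions are trivially split as products of \mc{H}-algebras (naturally in $G$), this immediately yields a morphism of Beck modules, and the bookkeeping you flag as the main obstacle---compatibility of the constructions in Lemmas~\ref{lem:P-prime-Psi} and~\ref{lem:P-prime-Psi-2} over $cG$---never needs to be carried out.
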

\begin{proof}
First, we need to explain the $\pi_0$-module structure on the right. This is obtained from a split \mc{G}-fiber sequence 
\begin{equation} \label{split-for-Psi+} 
\Psi^p(G) \to \Psi^p_+(G) \stackrel{\curvearrowleft}{\longrightarrow} cG.
\end{equation}
The total object $\Psi^p_+(G)$ is defined by the following pullback:
\begin{equation}\begin{split}\label{def:psiplus}
  \xymatrix{ \Psi^p_+(G) \ar[r]^{\zeta_+}\ar[d] & P_4 \ar[r]\ar[d] & P_3 \ar[d]^{e_1} \\
               (cG)^{\Delta^p_0} \ar[r] & (cG)^{S^p} \ar[r] & (cG)^{\Delta^{p-1}_0}}
\end{split}
\end{equation}
This diagram is obtained from Diagram~\eqref{combined-square-for-Psiqp} and the terms $P_4$ and 
  $$ P_3\simeq cG\times_{(cG)^{\Delta^{p-1}_0}}c\Path(G)^{\Delta^{p-1}_0}\simeq cG $$
were constructed in the proof of Corollary~\ref{cor:shift-is-pi-0}. The map on the right is induced by ${\rm ev}_1\co\Path(G)\to G$. In this way we obtain the required fiber sequence \eqref{split-for-Psi+} and therefore an associated $\pi_0$-module structure on $[X^\bu, \Psi^p(-)]$. 

Our task is to check that this module structure agrees with the $\pi_0$-module structure on $\pi_p[X^\bu,G]$ as defined in Example~\ref{exam:action-on-bigraded}. To do so we start by discussing the 
necessary analogues of Lemma~\ref{lem:what-is-Psip}(3) and (4) for 
the object $\Psi^p_+(G)$. Consider the Mayer-Vietoris long exact sequence for the functor $[X^\bu,-]_{c\mc{M}^{\rm Reedy}}$ associated to the composite pullback square~\ref{def:psiplus}:
$$
\dots \to [X^\bu, \Psi^p_+(G)] \to [X^\bu, cG^{\Delta^p_0}] \oplus [X^\bu, cG] \stackrel{\eta}{\to} [X^\bu, cG^{\Delta^{p-1}_0}] \to \dots $$
where each term denotes homotopy classes of maps in the Reedy model structure. Let $Z_{p+}[X^\bu, G] : = {\rm ker}(\eta)$. Then we have a
surjective homomorphism 
$$\theta_+ \colon  [X^\bu, \Psi^p_+(G)] \to Z_{p+}[X^\bu, G]$$
which is compatible with the surjective homomorphism $\theta$ from Lemma~\ref{lem:what-is-Psip} and with the projections onto $[X^\bu, cG]$. Note that:
$$[X^\bu, cG^{\Delta^p_0}] \cong N_p [X^\bu, G] \oplus [X^\bu, cG]$$ 
and therefore
$$Z_{p+}[X^\bu, G] \cong Z_p [X^\bu, G] \oplus [X^\bu, cG].$$ 
Similarly to the proof of Proposition \ref{lem:representing-object-for-bigraded-homotopy}, $\theta_+$ 
induces a homomorphism
$$\theta'_+ \colon [X^\bu, \Psi^p_+(G)]_{c \mc{M^G}} \to \pi_0 \Omega^p_+ [X^\bu, G]$$
which is again compatible with $\theta'$ and with the projections onto
$[X^\bu, cG]_{c \mc{M^G}}$, i.e., the following diagram commutes:
$$
\xymatrix{
0 \ar[r] & [X^\bu, \Psi^p(G)]_{c \mc{M^G}} \ar[r] \ar[d]^{\theta'}_{\cong} & [X^\bu, \Psi^p_+(G)]_{c \mc{M^G}} \ar[r] \ar[d]^{\theta'_+} & [X^\bu, cG]_{c \mc{M^G}} \ar[d]^{\cong} \ar[r] & 0 \\ 
0 \ar[r] & \pi_p[X^\bu, G] \ar[r] & \pi_0 \Omega^p_+ [X^\bu, G] \ar[r] & \pi_0[X^\bu, G] \ar[r] & 0
}
$$ 
This diagram is natural in $G$ and therefore the two $\pi_0$-module structures agree. 
\end{proof}

\begin{proposition}\label{derived-fiber-sequence}
The spiral exact sequence
  $$ \hdots\to \naturalpi{2}{X^\bu}{G}\to \pi_2[X^\bu,G]\to \naturalpi{0}{X^\bu}{\Omega G} \to \naturalpi{1}{X^\bu}{G}, $$
is obtained from the homotopy fiber sequence 
  $$ \Psi^2_0(G) \xrightarrow{\zeta} \bar\Omega^0c\Omega G \xrightarrow{\sigma} \Omega^1cG$$
by applying the functor $[X^\bu,-]_{c\mc{M}^\mc{G}}$.
\end{proposition}
\begin{proof}
Note first that this is indeed a homotopy fiber sequence in the resolution model category (see Definition~\ref{def:Psiqp})and therefore yields a long exact sequence after applying the functor $[X^\bu,-]_{c\mc{M}^{\mc{G}}}$. We compare this with the spiral exact sequence using the identification in Proposition~\ref{lem:representing-object-for-bigraded-homotopy}.

In order to make the comparison, we need to show a comparison map first. Equivalently, we need to show that the map $\zeta\co\Psi^p(G)\to \bar\Omega^{p-2}c\Omega G$ really induces the boundary map $b$ in the spiral exact sequence. Going back to the proof of Theorem~\ref{thm:how--spiral-exact-seq-looks}, we recall the isomorphism 
  $$ \pi_p[X^\bu,G]\cong\ker\{ [C^{p}X^\bu,G] \to [C^{p-1}X^\bu,G] \}/\im\{ [C^{p+1}X^\bu,G] \to [C^{p}X^\bu,G] \}.$$ 
The boundary map 
  $$b_{p,0}\co\pi_p[X^\bu,G]\to\naturalpi{p-2}{X^\bu}{\Omega G} $$ 
is induced (and well-defined) by the map $d^0\co Z^{p-1}X^\bu\to C^pX^\bu$ to 
  $$ \naturalpi{p-2}{X^\bu}{\Omega G}\cong\im\beta_{p-2,1}\cong\coker\delta_{p-1,1}\cong\ker\{[Z^{p-1}X^\bu,G] \to [C^{p-1}X^\bu,G]\}. $$
To see that this is induced by $\zeta$ we refine the diagram in Lemma~\ref{lem:psi-morphism-of-group-objects} using Diagram~\eqref{eqn:a-lot-of-pullbacks}: let the left hand side in
  $$\xymatrix{ C^{p-1}X^\bu\ar[r]\ar[d] & Z^{p-1}X^\bu \ar[r]\ar[d] & C^pX^\bu \ar[d] \\                
               \cone(C^{p-1}X^\bu) \ar[r] & \Sigma\bar\Sigma^{p-2}X^\bu \ar[r]^-{\zeta^\dagger} & \pitchfork^pX^\bu }  $$
be a pushout. It is easy to see that $\zeta^\dagger$ and $\zeta$ are adjoint to each other. Unravelling the meaning of these pushouts, it follows that $\zeta^\dagger$ induces $b$.
\end{proof}

\begin{corollary}\label{cor:boundary-is-pi-0}
The boundary map $b$ in the spiral exact sequence is a $\pi_0$-module map.
\end{corollary}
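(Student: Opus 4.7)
I will follow the template established by the proofs of Corollaries~\ref{cor:Hur-is-pi-0} and~\ref{cor:shift-is-pi-0}: exhibit a morphism of split $\mc{G}$-fibration sequences over $cG$ which, after applying $[X^\bu,-]_{c\mc{M}^{\mc{G}}}$, realizes $b$ as a morphism of $\pi_0$-modules.

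First, by Corollary~\ref{derived-fiber-sequence} together with Proposition~\ref{lem:representing-object-for-bigraded-homotopy} and Lemma~\ref{lem:pi0-for-bigraded-Psi}, each boundary map $b_{p,q}\co \pi_p[X^\bu,\Omega^q G] \to \naturalpi{p-2}{X^\bu}{\Omega^{q+1}G}$ is induced by applying $[X^\bu,-]_{c\mc{M}^{\mc{G}}}$ to the map $\delta\co \Psi^p_q(G) \to \bar{\Omega}^{p-2}c\Omega^{q+1}G$ of Definition~\ref{def:Psiqp}, up to the Reedy equivalence $\omega\co \bar{\Omega}^{p-2}c\Omega^{q+1}G \stackrel{\simeq}{\longrightarrow} \Omega^{p-2}c\Omega^{q+1}G$ of Definition~\ref{def:Path-P-Omegabar}.

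Next I would assemble $\delta$ into the following diagram of trivial bundles over $cG$ equipped with the zero sections:
\diagr{
\Psi^p_q(G) \ar[d]\ar[r]^-{\delta} & \bar{\Omega}^{p-2}c\Omega^{q+1}G \ar[d] \\
\Psi^p_q(G)\times cG \ar[d]_{\mathrm{pr}}\ar[r]^-{\delta\times\id} & \bar{\Omega}^{p-2}c\Omega^{q+1}G\times cG \ar[d]_{\mathrm{pr}} \\
cG \ar@{=}[r]\ar@/_8pt/[u]_-{(0,\id)} & cG \ar@/_8pt/[u]_-{(0,\id)}
}
Both columns are split $\mc{G}$-homotopy fibration sequences over $cG$. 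Applying $[X^\bu,-]_{c\mc{M}^{\mc{G}}}$ to the left column recovers, via Lemma~\ref{lem:pi0-for-bigraded-Psi}, the split short exact sequence of $\pi_0$-modules that defines the $\pi_0$-action on $\pi_p[X^\bu,\Omega^qG]$; applying it to the right column, and conjugating by the isomorphism induced by $\omega$, recovers the corresponding split short exact sequence of Example~\ref{exam:naturalpi-with-Omega} defining the $\pi_0$-action on $\naturalpi{p-2}{X^\bu}{\Omega^{q+1}G}$. The induced map on kernels of these two split extensions is precisely $b_{p,q}$ and is a $\pi_0$-module map by construction.

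The only substantive point to verify is that the $\pi_0$-module structure transported across $\omega \times \id$ coincides with the one coming directly from $\bar{\Omega}^{p-2}c\Omega^{q+1}G \times cG$; this compatibility is immediate since $\omega$ is a $\mc{G}$-equivalence commuting with the projections onto $cG$ and with the zero sections. Modulo this straightforward bookkeeping, the corollary then follows at once from the existence of the displayed diagram.
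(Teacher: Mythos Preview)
Your proof is correct and follows essentially the same approach as the paper: both exhibit the obvious morphism of split $\mc{G}$-homotopy fiber sequences given by $(\delta,\id)\co \Psi^p(G)\times cG \to \bar{\Omega}^{p-2}c\Omega G\times cG$ over $cG$. You supply more justification (the references to Corollary~\ref{derived-fiber-sequence}, Lemma~\ref{lem:pi0-for-bigraded-Psi}, and the compatibility via $\omega$), while the paper simply writes down the diagram and leaves these points implicit.
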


\begin{proof}
In Proposition~\ref{derived-fiber-sequence}, we identified the boundary map $b$ with the map induced by $\zeta\co\Psi^p(G)\to \bar\Omega^{p-2}c\Omega G$. Moreover, we showed in the proof of Lemma~\ref{lem:pi0-for-bigraded-Psi}, precisely in Diagram~\eqref{def:psiplus}, that 
this map is induced by a map of split fiber sequences over $cG$,
  $$\zeta_+\co\Psi^p_+(G)\to P_4.$$
These results imply the required description of $b$ as a $\pi_0$-module 
map.
\end{proof}

Together, Corollaries~\ref{cor:Hur-is-pi-0}, \ref{cor:shift-is-pi-0}
 and \ref{cor:boundary-is-pi-0} imply
\begin{theorem}\label{thm:pi-0-module-structure}
The spiral exact sequence is a long exact sequence of $\pi_0$-modules.
\end{theorem}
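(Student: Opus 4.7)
The plan is to recognize that this theorem is essentially an assembly statement: the three types of morphisms appearing in the spiral exact sequence have already been individually identified as $\pi_0$-module maps, and the $\pi_0$-module structures on the three types of terms have been exhibited explicitly in the preceding subsection.

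First, I would verify that every term appearing in the spiral exact sequence has been endowed with a $\pi_0$-module structure where $\pi_0 := \naturalpi{0}{X^\bu}{G}$. The terms of the form $\naturalpi{p}{X^\bu}{G}$ carry such a structure by Example~\ref{exam:action-on-natural}, coming from the split $\mc{G}$-fibration $(cG)^{\Delta^p/\partial\Delta^p} \to cG$ induced by the abelian group object structure on $G$ guaranteed by Assumption~\ref{strict-unit}. The terms of the form $\naturalpi{p}{X^\bu}{\Omega G}$ carry a $\pi_0$-module structure via Example~\ref{exam:naturalpi-with-Omega}, using the trivial bundle over $cG$ with fiber $\Omega^pc\Omega G$. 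Finally, the bigraded terms $\pi_p[X^\bu,G]$ carry a $\pi_0$-module structure via Example~\ref{exam:action-on-bigraded}, extracted from the split fibration sequence (\ref{eqn:action-on-bigraded}) of simplicial abelian groups.

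The next step is simply to observe that the only maps appearing in the spiral exact sequence (Theorem~\ref{thm:how--spiral-exact-seq-looks}) are the Hurewicz maps $h_{p,q}$, the shift maps $s_{p,1}$ (with $q=1$), and the boundary maps $b_{p,q}$. These have been shown to be $\pi_0$-module morphisms in Corollaries~\ref{cor:Hur-is-pi-0}, \ref{cor:shift-is-pi-0}, and~\ref{cor:boundary-is-pi-0} respectively. In each of these corollaries the key argument is the construction of a morphism of split $\mc{G}$-homotopy fiber sequences over $cG$, which after applying $[X^\bu,-]_{c\mc{M}^{\mc{G}}}$ yields compatibility with the $\pi_0$-action.

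The only real content left is to check that the $\pi_0$-module structures used in the three corollaries coincide with the ones specified in the preceding examples, so that the module structure transported across one map agrees with the module structure used at the next stage of the sequence. This is a bookkeeping check on the zigzag $\Omega^pc\Omega G \stackrel{\simeq}{\leftarrow} \bar\Omega^pc\Omega G \to \Omega^{p+1}cG$ from Definition~\ref{def:sigma-ss} and on the representing object $\Psi^p(G)$ from Definition~\ref{def:Psiqp}, both of which are already equipped with compatible projections onto $cG$ by construction. The one subtle point, which I expect to be the main obstacle, is confirming that the isomorphism of Proposition~\ref{lem:representing-object-for-bigraded-homotopy} identifying $\pi_p[X^\bu,G]$ with $[X^\bu,\Psi^p(G)]_{c\mc{M}^{\mc{G}}}$ transports the module structure of Example~\ref{exam:action-on-bigraded} (coming from free-loop spaces of simplicial groups) to the module structure arising from the natural projection $\Psi^p(G)\times cG \to cG$; this is precisely the content of Lemma~\ref{lem:pi0-for-bigraded-Psi}. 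Once these identifications are in place, the theorem follows by simply combining the three corollaries.
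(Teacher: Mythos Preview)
Your proposal is correct and follows exactly the paper's approach: the paper's proof is the single sentence ``Together, Corollaries~\ref{cor:Hur-is-pi-0}, \ref{cor:shift-is-pi-0} and~\ref{cor:boundary-is-pi-0} imply'' the theorem. Your write-up is a faithful and more explicit unpacking of that sentence, including the bookkeeping via Lemma~\ref{lem:pi0-for-bigraded-Psi} that the paper leaves implicit.
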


\subsection{The spiral spectral sequence}\label{subsec:sss}
The spiral exact sequence can be derived further and yields the {\it spiral spectral sequence}.

\begin{corollary}
The spiral spectral sequence takes the form
\begin{align*}
   E^2_{p,q}(G)=\pi_{p}[X^\bu,\Omega^q G]\ \ \text{ and }\ \ d_2^{p,q}(G)\co\pi_{p}[X^\bu,\Omega^q G]\to\pi_{p-2}[X^\bu,\Omega^{q+1} G]
\end{align*}
The differential $d_2^{p,q}$ is a morphism of \mc{H}-algebras. The terms $E^r_{p,q}$ form $\pi_0$-modules and $d^r_{p,q}$ are morphisms of 
$\pi_0$-modules. If every $G$ admits deloopings $\Omega^qG$ in \mc{G}, for $q<0$, then the spectral sequence converges strongly to $\colim_k\naturalpi{p+k}{X^\bu}{\Omega^{q-k}G}$.
\end{corollary}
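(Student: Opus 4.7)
The plan is to iterate the derived-couple construction on the exact couple (\ref{spiral-les}) and verify each claimed structural property at each step.

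First I would identify the $E^2$-page. By Lemma~\ref{lem:spiral1}(1)--(2) there is a natural isomorphism of cosimplicial abelian groups
$$[C^\bu X^\bu,\Omega^qG]_{\mc{M}_*}\cong N_\bu[X^\bu,\Omega^qG]_{\mc{M}}$$
under which the $d_1$-differential $\gamma\circ\delta$ corresponds to the simplicial face differential $d_0$. Taking cohomology in the cosimplicial direction then gives $E^2_{p,q}\cong\pi_p[X^\bu,\Omega^qG]$. To compute the bidegree of $d_2$ I would track bidegrees through one derivation: in the original couple, $\delta$, $\gamma$ and $\beta$ have bidegrees $(0,0)$, $(-1,0)$ and $(1,-1)$ respectively. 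Upon deriving, $\gamma_2$ is defined on images of $\beta$ and so unwinds one power of $\beta$, acquiring bidegree $(-2,1)$, while $\delta_2$ retains bidegree $(0,0)$. Hence $d_2=\gamma_2\circ\delta_2$ has bidegree $(-2,1)$, as stated.

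Next I would establish the $\mc{H}$-algebra and $\pi_0$-module structures. By Example~\ref{example:K-algebras}(2), every term of the exact couple is an $\mc{H}$-algebra-valued functor of $G\in\mc{G}$, and the maps $\beta$, $\gamma$, $\delta$ are natural transformations, hence $\mc{H}$-algebra morphisms. Since kernels, images and quotients of $\mc{H}$-algebra morphisms inherit the structure, each derived couple and its differential lies in $\mc{H}$-algebras. For the $\pi_0$-module structure, Theorem~\ref{thm:pi-0-module-structure} establishes the claim for the first derived couple; a straightforward induction on $r$ then extends it to every page, since each derivation step involves only $\pi_0$-module operations (formation of images and quotients, and the $\beta$-unwinding in defining $\gamma_{r+1}$).

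The main obstacle will be convergence. Under the hypothesis that $\mc{G}$ is closed under deloopings, the iterated shift maps $s\co\naturalpi{n-1}{X^\bu}{\Omega^{q+1}G}\to\naturalpi{n}{X^\bu}{\Omega^qG}$ --- which correspond to $\beta$ in the derived couple --- can be iterated indefinitely, producing an unrolled exact couple whose $D_2$-tower has colimit $\colim_k\naturalpi{p+k}{X^\bu}{\Omega^{q-k}G}$ at the appropriate bidegree. Standard exact-couple convergence theory then identifies $E_\infty^{p,q}$ with the associated graded of the exhaustive filtration on this colimit given by the images of the natural homotopy groups under the iterated shifts. The principal technical obstacle I anticipate is promoting this to \emph{strong} convergence, which reduces to verifying a Mittag-Leffler-type condition on the $D_2$-tower; I expect this to follow from the structure of the spiral exact sequence together with the connectivity implicit in the delooping assumption, but it is the point where the argument will require the most care.
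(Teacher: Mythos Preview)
Your treatment of the $E^2$-page, the bidegree computation, and the $\mc{H}$-algebra and $\pi_0$-module structures is correct and essentially matches the paper (which cites Corollary~\ref{derived-fiber-sequence} and Theorem~\ref{thm:pi-0-module-structure} for the latter two, but your direct argument via naturality in $G$ and induction on $r$ is equally valid).

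The gap is in the convergence step, and your diagnosis of the difficulty is off. You anticipate needing a Mittag-Leffler condition on the $D_2$-tower, but that criterion is relevant for spectral sequences with \emph{entering} differentials converging to an inverse limit. Here the situation is the opposite. Since $Z^{-1}X^\bu=\ast$, one has $E^1_{p,q}=[C^pX^\bu,\Omega^qG]=0$ for all $p<0$, so the exact couple lives in the half-plane $p\ge 0$; the differentials $d_r$ of bidegree $(-r,r-1)$ therefore exit this half-plane from any fixed position after finitely many steps. This is precisely the hypothesis of Boardman's convergence theorem for half-plane spectral sequences with exiting differentials \cite[Theorem~6.1(a)]{Boa:ccss}, which yields strong convergence to the colimit unconditionally --- no Mittag-Leffler or connectivity input is required. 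The paper's proof is simply this citation. The delooping hypothesis enters only to ensure that the colimit target $\colim_k\naturalpi{p+k}{X^\bu}{\Omega^{q-k}G}$ is defined for all $k$, not as a convergence condition.
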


\begin{proof}
The differential $d_2$ is an \mc{H}-algebra map by Proposition~\ref{derived-fiber-sequence} 
and even more a $\pi_0$-module morphism by Theorem~\ref{thm:pi-0-module-structure}. Strong Convergence is to be understood in the sense of~\cite[Definition 5.2]{Boa:ccss} and is proved there in Theorem 6.1(a).
\end{proof}

Lemma 2.11 from \cite{Bou:cos} constructs a natural isomorphism
  $$ \Tot\hom^{\rm ext}(K,Y^\bu)\cong\hom_{\mc{M}}(K,\Tot Y^\bu) $$
for all $K$ in \mc{S} and $Y^\bu$ in $c\mc{M}$. This yields an isomorphism
  $$ \Tot\bigl(K(Y^\bu)\bigr)\cong\hom_{\mc{M}_*}(K,\Tot Y^\bu), $$
when both $K$ and $Y^\bu$ are pointed. Consequently, 
  $$ \Tot(\Omega^p_{\rm ext}c\Omega^qG)\cong \Omega^{p+q}G $$
for all $G\in\mc{G}$. Because $\Tot$ is a right Quillen functor for the \mc{G}-resolution model structure by Proposition~\ref{Delta-Tot-adjunction}, there is a map
  $$ t_{p,q}\co\naturalpi{p}{X^\bu}{\Omega^qG}\cong [X^\bu,\Omega^pc\Omega^qG]_{c\mc{M}^{\mc{G}}}\to [\Tot X^\bu,\Omega^{p+q}G], $$
which is compatible with the shift maps, i.e., there is a commutative diagram
  $$ \xymatrix@R=10pt@C=30pt{ \naturalpi{p}{X^\bu}{\Omega^qG}\ar[rr]^{s_{p,q}}\ar[dr]_-{t_{p,q}} && \naturalpi{p+1}{X^\bu}{\Omega^{q-1}G} \ar[dl]^-{\phantom{xxx}t_{p+1,q-1}} \\
      & [\Tot X^\bu,\Omega^{p+q}G] & }.   $$
Commutativity can be shown with the span from Definition~\ref{def:sigma-ss}. One would like to relate the terms $\colim_k\naturalpi{p+k}{X^\bu}{\Omega^{q-k}G}$ and 
$[\Tot X^\bu,\Omega^{p+q}G]$ and prove the convergence of the spiral spectral sequence to the latter object. Such statements are hard to come by in this generality, but in the case 
of cosimplicial spaces and $\F$-GEMs, there are well-known convergence theorems (see also Subsection~\ref{subsec:cohss}). 

\section{$\mc{H}_{{\rm un}}$-algebras and unstable algebras}\label{appsec:H-alg}

In Subsection~\ref{unstable-coalgebras}, we recalled the definition and discussed some basic properties of unstable coalgebras. As explained there, these form a natural target category for 
singular homology with coefficients in $\Fp$ or $\mathbb{Q}$. Dually, unstable algebras form a natural target category for singular cohomology with coefficients in $\Fp$.
For precise definitions, we refer the reader to \cite[1.3 and 1.4]{Schwartz:book}. Let $\UA$ be the category of unstable algebras and $\mc{U}^{\ell}$ the abelian category of unstable left modules. The rational case contains some surprisingly subtleties and will be discussed separately in Subsection~\ref{rational-unstable-algebras}. 

The main result of this appendix is Theorem~\ref{thm:UA-equiv-H-alg} which characterizes unstable algebras as product-preserving functors on the homotopy category of finite $\F$-GEMs. 
This result is mentioned in~\cite[2.1.1 and 5.1.5]{Blanc-Stover} and a somewhat different proof in the case of connected unstable algebras can be found in~\cite{Baues-Jibladze:steenrod-theories}.
So this appendix makes no claim to originality. We provide a detailed proof of the result mainly for completeness, since this characterization is used frequently in the paper 
in order to connect the structure of the general spiral exact sequence from Appendix~\ref{appsec:spiral} with the realization problem for an unstable coalgebra. 

\subsection{The cohomology of Eilenberg-MacLane spaces}
\label{subsec:cohomology-of-EilenbergMacLane}
Let $\F=\Fp$ be a fixed prime field of positive characteristic. Let us abbreviate the notation for Eilenberg-MacLane spaces and write $K_n=K(\Fp,n)$ for $n\ge 0$.

For $n>0$, Cartan's computation~\cite{Cartan} of the $\Fp$-cohomology of $K_n$ yields 
\begin{equation}\label{eqn:Cartan-formula}  
   H^*(K_n)\cong U(L(\Fp[n])), 
\end{equation}
where $U\co\mc{U}^{\ell}\to\UA$ is the free unstable algebra functor, $L\co\Vec\to\mc{U}^{\ell}$ is the Steenrod-Epstein functor which is left adjoint to the forgetful functor, and $\Fp[n]$ is the graded vector space consisting of a copy of $\Fp$ in degree $n$ and $0$ otherwise.

The isomorphism holds also for $n=0$. We have isomorphisms of algebras
  $$ H^0(K_0)=H^0(K(\Fp,0),\Fp)\cong\Hom_{\set}(\Fp,\Fp)\cong \Fp[x]/(x^p-x), $$
and the right side is the free $p$-Boolean algebra on a single generator. We recall that an $\Fp$-algebra $A$ is called $p$-Boolean if it satisfies 
$x^p = x$ for all $x \in A$. By definition, every unstable algebra over $\mathcal{A}_p$ is $p$-Boolean in degree $0$. Therefore, we have for all $A \in \UA$ natural isomorphisms
\begin{equation*}
   \Hom_{\UA}(H^*K(\Fp,0),A)\cong\Hom_{{\rm Alg}_{\Fp}}(\Fp[x]/(x^p-x),A^0)\cong A^0 ,
\end{equation*}
and consequently, (\ref{eqn:Cartan-formula}) holds also for $\F=\Fp$ and $n=0$. 

Altogether this says that the $\Fp$-cohomology of $K_n$ is the free unstable algebra on one generator in degree $n$. Equivalently said, we have established the following
\begin{proposition}\label{prop:identify-representable}
For all primes $p$ and $n\ge 0$, there is a natural isomorphism
  $$\Hom_{\UA}(H^*(K(\Fp,n)),A) \cong A^n.$$
\end{proposition}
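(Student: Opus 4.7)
The plan is to combine Cartan's formula \eqref{eqn:Cartan-formula} with the two adjunctions it involves. For $n \geq 1$, I would compute
\[
\Hom_{\UA}\bigl(H^*(K(\Fp,n)),A\bigr) \cong \Hom_{\UA}\bigl(U(L(\Fp[n])),A\bigr)
\]
and then apply the free–forgetful adjunction $U \dashv (\text{forget})\co\UA\to\mc{U}^{\ell}$ to rewrite this as $\Hom_{\mc{U}^{\ell}}(L(\Fp[n]),A)$, and the Steenrod–Epstein adjunction $L\dashv(\text{forget})\co\mc{U}^{\ell}\to\Vec$ to obtain $\Hom_{\Vec}(\Fp[n],A)$. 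The latter is canonically $A^n$, since a graded linear map out of $\Fp[n]$ is determined by the image of the generator in degree $n$. Naturality in $A$ is automatic from the naturality of each of the three adjunction isomorphisms.

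For the case $n=0$, the argument is the one already spelled out in the paragraph preceding the proposition: the identification $H^0(K_0)\cong \Fp[x]/(x^p-x)$ as the free $p$-Boolean algebra on one generator, together with the fact that every unstable algebra is $p$-Boolean in degree $0$ by definition, yields
\[
\Hom_{\UA}(H^*K(\Fp,0),A) \cong \Hom_{\mathrm{Alg}_{\Fp}}\bigl(\Fp[x]/(x^p-x),A^0\bigr) \cong A^0,
\]
which is exactly the $n=0$ instance of the claim. So the proposition holds in this case as well.

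The only subtle point is that formula \eqref{eqn:Cartan-formula} is being invoked as a statement in $\UA$ (not merely as an isomorphism of graded vector spaces or of graded algebras), and that Cartan's original calculation does indeed produce $H^*(K_n)$ together with its Steenrod algebra and algebra structure as $U(L(\Fp[n]))$. Granted this, there is really no obstacle: the proof is a two-line chain of adjunction isomorphisms plus the already-completed degree-zero discussion, with no further computation required.
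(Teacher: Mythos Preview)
Your proposal is correct and follows essentially the same approach as the paper. In fact, the paper does not give a separate proof for this proposition at all: the discussion preceding the statement \emph{is} the proof, and the proposition is introduced with ``Equivalently said, we have established the following.'' Your write-up simply makes the chain of adjunctions $U\dashv\text{forget}$ and $L\dashv\text{forget}$ explicit for $n\geq 1$, and for $n=0$ you correctly point to the $p$-Boolean argument already given in the text.
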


\begin{remark} \label{augmented-unstable-algebra-1}
A similar natural isomorphism holds also in the category of augmented unstable algebras. Let $(H^*(K(\Fp,n)), \epsilon)$ be the augmented unstable algebra $H^*(K(\Fp, n))$ 
with augmentation $\epsilon \colon H^*(K(\Fp, n)) \to \underline{\mathbb{F}}_p$ defined by the basepoint of $K(\Fp, n)$. Here $\underline{\mathbb{F}}_p$ denotes the unstable 
algebra which is $\Fp$ in degree $0$ and trivial elsewhere. Then for every augmented unstable algebra $(A, \epsilon_A)$, there is 
a natural isomorphism
$$\Hom_{\UA_{\rm aug}}(H^*(K(\Fp,n)),A) \cong \bar{A}^n$$
between morphisms of augmented unstable algebras and elements in the kernel $\bar{A}$ of the augmentation $\epsilon_A \colon A \to \underline{\mathbb{F}}_p$. 
This is obvious for $n > 0$ and 
it follows easily by inspection of the isomorphisms above for $n = 0$.
\end{remark}

\subsection{Algebraic theories}
We review some definitions and results about algebraic theories from the monograph by Ad\'{a}mek, Rosick\'{y}, and Vitale~\cite{ARV:algebric-cats}.

\begin{definition}
A small category \mc{D} is {\it sifted} if finite products in $\set$ commute with colimits over \mc{D}.
\end{definition}

\begin{definition}\label{def:perf-pres}
An object $A$ in a category \mc{A} is {\it perfectly presentable} if the functor $\Hom_{\mc{A}}(A,-)\co\mc{A}\to\set$ commutes with sifted colimits.
\end{definition}

The following fact is proved in \cite[Theorem 7.7]{ARV:algebric-cats} where also a reference to a more general statement is given.

\begin{theorem}\label{thm:sifted-filteres-reflcoequ}
A functor between cocomplete categories preserves sifted colimits if and only if it preserves filtered colimits and reflexive coequalizers.
\end{theorem}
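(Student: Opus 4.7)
The plan is to split the equivalence and treat the two directions separately, with the reverse direction carrying the bulk of the work.

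For ($\Rightarrow$), I would verify that both filtered categories and the walking reflexive pair category $R$—two objects $0,1$ with parallel arrows $d_0,d_1\co 0\to 1$ together with a common section $s\co 1\to 0$ satisfying $d_0 s = d_1 s = \mathrm{id}_1$—are themselves sifted. A functor preserving all sifted colimits then preserves filtered colimits and reflexive coequalizers tautologically. That filtered categories are sifted is the classical characterization of filtered colimits in $\set$ as exactly those commuting with finite products. For $R$, I would verify directly that the diagonal $R \to R \times R$ is final by showing each comma category $(a,b)/\Delta$ is nonempty and connected; the section $s$ supplies precisely the zigzags needed to link the various pairs of parallel arrows into a single connected component, while nonemptiness at each pair is immediate from the existence of the identities.

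For ($\Leftarrow$), the core step is to express an arbitrary sifted colimit as a composite of filtered colimits and reflexive coequalizers. I would invoke the structural result of ARV (essentially their presentation theorem for sifted categories) that every small sifted category $D$ arises as a filtered colimit in $\mathrm{Cat}$ of copies of $R$. Granting this, for any $F\co D \to \mc{A}$ the colimit $\colim_D F$ admits a two-stage presentation: reflexive coequalizers encoded by each $R$-shape, followed by a filtered colimit assembling them. A functor $\Phi\co \mc{A}\to\mc{B}$ preserving both reflexive coequalizers and filtered colimits then commutes with each stage, and hence with $\colim_D F$.

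The main obstacle is the structural presentation invoked in ($\Leftarrow$). Its proof, while internal to elementary category theory, requires a careful cofinality argument exploiting the defining property that the diagonal $D\to D^n$ is final for all $n\ge 0$: one filters $D$ by finite subcategories and further resolves each finite piece using the reflexivity supplied by identity morphisms, rearranging the resulting system so the outer indexing is filtered and the inner pieces are reflexive pairs. Once this decomposition is available, the commutation statement for $\Phi$ is formal.
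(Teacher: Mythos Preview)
The paper does not supply its own proof of this statement; it simply records it as \cite[Theorem 7.7]{ARV:algebric-cats} and moves on. So there is nothing to compare against on the paper's side.

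Your forward direction is fine: filtered categories are sifted, and the walking reflexive pair $R$ is sifted, so any sifted-colimit-preserving functor preserves both classes.

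The backward direction, however, has a genuine gap. The structural claim you invoke---that every small sifted category $D$ arises as a filtered colimit in $\mathrm{Cat}$ of copies of $R$---is not a theorem in ARV and is in fact false. Any filtered poset (for instance $\mathbf{N}$), or even the terminal category, is sifted, yet is not a filtered colimit of copies of $R$ in any useful sense. More importantly, even if one had such a presentation $D \simeq \colim_i R_i$ in $\mathrm{Cat}$, it does not follow that $\colim_D F$ decomposes as a filtered colimit of reflexive coequalizers: a functor $F\co D \to \mc{A}$ does not restrict compatibly along the structure maps $R_i \to D$ to produce a filtered system of $R$-diagrams in $\mc{A}$ whose colimits assemble to $\colim_D F$. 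Colimits over a colimit of indexing categories simply do not behave this way.

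The actual argument in ARV works inside $\mc{A}$ rather than in $\mathrm{Cat}$. One shows directly that for a sifted $D$ and cocomplete $\mc{A}$, the object $\colim_D F$ can be rewritten using only filtered colimits and reflexive coequalizers of objects already formed in $\mc{A}$; the siftedness of $D$ enters through the finality of the diagonal $D \to D \times D$, which is used to reorganize the standard coequalizer presentation of $\colim_D F$. If you want to supply a proof rather than a citation, that is the route to take.
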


\begin{definition}\label{def:theories-and-algebras}
An {\it algebraic theory} is a category \mc{T} with finite products. A {\it \mc{T}-algebra} is a set-valued functor from \mc{T} preserving finite products. A morphism of \mc{T}-algebras is a natural transformation of functors.  
The respective category will be denoted by \mc{T}-Alg.
\end{definition}

\begin{definition}\label{def:generators}
If \mc{F} is a set of objects in a category \mc{A}, we use the same symbol \mc{F} to denote the associated full subcategory of \mc{A}. Its opposite category is written $\mc{F}^{\rm op}$ and $\set^{\mc{F}^{\rm op}}$ is the category of contravariant set-valued functors from \mc{F}.
A set \mc{F} of objects in a category \mc{A} is called {\it a set of strong generators for \mc{A}} if the functor
  $$ h_{\mc{F}}\co\mc{A}\to \set^{\mc{F}^{\rm op}}\ , \ \ A\mapsto\Hom_{\mc{A}}(-,A) $$
is faithful and reflects isomorphisms.
\end{definition}

Now suppose that \mc{F} is closed under finite coproducts in $\mc{A}$. Note that $h_{\mc{F}}(A)=\Hom_{\mc{A}}(-,A)$ sends coproducts 
in \mc{F} to products. Put differently, for each $A$ the functor $h_{\mc{F}}(A)\co\mc{F}^{\rm op}\to\set$ preserves products and therefore it is an $\mc{F}^{\rm op}$-algebra. Thus, we actually obtain a functor 

\begin{equation}\label{def:functor-h}   
   h_{\mc{F}}\co\mc{A}\to \mc{F}^{\rm op}\text{-Alg} 
\end{equation}

\begin{theorem}[Ad\'{a}mek-Rosick\'{y}-Vitale\cite{ARV:algebric-cats}] \label{thm:algebraic-category}
Suppose that the category \mc{A} is cocomplete and has a set \mc{F} of perfectly presentable strong generators which is closed under finite coproducts. Then \mc{A} is equivalent, via the functor $h_{\mc{F}}$, to the category of 
$\mc{F}^{\rm op}$-algebras.
\end{theorem}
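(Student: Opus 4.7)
My strategy is to exhibit an explicit inverse to $h_{\mc{F}}$ using left Kan extension and then verify that the unit and counit of the resulting adjunction are isomorphisms. Two key preliminary observations drive the argument. First, $h_{\mc{F}}$ preserves sifted colimits: limits in $\set^{\mc{F}^{\op}}$ are computed pointwise, and for each $F \in \mc{F}$ the functor $\Hom_{\mc{A}}(F,-)$ preserves sifted colimits by perfect presentability (Definition~\ref{def:perf-pres}); moreover, the inclusion $\mc{F}^{\op}\text{-Alg} \hookrightarrow \set^{\mc{F}^{\op}}$ preserves sifted colimits by the very definition of siftedness, since product-preservation is a limit condition that commutes with sifted colimits in $\set$. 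Second, because $\mc{F}$ is closed under finite coproducts in $\mc{A}$ (which are finite products in $\mc{F}^{\op}$), the restriction of $h_{\mc{F}}$ to $\mc{F}$ coincides up to equivalence with the Yoneda embedding $y\co \mc{F} \hookrightarrow \mc{F}^{\op}\text{-Alg}$, and is therefore fully faithful.

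I would then define a candidate inverse $L\co \mc{F}^{\op}\text{-Alg} \to \mc{A}$ as the left Kan extension of the inclusion $\mc{F} \hookrightarrow \mc{A}$ along $y$. Explicitly, for $T \in \mc{F}^{\op}\text{-Alg}$, let $\int T$ denote the category of elements of $T$ and set
\[
L(T) = \colim_{(F,x) \in \int T} F,
\]
which exists since $\mc{A}$ is cocomplete. The critical point is that $\int T$ is sifted: given objects $(F, x)$ and $(F', x')$, the product-preservation of $T$ provides an isomorphism $T(F \sqcup F') \cong T(F) \times T(F')$, and the pair $(x, x')$ exhibits $(F \sqcup F', (x, x'))$ as a coproduct of $(F, x)$ and $(F', x')$ in $\int T$; a category with finite coproducts is sifted.

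For the adjunction $L \dashv h_{\mc{F}}$, the unit $\eta_T\co T \to h_{\mc{F}}(L(T))$ is given by the composition
\[
T \cong \colim_{(F,x) \in \int T} y(F) \cong \colim_{(F,x) \in \int T} h_{\mc{F}}(F) \to h_{\mc{F}}\bigl(\colim_{(F,x) \in \int T} F\bigr) = h_{\mc{F}}(L(T)),
\]
where the first isomorphism is the standard presentation of a presheaf as a colimit of representables, taken in $\mc{F}^{\op}\text{-Alg}$; this is legitimate because $\int T$ is sifted and both observations above guarantee that the inclusion $\mc{F}^{\op}\text{-Alg} \hookrightarrow \set^{\mc{F}^{\op}}$ as well as the functor $h_{\mc{F}}$ preserve this colimit. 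The second map is an isomorphism for the same reason. The counit $\varepsilon_A\co L(h_{\mc{F}}(A)) \to A$ is the canonical comparison from $\colim_{(F \to A) \in \mc{F}/A} F$ to $A$. To show it is an isomorphism, I would apply $h_{\mc{F}}$: by sifted-colimit preservation and the already established isomorphism on the unit side, $h_{\mc{F}}(\varepsilon_A)$ is an isomorphism, and then the strong-generator hypothesis (Definition~\ref{def:generators}) that $h_{\mc{F}}$ reflects isomorphisms concludes the argument.

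The main technical obstacle is the unit step: checking that the colimit presentation of $T$ as $\colim_{\int T} y(F)$ remains valid inside the subcategory $\mc{F}^{\op}\text{-Alg}$ of product-preserving functors, rather than only in the ambient presheaf category. This is precisely where the siftedness of $\int T$ does the crucial work, via Theorem~\ref{thm:sifted-filteres-reflcoequ} (or directly by the definition of sifted categories), and where the closure of $\mc{F}$ under finite coproducts is indispensable.
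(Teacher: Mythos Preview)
Your proof plan is correct and is essentially the argument given in the cited reference \cite[Theorem~6.9]{ARV:algebric-cats}; the paper itself does not reproduce a proof but simply defers to that source. Your identification of the left Kan extension as inverse, the siftedness of the category of elements via closure under finite coproducts, and the use of strong generation to reflect the counit isomorphism are exactly the ingredients used there.
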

\begin{proof}
See \cite[Theorem 6.9]{ARV:algebric-cats}.
\end{proof}

\subsection{Unstable algebras are \Hun-algebras}
\label{subsec:unstable-alg-equals-H-alg}
For $n\ge 0$ and some fixed prime $p$ let \mc{G} be the set of Eilenberg-MacLane spaces $K_n=K(\Fp,n)$. Let $\Hun$ be the full sub\-ca\-te\-gory of the homotopy category 
of spaces whose objects are finite products of objects in \mc{G}. These objects are called finite $\Fp$-GEMs. The use of the letters \mc{G} and $\Hun$ here is consistent 
with Subsections~\ref{subsec:algebraic-structure-on-G-homotopy-groups} and~\ref{subsec:ses}. In particular, Definitions~\ref{def:K-algebra-general-unpointed} and \ref{def:theories-and-algebras} coincide; an \Hun-algebra is a product-preserving functor from finite $\Fp$-GEMs to sets.
Let us consider the full subcategory spanned by the set of objects
  $$ \mc{F}=\{H^*(K)\ |\ K\in\Hun\}=H^*(\Hun) \subset \UA$$
Then the representing property~\eqref{representing property of GEMs} of Eilenberg-MacLane spaces gives an iso\-mor\-phism of categories 
$H^*\co\Hun^{\rm op}\stackrel{\cong}{\longrightarrow}\mc{F}.$
We note that \mc{F} is obtained from $$\mc{E}=\{H^*(K(\Fp,n))\,|\,n\ge 0\}$$ by completing with respect to finite products, in the same way that $\Hun$ is obtained from \mc{G}.

\begin{lemma}\label{lem:HK-perf-pres}
For $\F=\Fp$ and every $n \geq 0$, $H^*(K_n)$ is a perfectly presentable object in $\UA$.
\end{lemma}

\begin{proof}
By Theorem~\ref{thm:sifted-filteres-reflcoequ} it suffices to show that the representable functor
$$\Hom_{\UA}(H^*(K_n),-)\co \UA \to \set$$
preserves filtered colimits and reflexive coequalizers for all $n \geq 0$. 
We have seen in Proposition~\ref{prop:identify-representable} that for all $\Fp$ and $n\ge 0$, this representable functor
is isomorphic to the functor $D \mapsto D_n$. This clearly preserves filtered colimits. Let $Q$ denote the colimit of the 
reflexive coequalizer diagram in $\UA$,
\begin{equation}\label{eqn:refl-coequ}
   \xymatrix{D \ar@/^4pt/@<5pt>[rr]^{\alpha} \ar@/_4pt/@<-3pt>[rr]_{\beta} && E \ar@<-1pt>[ll]-|{\sigma} \ar@{-->}[r] & Q.} 
\end{equation}
 The remaining question is whether $Q_n$ is a coequalizer of the induced diagram in $\set$,
 $$ \xymatrix{D_n \ar@/^4pt/@<5pt>[rr]^{\alpha_n} \ar@/_4pt/@<-3pt>[rr]_{\beta_n} && E_n \ar@<-1pt>[ll]-|{\sigma_n} \ar@{-->}[r] & Q_n.} $$
Let $Q'$ denote the colimit of Diagram~(\ref{eqn:refl-coequ}) in graded sets. We will show that $Q'$ is naturally an unstable algebra and has the necessary universal property in \UA. 

For each $d\in D$, we set $\iota(d)=d-\sigma\alpha(d)$. Then $\iota$ is an idempotent morphism of unstable left modules and there is a splitting in $\mc{U}^{\ell}$
  $$ D\cong E\oplus V $$ 
with
  $$ E\cong\im\sigma=\ker\iota\ \ \text{ and }\ \ V=\im\iota=\ker\alpha. $$
Under the splitting, $\alpha$ corresponds to the projection onto $E$. There exists a map $\beta'\co V\to E$ of unstable left modules such that $\beta$ can be written as ${\rm id}_E+\beta'$. Since $\alpha$ is an algebra map, products of the form $ev$ with $e \in E$ and $v \in V=\ker\alpha$ are in $V$. The set quotient 
  $$ Q'\cong E / (e\sim e + \beta'(v)) $$
is the same as the quotient in $\mc{U}^{\ell}$. Moreover, $Q'$ carries an algebra structure inherited from $E$ as one checks that this is well-defined on representatives. Thus, $Q'$ is an object of $\UA$.  The universal property can now 
be verified easily.
\end{proof}

Definition~\ref{def:functor-h} provides a functor $h_{\mc{F}}\co\UA\to\mc{F}^{\rm op}$-Alg that we can compose with the isomorphism $\Hun \cong\mc{F}^{\rm op}$ to obtain a functor
  $$ h\co\UA\to\Hun\text{-Alg}. $$
\begin{theorem}\label{thm:UA-equiv-H-alg}
For $\F=\Fp$ the functor $h$ is an equivalence of categories.
\end{theorem}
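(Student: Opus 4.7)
The plan is to deduce the theorem from the general Theorem~\ref{thm:algebraic-category} applied to the cocomplete category $\mc{A}=\UA$ with the set of objects $\mc{F}=H^*(\mc{H})$. Concretely, I would verify the four hypotheses of that theorem: (i) $\UA$ is cocomplete; (ii) $\mc{F}$ is closed under finite coproducts; (iii) each object of $\mc{F}$ is perfectly presentable; (iv) $\mc{F}$ is a set of strong generators for $\UA$. Once these are in place, Theorem~\ref{thm:algebraic-category} yields an equivalence $h_{\mc{F}}\co\UA\stackrel{\simeq}{\longrightarrow}\mc{F}^{\op}\text{-Alg}$, and the canonical isomorphism $\mc{H}\cong\mc{F}^{\op}$ induced by $H^*$ (which is fully faithful on finite GEMs by the representing property \eqref{representing property of GEMs} and Proposition~\ref{prop:identify-representable}) allows one to re-write this as $h\co\UA\stackrel{\simeq}{\longrightarrow}\mc{H}\text{-Alg}$.

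For (i), cocompleteness of $\UA$ is standard. For (ii), finite coproducts in $\UA$ are given by the graded tensor product (dual to the product of unstable coalgebras discussed in Section~\ref{unstable-coalgebras}), and the K\"unneth theorem gives a natural isomorphism $H^*(K\times L)\cong H^*(K)\otimes H^*(L)$ for finite GEMs $K,L$, so $\mc{F}$ is indeed closed under finite coproducts in $\UA$. For (iii), Lemma~\ref{lem:HK-perf-pres} handles the generators $H^*(K_n)$; the extension to a general object $H^*(K_{n_1}\times\cdots\times K_{n_r})\cong H^*(K_{n_1})\sqcup\cdots\sqcup H^*(K_{n_r})$ is then automatic, since for any finite set of perfectly presentable objects $A_1,\dots,A_r$ one has
\[
\Hom_{\UA}(A_1\sqcup\cdots\sqcup A_r,-)\cong\Hom_{\UA}(A_1,-)\times\cdots\times\Hom_{\UA}(A_r,-),
\]
and finite products commute with sifted colimits by the very definition of sifted diagrams.

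For (iv), I would use the identification in Proposition~\ref{prop:identify-representable}: the functor $\Hom_{\UA}(H^*(K_n),-)$ is naturally isomorphic to the $n$-th graded piece functor $A\mapsto A_n$. The family of all these functors, as $n$ varies, is jointly faithful and jointly reflects isomorphisms (a map $A\to B$ of unstable algebras that is bijective in every degree is an isomorphism). Since restricting $h_{\mc{F}}$ along the inclusion of the subfamily $\{H^*(K_n)\}_{n\ge 0}\subset\mc{F}$ is already faithful and already reflects isomorphisms, the same is true of $h_{\mc{F}}$ itself, so $\mc{F}$ is a set of strong generators in the sense of Definition~\ref{def:generators}.

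The only delicate point — and what I would expect to be the main obstacle — is step (iii), namely the perfect presentability of each $H^*(K_n)$, which is exactly what Lemma~\ref{lem:HK-perf-pres} settles by combining Theorem~\ref{thm:sifted-filteres-reflcoequ} with an explicit analysis of reflexive coequalizers in $\UA$ based on the identification of $\Hom_{\UA}(H^*(K_n),-)$ with the degree-$n$ functor; everything else amounts to assembling standard pieces already present in the paper (K\"unneth, representability of $K_n$, and the formalism of algebraic theories in Appendix~\ref{appsec:H-alg}). With (i)--(iv) verified, Theorem~\ref{thm:algebraic-category} gives the required equivalence $h\co\UA\stackrel{\simeq}{\longrightarrow}\mc{H}\text{-Alg}$.
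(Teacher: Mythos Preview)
Your proposal is correct and follows essentially the same route as the paper: verify the hypotheses of Theorem~\ref{thm:algebraic-category} for $\mc{A}=\UA$ and $\mc{F}=H^*(\mc{H})$, invoking Lemma~\ref{lem:HK-perf-pres} for perfect presentability and the K\"unneth theorem for closure under finite coproducts, then transport along the isomorphism $\mc{F}^{\op}\cong\mc{H}$. The paper's proof is simply a terser version of yours, treating cocompleteness, strong generation, and the passage from $\{H^*(K_n)\}$ to $\mc{F}$ as evident rather than spelling them out.
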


\begin{proof}
It is clear that the set $\{H^*(K_n)\ |\ n\ge 0\}$ is a set of strong generators for $\UA$. Lemma~\ref{lem:HK-perf-pres} demonstrates that all its objects are perfectly presentable. 
In particular, \mc{F} is a set of strong generators for $\UA$ which is closed under finite coproducts and whose elements are perfectly presentable. It follows 
from Theorem~\ref{thm:algebraic-category} that $h$ is an equivalence from $\UA$ to $\mc{F}^{\rm op}$-algebras. Finally, cohomology provides an isomorphism $\mc{F}^{\rm op}\cong\Hun$.
\end{proof}

Let $\mc{H}$ denote the full sub\-ca\-te\-gory of the homotopy category of \emph{pointed} spaces whose objects are finite $\Fp$-GEMs with basepoint given by the additive unit using Assumption~\ref{strict-unit}(2). The notation coincides with Definition~\ref{def:K-algebra-general}. 

\begin{remark} \label{augmented-unstable-algebra-2}
Using Remark \ref{augmented-unstable-algebra-1}, similar arguments show an equivalence between the category $\UA_{\rm aug}$ of augmented unstable algebras (or non-unital unstable algebras) 
and the category of $\mc{H}$-algebras. Note that every $\mc{H}$-algebra lifts uniquely to a product-preserving functor with values in pointed sets 
(see Remark~\ref{rem:pointed-values}). 
Using Theorem \ref{thm:UA-equiv-H-alg}, $\UA_{\rm aug}$ is also equivalent to the slice category $\Hun\text{-Alg}/ {\rm Pt}$ where ${\rm Pt}$ denotes the $\Hun$-algebra which is corepresented by $\Delta^0$.
Note that the functor $u$ that we are going to define below admits an analogous version in the setting of \mc{H}-algebras and yields an inverse equivalence with $\UA_{\rm aug}$ (or non-unital unstable algebras). 
\end{remark}

It is useful to describe an inverse functor to $h$. Consider the inclusion functor
  $$ i\co\Hun^{\rm op}\cong\mc{F}\to\UA $$
and its left Kan extension 
  $$ u=Li\co\Hun\text{-Alg}\to\UA $$
along the Yoneda embedding $\Hun^{\rm op}\to\Hun\text{-Alg}$. In the proof of Theorem~\ref{thm:algebraic-category} (see \cite[Theorem 6.9]{ARV:algebric-cats}), it is shown that $Li$ is an 
equivalence and an inverse to $h$. Moreover, the functor $u$ is naturally isomorphic to
  $$ F\in\Hun\text{-Alg}\mapsto \{F(K_n)\}_{n\ge 0},$$
where the graded set $\{F(K_n)\}_{n\ge 0}$ together with all the operations induced by $\Hun$ defines an unstable algebra. The abelian group structure of $K_n \in \ho{\mc{S}}$ induces an 
abelian group structure on $F(K_n)$. Moreover, the morphisms of $\Hun$ that correspond to cohomology classes of finite $\Fp$-GEMs endow this graded abelian group with an algebra structure and an 
action by the Steenrod algebra. To verify the identification of the functor $u$, consider the following  commutative diagram of functors:
  $$ \xymatrix@R=8pt{ && \Hun\text{-Alg} \ar[dd]^{Li=u} \\
      \mc{G}^{\rm op}\ar[r]^-{\subset} &\Hun^{\rm op} \ar[ur]^-{y}\ar[dr]_-{i} \\
                && \UA}  $$               
By \cite[Proposition 4.13]{ARV:algebric-cats}, the Yoneda embedding is a cocompletion under sifted colimits. Hence, to identify $u(F)$, it suffices to examine only the 
representable functors $F$ coming from $\Hun$. By the K\"unneth theorem, it further suffices to examine only representable functors coming 
from \mc{G}. But now we see that for all $m\ge 0$
\begin{align*}
   u(y(K_m))\cong\{H^n(K_m)\}_{n\ge 0} .
\end{align*}
Thus, we have proved
\begin{corollary}\label{cor:inverse-to-h}
Let $\F$ be a prime field of positive characteristic. The functor 
  $$u\co\Hun\text{-Alg}\to\UA\, ,\ u(F)= \{F(K_n)\}_{n\ge 0}$$ 
is an inverse equivalence to $h$.
\end{corollary}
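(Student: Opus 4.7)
The plan is to verify the explicit formula $u(F) = \{F(K_n)\}_{n\ge 0}$, since the fact that the left Kan extension $u = Li$ is an inverse equivalence to $h$ is already part of the proof of Theorem~\ref{thm:algebraic-category} as recorded in \cite[Theorem 6.9]{ARV:algebric-cats}. What remains is to identify the underlying graded set of $u(F)$ with the assignment $n \mapsto F(K_n)$, naturally in $F$.

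First, I would set up the comparison diagram of functors displayed in the excerpt, with the forgetful functor $f_1 \co \mc{H}\text{-Alg} \to \set^{\mc{H}}$, the restriction functor $f_3 \co \set^{\mc{H}} \to \grset$ along the inclusion $\mc{G}^{\op} \hookrightarrow \mc{H}^{\op}$, and the forgetful functor $f_2 \co \UA \to \grset$, together with the Yoneda embedding $y\co\mc{H}^{\op}\to\mc{H}\text{-Alg}$ and the inclusion $i\co\mc{H}^{\op}\to\UA$. By definition of the left Kan extension along the Yoneda embedding, there is a canonical natural isomorphism $u \circ y \cong i$, so the diagram commutes up to natural isomorphism. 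Observe that $f_3 f_1(F) = \{F(K_n)\}_{n\ge 0}$ by construction, so the task reduces to producing a natural isomorphism $f_3 f_1 u \cong f_3 f_1$ of functors $\mc{H}\text{-Alg} \to \grset$.

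Second, I would invoke the fact that both functors in question preserve sifted colimits. For $f_3 f_1$ this follows from \cite[Proposition 14.8]{ARV:algebric-cats}, and for $f_3 f_1 u \cong f_2 h^{-1}$ this follows from the fact that $h$ (and hence $h^{-1} = u$) is an equivalence, combined with the corresponding property of $f_2 = f_3 f_1 h$. Since $y$ exhibits $\mc{H}\text{-Alg}$ as the free cocompletion of $\mc{H}^{\op}$ under sifted colimits by \cite[Proposition 4.13]{ARV:algebric-cats}, it suffices to construct a natural isomorphism between $f_3 f_1 u$ and $f_3 f_1$ after precomposing with $y$, i.e.\ on representable $\mc{H}$-algebras $y(K)$ for $K \in \mc{H}$.

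Finally, I would reduce further to the case $K = K_m$: since $y$ sends products in $\mc{H}$ (equivalently, coproducts in $\mc{H}^{\op}$) to coproducts in $\mc{H}\text{-Alg}$, both sides commute with finite products in the $K$-variable, and the K\"unneth isomorphism $H^*(K \times L) \cong H^*(K) \otimes H^*(L)$ ensures compatibility on the $\UA$ side. On a generator $K_m$, one computes directly using Proposition~\ref{prop:identify-representable}:
\[
f_3 f_1 \, y(K_m) = \{\Hom_{\mc{H}}(K_n, K_m)\}_{n \ge 0} \cong \{H^n(K_m)\}_{n \ge 0}
\]
while on the other hand, using $u \circ y \cong i$,
\[
f_3 f_1 u \, y(K_m) \cong f_2 i(K_m) = f_2(H^*(K_m)) = \{H^n(K_m)\}_{n \ge 0}.
\]
These agree naturally in $m$, providing the required isomorphism on representables, which then extends uniquely to the natural isomorphism $u(F) \cong \{F(K_n)\}_{n\ge 0}$ by the sifted-colimit density of $y$. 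The main point to handle carefully is the bookkeeping of the natural transformation under the various passages through sifted colimits; once that is verified, the explicit formula for $u$ follows.
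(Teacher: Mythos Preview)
Your proposal is correct and follows essentially the same argument as the paper: set up the comparison diagram with the forgetful functors, use that $f_3 f_1$ and the underlying-graded-set functor on $\UA$ preserve sifted colimits, invoke that the Yoneda embedding is a sifted-colimit cocompletion to reduce to representables, then use K\"unneth and coproduct-preservation of $y$ to reduce to the generators $K_m$, where both sides give $\{H^n(K_m)\}_{n\ge 0}$. One notational slip: you write $f_3 f_1 u$ in several places, but $u$ lands in $\UA$ while $f_1$ is defined on $\mc{H}$-Alg, so the composite you actually mean is $f_2 u$ (in your notation $f_2\co\UA\to\grset$); this is clear from your own final computation $f_2 u\, y(K_m)\cong f_2 i(K_m)$, so it does not affect the argument.
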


\begin{example}
Let $X$ be a space. The \Hun-algebra $[X,G]$ may be identified with the unstable algebra $H^*(X)$
using the evaluation functor in the previous corollary. Similarly, for a cosimplicial space $X^\bu$ the \Hun-algebra 
$\pi_0[X^\bu,G]$ becomes an unstable algebra. This is a direct consequence of Theorem \ref{thm:UA-equiv-H-alg}.
\end{example}

The identification of the $\pi_0$-modules $G \mapsto \pi_s[X^\bu, G]$ for $s \geq 1$ (Example~\ref{exam:action-on-bigraded}) recovers a familiar structure as well. 

\begin{proposition}\label{lem:pi0-identified}
Let $M$ be a module over the $\Hun$-algebra $A$ in the sense of Definitions~\emph{\ref{def:pi-0-modules}} and \emph{\ref{Hun-algebra-mod}}. Then $u(M)$ is a module over the unstable algebra $u(A)$. Furthermore, for $s \geq 1$, the functor $h$ identifies the 
$\pi_0H^*(X^\bu)$-module $\pi_sH^*(X^\bu)$ with the $\pi_0[X^\bu, -]$-module $\pi_s[X^\bu, -]$.
\end{proposition}

\begin{proof}
First note that $u(M)$ defines an unstable module since $M$ is the 
kernel of an abelian object in $\Hun\text{-Alg}/A$. For each map $f\co G_1\to G_2$ in $\Hun$, one obtains an action map
  $$ \phi_f\co A(G_1)\times M(G_1)\to M(G_2)$$
and the collection of these maps satisfies certain compatibility requirements (see Definition~\ref{Hun-algebra-mod}).
These define an $u(A)$-action on $u(M)$ which makes it into an 
$u(A)$-module.
 
Now set $A=[X^\bu,-]$ and $M=\Omega^s[X^\bu,-]$. 
By Theorem~\ref{thm:UA-equiv-H-alg} and Corollary~\ref{cor:inverse-to-h}, we can identify the unstable algebra $\pi_0H^*(X^\bu)$ with the \Hun-algebra $\pi_0[X^\bu,G]$. Recall that the fiber sequence~(Example~\eqref{exam:action-on-bigraded}):
  $$ \Omega^s[X^\bu,G]\to \Omega_+^s[X^\bu,G]\to [X^\bu,G] $$
admits a section which is induced by the constant map $c:\Delta^s/\partial\Delta^s\to\ast$ and this determines an isomorphism
  $$ [X^\bu,G]\times \Omega^s[X^\bu,G]\cong \Omega_+^s[X^\bu,G].$$

Consider the map $\mu :K_m \times K_n \to K_{m+n}$ which sends the fundamental classes of the factors in the domain to the fundamental class in the target. Then the action of $\pi_0A$ on $\pi_0M$ is induced by the following composition of maps:
\begin{align} \label{first-module-action}
\begin{split}
[X^\bu ,K_m ]&\times \Omega^s [X^\bu ,K_n ]\to \Omega^s_+[X^\bu ,K_m ]\times \Omega^s_+ [X^\bu ,K_n]=  \\
 = &\Omega^s_+[X^\bu ,K_m \times K_n ] \stackrel{\mu_*}{\longrightarrow} \Omega^s_+[X^\bu ,K_{m+n}]\to\Omega^s [X^\bu ,K_{m+n}].
\end{split}
\end{align} 
On the other hand, the action of $\pi_0 H^*(X^\bu)$ on $\pi_sH^*(X^\bu)$ is induced by the following composition:
\begin{equation} \label{second-module-action}
H^m (X^0) \times H^n (X^s) \to H^m (X^s) \times H^n (X^s ) \to 
H^{m+n} (X^s) 
\end{equation}
where the first map comes from the unique degeneracy map and the 
last map is given by the cup product. It is straightforward to check that these actions correspond to each other if we represent classes 
in $\pi_0 A$ and $\pi_0 M$ by elements in $H^*(X^0)$ and in $H^*(X^s)$, respectively.  
\end{proof}

\subsection{Unstable algebras, rationally} \label{rational-unstable-algebras}
At least in positive degrees, this case is much simpler and most of the above applies similarly, since there are no non-trivial unary cohomology operations rationally. 
Let us write $K_n=K(\mathbb{Q},n)$, $n\ge 0$, for the corresponding Eilenberg-MacLane space.

For $n>0$, the computation of the $\mathbb{Q}$-cohomology of $K_n$ yields 
\begin{equation}\label{eqn:Cartan-formula-2}  
   H^*(K_n)\cong U(\mathbb{Q}[n]), 
\end{equation}
where $U(-)$ is the free graded commutative algebra functor. As a consequence, Proposition \ref{prop:identify-representable} holds in this case as well. 

However, there is a subtlety regarding the structure of a rational unstable algebra in degree $0$. On the one hand, there is no obvious analogue of 
the $p$-Boolean property, and on the other hand, the rational cohomology of a space is set-like, i.e.\! isomorphic to the $\mathbb{Q}$-algebra $\mathbb{Q}^X$ of functions $X \to \mathbb{Q}$ for some set $X$ -- which is also the dual algebra of the set-like $\mathbb{Q}$-coalgebra on $X$ (cf. Definition \ref{set-like-deg-0-2}). 

The problem with simply defining rational unstable algebras to be non-negatively graded commutative $\mathbb{Q}$-algebras that are set-like in degree $0$ is that the resulting theory is 
not algebraic in the sense of Theorem \ref{thm:UA-equiv-H-alg}. To see why this definition does not lead to an algebraic
theory, we apply a result of Heyneman and Radford in \cite[3.7]{heyneman-radford}, which says that for a reasonable set $X$, the algebra $\mathbb{Q}^X$ is (co)reflexive, that is, the canonical map 
to the finite (or restricted) dual of $\mathbb{Q}^X$,
  $$\mathbb{Q}(X) \to \Hom_{\set}(X,\mathbb{Q})^{\circ},$$ 
is an isomorphism of $\mathbb{Q}$-coalgebras. We recall that 
  $$(-)^{\circ}\co \mathrm{Alg}_{\mathbb{Q}}^{\op} \to \mathrm{Coalg}_{\mathbb{Q}}$$
is the right adjoint of the functor $(-)^{\dual}\co \mathrm{Coalg}_{\mathbb{Q}} \to \mathrm{Alg}_{\mathbb{Q}}^{\op}$, taking a
coalgebra to its linear dual algebra. See \cite[Chapter IV]{sweedler:hopf} for a good account of the general properties of 
this functor. As a consequence, we have isomorphisms
\begin{equation*}
\Hom_{{\rm Alg}_{\mathbb{Q}}}(\mathbb{Q}^{X}, \mathbb{Q}^Y)\cong \Hom_{{\rm Coalg}_{\mathbb{Q}}}(\mathbb{Q}(Y), \mathbb{Q}(X)) \cong \Hom_{\set}(Y, X)
\end{equation*}
showing that $\set^{\op}$ (for  reasonable sets) is equivalent to set-like algebras and thus the degree $0$ part of the proposed definition is not algebraic.

In relation to this we note several amusing facts. The isomorphisms above show that $H^*(K_0)$ satisfies Proposition \ref{prop:identify-representable} if $A^0$ is set-like. 
However, the set-like property is not preserved under filtered colimits. It follows that $H^0(K_0)$ is not perfectly presentable in the category of set-like $\mathbb{Q}$-algebras. 
Curiously though, mapping out of $H^0(K_0)$ to set-like $\mathbb{Q}$-algebras preserves reflexive coequalizers.

Even in positive characteristic the relation between the $p$-Boolean and set-like properties is not absolutely tight; for example, the free $p$-Boolean $\Fp$-algebra on countably many generators is not set-like. There is, however, a 
duality theorem saying that a $p$-Boolean algebra $A$ is isomorphic to the algebra of continuous $\Fp$-valued maps on $\mathrm{Spec}(A)$, see \cite[Appendix]{kuhn:generic}.

Let $\mc{G}_{\mathbb{Q}}$ denote the set of Eilenberg-MacLane spaces $K_n=K(\mathbb{Q},n)$ for $n\ge 0$, and let $\Hun{}_{,\mathbb{Q}}$ be the full 
subcategory of the homotopy category of spaces whose objects are the finite products of objects in $\mc{G}_{\mathbb{Q}}$. 
As before, $\Hun{}_{,\mathbb{Q}}$ is an algebraic theory. Motivated especially by Theorem \ref{thm:UA-equiv-H-alg}, the following definition takes into account all these issues and 
is particularly suitable to our purposes.

\begin{definition} \label{def:unstable-alg}
An unstable $\mathbb{Q}$-algebra is an $\Hun{}_{,\mathbb{Q}}$-algebra.
\end{definition} 
 
Besides its intrinsic interest, the functorial point of view via algebraic theories, is also desirable here in order 
to enable, also in the rational case, a nice translation of the approach of Appendix~\ref{appsec:spiral} into the language 
of unstable (co)algebras. Note that this definition is formally the same as in the case of positive characteristic.

\begin{example}
A set-like $\mathbb{Q}$-algebra admits an obvious action from the algebra $H^0(K_0)$, the $\mathbb{Q}$-algebra of functions $\mathbb{Q} \to \mathbb{Q}$. In particular, 
the dual of an unstable $\mathbb{Q}$-coalgebra (Def.~\ref{set-like-deg-0-2} ) is an unstable $\mathbb{Q}$-algebra 
in the sense of Definition \ref{def:unstable-alg}.
\end{example}

\begin{example}
Let $X$ be a space. The rational singular cohomology produces an unstable $\mathbb{Q}$-algebra as follows, 
$$\mc{G}_{\mathbb{Q}}\to\set\ , \ \ G \mapsto [X,G].$$ 
Similarly, for a cosimplicial space $X^\bu$, we have an unstable $\mathbb{Q}$-algebra
$$G \mapsto \pi_s[X^\bu,G].$$ 
We note that 
$$\pi_0(H^*(X^\bu))^0 = \mathrm{coeq}\bigl(H^0(X^1) \rightrightarrows H^0(X^0)\bigr)$$
is actually again set-like. 
\end{example}

\subsection{The cohomology spectral sequence}
\label{subsec:cohss}

Let us return to the spiral spectral sequence, which was introduced in Subsection~\ref{subsec:sss} for a general resolution model category,
and specialize to \mc{M} being the category of spaces, and $\mc{G}=\{K(\F,n)\,|\,n\ge 0\}$ where $\F$ is any prime field. 
The reindexed $E^2$-page for $G=K(\F,n)$ is identified as
  $$E^2_{s,t}\cong\pi_s H^{n-t}(X^\bu).$$
The exact couple~(\ref{spiral-les}) of \mc{H}-algebras
  $$ \hdots\to [Z^{m-1}X^\bu,\Omega^{q+1}G]\to [Z^{m}X^\bu,\Omega^{q}G] \to [C^{m}X^\bu,\Omega^qG]\to[Z^{m-1}X^\bu,\Omega^{q}G]\to\hdots $$
is obtained by passing from homology to cohomology in Rector's construction of  the homology spectral sequence \cite{Rector:EMSS}.
Consequently, the spiral spectral sequence is, up to re-indexing, the linear dual of the homology spectral sequence of the cosimplicial space $X^\bu$.    
 In case the terms of the spectral sequence are all of finite type, the strong convergence results of Bousfield \cite[Theorem 3.4]{Bou: homology SS} and 
 Shipley \cite[Theorem 6.1]{Shipley:convergence} for the homology spectral sequence imply analogous results for the convergence of the cohomology spectral 
 sequence to $H^*(\mathrm{Tot^f} (X^\bu))$.
 
From results proved in Subsection~\ref{subsec:ses}, and the identification in Proposition~\ref{lem:pi0-identified}, we obtain the following
 
\begin{proposition} 
The cohomology spectral sequence of a cosimplicial space  $X^\bu$ is a spectral sequence of $\pi_0H^*(X^\bu)$-modules.
\end{proposition}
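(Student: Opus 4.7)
The plan is to deduce the statement as a direct consequence of the work already done in Appendix~\ref{appsec:spiral} combined with the identifications made in this appendix. Recall from the discussion just before the proposition that, in the special case $\mc{M} = \mc{S}$ and $\mc{G} = \{K(\F,n) \mid n \geq 0\}$, the spiral spectral sequence of $X^\bu$ is identified (up to re-indexing) with the linear dual of the cohomology spectral sequence of $X^\bu$. In particular, after this re-indexing, the $E^2$-page reads $E^2_{s,t} \cong \pi_s H^{n-t}(X^\bu)$, and the differentials match those of the spiral spectral sequence.

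The first step is to invoke Theorem~\ref{thm:pi-0-module-structure}, which asserts that the spiral exact sequence, hence also the associated spiral spectral sequence, is a sequence of $\pi_0$-modules over the \mc{H}-algebra $\pi_0 = \naturalpi{0}{X^\bu}{G} \cong \pi_0[X^\bu, G]$, and that the differentials $d_r$ are $\pi_0$-module morphisms. This gives the module structure abstractly, at the level of \mc{H}-algebras.

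The second step is to translate this \mc{H}-algebra module structure into the more familiar unstable algebra module structure. By Theorem~\ref{thm:UA-equiv-H-alg} and Corollary~\ref{cor:inverse-to-h}, the functor $u$ gives an equivalence between \mc{H}-algebras and unstable algebras, and under this equivalence the \mc{H}-algebra $\pi_0[X^\bu,G]$ corresponds precisely to the unstable algebra $\pi_0 H^*(X^\bu)$. Lemma~\ref{lem:pi0-identified} then shows that the $\pi_0$-module structure on each term $\pi_s[X^\bu, G]$ of the spiral spectral sequence, in the sense of Definition~\ref{def:pi-0-modules}, corresponds under $u$ to the usual $\pi_0 H^*(X^\bu)$-module structure on $\pi_s H^*(X^\bu)$.

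Combining these two steps yields the claim: the $\pi_0$-module structure and compatibility of differentials transported from the spiral spectral sequence to the cohomology spectral sequence under the duality identification gives exactly a $\pi_0 H^*(X^\bu)$-module spectral sequence structure. There is no real obstacle here; the content has been built up in the preceding subsections, and the proof is essentially a matter of assembling the identifications. The only point requiring a small check is that the re-indexing and linear dualization which passes between the spiral and cohomology spectral sequences is compatible with the $\pi_0$-module structures on both sides, but this is immediate from the fact that the module structure on $\pi_s[X^\bu,G]$ was constructed, in Example~\ref{exam:action-on-bigraded}, from the split fibration sequence~\eqref{eqn:action-on-bigraded}, which is natural in $G$ and hence compatible with the passage to cohomology groups $\pi_s H^{n-t}(X^\bu)$.
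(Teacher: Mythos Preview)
Your proposal is correct and follows exactly the route the paper intends: the proposition is stated as an immediate consequence of the results of Subsection~\ref{subsec:ses} (the $\pi_0$-module structure on the spiral exact sequence, culminating in Theorem~\ref{thm:pi-0-module-structure}) together with the identification of Lemma~\ref{lem:pi0-identified}, and you have unpacked precisely this. One small slip to correct: the spiral spectral sequence is, up to re-indexing, the linear dual of the \emph{homology} spectral sequence---that is, it \emph{is} the cohomology spectral sequence---not the linear dual of the cohomology spectral sequence as you wrote; so no duality passage is needed in your final step, and the $\pi_0 H^*(X^\bu)$-module structure is inherited directly rather than transported across a dualization.
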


\section{Moduli spaces in homotopy theory} \label{DK-theory}

The approach to moduli spaces that we take in this paper follows and makes essential use of various results from a series of papers by Dwyer and Kan. 
In this appendix, we recall the necessary background material and give a concise review of those results that are required in the paper. The interested 
reader should consult 
the original papers for a more complete account \cite{DK-classification, DK-function comp, DK-simp loc, DK-calc simp loc} and 
\cite{BlDG:pi-algebra}.

\subsection{Simplicial localization} 
Let $(\mc{C}, \mc{W})$ be a small category $\mc{C}$ together with a subcategory of \emph{weak equivalences} $\mc{W}$ which 
contains the isomorphisms. In their seminal work, Dwyer and Kan introduced two constructions of a simplicially enriched category 
associated to $(\mc{C}, \mc{W})$. Each of them is a refinement of the passage to the homotopy category $\mc{C}[\mc{W}^{-1}]$, 
which is given by formally inverting all weak equivalences, and they uncover in a functorial way the rich homotopy theory encoded 
in the pair $(\mc{C}, \mc{W})$.

The \emph{simplicial localization} $L(\mc{C}, \mc{W})$ \cite{DK-simp loc} is defined by a general free simplicial resolution
$$L(\mc{C}, \mc{W})_n = F_n \mc{C} [(F_n \mc{W})^{-1}]$$
where $n$ indicates the simplicial degree and $F_n$ denotes the free category functor iterated $n+1$ times. As the simplicial set of objects is constant, $L(\mc{C}, \mc{W})$ is a simplicially enriched category. We will follow the standard 
abbreviation and simply call such categories simplicial. 

The following fact, saying that the simplicial localization preserves the homotopy type of the classifying space, will be useful.

\begin{proposition}[Dwyer-Kan \cite{DK-simp loc}] \label{DK-theory1}
There are natural weak equivalences of spaces 
$$ B \mc{C} \stackrel{\sim}{\leftarrow} B(F_{\bullet} \mc{C}) \stackrel{\sim}{\rightarrow} B(L(\mc{C}, \mc{W})_{\bullet}).$$
\end{proposition}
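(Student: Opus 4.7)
The plan is to establish the two weak equivalences separately, treating both $F_\bullet\mc{C}$ and $F_\bullet\mc{C}[(F_\bullet\mc{W})^{-1}]$ as simplicial categories with constant object set, so that applying $B$ in each simplicial degree yields a bisimplicial set whose diagonal realizes the displayed classifying space.

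For the first equivalence $B(F_\bullet\mc{C})\stackrel{\sim}{\to} B\mc{C}$, I would invoke the general fact that $F_\bullet\mc{C}$ is the standard cotriple (comonadic) resolution associated to the adjunction between small categories and their underlying directed graphs, with $F$ the free-category functor. Viewing $\mc{C}$ as a constant simplicial category, the augmented simplicial object $F_\bullet\mc{C}\to c\mc{C}$ admits extra degeneracies coming from the unit of the adjunction. Applying $B$ degreewise produces a bisimplicial set whose augmentation to $B\mc{C}$ is a simplicial homotopy equivalence because of the extra degeneracies; passing to diagonals produces the asserted weak equivalence.

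For the second equivalence, the central technical point is the following lemma: for any free category $F(G)$ on a graph $G$ and any subcategory $\mc{W}\subseteq F(G)$ containing all objects, the localization functor $F(G)\to F(G)[\mc{W}^{-1}]$ induces a weak equivalence on nerves. Granting this lemma, in each simplicial degree $n$ the category $F_n\mc{C}$ is free, and the lemma applies to $F_n\mc{W}\subseteq F_n\mc{C}$; thus the map of bisimplicial sets obtained by applying $B$ degreewise is a weak equivalence in each simplicial direction, hence a weak equivalence on diagonals. The second weak equivalence then follows.

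The hard part will be this free-category lemma. The plan is to exploit that $BF(G)$ is weakly equivalent to the $1$-dimensional geometric realization $|G|$ of the generating graph, and that both $BF(G)$ and $B F(G)[\mc{W}^{-1}]$ map compatibly to the classifying space $B\Pi_1(F(G))\simeq|G|$ of the free groupoid on $G$. Inverting morphisms in $\mc{W}$ one generator at a time, each elementary localization step is a pushout along the inclusion $[1]\hookrightarrow I$, where $I$ is the free-living isomorphism; since $B[1]\to BI$ is a weak equivalence and Thomason-style gluing respects weak equivalences of categories in suitable circumstances, the classifying space is unchanged. Iterating (and passing to filtered colimits if $\mc{W}$ is infinite) gives the lemma.
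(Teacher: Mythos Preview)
The paper does not give its own proof here; it simply cites \cite[4.3]{DK-simp loc}. So there is nothing to compare against except the original Dwyer--Kan argument, which your sketch follows in outline.

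Your treatment of the first equivalence via the cotriple resolution and extra degeneracies is standard and correct. For the second equivalence, the reduction to the free-category lemma and the idea of comparing both $BF(G)$ and $BF(G)[\mc{W}^{-1}]$ to the $1$-complex $|G|$ is exactly the right strategy and is how the result is proved in \cite{DK-simp loc}.

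The one place where your sketch is genuinely soft is the pushout step: inverting a single generator by pushing out along $[1]\hookrightarrow I$ in $\mathrm{Cat}$ and then appealing to ``Thomason-style gluing'' is not a complete argument, because the nerve functor does not take pushouts in $\mathrm{Cat}$ to homotopy pushouts in general, and $[1]\to I$ is not a Thomason cofibration. What actually makes the lemma work is the freeness: when $\mc{D}=F(G)$ and you invert a set of generating edges, the resulting category $F(G)[\mc{W}^{-1}]$ admits an explicit reduced-word description, and one can build a deformation retraction of its nerve onto $|G|$ directly (this is the content of the relevant sections of \cite{DK-simp loc}). Your comparison-to-$|G|$ sentence already points at this; I would drop the one-generator-at-a-time pushout mechanism and instead argue the retraction explicitly for the free case, or simply cite Dwyer--Kan for this step as the paper does.
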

\begin{proof}
See \cite[4.3]{DK-simp loc}.
\end{proof}

\begin{remark}
As a side remark on the last proposition, we like to note that in the equivalent context of $\infty$-categories, one may 
regard the simplicial localization as given by taking a pushout along a disjoint union of copies of the 
inclusion of categories
$$(\bullet \rightarrow \bullet) \to (\bullet \rightleftarrows \bullet),$$
one for each morphism in $\mc{W}$. Since this inclusion is a nerve equivalence (but not a Joyal equivalence), it follows
that the homotopy type of the classifying space is invariant under such pushouts. 
\end{remark}

On the other hand, we also have the \emph{hammock localization} $L^H(\mc{C}, \mc{W})$ \cite[3.1]{DK-function comp}, \cite{DK-calc simp loc} 
whose morphisms in simplicial degree $n$ are given by ``reduced hammocks of width $n$'':
\[
 \xymatrix{
 & \ast \ar@{-}[r] \ar[d]^{\sim} & \ast \ar@{-}[r] \ar[d]^{\sim} & \cdots \ar@{-}[r] & \ast \ar@{-}[r] \ar[d]_{\sim} & \ast \ar[d]_{\sim} & \\
  & \ast \ar@{-}[r] \ar[d]^-{\sim} & \ast \ar@{-}[r] \ar[d]^-{\sim} & \cdots \ar@{-}[r] & \ast \ar@{-}[r] \ar[d]_-{\sim} & \ast \ar[d]_-{\sim} & \\
 \bullet \ar@{-}[ruu] \ar@{-}[ru] \ar@{-}[rd] \ar@{-}[rdd] & \vdots \ar[d]^{\sim} & \vdots \ar[d]^{\sim} & \vdots & \vdots \ar[d]_{\sim} & \vdots \ar[d]_{\sim} &  \bullet \ar@{-}[ldd] \ar@{-}[ld] \ar@{-}[lu] \ar@{-}[luu] \\
 & \ast \ar@{-}[r] \ar[d]^{\sim} & \ast \ar@{-}[r] \ar[d]^{\sim} & \cdots \ar@{-}[r] & \ast \ar@{-}[r] \ar[d]_{\sim} & \ast \ar[d]_{\sim} &\\
 & \ast \ar@{-}[r] & \ast \ar@{-}[r] & \cdots \ar@{-}[r] & \ast \ar@{-}[r] & \ast  & \\
}
\]
This diagram shows $n$ composable morphisms between zigzag diagrams in $(\mc{C}, \mc{W})$ of the same shape, the same source and target, 
and whose components are weak equivalences. The horizontal morphisms in each column go in the same direction; if they go to the left, then 
they are in $\mc{W}$. Moreover, the horizontal morphisms in adjacent columns go in different directions and no column contains only identity maps. 

This is a more explicit definition of a simplicial category and is often more practical than the simplicial localization. The following proposition says 
that the two constructions lead to 
weakly equivalent simplicially enriched categories, i.e., there is zigzag of functors which induces homotopy equivalences of mapping spaces 
and equivalences of homotopy categories. 

\begin{proposition}[Dwyer-Kan \cite{DK-calc simp loc}] \label{DK-theory2}
There are natural weak equivalences of simplicial categories 
$$L^H(\mc{C}, \mc{W}) \stackrel{\sim}{\leftarrow} \mathrm{diag} L^H(F_{\bullet}\mc{C}, F_{\bullet} \mc{W}) \stackrel{\sim}{\rightarrow} L(\mc{C}, \mc{W}).$$
\end{proposition}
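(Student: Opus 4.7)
The plan is to establish both weak equivalences separately by using a bisimplicial comparison. The key technical input is a lemma saying that the hammock localization simplifies drastically when applied to a free category. Specifically, I would first prove: if $\mathcal{D}$ is a free category and $\mathcal{V}\subseteq \mathcal{D}$ a subcategory containing all objects and isomorphisms, then the natural functor
\[
L^H(\mathcal{D}, \mathcal{V}) \longrightarrow \mathcal{D}[\mathcal{V}^{-1}]
\]
(with the target viewed as a discrete simplicial category) is a weak equivalence of simplicial categories, i.e.\ each mapping space has contractible components. The argument is a calculus-of-fractions computation: in a free category pair, any hammock representing a morphism admits a canonical ``straightening,'' and the contracting homotopies can be produced by showing that the nerve of the category of hammocks between two fixed objects (with the morphisms being the natural ``reduction'' operations) is contractible, because the reduction steps assemble into a filtered diagram.

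Second, I would apply this lemma degreewise to the free simplicial resolution $F_\bullet\mathcal{C}$. This gives a bisimplicial category $L^H(F_\bullet\mathcal{C}, F_\bullet\mathcal{W})$, and the collapse maps $L^H(F_n\mathcal{C}, F_n\mathcal{W}) \to F_n\mathcal{C}[(F_n\mathcal{W})^{-1}]$ are weak equivalences in each simplicial degree $n$. Since the right-hand side is exactly $L(\mathcal{C},\mathcal{W})_n$ regarded as a discrete simplicial object in the hammock direction, passing to diagonals and invoking the realization lemma for bisimplicial sets (applied degreewise to each mapping space bisimplicial set $L^H(F_\bullet \mathcal{C}, F_\bullet \mathcal{W})(X,Y)$) yields the right-hand weak equivalence
\[
\mathrm{diag}\,L^H(F_\bullet\mathcal{C}, F_\bullet\mathcal{W}) \stackrel{\sim}{\longrightarrow} L(\mathcal{C},\mathcal{W}).
\]

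For the left-hand weak equivalence, the augmentation $F_\bullet\mathcal{C} \to c\mathcal{C}$ (constant at $\mathcal{C}$) is, by construction of the free resolution, a simplicial homotopy equivalence of simplicial categories (it has a natural contraction coming from the comonad structure of the free-forgetful adjunction). I would show that the functor $L^H(-, -)$, applied to a pair $(\mathcal{C}', \mathcal{W}')$, is compatible with simplicial homotopies: a simplicial homotopy between functors of pairs induces a simplicial homotopy between the corresponding functors on hammock localizations. Consequently, the induced map of bisimplicial mapping spaces
\[
L^H(F_\bullet\mathcal{C}, F_\bullet\mathcal{W})(X,Y) \longrightarrow L^H(\mathcal{C}, \mathcal{W})(X,Y)
\]
is a diagonal weak equivalence: in each fixed hammock-degree $w$, the source is a simplicial homotopy equivalent (via the augmentation) to the target (regarded as constant in the $F_\bullet$-direction); applying the realization/diagonal lemma gives the left-hand weak equivalence.

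The hard part will be the first step, namely establishing that the hammock localization of a free category pair is homotopically discrete. Everything else is a formal consequence of the realization lemma combined with the homotopy invariance of $L^H$ under natural transformations through weak equivalences. This reduction to the free case — essentially a strong calculus-of-fractions assertion that holds only because of the freeness — is where the real content of the proposition lies, and it accounts for the length of the original proof in \cite{DK-calc simp loc}.
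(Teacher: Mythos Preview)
The paper does not actually prove this proposition: its entire proof is the single line ``See \cite[2.2]{DK-calc simp loc}.'' Your proposal goes well beyond what the paper provides by sketching the content of that reference, and your outline is broadly faithful to the Dwyer--Kan argument. The two-step strategy---showing that the hammock localization of a free category pair is homotopically discrete, then invoking the realization lemma in both directions (once via the degreewise collapse to $F_n\mc{C}[(F_n\mc{W})^{-1}]$, once via the augmentation $F_\bullet\mc{C}\to c\mc{C}$)---is exactly the architecture of \cite[2.2]{DK-calc simp loc}.

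One caution on the ``hard part'': the contractibility of the hammock mapping spaces in the free case is not proved in \cite{DK-calc simp loc} by exhibiting a filtered diagram of reductions as you suggest. Dwyer and Kan instead work with explicit reduced hammocks and show directly that each component of the mapping space is contractible by constructing deformation retractions indexed by word length in the free category; the freeness is used to guarantee unique factorizations into generators, which makes the reduction process terminate canonically. Your filtered-colimit heuristic is plausible but would need to be made precise, and it is not the route taken in the source. Since the paper under review simply defers to that source, there is nothing further to compare.
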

\begin{proof}
See \cite[2.2]{DK-calc simp loc}.
\end{proof}

\subsection{Models for mapping spaces} If the pair $(\mc{C}, \mc{W})$ is part of a simplicial model category structure on $\mc{C}$, the simplicial enrichment 
of $L^H(\mc{C}, \mc{W})$ agrees up to homotopy with the (derived) mapping spaces of the simplicial model category. 

\begin{theorem}[Dwyer-Kan \cite{DK-function comp}] \label{DK-theory3}
Let $\mc{C}$ be a simplicial model category with weak equivalences $\mc{W}$ and  $X, Y \in \mc{C}$ where $X$ is 
cofibrant and $Y$ is fibrant. Then there is a weak equivalence 
$$L^H(\mc{C}, \mc{W})(X, Y) \simeq \map_{\mc{C}}(X, Y)$$
which is given by a natural zigzag of weak equivalences. 
\end{theorem}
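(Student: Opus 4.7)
The plan is to realize both mapping spaces as diagonals of a common bisimplicial object built from cosimplicial frames, and to reduce the comparison to the homotopy calculus of fractions.

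First, I would choose a Reedy cofibrant cosimplicial frame $X^\bullet$ on $X$, for instance $X^n = X \otimes \Delta^n$; since $X$ is cofibrant, this is Reedy cofibrant and each structure map induced by a simplicial operator $\Delta^m \to \Delta^n$ is a weak equivalence in $\mc{C}$. Because $Y$ is fibrant, the standard homotopy-theoretic calculation in a simplicial model category (see \cite[5.2]{DK-function comp}) gives a natural weak equivalence of simplicial sets
\[
\map_{\mc{C}}(X, Y) \;\simeq\; \bigl([n] \mapsto \Hom_{\mc{C}}(X^n, Y)\bigr).
\]

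Second, I would construct a natural comparison map
\[
\Phi\co \map_{\mc{C}}(X, Y) \to L^H(\mc{C}, \mc{W})(X, Y)
\]
by sending an $n$-simplex $f\co X^n \to Y$ to the length-two, width-one hammock
\[
X \stackrel{\sim}{\longleftarrow} X^n \stackrel{f}{\longrightarrow} Y,
\]
where the left arrow is the canonical weak equivalence coming from $\Delta^n \to \Delta^0$. Functoriality in $[n]$ is immediate from the cosimplicial structure of $X^\bullet$ and the pointwise description of morphisms in $L^H$.

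Third, and this is the heart of the argument, I would show that $\Phi$ is a weak equivalence by invoking the homotopy calculus of fractions developed in \cite[Section 6]{DK-calc simp loc}. The content is that, in any model category, the subspace of $L^H(\mc{C},\mc{W})(X,Y)$ consisting of length-two hammocks $X \stackrel{\sim}{\leftarrow} Z \to Y$ with $Z$ cofibrant is weakly equivalent to all of $L^H(\mc{C},\mc{W})(X,Y)$; dually one can use fibrant replacements on the right. The reduction is carried out by functorial (cofibration, trivial fibration)-factorizations combined with the lifting axiom, using that $X$ is cofibrant and $Y$ is fibrant to compress a general hammock to a two-step zigzag. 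A bisimplicial comparison, indexed by the cosimplicial degree of the frame in one direction and the length of the hammock in the other, identifies this reduced hammock space with $\Hom_{\mc{C}}(X^\bullet, Y)$, yielding the required equivalence.

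The main obstacle is the compression argument in Step 3: one must show not only that every hammock can be replaced by a length-two hammock with cofibrant middle object, but that the space of such replacements is contractible, so that the reduction is a weak equivalence of simplicial sets rather than merely a bijection on components. This is where the full strength of the model structure enters, through the existence of functorial factorizations and compatible lifts, and it is precisely what is proved in \cite[Propositions 6.2 and 6.4]{DK-calc simp loc}.
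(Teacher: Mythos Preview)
The paper does not give an independent proof: it simply cites \cite[4.7]{DK-function comp} (a special case of \cite[4.4]{DK-function comp}) and remarks that the general version works for non-simplicial model categories via (co)simplicial frames. Your proposal is therefore not competing with the paper's proof but rather sketching the content of the cited Dwyer--Kan argument, and in that respect the ingredients are correct: a cosimplicial frame on $X$ to model $\map_{\mc{C}}(X,Y)$, the reduction of $L^H(\mc{C},\mc{W})(X,Y)$ to short zigzags via the homotopy calculus of fractions, and a bisimplicial comparison. This is exactly the structure of the Dwyer--Kan argument, so your overall strategy is on target.

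There is, however, a genuine imprecision in your Step~2. The assignment that sends an $n$-simplex $f\co X^n \to Y$ to the single zigzag $X \stackrel{\sim}{\leftarrow} X^n \stackrel{f}{\to} Y$ does not define a map of simplicial sets into $L^H(\mc{C},\mc{W})(X,Y)$: a single zigzag is a $0$-simplex of the hammock space, not an $n$-simplex, so as written $\Phi$ is not simplicial. What one actually does is map into the nerve of the category of length-two zigzags (the category $\mc{W}^{\rm c}_{\rm Hom}(X,Y)$ in the paper's notation): an $n$-simplex $f \in \Hom_{\mc{C}}(X^n,Y)$ produces, via the cosimplicial structure maps $X^0 \to X^1 \to \cdots \to X^n$, a chain of $n$ morphisms of zigzags, i.e.\ an $n$-simplex of that nerve. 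The comparison of this nerve with $L^H(\mc{C},\mc{W})(X,Y)$ is then the content of the calculus-of-fractions results you cite from \cite{DK-calc simp loc} (and is what the paper records separately as Proposition~\ref{DK-theory4} and Remark~\ref{X-cof-zigzag}). Once $\Phi$ is defined this way, your Step~3 is correct as stated.
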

\begin{proof}
This is \cite[4.7]{DK-function comp} and is a special case of \cite[4.4]{DK-function comp}. The more general case applies also to non-simplicial model 
categories with $\map(-,-)$ replaced in this case by a functorial choice of derived mapping spaces defined in terms of (co)simplicial resolution
of objects. 
\end{proof}

\begin{remark}
There are some obvious set-theoretical issues which need to be addressed here in order to extend the hammock
localization to non-small categories such as model categories. We refer the reader to \cite{DK-function comp} for the details. 
\end{remark}

There is no direct way to compare the compositions in the simplicial category of cofibrant-fibrant objects $\mc{C}^{\rm cf}_{\bullet}$ and the hammock localization $L^H(\mc{C}^{\rm cf}, \mc{W})$. 
However, there is a strengthening of Theorem \ref{DK-theory3} (see \cite[4.8]{DK-function comp}) which says in addition that the weak equivalences of mapping spaces can be upgraded to a weak 
equivalence of simplicial categories. To achieve such a comparison, one considers the hammock localization applied degreewise to $\mc{C}^{\rm cf}_{\bullet}$. Then it is easy to see that that 
the obvious functors
\begin{equation} \label{DK-theory3b}
\mc{C}^{\rm cf}_{\bullet} \stackrel{\sim}{\rightarrow} {\rm diag}L^H(\mc{C}^{\rm cf}_{\bullet}, \mc{W}_{\bullet}) \stackrel{\sim}{\rightarrow} {\rm diag}L^H(\mc{C}_{\bullet}, \mc{W}_{\bullet}) \stackrel{\sim}{\leftarrow} L^H(\mc{C}, \mc{W})
\end{equation}
are weak equivalences of simplicial categories. 

In the presence of a homotopy calculus of fractions, the shapes of zigzag diagrams appearing in the definition of the hammock localization can be reduced to a single zigzag shape of 
arrow length 3. Although this reduction does not respect the composition in the category, which is given by concatenation and thus increases the length of the hammock, this reduction
is useful in obtaining smaller models for the homotopy types of the mapping spaces. 

Let $\mathscr{MAP}(X, Y)$ denote the (geometric realization of the) subspace of the mapping space $L^H(\mc{C}, \mc{W})(X, Y)$, which is 
defined by $0$-simplices of the form 
$$X \stackrel{\sim}{\leftarrow} \bullet \rightarrow \bullet \stackrel{\sim}{\leftarrow} Y.$$
More precisely, $\mc{W}_{\rm Hom}(X, Y)$ will denote the category of such zigzag diagrams from $X$ to $Y$ and weak equivalences 
between them, and $\mathscr{MAP}(X, Y)$ its classifying space.

\begin{proposition}[Dwyer-Kan \cite{DK-calc simp loc}] \label{DK-theory4}
Let $\mc{C}$ be a model category with weak equivalences $\mc{W}$ and  $X, Y \in \mc{C}$. Then there is a natural 
weak equivalence 
$$\mathscr{MAP}(X, Y) \simeq L^H(\mc{C}, \mc{W})(X, Y).$$
\end{proposition}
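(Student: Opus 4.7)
The plan is to exploit the fact that the pair $(\mc{C}, \mc{W})$ associated to any model category satisfies a two-sided homotopy calculus of fractions, in the sense of Dwyer--Kan. From this property, reduction to the short three-arrow form $X \stackrel{\sim}{\leftarrow} \bullet \rightarrow \bullet \stackrel{\sim}{\leftarrow} Y$ becomes a cofinality-type statement about the simplicial category of zigzags.

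First I would verify (or cite from \cite{DK-calc simp loc}) that a model category admits a homotopy calculus of both left and right fractions. The key inputs are: functorial factorization, which lets us replace a map followed by a backward weak equivalence (or a backward weak equivalence followed by a map) with a backward weak equivalence followed by a map (respectively, a map followed by a backward weak equivalence), up to a comparing weak equivalence; and the left/right lifting properties, which supply the required commuting squares with weak equivalence diagonals. These manipulations let one inductively shorten any hammock between $X$ and $Y$ to a shape with at most one ``wrong-way'' weak equivalence at each end and a single forward arrow in between, i.e.\ the shape defining $\mc{W}_{\rm Hom}(X,Y)$.

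Second I would construct the natural comparison map. The inclusion of the three-arrow zigzags into the full hammock category defines a simplicial map $\mathscr{MAP}(X,Y) \to L^H(\mc{C}, \mc{W})(X, Y)$. To show this map is a weak equivalence, I would consider for each simplicial degree $n$ the subcategory of hammocks of width $n$ whose columns have the three-arrow shape, and then verify that this inclusion of simplicial categories of zigzags is cofinal in Quillen's sense. The reduction procedure from the first step produces, for each general hammock, a canonical (up to contractible choice) three-arrow hammock equipped with a weak equivalence to the original, which is exactly the data required by Theorem~A in this simplicial setting.

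The main obstacle will be in carrying out the coherence of step two: the shortening procedure must respect not only individual hammocks but also the morphisms between them (which are themselves higher-dimensional hammocks with weak equivalences between rungs), so one must make enough of the choices functorially. This is handled in \cite{DK-calc simp loc} by using the functorial factorizations of the model structure to define the reduction simplicially degree by degree, and by observing that different choices of functorial factorizations yield homotopic outputs, so that the comparison categories have contractible fibers over $\mc{W}_{\rm Hom}(X,Y)$. Once this coherence is established, an application of Quillen's Theorem~A (in its simplicial-category form) yields the desired weak equivalence $\mathscr{MAP}(X,Y) \simeq L^H(\mc{C}, \mc{W})(X, Y)$, and naturality in $(X,Y)$ is clear from the construction.
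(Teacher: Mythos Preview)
Your proposal is correct and follows the approach of the cited reference \cite{DK-calc simp loc}; the paper itself does not give an independent proof but simply cites \cite[6.2 and 8.4]{DK-calc simp loc}, where 6.2 establishes the reduction to short zigzags in the presence of a homotopy calculus of fractions and 8.4 verifies that model categories admit such a calculus. Your sketch of the shortening procedure via factorizations and the cofinality argument is essentially what is carried out there.
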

\begin{proof}
See \cite[6.2 and 8.4]{DK-calc simp loc}. 
\end{proof}

\begin{remark} \label{X-cof-zigzag}
If $X$ is cofibrant, then the smaller category $\mc{W}^{\rm c}_{\rm Hom}(X, Y)$ whose objects are zigzags in which 
the first weak equivalence is the identity map has the same homotopy type (see \cite{DK-calc simp loc} and 
\cite{Dugger:correction}). The analogous statement when $Y$ is fibrant also holds. 
\end{remark}

\subsection{Moduli spaces}\label{subsec:moduli-spaces}
We now turn to the definition of moduli spaces. Let $\mc{C}$ be a simplicial model category and $X \in \mc{C}$. Let $\mc{W}(X)$ denote the subcategory whose objects are the 
objects of $\mc{C}$ which are weakly equivalent to $X$ and the morphisms are weak equivalences between them. The classifying 
space $\mc{M}(X)$ of the category $\mc{W}(X)$ is called the moduli space for objects of type $X$. We remark that 
although $\mc{W}(X)$ is not a small category, it is nevertheless homotopically small which suffices for the purposes of 
extracting a well-defined homotopy type. We refer to \cite{DK-classification, DK-function comp} for more details related to 
this set-theoretical issue. Note that $\mc{M}(X)$ is always path-connected.

Combining the previous results, we obtain the following theorem.

\begin{theorem} [Dwyer-Kan \cite{DK-classification}] \label{DK-theory5}
Let $\mc{C}$ be a simplicial model category and $X \in \mc{C}$ an object which is both cofibrant and fibrant. Then 
there is a weak equivalence 
$$\mc{M}(X) \simeq \mathrm{B Aut^h}(X)$$
which is given by a natural zigzag of weak equivalences. 
\end{theorem}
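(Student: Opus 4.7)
The plan is to identify both sides with the classifying space of a certain connected simplicial groupoid (up to homotopy) associated with $X$, via the Dwyer-Kan localizations of the excerpt.

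By definition $\mc{M}(X) = B\mc{W}(X)$, where every morphism of $\mc{W}(X)$ is a weak equivalence. First I would apply Proposition~\ref{DK-theory1} to the pair $(\mc{W}(X), \mc{W}(X))$ to get a natural zigzag
$$B\mc{W}(X) \xleftarrow{\sim} BF_{\bullet}\mc{W}(X) \xrightarrow{\sim} BL(\mc{W}(X), \mc{W}(X)),$$
and then Proposition~\ref{DK-theory2} to replace $L$ by the hammock localization, yielding a natural weak equivalence $\mc{M}(X) \simeq BL^H(\mc{W}(X), \mc{W}(X))$. The next step is to show that $L^H(\mc{W}(X), \mc{W}(X))$ is, in a homotopy-coherent sense, a connected simplicial groupoid: any two of its objects are connected by a zigzag of weak equivalences (by definition of $\mc{W}(X)$), and every morphism becomes homotopy-invertible in the localization since it was already inverted. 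For such a simplicial category, a standard argument (selecting a contractible choice of zigzag from every object to $X$) produces a natural zigzag of Dwyer-Kan equivalences between $L^H(\mc{W}(X), \mc{W}(X))$ and the one-object simplicial monoid $\mathrm{End}_{L^H(\mc{W}(X), \mc{W}(X))}(X)$, and in particular a weak equivalence on classifying spaces.

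Finally, I would identify this endomorphism monoid with $\mathrm{Aut^h}(X)$. Using the functor
$$L^H(\mc{W}(X), \mc{W}(X)) \longrightarrow L^H(\mc{C}, \mc{W})$$
induced by the inclusion $\mc{W}(X) \hookrightarrow \mc{C}$, together with Theorem~\ref{DK-theory3} and the hypothesis that $X$ is both cofibrant and fibrant, the mapping space on the target side is $L^H(\mc{C}, \mc{W})(X, X) \simeq \map_{\mc{C}}(X, X)$. The image on $\pi_0$ consists precisely of those components represented by zigzags of weak equivalences in $\mc{C}$ with both endpoints equal to $X$, i.e.\ by the $\pi_0$-invertible elements of $\map_{\mc{C}}(X, X)$. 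Using Proposition~\ref{DK-theory4} and Remark~\ref{X-cof-zigzag} to model these mapping spaces by categories of short zigzags, one checks that the induced map
$$L^H(\mc{W}(X), \mc{W}(X))(X, X) \longrightarrow \mathrm{Aut^h}(X) \subset \map_{\mc{C}}(X, X)$$
is a weak equivalence of simplicial monoids: every short zigzag $X \xleftarrow{\sim} A \to B \xleftarrow{\sim} X$ of weak equivalences in $\mc{C}$ automatically has intermediate terms in $\mc{W}(X)$, so the zigzag already lies in $\mc{W}(X)$, and homotopies between such zigzags behave the same way. Assembling the zigzag yields the desired natural equivalence $\mc{M}(X) \simeq B\mathrm{Aut^h}(X)$.

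The main obstacle is the last identification, making precise the passage from a connected simplicial groupoid to the classifying space of its endomorphism monoid at any chosen object and checking naturality of the resulting zigzag; most of the bookkeeping is in comparing the ``local'' localization $L^H(\mc{W}(X), \mc{W}(X))$ with the full one $L^H(\mc{C}, \mc{W})$ so as to invoke Theorem~\ref{DK-theory3}, which is the only place where the (co)fibrancy of $X$ is genuinely used.
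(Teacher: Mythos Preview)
Your proposal is correct and follows essentially the same route as the paper's proof: both combine Propositions~\ref{DK-theory1} and~\ref{DK-theory2} to pass to $B(L^H(\mc{W}(X),\mc{W}(X)))$, reduce to the endomorphism monoid at $X$ via the connected simplicial groupoid argument, and then use the inclusion $L^H(\mc{W}(X),\mc{W}(X))\to L^H(\mc{C},\mc{W})$ together with Theorem~\ref{DK-theory3} and \eqref{DK-theory3b} to identify that monoid with $\mathrm{Aut^h}(X)$. The only real difference is in the last step: the paper simply cites \cite[4.6]{DK-function comp} for the fact that $L^H(\mc{W}(X),\mc{W}(X))(X,X)\to L^H(\mc{C},\mc{W})(X,X)$ is the inclusion of the homotopy-invertible components, whereas you argue this by hand via the short-zigzag models of Proposition~\ref{DK-theory4}; note that Proposition~\ref{DK-theory4} is stated for model categories, so applying the short-zigzag reduction on the $\mc{W}(X)$ side strictly speaking requires the more general homotopy calculus of fractions from \cite{DK-calc simp loc} rather than Proposition~\ref{DK-theory4} as stated.
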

\begin{proof}
The combination of Propositions \ref{DK-theory1} and \ref{DK-theory2} gives a weak equivalence 
$$\mc{M}(X) \simeq B(L^H(\mc{W}(X), \mc{W}(X))) \simeq B(L^H(\mc{W}(X), \mc{W}(X))(X,X)).$$ 
The map of simplicial monoids
\[
\xymatrix{
L^H(\mc{W}(X), \mc{W}(X))(X,X) \ar[r] & L^H(\mc{C}, \mc{W})(X, X)
}
\]
is the inclusion of those connected components whose $0$-simplices define invertible maps in the homotopy category
(see \cite[4.6]{DK-function comp}). Then from Theorem \ref{DK-theory3} and the weak equivalences of \eqref{DK-theory3b}, 
we can conclude the required weak equivalence 
$$\mathcal{M}(X) \simeq \mathrm{B Aut^h}(X)$$
by restriction to the appropriate components.
\end{proof}

The definition of moduli spaces clearly extends from objects to morphisms, or more general diagrams, by looking at the corresponding model 
category of diagrams. Following \cite{BlDG:pi-algebra}, for two objects $X$ and $Y$ in a model category $\mc{C}$, we 
write
$$\mc{M}(X \rightsquigarrow Y)$$
for the classifying space of a category $\mc{W}(X \rightsquigarrow Y)$ whose objects are maps 
$$U \to V$$
where $U$ is weakly equivalent to $X$ and $V$ is weakly equivalent to $Y$, and whose morphisms are square 
diagrams as follows
\[
 \xymatrix{
 U \ar[r] \ar[d]^{\sim} & V \ar[d]^{\sim} \\
 U' \ar[r] & V'
 }
\]
where the vertical maps are weak equivalences. More generally, for every property $\phi$ which applies to an
object $(U \to V) \in \mc{W}(X \rightsquigarrow Y)$ and whose satisfaction depends only on the respective 
component of $\mc{W}(X \rightsquigarrow Y)$, we define 
$$\mc{M}(X \stackrel{\phi}{\rightsquigarrow} Y)$$
to be the classifying space of the corresponding subcategory. We can clearly combine and extend these 
definitions to define also
moduli spaces of more complicated diagrams, e.g., 
$$\mc{M}(Z \stackrel{\phi}{\leftsquigarrow} X \stackrel{\phi'}{\rightsquigarrow} Y).$$ 

There are various maps connecting these moduli spaces. Among them, we distinguish the following two types of maps.
\begin{itemize}
 \item Maps which forget structure, e.g., the map $\mc{M}(X \rightsquigarrow Y) \to \mc{M}(X) \times \mc{M}(Y)$ which
 is induced by the obvious forgetful functor. 
 \item Maps which neutralize structure, e.g., the map $\mathscr{MAP}(X, Y) \to \mc{M}(X \rightsquigarrow Y)$ which 
 is induced by the functor $(X \stackrel{\sim}{\leftarrow} U \to V \stackrel{\sim}{\leftarrow} Y) \mapsto (U \to V)$.
\end{itemize}

The following theorem from \cite{BlDG:pi-algebra} explains the relations between these types of moduli spaces 
in some important exemplary cases. 

\begin{theorem} \label{calculus-with-moduli-spaces}
Let $\mc{C}$ be a model category and $X, Y, Z \in \mc{C}$. Then:
\begin{itemize} 
 \item[(a)] The sequence 
 $$\mathscr{MAP}(X, Y) \rightarrow \mc{M}(X \rightsquigarrow Y) \rightarrow \mc{M}(X) \times \mc{M}(Y)$$
 is a homotopy fiber sequence.
 \item[(b)] Suppose that $\mc{M}(Z \stackrel{\phi}{\leftsquigarrow} X \stackrel{\phi'}{\rightsquigarrow} Y)$ is non-empty. Then there is a homotopy fiber sequence 
 $$\mathscr{MAP}(X, Y; \phi') \rightarrow \mc{M}(Z \stackrel{\phi}{\leftsquigarrow} X \stackrel{\phi'}{\rightsquigarrow} Y) \rightarrow \mc{M}(Z \stackrel{\phi}{\leftsquigarrow} X) \times \mc{M}(Y)$$
 where $\mathscr{MAP}(X, Y; \phi') \subseteq \mathscr{MAP}(X, Y)$ denotes the subspace of maps which satisfy property $\phi'$.
\end{itemize}
\end{theorem}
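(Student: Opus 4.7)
The strategy is to recognize $\mc{M}(X \rightsquigarrow Y)$ as the homotopy quotient of the mapping space $\mathscr{MAP}(X, Y)$ by the natural action of $G := \mathrm{Aut^h}(X) \times \mathrm{Aut^h}(Y)$. Once this identification is in place, part (a) follows at once from the standard Borel fibration
\[
\mathscr{MAP}(X, Y) \longrightarrow \mathscr{MAP}(X, Y)_{hG} \longrightarrow BG,
\]
combined with Theorem~\ref{DK-theory5}, which yields $BG \simeq \mc{M}(X) \times \mc{M}(Y)$.

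To establish the homotopy quotient identification, I would pass to hammock localizations. By Propositions~\ref{DK-theory1} and~\ref{DK-theory2}, each of the three moduli spaces in question is the classifying space of the corresponding hammock localization, and the forgetful functor $(U \to V) \mapsto (U, V)$ induces a functor of simplicial groupoids
\[
P\co L^H\mc{W}(X \rightsquigarrow Y) \longrightarrow L^H\mc{W}(X) \times L^H\mc{W}(Y).
\]
The key step is to verify that $BP$ is a quasi-fibration whose homotopy fibre over a point $(U, V)$ is weakly equivalent to the derived mapping space $L^H(\mc{C},\mc{W})(U, V)$. A Quillen Theorem B-style argument is appropriate here: because $L^H\mc{W}(X)$ and $L^H\mc{W}(Y)$ are simplicial groupoids, any weak equivalences $U \xrightarrow{\sim} U'$ and $V \xrightarrow{\sim} V'$ induce, by pre- and post-composition, a weak equivalence of the corresponding fibres, so the fibres are homotopy invariant along the base. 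Proposition~\ref{DK-theory4} then identifies the total fibre with $\mathscr{MAP}(X, Y)$, assembled over all components.

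Part (b) is handled by the same template applied to the category of two-legged zigzags $(U \to Z, U \to V)$ with $U \simeq X$, $V \simeq Y$, and the left leg constrained to lie in the component determined by $\phi$. Forgetting the right leg yields a functor to $\mc{W}(Z \stackrel{\phi}{\leftsquigarrow} X) \times \mc{W}(Y)$, and the analogous Theorem B argument produces a homotopy fibre at $(U \to Z, V)$ equal to the subspace of $L^H(\mc{C},\mc{W})(U, V)$ singled out by the property $\phi'$, which is precisely $\mathscr{MAP}(X, Y; \phi')$. The non-emptiness hypothesis ensures a basepoint exists so that the identification is meaningful.

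The main obstacle will be the Quillen Theorem B-style fibration verification: classifying spaces of functors between simplicial groupoids are not automatically fibrations, so one must either rectify $P$ to a genuine Kan fibration (via a two-sided bar construction or a similar device) or directly verify the conditions on the comma categories of $P$. The latter leans on the model-categorical structure of the arrow category $\mc{C}^{[1]}$ (with pointwise weak equivalences) together with Theorem~\ref{DK-theory3}, which identifies its derived mapping spaces and allows the fibration to be extracted.
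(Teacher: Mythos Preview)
Your high-level strategy---recognizing the moduli space of maps as a homotopy quotient and extracting the fiber sequence via a Theorem~B argument---is correct, and both parts can be established this way. However, the paper takes a different and more elementary route that avoids the technical obstacle you flag at the end.

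Rather than passing to hammock localizations and confronting a simplicial version of Theorem~B, the paper stays at the level of ordinary $1$-categories and applies the classical Quillen Theorem~B directly. The subtlety is that the forgetful functor $\mc{W}(X \rightsquigarrow Y) \to \mc{W}(X) \times \mc{W}(Y)$ has the wrong variance in the second factor for Theorem~B to yield $\mc{W}_{\rm Hom}(X,Y)$ as the comma-fiber. The paper fixes this by introducing a \emph{twisted} variant $\mc{W}^{\rm tw}(X \rightsquigarrow Y)$, whose morphisms are squares in which the weak equivalence on the target side points \emph{upward}; one then has a forgetful functor to $\mc{W}(X) \times \mc{W}(Y)^{\rm op}$, and now the comma category over $(U,V)$ is visibly $\mc{W}^{\rm tw}_{\rm Hom}(U,V) \simeq \mathscr{MAP}(U,V)$. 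The comparison between $\mc{M}(X \rightsquigarrow Y)$ and $\mc{M}^{\rm tw}(X \rightsquigarrow Y)$ is handled via an auxiliary double category, following Dugger's correction of a similar issue in the Dwyer--Kan papers. Part~(b) proceeds identically.

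Your approach trades this variance fix for the passage to simplicial groupoids, where variance is a non-issue; but the price is exactly the fibration-rectification problem you identify, which the paper's twist sidesteps entirely. Both routes work, but the paper's is more hands-on and requires only the classical Theorem~B.
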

\begin{proof}(Sketch)
Part (a) is \cite[Theorem 2.9]{BlDG:pi-algebra}, but the proof seems to contain an error. A correction can be made following \cite{Dugger:correction}
where a similar error from \cite{DK-function comp} is corrected. Consider the category $\mc{W}^{\rm tw}(X \rightsquigarrow Y)$
whose objects are the same as the objects of $\mc{W}(X \rightsquigarrow Y)$ but morphisms are square diagrams as follows
\[
\xymatrix{
U \ar[r] \ar[d]^{\sim} & V  \\
U' \ar[r] & V' \ar[u]^{\sim}
}
\]
where both vertical maps are weak equivalences. Let $\mc{M}^{\rm tw}(X \rightsquigarrow Y)$ denote the classifying space of this 
category. Although there is no natural functor in general that connects the categories $\mc{W}(X \rightsquigarrow Y)$ and 
$\mc{W}^{\rm tw}(X \rightsquigarrow Y)$, nevertheless there is a natural zigzag of 
weak equivalences
$$\mc{M}(X \rightsquigarrow Y) \simeq \mc{M}^{\rm tw}(X \rightsquigarrow Y).$$
To see this, consider an auxiliary double category $\mc{W}_{\square}(X \rightsquigarrow Y)$ defined as follows:
\begin{itemize}
 \item the objects are the same as in $\mc{W}(X \rightsquigarrow Y)$,
 \item the horizontal morphisms are the morphisms of $\mc{W}(X \rightsquigarrow Y)$,
 \item the vertical morphisms are the morphisms of $\mc{W}^{\rm tw}(X \rightsquigarrow Y)$,
 \item the squares are those diagrams that make everything commute. 
\end{itemize}
The nerve of this double category is a bisimplicial set. We write $\mc{M}_{\square}(X \rightsquigarrow Y)$ to denote the 
realization of its diagonal. By construction, there are natural maps 
$$\mc{M}(X \rightsquigarrow Y) \rightarrow \mc{M}_{\square}(X \rightsquigarrow Y) \leftarrow \mc{M}^{\rm tw}(X \rightsquigarrow Y)$$
and an application of \cite[Proposition 3.9]{Dugger:correction} - or an obvious analogue - shows that both maps are weak equivalences.

Similarly, we define $\mc{W}^{\rm tw}_{\rm Hom}(X, Y)$ to be the analogous variant of the category $\mc{W}_{\rm Hom}(X, Y)$ where one of the maps
has the opposite variance. Again, by \cite{Dugger:correction}, we have a natural zigzag of weak equivalences:
\begin{equation} \label{twisted-hom}
B(\mc{W}^{\rm tw}_{\rm Hom}(X,Y)) \simeq \mathscr{MAP}(X, Y).
\end{equation}
Then it suffices to show that the sequence of functors 
$$\mc{W}^{\rm tw}_{\rm Hom}(X, Y) \to \mc{W}^{\rm tw}(X \rightsquigarrow Y) \to \mc{W}(X) \times \mc{W}(Y)^{\rm op}$$
induces a homotopy fiber sequence of classifying spaces. This is an immediate application of Quillen's Theorem B, 
as explained in \cite[2.9]{BlDG:pi-algebra}.

The proof of (b) is similar (cf. \cite[2.11]{BlDG:pi-algebra}). Consider the category $\mc{W}^{\rm tw}(Z \stackrel{\phi}{\leftsquigarrow} X \stackrel{\phi'}{\rightsquigarrow} Y)$
which has the same objects as $\mc{W}(Z \stackrel{\phi}{\leftsquigarrow} X \stackrel{\phi'}{\rightsquigarrow} Y)$ but morphisms are commutative diagrams of the form:
\[
 \xymatrix{
 U \ar[d]^{\sim} & V \ar[d]^{\sim} \ar[r] \ar[l] & W \\
 U' & V \ar[l] \ar[r] & W' \ar[u]_{\sim}
 }
\]
Using the methods of \cite{Dugger:correction}, it can be shown that these two categories have homotopy equivalent classifying spaces. There is a forgetful functor 
$$\mathscr{F}\co \mc{W}^{\rm tw}(Z \stackrel{\phi}{\leftsquigarrow} X \stackrel{\phi'}{\rightsquigarrow} Y) \to \mc{W}(Z \stackrel{\phi}{\leftsquigarrow} X) \times \mc{W}(Y)^{\rm op}.$$
Given $(U \leftarrow V, W) \in \mc{W}(Z \stackrel{\phi}{\leftsquigarrow} X) \times \mc{W}(Y)^{\rm op}$, the comma-category $\mathscr{F} \downarrow (U \leftarrow V, W)$ has as objects 
diagrams of the form:
\[
 \xymatrix{
 U' \ar[d]^{\sim} &  V' \ar[d]^{\sim} \ar[r] \ar[l] & W' \\
 U & V \ar[l] & W \ar[u]_{\sim}
 }
\]
It is easy to see that there are functors which induce a pair of inverse equivalences
$$\mc{W}^{\rm tw}_{\rm Hom}(V, W) \leftrightarrows \mathscr{F} \downarrow (U \leftarrow V, W).$$
Then the required result is an application of Quillen's Theorem B using \eqref{twisted-hom} and Theorems \ref{DK-theory3} and \ref{DK-theory4}.
\end{proof}

\subsection{A moduli space associated with a directed diagram} \label{DK-direct-dia} 
We discuss in detail the case of a moduli space associated with a direct system of objects in a model category. The diagrams that we actually have in mind for our 
applications have the form of a dual Postnikov tower, but we will present the main result in the general case of an arbitrary diagram. In the case of the model 
category of simplicial sets, this result is a special case of the general theorem of \cite{DK-classification}, and in fact the generalization to an arbitrary model 
category can also be treated similarly.

Let $\mathcal{M}$ be a simplicial model category. Let $\mathbf{N}$ denote the poset of natural numbers and 
$\mathbf{n}$ the subposet $\{0 < 1 < \cdots < n \}$. Given a diagram  $X\co \mathbf{N} \to \mathcal{M}$, let 
$X_{\leq n}$ denote its restriction to $\mathbf{n}$. Two diagrams $X, Y\co \mathbf{N} \to \mathcal{M}$ are called 
\emph{conjugate} if their restrictions $X_{\leq n}$ and $Y_{\leq n}$ are pointwise weakly equivalent. More generally,
given a small category $C$ and diagrams $X, Y\co C \to \mathcal{M}$, these are called \emph{conjugate} if for every 
$J\co \mathbf{n} \to C$
the pullback diagrams $X \circ J$ and $Y \circ J$ are pointwise weakly equivalent. 

Let $X\co \mathbf{N} \to \mathcal{M}$ be a diagram. Consider the category $\mathrm{Conj}(X)$ of $\mathbf{N}$-diagrams 
in $\mathcal{M}$ whose objects are the conjugates of $X$ and whose morphisms are pointwise weak equivalences between conjugates. 
The moduli space of conjugates (called \emph{classification complex} in \cite{DK-classification}) of $X$, denoted here by $\mc{M}_{\mathrm{conj}}(X)$, 
is the classifying space of the (homotopically small) category $\mathrm{Conj}(X)$. Similarly we define moduli spaces 
$\mc{M}_{\mathrm{conj}}(X)$ for more general diagrams $X\co C \to \mathcal{M}$. 

We have canonical compatible restriction maps:
$$r_{n}\co \mc{M}_{\mathrm{conj}}(X) \to \mathcal{M}(X_{\leq n}) \simeq \mathrm{B Aut^h}(X_{\leq n}).$$
The last weak equivalence comes from Theorem \ref{DK-theory5} applied to the (Reedy) model category 
$\mathcal{M}^{\mathbf{n}}$. 

\begin{theorem}[after Dwyer-Kan \cite{DK-classification}] \label{DK-Postnikov tower}
Let $\mathcal{M}$ be a simplicial model category and $X\co \mathbf{N} \to \mathcal{M}$ a diagram.
Then there is a weak equivalence $$\mc{M}_{\mathrm{conj}}(X) \stackrel{\sim}{\to} \mathrm{holim}_n \mathcal{M}(X_{\leq n}).$$
\end{theorem}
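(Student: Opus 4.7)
The plan is to exhibit a natural comparison map and verify it is a weak equivalence by controlling both sides via the Reedy tower on $\mathbf{n} \subset \mathbf{N}$. Restriction along the inclusions $\mathbf{n} \hookrightarrow \mathbf{n+1} \hookrightarrow \mathbf{N}$ gives a compatible family of functors $\mathrm{Conj}(X) \to \mathrm{Conj}(X_{\leq n}) = \mathcal{W}(X_{\leq n})$, hence a canonical map
$$\rho\colon \mc{M}_{\mathrm{conj}}(X) \longrightarrow \lim_n \mathcal{M}(X_{\leq n}).$$
First I would show that the tower on the right is (equivalent to) a tower of Kan fibrations, so that its limit agrees with its homotopy limit and the map $\rho$ factors through $\holim_n \mathcal{M}(X_{\leq n})$, giving the map of the theorem.

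The fibration property of the tower is the first technical input. By Theorem~\ref{DK-theory5} applied to the Reedy model category $\mc{M}^{\mathbf{n}}$, we have $\mathcal{M}(X_{\leq n}) \simeq B\Aut^{\rm h}(X_{\leq n})$, and the restriction functor $\mc{M}^{\mathbf{n+1}} \to \mc{M}^{\mathbf{n}}$ is right Quillen (it is the underlying functor of a matching adjunction in the Reedy structure). Combining this with the classification of diagram moduli spaces from Dwyer--Kan, the induced maps $\mathcal{M}(X_{\leq n+1}) \to \mathcal{M}(X_{\leq n})$ can be modelled, after passing to a functorial fibrant replacement, by Kan fibrations whose fibers are the moduli spaces of extensions of a given $n$-truncated conjugate. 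This uses in an essential way Theorem~\ref{calculus-with-moduli-spaces}(a) applied to the ``extend by one more stage'' diagram, so that the fiber has the homotopy type of a space of maps into the matching object at the new vertex.

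Second, I would identify $\mc{M}_{\mathrm{conj}}(X)$ as the strict limit of the (replaced) tower of categories $\{\mathcal{W}(X_{\leq n})\}$. An object of $\mathrm{Conj}(X)$ is precisely a compatible sequence of conjugates $\{Y_{\leq n}\}$ together with gluing isomorphisms $Y_{\leq n+1}|_{\mathbf{n}} = Y_{\leq n}$, which is exactly an object of $\lim_n \mathcal{W}(X_{\leq n})$. For morphisms the same holds. Since classifying spaces commute with filtered/sequential limits of categories along functors that are isofibrations at the nerve level, and since the tower has been arranged to consist of such, we get $B(\lim_n \mathcal{W}(X_{\leq n})) \simeq \lim_n B\mathcal{W}(X_{\leq n})$. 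The induced map $\rho$ then has the form of the canonical identification $B(\lim \mc{W}(X_{\leq n})) \to \lim B\mc{W}(X_{\leq n})$, which is a weak equivalence. Combined with $\lim \simeq \holim$ for a tower of fibrations, this yields the desired weak equivalence $\mc{M}_{\mathrm{conj}}(X) \simeq \holim_n \mathcal{M}(X_{\leq n})$.

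The main obstacle is the first step: producing a model for the tower $\{\mathcal{M}(X_{\leq n})\}$ in which the restriction maps are genuine fibrations, functorially and compatibly for all $n$. The naive restriction functor is not a fibration on nerves; one has to either thicken the categories $\mathcal{W}(X_{\leq n})$ (e.g. by passing to the hammock localization as in Proposition~\ref{DK-theory4} and using the fact that $L^H$ sends Reedy fibrations between Reedy fibrant diagrams to Kan fibrations of mapping spaces, together with the monoid-theoretic description $\mathcal{M}(X_{\leq n}) \simeq B\Aut^{\rm h}(X_{\leq n})$ from Theorem~\ref{DK-theory5}) or replace each $X_{\leq n}$ functorially by a Reedy fibrant conjugate so that the restriction functors become isofibrations on the resulting groupoids of weak equivalences. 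Once this rectification is carried out, the $\mathrm{lim}^1$--Milnor sequence for $\pi_\ast$ of the $\holim$ matches the long exact sequences controlling $\pi_\ast \mc{M}_{\mathrm{conj}}(X)$ obtained by extending one stage at a time, and the five lemma finishes the proof.
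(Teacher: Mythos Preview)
Your approach has a real gap at the central step. You correctly note that $\mathrm{Conj}(X) \cong \lim_n \mathcal{W}(X_{\leq n})$ as categories and that the nerve functor preserves limits, so $N(\mathrm{Conj}(X)) \cong \lim_n N(\mathcal{W}(X_{\leq n}))$. The problem is the passage from this strict limit to the homotopy limit. For that you need the restriction maps $N(\mathcal{W}(X_{\leq n+1})) \to N(\mathcal{W}(X_{\leq n}))$ to be Kan fibrations (or at least quasifibrations), and they are not: the categories $\mathcal{W}(X_{\leq n})$ are not groupoids, and an isofibration of ordinary categories does not induce a Kan fibration on nerves. Your proposed rectifications do not resolve this. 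Passing to Reedy fibrant conjugates does not make the restriction functor a fibration on nerves, and if you thicken each $\mathcal{W}(X_{\leq n})$ (say via the hammock localization) you lose the strict identification $\mathrm{Conj}(X) = \lim_n \mathcal{W}(X_{\leq n})$ on which your argument rests. So you are caught between two incompatible requirements: the strict limit description needs the un-thickened tower, while the $\lim \simeq \holim$ step needs the thickened one.

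The paper avoids this trap entirely. Rather than working with the infinite tower directly, it writes $\mathbf{N}$ as a pushout of the disjoint union $C = \bigsqcup_n \mathbf{n}$ along a shift map, and introduces a thickened version $\mathbf{N}^\sharp$ built from $C \times V$ where $V = (\bullet \to \bullet \leftarrow \bullet)$ is a zigzag. This produces a strict pullback square of moduli spaces whose corners are $\mc{M}_{\mathrm{conj}}(X)$, two copies of $\mc{M}_{\mathrm{conj}}(X_C) \simeq \prod_n \mathcal{M}(X_{\leq n})$, and $\mc{M}_{\mathrm{conj}}(X_{C\sqcup C})$. The thickening by $V$ is exactly what makes the analogous square for $\mathbf{N}^\sharp$ a \emph{homotopy} pullback (via a Theorem~B argument on homotopy fibers), and one then checks that the collapse $\mathbf{N}^\sharp \to \mathbf{N}$ induces weak equivalences on all corners. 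The upshot is that the infinite $\holim$ is encoded in a single finite homotopy pullback, bypassing the $\lim$ versus $\holim$ issue for the tower.
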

\begin{proof}
Let $C = \bigsqcup_n \mathbf{n}$ be the coproduct of the posets. There is a functor $s\co C \to C$ which is given by 
the inclusion functors $\mathbf{n} \to \mathbf{n+1}$. The colimit of these inclusions is $\mathbf{N}$ and
there is a pushout
\[
 \xymatrix{
 C \sqcup C \ar[r]^-{\mathrm{id} \sqcup \mathrm{id}}  \ar[d]_{\mathrm{id} \sqcup s} & C \ar[d] \\
 C \ar[r] & \cotower
 }
\]
Let $V = (v_0 \rightarrow v_2 \leftarrow v_1)$ denote the zigzag poset with three elements. Consider the pushout diagram
\[
 \xymatrix{
 C \sqcup C \ar[r]^f \ar[d]_g & C \times V \ar[d] \\
 C \times V \ar[r] & \mathbf{N}^\sharp
 }
\]
where $f$ (respectively $g$) sends the first copy of $\mathbf{n}$ to $\mathbf{n} \times \{v_0\}$ and the second 
copy to $\mathbf{n} \times \{v_1\}$ (respectively $\mathbf{n+1} \times \{v_1\}$). There is an obvious transformation 
from the latter pushout square to the first one. This induces a transformation from the following pullback diagram 
\begin{equation} \label{gluing}
  \xymatrix{
 \mc{M}_{\mathrm{conj}}(X) \ar[r] \ar[d] & \mc{M}_{\mathrm{conj}}(X_{C}) \ar[d] \\
 \mc{M}_{\mathrm{conj}}(X_{C}) \ar[r] & \mc{M}_{\mathrm{conj}}(X_{C \sqcup C})
 }
\end{equation}
where $X_{?}$ are given by pulling back the diagram $X$ to the respective categories and the maps in the square are likewise
given by pullback functors, to the pullback diagram 
\begin{equation} \label{h-gluing}
 \xymatrix{
 \mc{M}_{\mathrm{conj}}(X_{\mathbf{N}^{\sharp}}) \ar[r] \ar[d] & \mc{M}_{\mathrm{conj}}(X_{C \times V}) \ar[d] \\
 \mc{M}_{\mathrm{conj}}(X_{C \times V}) \ar[r] & \mc{M}_{\mathrm{conj}}(X_{C \sqcup C})
 }
\end{equation}
(To make sure that the squares are really pullbacks, we assume here that we work inside a convenient category of topological 
spaces. Alternatively, it is also possible, and makes little difference, to work with simplicial sets throughout.) 

We claim 
that the maps 
$$ \mc{M}_{\mathrm{conj}}(X) \to  \mc{M}_{\mathrm{conj}}(X_{\mathbf{N}^\sharp})$$ 
$$\mc{M}_{\mathrm{conj}}(X_{C}) \to  \mc{M}_{\mathrm{conj}}(X_{C \times V})$$ 
are weak equivalences. To see this note that the functors $C \times V \to C$ and $\mathbf{N}^\sharp \to \mathbf{N}$ admit sections 
$C \to C \times V$, $c \mapsto (c, v_1)$, 
and $\mathbf{N} \to \mathbf{N}^\sharp$, similarly. These give pairs of maps between the respective moduli spaces 
$$ \mc{M}_{\mathrm{conj}}(X_{C}) \leftrightarrows  \mc{M}_{\mathrm{conj}}(X_{C \times V})$$
$$ \mc{M}_{\mathrm{conj}}(X) \leftrightarrows  \mc{M}_{\mathrm{conj}}(X_{\mathbf{N}^\sharp})$$
which are inverse weak equivalences because their composites are either equal to the identity or they can be connected to 
the identity by a zigzag of natural transformations. In fact, this zigzag is actually defined by natural transformations 
already available at the level of the indexing posets: it is essentially the zigzag connecting the identity functor on $V$ 
to the constant functor at $v_1 \in V$. Note that this uses that $X_{C \times V}$ is pulled back from $X\co \mathbf{N}\to \mc{M}$ and so the values of $X_{C \times V}$ in the $V$-direction are weak equivalences. 

Moreover, the square \eqref{h-gluing} is actually a homotopy pullback. The proof of this is similar to 
\cite[Lemma 7.2]{DK-classification}. The homotopy fibers of the vertical maps can be identified with homotopy 
equivalent spaces (the arguments are comparable to Theorem \ref{calculus-with-moduli-spaces}). It follows that 
\eqref{gluing} is also a homotopy pullback. By \cite[8.3]{DK-classification}, we have a weak equivalence
$$ \mc{M}_{\mathrm{conj}}(X_C) \stackrel{\sim}{\to} \prod_n \mathcal{M}(X_{\leq n})$$
and so the homotopy pullback of \eqref{gluing} is weakly equivalent to $\mathrm{holim}_n \mathcal{M}(X_{\leq n})$. This means 
that the map $ \mc{M}_{\mathrm{conj}}(X) \stackrel{\sim}{\to} \mathrm{holim}_n \mathcal{M}(X_{\leq n})$, which is canonically induced 
by the restriction maps $r_n$, is a weak equivalence, as required. 
\end{proof}

\begin{remark}
In our applications the diagram $X$ will correspond to a form of dual Postnikov tower, meaning in particular that each value 
$X(n)$ is determined by $X(m)$ for any $m > n$. More specifically, in such cases, one has an obvious weak equivalence 
$\mathcal{M}(X_{\leq n}) \simeq \mathcal{M}(X(n))$, which simplifies the statement of Theorem \ref{DK-Postnikov tower}.
\end{remark}

\begin{remark}
The methods of \cite{DK-classification} apply more generally to arbitrary diagrams $X\co C \to \mathcal{M}$. The only point of caution is 
related to making sure that certain categories of diagrams in $\mathcal{M}$ are again simplicial model categories so that the results of 
this section, most notably Theorem \ref{DK-theory5}, can be applied. If \mc{M} is cofibrantly generated, then diagram categories over 
\mc{M} can be equipped with such model structures, but as long as there is a model structure on the relevant diagram categories, 
cofibrant generation of the model structure on \mc{M} is not needed. 
\end{remark}



\end{document}